\newtheorem{theorem}{Theorem}[section]
\newtheorem{proposition}[theorem]{Proposition}
\newtheorem{lemma}[theorem]{Lemma}
\newtheorem{corollary}[theorem]{Corollary}
\newtheorem*{claim*}{Claim}
\newtheorem*{subclaim*}{Subclaim}
\theoremstyle{definition}
\newtheorem{definition}[theorem]{Definition}
\newtheorem{remark}[theorem]{Remark}
\newcommand{\Ce}{{\mathcal{C}}}
\newcommand{\Ee}{{\mathcal{E}}}
\newcommand{\SR}{\mathrm{SR}}
\newcommand{\HSR}{\mathrm{HSR}}
\newcommand{\VSR}{\mathrm{VSR}}
\newcommand{\WSR}{\mathrm{WSR}}
\newcommand{\PR}{\mathrm{PSR}}
\newcommand{\WPR}{\mathrm{WPSR}}
\newcommand{\PRP}{\mathrm{PRP}}
\renewcommand{\emptyset}{\varnothing}
\newcommand{\dom}[1]{{{\rm{dom}}(#1)}}
\newcommand{\length}[1]{{\rm{lh}}({#1})}
\newcommand{\otp}[1]{{{\rm{otp}}\left(#1\right)}}
\newcommand{\lub}{{\rm{lub}}}
\newcommand{\POT}[1]{{\mathcal{P}}({#1})}
\newcommand{\seq}[2]{\langle{#1}~\vert~{#2}\rangle}
\newcommand{\map}[3]{{#1}:{#2}\longrightarrow{#3}}
\newcommand{\Map}[5]{{\map{#1}{#2}{#3}}; ~ {#4}\mapsto{#5}}
\newcommand{\ran}[1]{{{\rm{ran}}(#1)}}
\newcommand{\id}{{\rm{id}}}
\newcommand{\crit}[1]{{{\rm{crit}}\left({#1}\right)}}
\newcommand{\calL}{\mathcal{L}}
\newcommand{\rank}[1]{{\rm{rnk}}({#1})}
\newcommand{\cof}[1]{{{\rm{cof}}(#1)}}
\newcommand{\Set}[2]{\{{#1}~ \vert~{#2}\}}
\newcommand{\anf}[1]{{\text{``}\hspace{0.3ex}{#1}\hspace{0.3ex}\text{''}}}
\newcommand{\ZFC}{{\rm{ZFC}}}
\newcommand{\On}{{\rm{Ord}}}
\newcommand{\Lim}{{\rm{Lim}}}
\newcommand{\betrag}[1]{\vert{#1}\vert}
\newcommand{\calE}{\mathcal{E}}
\newcommand{\calS}{\mathcal{S}}
\newcommand{\goedel}[2]{{\prec}{#1},{#2}{\succ}}
\title{Patterns of Structural Reflection in the large-cardinal hierarchy}
\author{Joan Bagaria}
\address{ICREA (Instituci\'o Catalana de Recerca i Estudis Avan\c{c}ats) and
\newline \indent Departament de Matem\`atiques i Inform\`atica, Universitat de Barcelona. 
Gran Via de les Corts Catalanes, 585,
08007 Barcelona, Catalonia.}
\email{joan.bagaria@icrea.cat}
\author{Philipp L\"ucke}
\address{Fachbereich Mathematik, Universit\"at Hamburg, Bundesstra{\ss}e 55, Hamburg, 20146, Germany}
\email{philipp.luecke@uni-hamburg.de}
\thanks{The authors would like to thank the anonymous referee for the careful reading of the manuscript and numerous helpful comments. 
The research of both authors was supported by the Spanish Government under grant EUR2022-134032. 
In addition, the first author was supported by the Generalitat de Catalunya (Catalan Government) under grant 2021 SGR 00348, and by the Spanish Government under grant MTM-PID2020-116773GB-I00. 
The second author was supported by the Deutsche Forschungsgemeinschaft (Project number 522490605). 
Finally, this project has received funding from the European Union’s Horizon 2020 research and innovation programme under the Marie Sk{\l}odowska-Curie grant agreement No 842082 (Project \emph{SAIFIA: Strong Axioms of Infinity -- Frameworks, Interactions and Applications}).}
\subjclass[2020]{03E55; 18A15, 03C55, 03E47} 
\keywords{Large cardinals, structural reflection, strongly unfoldable cardinals, subtle cardinals, Vop\v{e}nka cardinals, Woodin cardinals, Vop\v{e}nka's Principle}
\begin{document}

\begin{abstract}
We unveil new patterns of Structural Reflection in the large-cardinal hierarchy below the first measurable cardinal. Namely, we give two different characterizations of strongly unfoldable and subtle cardinals in terms of a weak form of the principle of Structural Reflection, and also in terms of weak product structural reflection. Our analysis prompts the introduction of the new notion of $C^{(n)}$-strongly unfoldable cardinal for every natural number $n$, and we show that these cardinals form a natural hierarchy between strong unfoldable and subtle cardinals  analogous to the known hierarchies of $C^{(n)}$-extendible  and $\Sigma_n$-strong cardinals. These results show that the relatively low region of the large-cardinal hierarchy comprised between the first strongly unfoldable and the first subtle cardinals is completely analogous to the much higher region between the first strong and the first Woodin cardinals, and also to the much further upper region of the hierarchy ranging between the first supercompact and the first Vop\v{e}nka cardinals.
\end{abstract}

\maketitle


\section{Introduction}

Large cardinals are transfinite cardinal numbers with associated properties that make them very large, so much so that their existence cannot be proved in ZFC. Since the weakest large cardinals, the \emph{weakly inaccessible}, were first defined and studied by Hausdorff over a century ago, in 1908, a plethora of different and much stronger large cardinals have  since then been identified  in a great variety of contexts and  taking many different forms. The book of Kanamori \cite{MR1994835} gives a comprehensive overview of the rich world of large cardinals, the world of the ``\emph{Higher Infinite}" as the book's title reads. 
Since the book's first edition, published in 1994 in the wake of the groundbreaking results of Martin-Steel \cite{MS:PPD} and Woodin \cite{Woodin:PNAS} establishing the tight connections between  large cardinals and the determinacy of sets of reals, the theory of large cardinals has been expanding  in multiple directions, yielding  solutions to well-known set-theoretic problems -- e.g., in the arithmetic of singular cardinals (see \cite{GitPrikry}) or in the combinatorial properties of small uncountable cardinals (see \cite{For:denseideal} and  \cite{FoMa:MutStat}) -- as well as fertile applications to other areas, from general topology (see \cite{BM2} and \cite{Fl:NMSC}) to algebraic topology and homotopy theory (see \cite{BCMR} and \cite{BagariaWilsonRefl}), to abelian groups (see \cite{Ba-Ma:GR} and \cite{MR4474130}), etc. 
The use of large cardinals is most effective in conjunction with the forcing technique (e.g., Prikry-type forcing), and also via the construction and analysis of canonical inner models in which large cardinals exist -- the so-called \emph{inner model program}. Indeed, to settle a given  statement $\varphi$ one typically assumes, on the one hand, the existence of some suitable large cardinal and, by forcing, produces a model of set theory in which  $\varphi$ holds. On the other hand, one assumes $\varphi$ and shows that some large cardinal (or even the same kind of large cardinal) exists in a canonical inner model, thereby showing that $\varphi$ is (equi)consistent with the existence of that large cardinal.

However, in spite of their enormous success, large-cardinal axioms, i.e., the axioms asserting that such and such large cardinals exist, have remained a long-standing mystery from a foundational point of view. Indeed, a precise definition of \emph{large cardinal}, which would encompass all the variety of known large cardinals into a single notion, is still lacking. Moreover, the fact that all known large cardinals line up into a well-ordered hierarchy of consistency strength remains unexplained. Furthermore, no convincing intrinsic justification of their naturalness, or their status as true axioms of set theory has been yet put forward, and their acceptance has so far been supported mainly by their fruitful consequences.

In recent years, a new program of reformulating large cardinals in terms of a general reflection principle called \emph{Structural Reflection} ($\SR$) (see \cite{Bag-SR}), has succeeded at characterizing well-known large cardinals at several regions of the large-cardinal hierarchy in terms of this principle.
The program started, still in a veiled form, with  \cite{zbMATH06029795} and  \cite{BCMR}, in which large cardinals in the region spanning from supercompact  and extendible cardinals to Vop\v{e}nka's Principle are characterized in terms of a strong form of L\"owenheim-Skolem type of reflection, equivalent to stratified forms of Vop\v{e}nka's Principle restricted to    classes of structures of a given definitional complexity. The work continued in \cite{BagariaWilsonRefl}, in which a similar characterization in terms of $\SR$, this time using products and homomorphisms instead of elementary embeddings, was given for large cardinals in the region between strong and Woodin cardinals. Further work along the same lines was done in the lower regions of the large-cardinal hierarchy, below the first measurable cardinal. In \cite{MR3519447} (see also \cite{SRminus} and \cite{Bag-SR}),   reformulations in terms of $\SR$ are given for the weakest of large cardinals, namely those between weakly inaccessible and weakly compact cardinals. Furthermore, in \cite{BGS}, a generic form of $\SR$ is used to characterize cardinals that lie between almost-remarkable and virtually extendible. Several other similar $\SR$ characterizations of large cardinals lying in other regions of the large-cardinal hierarchy are given in \cite{Bag-SR} -- e.g., for globally superstrong cardinals -- and even large-cardinal principles such as ``\emph{$0^\sharp$ exists}" or ``\emph{$0^\dagger$ exists}" are shown to be equivalent to $\SR$ for definable classes of structures belonging to canonical  inner models. Finally, in \cite{BALU1}, the authors gave characterizations of large cardinals in the uppermost regions of the large-cardinal hierarchy,  between Vop\v{e}nka's Principle and ${\rm I}1$-cardinals. Moreover, our analysis of those cardinals in terms of  $\SR$ allowed  to formulate a new hierarchy of large cardinal notions that has the potential to go beyond all known large cardinals not known to be inconsistent with $\ZFC$.

In the present article, we show that the same pattern of $\SR$ that holds  between a supercompact  and a Vop\v{e}nka cardinal, and between a strong and a Woodin cardinal, also holds   between a strongly unfoldable and a subtle cardinal. These correlations can be informally expressed by the following equations: 

\begin{equation*}
 \frac{\textit{Vop\v{e}nka}}{\textit{supercompact}} ~ {} ~ {} ~ = ~ {} ~ {} ~ \frac{\textit{Woodin}}{\textit{strong}} ~ {} ~ {} ~  = ~ {} ~ {} ~ \frac{\textit{subtle}}{\textit{strongly unfoldable}} 
\end{equation*}
\smallskip

As strongly unfoldable and subtle cardinals lie below the first measurable cardinal and can exist in G\"odel's constructible universe $L$, our results show that the large-cardinal hierarchy is highly homogeneous, repeating the same pattern in the upper, central, and lower regions of the hierarchy. The reformulation of large cardinals in terms of $\SR$ does explain the empirical fact that they form a well-ordered hierarchy, and also attests to their naturalness and intrinsic justification as true axioms of set theory (see \cite{Bag-SR}). Moreover, each new $\SR$ characterization, like the ones given in the present article, constitutes a further step towards the ultimate goal of yielding a uniform formulation of all known large cardinals in terms of a single unifying principle. Furthermore, what could be termed as a ``side effect" is that  every $\SR$ reformulation of a known large-cardinal notion suggests and gives rise to new definitions of large cardinals,  filling in the gaps in the large-cardinal hierarchy, e.g., $C^{(n)}$-extendible cardinals, which lie between supercompact cardinals and Vop\v{e}nka's Principle. Let us  describe next more precisely our results and explain how they fit in and contribute to the $\SR$ program.


\subsection{Structural Reflection}

Recall that, given a  natural number $n$, $C^{(n)}$ is the $\Pi_n$-definable closed unbounded class of ordinals $\alpha$ that are $\Sigma_n$-correct in $V$, that is, $V_\alpha$ is a $\Sigma_n$-elementary substructure of $V$, written $V_\alpha \prec_{\Sigma_n}V$. Thus, $C^{(0)}$ is the class of all ordinal numbers and $C^{(1)}$ is the class of all uncountable cardinals $\kappa$ such that $V_\kappa =H_\kappa$.

Let us now recall the following different variants of the principle of Structural Reflection:

\begin{definition}\label{defsr}
 Let $\Ce$ be a  class\footnote{We work in $\ZFC$. Therefore,  when considering proper classes, we shall always mean classes that are definable, possibly using sets as parameters.} of structures\footnote{In this paper, the term \emph{structure} refers to structures for countable first-order languages. The cardinality of a structure is defined as the cardinality of its domain.} of the same type and let $\kappa$ be an infinite cardinal. 
\begin{enumerate}
    \item (Bagaria-V\"a\"an\"anen, ~ \cite{MR3519447})
  $\SR_\Ce(\kappa)$ denotes the statement that for every structure $B$ in $\Ce$, there exists a structure $A$ in $\Ce$ of cardinality less than $\kappa$  and an elementary embedding of $A$ into $B$.
 \item (Bagaria, ~ \cite{zbMATH06029795}) 
  $\HSR_\Ce(\kappa)$ denotes the statement that for every structure $B$ in $\Ce$, there exists a structure $A$ in $\Ce \cap H_\kappa$    and an elementary embedding of $A$ into $B$.
 \item (Bagaria, ~ \cite{Bag-SR})
 $\VSR_\Ce(\kappa)$ denotes the statement that for every structure $B$ in $\Ce$, there exists a structure $A$ in $\Ce \cap V_\kappa$  and an elementary embedding of $A$ into $B$.
 \end{enumerate}
\end{definition}

Clearly, for a given class $\Ce$ and a cardinal $\kappa$, the principle $\HSR_\Ce(\kappa)$ implies both the principle $\SR_\Ce(\kappa)$ and the principle  $\VSR_\Ce(\kappa)$. Also, if $\kappa$ is an element of $C^{(1)}$, then we have $H_\kappa=V_\kappa$, and therefore the principles $\HSR_\Ce(\kappa)$ and $\VSR_\Ce(\kappa)$ are identical. Moreover, for classes $\Ce$ that are closed under isomorphic images, the principle $\SR_\Ce(\kappa)$ implies the principle $\HSR_\Ce(\kappa)$. Thus, for a class $\Ce$ closed under isomorphic images and $\kappa \in C^{(1)}$, the three principles $\SR_\Ce(\kappa)$, $\HSR_\Ce(\kappa)$ and $\VSR_\Ce(\kappa)$ are the same.

The following theorem, proved in {\cite[Corollary 4.10]{zbMATH06029795}} using Magidor's characterization of the first supercompact cardinal as the first cardinal that reflects the class of all structures of the form $\langle V_\alpha,\in\rangle$ for  ordinals $\alpha$ (see {\cite[Theorem 1]{MR295904}}),  gives a reformulation of  supercompact cardinals in terms of $\SR$ and it may be considered the first result of the $\SR$ program.

\begin{theorem}[\cite{zbMATH06029795}]\label{theorem:JoanBoldSupercompactSigma2}
 The following statements are equivalent for every cardinal $\kappa$: 
 \begin{enumerate}
  \item The cardinal $\kappa$ is either supercompact or a limit of supercompact cardinals. 
  
  \item The principle $\SR_\Ce(\kappa)$ (equivalently, $\HSR_\Ce(\kappa)$, or $\VSR_\Ce(\kappa)$) holds for every class $\Ce$ of structures of the same type that is definable by a $\Sigma_2$-formula with parameters in $V_\kappa$. 
 \end{enumerate}
\end{theorem}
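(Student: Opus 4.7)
The plan is to route both directions of the equivalence through Magidor's classical reflection characterization of supercompactness \cite{MR295904}: $\kappa$ is supercompact iff for every $\lambda \geq \kappa$ there exist $\bar\lambda < \kappa$ and an elementary $\map{j}{V_{\bar\lambda}}{V_\lambda}$ with $\crit{j} < \bar\lambda$ and $j(\crit{j}) = \kappa$. For $(1) \Rightarrow (2)$, first reduce to the case $\kappa$ itself supercompact: if $\kappa$ is only a limit of supercompacts and $\Ce$ is defined by a $\Sigma_2$-formula $\varphi(x, a)$ with $a \in V_\kappa$, choose a supercompact $\mu < \kappa$ with $a \in V_\mu$, so that $\SR_\Ce(\mu)$ trivially implies $\SR_\Ce(\kappa)$. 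Assuming $\kappa$ supercompact, fix $B \in \Ce$ and $\lambda \in C^{(2)}$ with $B \in V_\lambda$. Magidor furnishes $\bar\kappa < \bar\lambda < \kappa$ and elementary $\map{j}{V_{\bar\lambda}}{V_\lambda}$ with $\crit{j} = \bar\kappa$, $j(\bar\kappa) = \kappa$, and $B \in \ran{j}$; write $B = j(\bar B)$, and arrange $a \in V_{\bar\kappa}$ so that $j(a) = a$. Since $\lambda \in C^{(2)}$ and $\varphi$ is $\Sigma_2$, $B \in \Ce$ pulls down to $V_\lambda \models \varphi(B, a)$; elementarity of $j$ gives $V_{\bar\lambda} \models \varphi(\bar B, a)$; and since $\bar\lambda \in C^{(2)}$ as well (a standard consequence of $\lambda \in C^{(2)}$ and the elementarity of $j$), this lifts back to $\bar B \in \Ce$. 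The restriction $j \upharpoonright \bar B$ is the required elementary embedding of $\bar B$ into $B$, and $|\bar B| \leq |V_{\bar\lambda}| < \kappa$.

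For $(2) \Rightarrow (1)$, the strategy is to verify Magidor's criterion at $\kappa$ directly from $\SR$. Given $\lambda > \kappa$, apply $\SR_\Ce(\kappa)$ to the $\Sigma_2$-definable class $\Ce$ of all structures of the form $\langle V_\beta, \in, \gamma\rangle$ with $\gamma < \beta$, taking $B := \langle V_\lambda, \in, \kappa\rangle$. The reflect $A = \langle V_{\bar\lambda}, \in, \bar\kappa\rangle$ has cardinality below $\kappa$, hence $\bar\lambda < \kappa$, and comes equipped with an elementary $\map{j}{V_{\bar\lambda}}{V_\lambda}$ sending $\bar\kappa$ to $\kappa$. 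Feeding these data — suitably strengthened so that $j(\crit{j}) = \kappa$ — into Magidor's theorem then yields that $\kappa$ is supercompact, or, if the available reflecting embeddings witness Magidor's criterion only for cardinals $\kappa' < \kappa$ cofinal in $\kappa$ rather than for $\kappa$ itself, that $\kappa$ is a limit of supercompacts. The equivalence of the three forms $\SR_\Ce$, $\HSR_\Ce$, $\VSR_\Ce$ then follows from the fact that any $\kappa$ for which (1) holds lies in $C^{(1)}$, so $H_\kappa = V_\kappa$.

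The main technical obstacle is arranging $j(\crit{j}) = \kappa$, since bare $\SR$ yields only the weaker $j(\bar\kappa) = \kappa$ with $\crit{j} \leq \bar\kappa$. I would address this by enriching the reflecting class $\Ce$ with additional ingredients pinning down the initial segment $V_{\bar\kappa}$ — for instance a predicate for a well-ordering of $V_{\bar\kappa}$ of order type $\bar\kappa$, or a sort encoding $V_{\bar\kappa}$ so that elementarity forces $j \upharpoonright \bar\kappa = \id$ — or alternatively by iterating along the sequence $\crit{j}, j(\crit{j}), j^2(\crit{j}), \ldots$ to extract a sub-embedding whose critical point maps directly to $\kappa$. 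The residual separation between the \emph{supercompact} and \emph{limit of supercompacts} cases is then resolved by tracking whether, as $\lambda$ varies, these embeddings can be chosen with $\bar\kappa$ bounded below $\kappa$ or unbounded in it.
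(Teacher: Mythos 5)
Your direction $(1)\Rightarrow(2)$ is essentially the standard argument (and the one used in \cite{zbMATH06029795} and mirrored in this paper's Lemma \ref{lemma:VopenkeSR} and the first part of the proof of Theorem \ref{theorem:CharSr-Sr--}), modulo two small inaccuracies: Magidor's characterization as literally stated does not hand you $B\in\ran{j}$ and $\crit{j}>\rank{a}$ --- you have to rerun the reflection argument from a supercompactness embedding to get the strengthened form; and your parenthetical claim that $\bar{\lambda}\in C^{(2)}$ follows from $\lambda\in C^{(2)}$ and elementarity of $j$ is unjustified (elementarity of a non-inclusion embedding $\map{j}{V_{\bar{\lambda}}}{V_\lambda}$ transfers statements about $j(x)$, not about $x$). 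What you actually need, and what is true, is only $\bar{\lambda}\in C^{(1)}$ (being a beth fixed point is first order over $V_\lambda$, and beth fixed points are cardinals), which already gives upward absoluteness of the $\Sigma_2$ statement $\varphi(\bar{B},a)$ from $V_{\bar{\lambda}}$ to $V$.

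The genuine gap is in $(2)\Rightarrow(1)$, precisely at the point you flag as the ``main technical obstacle''. Neither of your proposed fixes works: enriching the structures with a well-ordering of $V_{\bar{\kappa}}$ (or a sort coding $V_{\bar{\kappa}}$) does not force $j\restriction\bar{\kappa}=\id_{\bar{\kappa}}$, since the embedding simply maps the added structure of $A$ to the corresponding structure of $B$ and nothing prevents a critical point below $\bar{\kappa}$; and the ``iteration'' idea is not available, because $j$ maps $V_{\bar{\lambda}}$ into $V_\lambda$ and cannot be composed with itself, nor is there any reason the critical sequence should reach $\kappa$ or yield a sub-embedding with $j(\crit{j})=\kappa$. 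The actual proof does not try to arrange $\crit{j}=\bar{\kappa}$ at all. Instead (as in the proof of Theorem \ref{theorem:CharSr-Sr--} in Section \ref{section3}, which is the template for Bagaria's original argument), one uses a parameter $a\in V_\kappa$ fixed by the embedding together with the Kunen Inconsistency to push $\crit{j}$ above any prescribed ordinal $\rho<\kappa$, then invokes Magidor's \emph{Lemma}~2 of \cite{MR295904} (not his characterization theorem) to conclude that $\crit{j}$ is $\alpha$-supercompact for all $\alpha<\bar{\kappa}$, transfers this by elementarity to get cardinals in $(\rho,\kappa)$ that are $\alpha$-supercompact for all $\alpha<\kappa$, and finally uses a second application of the reflection hypothesis (reflecting arbitrary $\lambda\geq\kappa$ below $\kappa$ while fixing such a cardinal $\mu$) to show that any $\mu<\kappa$ which is $\alpha$-supercompact for all $\alpha<\kappa$ is fully supercompact; this is exactly what produces the ``supercompact or a limit of supercompact cardinals'' dichotomy. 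Without this two-step mechanism your outline does not yield Magidor's criterion at $\kappa$, so as written the backward direction is not a proof.
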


\begin{corollary}
 The following statements are equivalent for every cardinal $\kappa$: 
 \begin{enumerate}
  \item The cardinal $\kappa$ is the least supercompact cardinal. 
  
  \item The cardinal $\kappa$ is the least cardinal with the property that $\SR_\Ce(\kappa)$ (equivalently, $\HSR_\Ce(\kappa)$, or $\VSR_\Ce(\kappa)$) holds for every class $\Ce$ of structures of the same type that is definable by a $\Sigma_2$-formula with parameters in $V_\kappa$. \qed 
 \end{enumerate}
\end{corollary}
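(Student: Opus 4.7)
The plan is to derive this corollary directly from Theorem~\ref{theorem:JoanBoldSupercompactSigma2}, which already characterizes the class of cardinals satisfying the reflection condition in (2) as precisely the union of the supercompact cardinals and their limits. The whole task thus reduces to locating the least element of that union, and the argument splits naturally into two minimality checks.

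First I would verify (1)~$\Rightarrow$~(2). If $\kappa$ is the least supercompact cardinal, then $\kappa$ itself satisfies the reflection property by the ``(1)~$\Rightarrow$~(2)'' direction of Theorem~\ref{theorem:JoanBoldSupercompactSigma2}. For minimality, I would take any $\mu<\kappa$ satisfying the reflection property and invoke the converse direction of the same theorem to conclude that $\mu$ is either supercompact or a limit of supercompact cardinals. Either alternative produces a supercompact cardinal strictly below $\kappa$, contradicting the choice of $\kappa$.

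Next I would establish (2)~$\Rightarrow$~(1). Assuming $\kappa$ is the least cardinal with the reflection property, Theorem~\ref{theorem:JoanBoldSupercompactSigma2} tells me that $\kappa$ is supercompact or a limit of supercompacts. In the latter case any supercompact $\mu<\kappa$ would, by the same theorem applied at $\mu$, already satisfy the reflection property, contradicting the minimality of $\kappa$. Hence $\kappa$ must itself be supercompact. The very same minimality argument rules out any strictly smaller supercompact cardinal, so $\kappa$ is the least supercompact.

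The only point requiring minor care is parameter handling: when I invoke Theorem~\ref{theorem:JoanBoldSupercompactSigma2} at a cardinal $\mu<\kappa$, the $\Sigma_2$-definable classes under consideration are those with parameters in $V_\mu\subseteq V_\kappa$, so there is no mismatch between the parameter classes used at $\mu$ and at $\kappa$. Beyond this bookkeeping I do not foresee any real obstacle, since all the combinatorial content has been absorbed into Theorem~\ref{theorem:JoanBoldSupercompactSigma2}.
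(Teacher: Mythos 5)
Your proposal is correct and follows exactly the route the paper intends: the corollary is stated with a \qed precisely because it is an immediate minimality argument from Theorem \ref{theorem:JoanBoldSupercompactSigma2}, and your two minimality checks (plus the observation that parameters in $V_\mu\subseteq V_\kappa$ cause no mismatch) are the whole content. Nothing is missing.
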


For classes of structures of higher definitional complexity an analogous equivalence is given in the theorem below, proved in  {\cite[Theorem 4.18]{zbMATH06029795}}, which characterizes extendible and $C^{(n)}$-extendible cardinals in terms of $\SR$. 
Recall that a cardinal $\kappa$ is \emph{$C^{(n)}$-extendible} if for every $\lambda > \kappa$, there is an ordinal $\mu$ and  an elementary embedding $\map{j}{V_\lambda}{V_\mu}$ with $\crit{j}=\kappa$, $j(\kappa)>\lambda$ and  $j(\kappa)\in C^{(n)}$. A cardinal is \emph{extendible} if and only if it is $C^{(1)}$-extendible.

\begin{theorem}[\cite{zbMATH06029795}]\label{theorem:JoanBoldSupercompactSigma22}
 The following statements are equivalent for every cardinal $\kappa$: 
 \begin{enumerate}
  \item $\kappa$ is either $C^{(n)}$-extendible or a limit of $C^{(n)}$-extendible cardinals. 
  
  \item The principle $\SR_\Ce(\kappa)$ (equivalently, $\HSR_\Ce(\kappa)$, or $\VSR_\Ce(\kappa)$) holds for every class $\Ce$ of structures of the same type that is definable by a $\Sigma_{n+2}$-formula with parameters in $V_\kappa$. 
 \end{enumerate}
\end{theorem}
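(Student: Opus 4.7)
The plan is to adapt the proof of Theorem \ref{theorem:JoanBoldSupercompactSigma2} (which corresponds to the case $n=0$), replacing Magidor's characterization of supercompactness with the defining embeddings of $C^{(n)}$-extendibility and tracking the interaction between the higher definitional complexity $\Sigma_{n+2}$ of the reflected classes and the $\Sigma_n$-correctness of $V_{j(\kappa)}$ supplied by $j(\kappa) \in C^{(n)}$. The restriction $p \in V_\kappa$ on the parameter and the threshold $\Sigma_{n+2}$ are precisely calibrated for this interplay.

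For the direction $(1) \Rightarrow (2)$, I first reduce to the case in which $\kappa$ itself is $C^{(n)}$-extendible, since if $\kappa$ is merely a limit of such cardinals, a $C^{(n)}$-extendible $\kappa' < \kappa$ above the rank of the parameter can be chosen and $\SR_\Ce(\kappa')$ trivially yields $\SR_\Ce(\kappa)$. Fix a class $\Ce$ defined by a $\Sigma_{n+2}$-formula $\varphi(x,p)$ with $p \in V_\kappa$ and a structure $B \in \Ce$. Pick $\lambda > \kappa$ in $C^{(n+2)}$ with $B \in V_\lambda$, so that $V_\lambda \models \varphi(B,p)$, and apply $C^{(n)}$-extendibility to obtain an elementary embedding $\map{j}{V_\lambda}{V_\mu}$ with $\crit{j} = \kappa$, $j(\kappa) > \lambda$, and $j(\kappa) \in C^{(n)}$. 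Since $|B| < \lambda < j(\kappa)$ and $j \restriction B$ is an elementary embedding of $B$ into $j(B)$, a combination of the elementarity of $j$, the $\Sigma_n$-correctness of $V_{j(\kappa)}$, and the $\Sigma_{n+2}$-form of $\varphi$ shows that $V_\mu$ satisfies the statement ``there exists $A$ in $\Ce$ of cardinality less than $j(\kappa)$ admitting an elementary embedding into $j(B)$'', witnessed by $A = B$. Pulling this $\Sigma_{n+2}$-statement back through $j$ to $V_\lambda$, and then up to $V$ via $\lambda \in C^{(n+2)}$, produces the required $A \in \Ce$ of cardinality less than $\kappa$ with an elementary embedding into $B$.

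For the direction $(2) \Rightarrow (1)$, I apply $\SR$ to a carefully chosen $\Sigma_{n+2}$-definable class. Since $\SR$ for $\Sigma_{n+2}$-classes implies $\SR$ for $\Sigma_2$-classes, Theorem \ref{theorem:JoanBoldSupercompactSigma2} ensures that $\kappa$ is supercompact or a limit thereof, whence $\kappa \in C^{(n)}$. For $\lambda \geq \kappa$, consider the class $\Ce$ of structures $\langle V_\eta, \in, \alpha, \gamma\rangle$ with $\eta, \gamma \in C^{(n)}$ and $\gamma \leq \alpha < \eta$; since membership in $C^{(n)}$ is $\Pi_n$-definable, $\Ce$ is $\Sigma_{n+2}$-definable. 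Pick $\eta \in C^{(n)}$ with $\eta > \lambda$ and set $B = \langle V_\eta, \in, \lambda, \kappa\rangle \in \Ce$. Then $\SR_\Ce(\kappa)$ provides $A = \langle V_{\bar\eta}, \in, \bar\alpha, \bar\gamma\rangle \in \Ce$ with $\bar\eta < \kappa$ and an elementary embedding $\map{\iota}{A}{B}$ sending $\bar\alpha \mapsto \lambda$ and $\bar\gamma \mapsto \kappa$. By augmenting $B$ with additional distinguished constants drawn from $V_\kappa$ that force $\iota$ to fix all ordinals below $\bar\gamma$, one arranges $\crit{\iota} = \bar\gamma$; then $\iota \restriction V_{\bar\alpha} : V_{\bar\alpha} \to V_\lambda$ is an elementary embedding with critical point $\bar\gamma$, $\iota(\bar\gamma) = \kappa \in C^{(n)}$, and $\iota(\bar\gamma) > \bar\alpha$, so it is a $C^{(n)}$-extendibility witness for $\bar\gamma$ at level $\bar\alpha$. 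A reflection argument using $\bar\eta \in C^{(n)}$ then promotes this single witness to the full $C^{(n)}$-extendibility of $\bar\gamma$, and varying $\lambda$ makes such $\bar\gamma$ cofinal in $\kappa$, so $\kappa$ is a limit of $C^{(n)}$-extendibles.

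The main obstacle, I expect, lies in the backward direction: converting a single $C^{(n)}$-extendibility embedding visible in $V_\eta$ into the universally quantified assertion that $\bar\gamma$ is $C^{(n)}$-extendible. The key point is that the $\Sigma_n$-elementarity of $V_{\bar\eta}$ in $V$, combined with the structural data built into $A$, allows the single instance furnished by $\iota$ to reflect into the universal statement about all target levels, and this interaction is precisely what makes $\Sigma_{n+2}$ the tight complexity threshold matching $C^{(n)}$-extendibility.
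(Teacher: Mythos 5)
Your overall plan for $(1)\Rightarrow(2)$ is the standard one (the paper itself does not reprove this theorem but cites {\cite[Theorem 4.18]{zbMATH06029795}}), but the serious problems are in your direction $(2)\Rightarrow(1)$. First, you try to prove outright that $\kappa$ is a limit of $C^{(n)}$-extendible cardinals; this cannot follow from (2), since the least $C^{(n)}$-extendible cardinal satisfies (2) and is not such a limit. Any correct proof must split into cases and, when $\kappa$ is \emph{not} $C^{(n)}$-extendible, use a witness to that failure as a parameter in the reflected structures, exactly as in the proofs of Theorems \ref{theorem:CharSr-Sr--} and \ref{C(n)StronglyUnfoldable}. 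Second, the two devices driving your argument do not work: (a) you cannot ``augment $B$ with constants drawn from $V_\kappa$ that force $\iota$ to fix all ordinals below $\bar\gamma$'' --- the languages here are countable, $\bar\gamma$ is not known in advance, and in the genuine arguments the critical point of the derived embedding is simply whatever it is (it need not be the preimage of $\kappa$) and is analysed via the Kunen Inconsistency; and (b) a single embedding $\map{\iota\restriction V_{\bar\alpha}}{V_{\bar\alpha}}{V_\lambda}$ with critical point $\bar\gamma$ cannot be ``promoted'' to full $C^{(n)}$-extendibility of $\bar\gamma$ by the $\Sigma_n$-correctness of $V_{\bar\eta}$: full $C^{(n)}$-extendibility is a statement of complexity about $\Pi_{n+2}$ quantifying over all target levels, and if such a promotion were possible the least $C^{(n)}$-extendible cardinal would again be a limit of them. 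The Magidor-style mechanism that actually works (see the proof of Theorem \ref{C(n)StronglyUnfoldable}) runs the other way: one shows the critical point $\mu$ of the derived embedding is appropriately extendible at \emph{every} level below the top of the reflecting model, and then uses the elementarity of the embedding together with the correctness of $\kappa$ to conclude that the \emph{image} of $\mu$, which lies below $\kappa$, is fully $C^{(n)}$-extendible. Relatedly, your claim that supercompactness of $\kappa$ yields $\kappa\in C^{(n)}$ is unjustified (it only gives $\kappa\in C^{(2)}$); the correctness of $\kappa$ should instead come from Lemma \ref{lemma:Sigma2Reflex} applied to the full $\Sigma_{n+2}$-hypothesis, which gives $\kappa\in C^{(n+2)}$.

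In $(1)\Rightarrow(2)$ there is also an unaddressed step: you assert that $V_\mu$ satisfies ``there exists $A$ in $\Ce$ of cardinality less than $j(\kappa)$ with an elementary embedding into $j(B)$'', witnessed by $A=B$, but membership of $B$ in $\Ce$ must be certified inside the target model, and $V_\mu$ is not known to be $\Sigma_{n+2}$-correct (the definition of $C^{(n)}$-extendibility gives only $j(\kappa)\in C^{(n)}$, not any correctness of $\mu$). The standard repair writes $\varphi=\exists y\,\psi$ with $\psi\in\Pi_{n+1}$, chooses $\lambda$ so that a witness $y\in V_\lambda$ exists, and relativizes membership to $V_{j(\kappa)}$, where $V_{j(\kappa)}\models\varphi(B,p)$ follows from downward $\Pi_{n+1}$-absoluteness since $j(\kappa)\in C^{(n)}$. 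But then pulling back through $j$ produces $A\in V_\kappa$ with $V_\kappa\models\varphi(A,p)$, and to conclude $A\in\Ce$ you need the nontrivial fact that every $C^{(n)}$-extendible cardinal belongs to $C^{(n+2)}$ --- an ingredient your sketch never invokes; the $\Sigma_{n+2}$-correctness of $\lambda$ does not help at that point, because you have not established the truth of $\varphi(B,p)$ in $V_\mu$ that your ``pull back to $V_\lambda$, then lift via $\lambda\in C^{(n+2)}$'' route would require.
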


\begin{corollary}
 The following statements are equivalent for every cardinal $\kappa$: 
 \begin{enumerate}
  \item $\kappa$ is the least $C^{(n)}$-extendible cardinal. 
  
  \item $\kappa$ is the least cardinal with the property that $\SR_\Ce(\kappa)$ (equivalently, $\HSR_\Ce(\kappa)$, or $\VSR_\Ce(\kappa)$) holds for every class $\Ce$ of structures of the same type that is definable by a $\Sigma_{n+2}$-formula with parameters in $V_\kappa$. \qed 
 \end{enumerate}
\end{corollary}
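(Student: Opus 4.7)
The plan is to deduce the corollary directly from Theorem \ref{theorem:JoanBoldSupercompactSigma22}. Throughout, let $\mathrm{SR}^{n+2}(\kappa)$ abbreviate the statement that $\SR_\Ce(\kappa)$ holds for every class $\Ce$ of structures of the same type definable by a $\Sigma_{n+2}$-formula with parameters in $V_\kappa$. By the theorem, $\mathrm{SR}^{n+2}(\kappa)$ is equivalent to $\kappa$ being $C^{(n)}$-extendible or a limit of $C^{(n)}$-extendible cardinals.

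For the implication (1)$\Rightarrow$(2), I would assume $\kappa$ is the least $C^{(n)}$-extendible cardinal. Then $\kappa$ satisfies $\mathrm{SR}^{n+2}(\kappa)$ by the theorem. Any strictly smaller cardinal $\lambda<\kappa$ with $\mathrm{SR}^{n+2}(\lambda)$ would, by the theorem, be $C^{(n)}$-extendible or a limit of $C^{(n)}$-extendibles; neither is possible, since by minimality of $\kappa$ there are no $C^{(n)}$-extendible cardinals strictly below $\kappa$.

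For (2)$\Rightarrow$(1), assume $\kappa$ is least with $\mathrm{SR}^{n+2}(\kappa)$. By the theorem, $\kappa$ is $C^{(n)}$-extendible or a limit of such. In the latter case, any $C^{(n)}$-extendible cardinal $\mu<\kappa$ would itself satisfy $\mathrm{SR}^{n+2}(\mu)$, contradicting the minimality of $\kappa$; hence $\kappa$ itself is $C^{(n)}$-extendible. The same minimality argument, now applied to an arbitrary $C^{(n)}$-extendible cardinal $\mu<\kappa$ (which would satisfy $\mathrm{SR}^{n+2}(\mu)$ by the theorem), shows that no such $\mu$ exists, so $\kappa$ is the least $C^{(n)}$-extendible cardinal.

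No real obstacle is expected: everything reduces to combining the equivalence of the preceding theorem with the observation that a proper limit of $C^{(n)}$-extendibles witnessing $\mathrm{SR}^{n+2}$ always has smaller witnesses below it, so the ``least'' version of each characterization forces $\kappa$ itself to be $C^{(n)}$-extendible.
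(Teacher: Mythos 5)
Your argument is correct and is exactly the intended one: the paper states the corollary with no separate proof, treating it as an immediate consequence of Theorem \ref{theorem:JoanBoldSupercompactSigma22}, and your deduction (minimality rules out the ``limit of $C^{(n)}$-extendibles'' alternative in both directions) is precisely that routine argument.
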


 Remember that \emph{Vop\v{e}nka's Principle} is the scheme of axioms stating that for every proper class $\Ce$ of structures of the same type, there exist $A,B\in\Ce$ with $A\neq B$ and an elementary embedding from $A$ to $B$. In \cite{zbMATH06029795}, the first author obtains the following characterization of this principle:

 \begin{theorem}[\cite{zbMATH06029795}]\label{theorem:CharVP}
  The following schemes of axioms are equivalent over $\ZFC$: 
  \begin{enumerate}
      \item Vop\v{e}nka's Principle. 
      
      \item For every natural number $n$, there exists a $C^{(n)}$-extendible cardinal. 
      
      \item For every natural number $n$, there exists a proper class of $C^{(n)}$-extendible cardinals.  
      
      \item For every class $\Ce$ of structures of the same type, there exists a cardinal $\kappa$ such that $\SR_\Ce(\kappa)$  (equivalently, $\HSR_\Ce(\kappa)$, or $\VSR_\Ce(\kappa)$) holds.
  \end{enumerate}
 \end{theorem}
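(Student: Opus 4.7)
The plan is to run the cycle $(3)\Rightarrow(2)\Rightarrow(1)\Rightarrow(4)\Rightarrow(3)$, using Theorem \ref{theorem:JoanBoldSupercompactSigma22} as the central bridge between the large-cardinal and $\SR$ sides. The implication $(3)\Rightarrow(2)$ is trivial.

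For $(1)\Rightarrow(4)$, let $\Ce$ be a definable class of structures of the same type and suppose toward contradiction that $\SR_\Ce(\kappa)$ fails for every infinite cardinal $\kappa$. For each such $\kappa$ one obtains a witness $B_\kappa\in\Ce$ admitting no elementary embedding from any $A\in\Ce$ with $|A|<\kappa$; by standard applications of L\"owenheim-Skolem (and closure of the relevant definable classes under elementary substructures) one may arrange $|B_\kappa|=\kappa$ at cofinally many $\kappa$. Form the proper class $\Ce^\ast$ of structures $\langle B_\kappa,\kappa\rangle$ in the common signature augmented by a constant for $\kappa$. VP applied to $\Ce^\ast$ yields $\kappa_0<\kappa_1$ and an elementary embedding $\langle B_{\kappa_0},\kappa_0\rangle\to\langle B_{\kappa_1},\kappa_1\rangle$; forgetting the constant gives an elementary embedding $B_{\kappa_0}\to B_{\kappa_1}$ with $|B_{\kappa_0}|=\kappa_0<\kappa_1$, contradicting the badness of $B_{\kappa_1}$.

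For $(4)\Rightarrow(3)$, fix $n$ and a target $\beta\in\On$. Each definable class $\Ce$ has a least $\SR$-threshold $\kappa_\Ce$, well-defined by (4) together with upward monotonicity of $\SR_\Ce$ in $\kappa$. Build inductively $\beta=\alpha_0<\alpha_1<\cdots$ with $\alpha_{i+1}>\sup\{\kappa_\Ce:\Ce\text{ is }\Sigma_{n+2}\text{-definable with parameter in }V_{\alpha_i}\}$, a supremum of set-many ordinals. Setting $\kappa=\sup_i\alpha_i$, every $\Sigma_{n+2}$-definable class with parameter in $V_\kappa$ satisfies $\SR_\Ce(\kappa)$, so by the converse direction of Theorem \ref{theorem:JoanBoldSupercompactSigma22} the cardinal $\kappa>\beta$ is $C^{(n)}$-extendible or a limit of such; arbitrariness of $\beta$ yields a proper class.

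The main obstacle is $(2)\Rightarrow(1)$. Given a proper class $\Ce$ definable by a $\Sigma_m$-formula from parameter $p$, Theorem \ref{theorem:JoanBoldSupercompactSigma22} would supply $\SR_\Ce(\kappa)$ at any $C^{(m-2)}$-extendible $\kappa>\rank(p)$, and then any $B\in\Ce$ with $|B|\geq\kappa$ reflects to a distinct $A\in\Ce$ with an elementary embedding, witnessing VP. The delicate point is that (2) asserts only existence, not unboundedness, so $\rank(p)$ may exceed every available $C^{(m-2)}$-extendible. The workaround is to invoke (2) at a sufficiently large $N$, obtain a $C^{(N)}$-extendible $\kappa_0$, and use its extendibility at some $\lambda\in C^{(N+2)}$ above $\rank(p)$ to produce $\kappa=j(\kappa_0)>\lambda$ with $\kappa\in C^{(N)}$; for $N$ chosen so that $\kappa\in C^{(N)}$ reflects the $\Pi_m$-property of $C^{(m-2)}$-extendibility, $\kappa$ is itself $C^{(m-2)}$-extendible above $\rank(p)$, completing the argument.
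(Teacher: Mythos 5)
You are reviewing a statement the paper only cites (from \cite{zbMATH06029795}); measured against the standard argument and against this paper's own proof of the analogous Theorem \ref{theorem:OrdSubtle1}, your plan has the right architecture but its crucial step fails. In $(2)\Rightarrow(1)$ you correctly isolate the crux (one must produce $C^{(m-2)}$-extendible cardinals above $\rank{p}$ from mere existence of $C^{(N)}$-extendibles), but the closing move is not justified: elementarity of $\map{j}{V_\lambda}{V_\mu}$ only gives that $V_\mu$ \emph{believes} ``$j(\kappa_0)$ is $C^{(m-2)}$-extendible'', and nothing in the definition of $C^{(N)}$-extendibility makes $\mu$ correct, so this belief cannot be exported to $V$. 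The correctness you invoke, $j(\kappa_0)\in C^{(N)}$, concerns statements with parameters \emph{in} $V_{j(\kappa_0)}$, whereas ``$j(\kappa_0)$ is $C^{(m-2)}$-extendible'' is a $\Pi_m$ statement \emph{about} $j(\kappa_0)$ whose quantifiers range over all of $V$; trying to certify it from $\Sigma_N$-correctness of $V_{j(\kappa_0)}$ is circular. To close this gap one needs a genuinely different ingredient: either the nontrivial theorem (due to Tsaprounis) that $C^{(n)}$-extendibility is equivalent to the variant in which the target may be taken with $\mu\in C^{(n)}$ (then the $\Pi_m$ statement does transfer from $V_\mu$ to $V$), or, much more simply, the definable-supremum argument used in this paper's proof of $(2)\Rightarrow(3)$ of Theorem \ref{theorem:OrdSubtle1} in Section \ref{section9}: if the $C^{(m-2)}$-extendibles were bounded, their least upper bound $\lambda$ would be definable without parameters at some fixed complexity; but every $C^{(N)}$-extendible cardinal lies in $C^{(N+2)}$ (combine Propositions \ref{proposition:CnExtendsibleCn+1Sunf} and \ref{proposition:CnSUNF-Cn+1}), hence lies above every such definable ordinal once $N$ is large, while still being $C^{(m-2)}$-extendible --- a contradiction, yielding a proper class of $C^{(m-2)}$-extendibles and in particular one above $\rank{p}$. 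Separately, even granted such a $\kappa>\rank{p}$, your last sentence assumes $\Ce$ has a member of cardinality at least $\kappa$; if all members of $\Ce$ have cardinality below $\kappa$, one must instead note that a proper class of structures of bounded cardinality in a countable signature contains two distinct isomorphic members.

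Your $(1)\Rightarrow(4)$ also needs repair. You cannot ``arrange $|B_\kappa|=\kappa$'' via L\"owenheim--Skolem: $\SR_\Ce$ requires the small structure to lie in $\Ce$, and an arbitrary class $\Ce$ is not closed under elementary substructures; what you do get for free is $|B_\kappa|\geq\kappa$ (otherwise the identity map on $B_\kappa$ witnesses $\SR_\Ce(\kappa)$ for that $B$). Moreover, a bare constant ``for $\kappa$'' neither makes literal sense (the ordinal $\kappa$ need not belong to the domain of $B_\kappa$) nor forces the embedding provided by Vop\v{e}nka's Principle to go from the smaller index to the larger one, so ``$\kappa_0<\kappa_1$'' and the ensuing contradiction do not follow. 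The standard fix is to choose the witnesses recursively so that each new index cardinal exceeds the cardinalities of all previously chosen witnesses, and to expand each witness to a structure whose domain also contains an initial segment of the ordinals with $\in$ and a constant for the index; an elementary embedding between two distinct expanded structures then restricts to an order-preserving injection on the coded ordinals sending one index to the other, so it must go upward in index, and the cardinality bookkeeping gives the contradiction. With these repairs, and with the (standard, but worth stating) remark that in $(4)\Rightarrow(3)$ the least-threshold function is definable for fixed $n$ via a $\Sigma_{n+2}$-satisfaction predicate so that Replacement applies to your suprema, the cycle around Theorem \ref{theorem:JoanBoldSupercompactSigma22} does assemble into a proof.
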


The next result, which shall be proved in Section \ref{section2} below, uses the \emph{set}-version of this equivalence to provide a canonical characterization of \emph{Vop\v{e}nka cardinals}, i.e., inaccessible cardinals $\delta$ with the property that for every set $\Ce$ of structures of the same type with $\Ce\in V_{\delta+1}\setminus V_\delta$, there exist $A,B\in\Ce$ with $A\neq B$ and an elementary embedding from $A$ to $B$ (see, for example, \cite{hamkins2016vopvenka} and \cite{MR3324126}).

\begin{theorem}\label{main:Vopenka}
 The following statements are equivalent for every uncountable cardinal $\delta$: 
 \begin{enumerate}
  \item\label{item:Vopenka1} The cardinal $\delta$ is a Vop\v{e}nka cardinal. 
  
  \item\label{item:Vopenka2} For every set $\Ce$ of structures of the same type with $\Ce\in V_{\delta+1}\setminus V_\delta$, there exists a cardinal $\kappa<\delta$ with the property that the principle $\SR_\Ce(\kappa)$ (equivalently, $\HSR_\Ce(\kappa)$, or $\VSR_\Ce(\kappa)$)  holds. 
 \end{enumerate}
\end{theorem}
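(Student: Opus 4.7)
The plan is to prove $(1)\Leftrightarrow(2)$ in two directions. For $(1)\Rightarrow(2)$ I would argue by contradiction, exploiting the Vop\v{e}nka property of $\delta$ applied to a carefully chosen subfamily of $\Ce$. For $(2)\Rightarrow(1)$ I would first extract the Vop\v{e}nka property from (2) via a tagging trick that forces the cardinality of each structure to be at least its rank, and then separately verify inaccessibility using concrete witnesses.

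For $(1)\Rightarrow(2)$, fix a Vop\v{e}nka cardinal $\delta$ and a set $\Ce\in V_{\delta+1}\setminus V_\delta$ of structures of the same type, and suppose toward a contradiction that $\SR_\Ce(\kappa)$ fails for every $\kappa<\delta$. Using the axiom of choice, pick for each $\kappa<\delta$ a witness $B_\kappa\in\Ce$ such that no $A\in\Ce$ with $|A|<\kappa$ admits an elementary embedding into $B_\kappa$; since the identity embedding is elementary, necessarily $|B_\kappa|\geq\kappa$. By the regularity of $\delta$ the set $C=\{\kappa<\delta : |B_{\kappa'}|<\kappa \text{ for all } \kappa'<\kappa\}$ is a club in $\delta$, and the subfamily $\De=\{B_\kappa : \kappa\in C\}\subseteq\Ce$ has members of cardinalities unbounded in $\delta$, so it contains $\delta$-many distinct structures and lies in $V_{\delta+1}\setminus V_\delta$. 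Applying the Vop\v{e}nka property of $\delta$ to $\De$ yields $\kappa_0<\kappa_1$ in $C$ and an elementary embedding $B_{\kappa_0}\hookrightarrow B_{\kappa_1}$; but $|B_{\kappa_0}|<\kappa_1$ by the definition of $C$, contradicting the choice of $B_{\kappa_1}$.

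For the Vop\v{e}nka-property half of $(2)\Rightarrow(1)$, given $\Ce\in V_{\delta+1}\setminus V_\delta$, form the family $\Ce^\ast = \{B^\ast : B\in\Ce\}$, where $B^\ast$ is the disjoint union of the $\tau$-structure $B$ with the well-order $(\rank{B},\in)$, augmented by a unary predicate $P$ distinguishing the two parts (with $\tau$-formulas relativized to $P$). Then $\Ce^\ast$ is again a set of same-type structures in $V_{\delta+1}\setminus V_\delta$, but now $|B^\ast|\geq\rank{B}$, so the cardinalities of its members are unbounded in $\delta$. Applying (2) to $\Ce^\ast$ produces $\kappa<\delta$ with $\SR_{\Ce^\ast}(\kappa)$. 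Choosing $B\in\Ce$ with $\rank{B}\geq\kappa$ (hence $|B^\ast|\geq\kappa$), the corresponding witness $A^\ast\in\Ce^\ast$ satisfies $|A^\ast|<\kappa\leq|B^\ast|$, so $A^\ast\neq B^\ast$ and thus $A\neq B$ in $\Ce$; restricting the elementary embedding $A^\ast\hookrightarrow B^\ast$ to the $\tau$-reducts of the $P$-parts produces the desired elementary embedding $A\hookrightarrow B$.

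It remains to verify that (2) forces $\delta$ to be inaccessible. If $\delta=\lambda^+$ were a successor, the family $\Ce=\{(\gamma,\in) : \lambda\leq\gamma<\delta\}$ would lie in $V_{\delta+1}\setminus V_\delta$ with every member of cardinality exactly $\lambda$, so $\SR_\Ce(\kappa)$ would vacuously fail for every $\kappa\leq\lambda$, i.e., for every $\kappa<\delta$, contradicting (2). Regularity and the strong-limit clause follow from similar constructions: if $\cf(\delta)=\mu<\delta$ is witnessed by a cofinal sequence $\langle\delta_i\rangle_{i<\mu}$, one builds a family of $\mu$-many rigid same-type structures indexed along this sequence (for instance, suitable expansions of $(\delta_i+1,\in,\delta_i)$ with enough additional markers) that are pairwise non-elementarily-embeddable, forcing $\SR_\Ce(\kappa)$ to fail at cofinally many $\kappa<\delta$; and an analogous but more delicate construction codes a potential power-set witness $\mu<\delta$ with $2^\mu\geq\delta$ into a family in $V_{\delta+1}\setminus V_\delta$ where $\SR$ fails throughout. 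I expect the main technical obstacle to be verifying rigidity in these two cases, where one must carefully choose structures with enough first-order structure to prevent spurious elementary embeddings and thus guarantee the failure of $\SR_\Ce(\kappa)$ at cofinally many $\kappa<\delta$.
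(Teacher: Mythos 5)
The genuine gap is in the inaccessibility half of $(2)\Rightarrow(1)$. Your successor-cardinal case is fine, but for the regularity and strong-limit clauses you only gesture at ``suitable expansions of $(\delta_i+1,\in,\delta_i)$ with enough additional markers'' and ``an analogous but more delicate construction'', and you yourself flag the non-embeddability verification as unresolved --- and that is exactly where the substance of this direction lies. Ordinal structures carrying arbitrary ordinal tags are not automatically pairwise non-embeddable: well-orders admit plenty of elementary embeddings, and a tag $\xi<\cof{\delta}$ is in general not definable in the structure, so elementarity alone does not force an embedding to respect it. The paper supplies the missing idea in Lemma \ref{lemma:BadC1singular} and Proposition \ref{proposition:Sr--C1}: for countable cofinality one tags with natural numbers (definable, hence fixed by any elementary embedding); for uncountable cofinality one uses structures $\langle V_{\delta_\xi},\in,\cof{\delta},\xi\rangle$, so that any elementary embedding restricts to an elementary self-map of $V_{\cof{\delta}}$ with unboundedly many fixed points, and the Kunen inconsistency forces this restriction to be the identity, pinning down the tag $\xi$; and for the strong-limit clause (equivalently, in the paper's decomposition, ``$\delta$ is a limit of inaccessibles'') one uses structures $\langle V_{\gamma+2},\in,\lambda\rangle$ and Kunen again to manufacture an inaccessible critical point in a forbidden interval. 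Without an argument of this kind, the failure of $\SR_\Ce(\kappa)$ at cofinally many $\kappa<\delta$ (which, by the upward monotonicity of $\SR_\Ce(\cdot)$ in $\kappa$, is indeed equivalent to failure at all $\kappa<\delta$) is simply not established, so your proof of $(2)\Rightarrow(1)$ is incomplete at its crucial step.

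On the positive side, your $(1)\Rightarrow(2)$ via the club of failure-witnesses is a genuinely different and more elementary route than the paper's, which invokes the existence below a Vop\v{e}nka cardinal of a cardinal that is $\eta$-extendible for $\Ce$ for all $\eta<\delta$; and your rank-tagging family $\Ce^\ast$ is a clean way to guarantee $A\neq B$ in $(2)\Rightarrow(1)$. Two caveats, though. First, the Vop\v{e}nka property applied to $\De=\Set{B_\kappa}{\kappa\in C}$ only yields \emph{some} ordered pair with an elementary embedding; you must also dispose of the case of an embedding from $B_{\kappa_1}$ into $B_{\kappa_0}$ with $\kappa_0<\kappa_1$ --- immediate, since elementary embeddings are injective and $\betrag{B_{\kappa_1}}\geq\kappa_1>\betrag{B_{\kappa_0}}$, but it should be said. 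Second, your reductio only produces the $\SR$ form of $(2)$, whereas the statement's parenthetical also asserts the $\HSR$ and $\VSR$ forms: the paper's argument yields a reflecting structure in $\Ce\cap V_\kappa=\Ce\cap H_\kappa$ because its $\kappa$ is extendible for $\Ce$, while a structure of cardinality less than $\kappa$ produced by your argument need not lie in $H_\kappa$ or $V_\kappa$ (its elements may have large rank), so the stronger forms do not follow from your proof as written.
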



\subsection{Weak Structural Reflection}

We now use weaker forms of $\SR$, defined below, to obtain canonical characterizations of smaller large cardinals, lying below the first measurable cardinal.

\begin{definition}
\label{defminus}
 Let $\Ce$ be a non-empty class of structures of the same type and let $\kappa$ be an infinite cardinal. 
 \begin{enumerate}
  \item (Bagaria--V\"a\"an\"anen, ~ \cite{MR3519447})  $\SR^-_\Ce(\kappa)$ is the statement that for every structure $B$ in $\Ce$ of cardinality $\kappa$, there exists a structure $A$ in $\Ce$ of cardinality less than $\kappa$ and an elementary embedding of $A$ into $B$.
  
  \item (Bagaria--V\"a\"an\"anen, ~ \cite{MR3519447}) Let $\SR^{--}_\Ce(\kappa)$ denote the statement that $\Ce$ contains a structure of cardinality less than $\kappa$. 
  
  \item $\WSR_\Ce(\kappa)$ denotes the conjunction of $\SR^-_\Ce(\kappa)$ and $\SR^{--}_\Ce(\kappa)$. 
  \item Let $\HSR_\Ce^-(\kappa)$ and $\VSR_\Ce^-(\kappa)$ be the same as $\HSR_\Ce(\kappa)$ and $\VSR_\Ce(\kappa)$, respectively (as given in Definition \ref{defsr}), but restricted to structures $B$ of cardinality $\kappa$.
 \end{enumerate}
\end{definition}

For a given class $\Ce$ and a cardinal $\kappa$, the principle $\HSR^-_\Ce(\kappa)$ clearly implies both $\SR^-_\Ce(\kappa)$ and $\VSR^-_\Ce(\kappa)$; and if $\kappa$ is an element of $C^{(1)}$, then the principles $\HSR^-_\Ce(\kappa)$ and $\VSR^-_\Ce(\kappa)$ are equivalent. Also, if $\Ce$ is closed under isomorphic images, then the principle $\SR^-_\Ce(\kappa)$ implies the principle  $\HSR_\Ce(\kappa)$. Hence, for  $\Ce$  closed under isomorphic images and $\kappa \in C^{(1)}$, the principles  $\HSR^-_\Ce(\kappa)$, $\SR^-_\Ce(\kappa)$ and $\VSR^-_\Ce(\kappa)$ are the same.

Next, we recall the large-cardinal notion of strong unfoldability, introduced by Villaveces in model-theoretic investigations of models of set theory.

\begin{definition}[Villaveces, ~ \cite{MR1649079}]
 An inaccessible cardinal $\kappa$ is \emph{strongly unfoldable} if for every ordinal $\lambda$ and every transitive $\mathrm{ZF}^-$-model $M$ of cardinality $\kappa$ with $\kappa\in M$ and ${}^{{<}\kappa}M\subseteq M$, there is a transitive set $N$ with $V_\lambda\subseteq N$ and an elementary embedding $\map{j}{M}{N}$ with $\crit{j}=\kappa$ and $j(\kappa)\geq\lambda$. 
\end{definition}

Every strongly unfoldable cardinal is a totally indescribable element  of $C^{(2)}$ (see {\cite[Theorem 2]{MR2279655}} and {\cite[Section 3]{luecke2021strong}}). 
In particular, the first strongly unfoldable cardinal is bigger than the first subtle cardinal (see Definition \ref{defsubtle} below) and smaller than the first strong cardinal. 
 The following characterization of strongly unfoldable cardinals in terms of weak $\SR$ will be proved in Section \ref{section3}:

\begin{theorem}\label{theorem:CharSr-Sr--}
 The following statements are equivalent for every  cardinal $\kappa$: 
 \begin{enumerate}
   \item\label{item:EiterSunfOrSC} $\kappa$ is either strongly unfoldable or a limit of supercompact cardinals. 
   
  \item\label{item:BothWeakSR} The principle $\WSR_\Ce(\kappa)$ holds for every class $\Ce$ of structures of the same type that is definable by a $\Sigma_2$-formula with parameters in $V_\kappa$. 
  
  \item\label{item:BothWeakHSR} $\kappa$ is an element of $C^{(2)}$ and the principle $\HSR_\Ce^- (\kappa)$ holds for every class $\Ce$ of structures of the same type that is definable by a $\Sigma_2$-formula with parameters in $V_\kappa$. 
 \end{enumerate}
\end{theorem}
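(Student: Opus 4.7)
The plan is to close the cycle $(1)\Rightarrow(3)\Rightarrow(2)\Rightarrow(1)$.

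For $(1)\Rightarrow(3)$, I would split on the two cases. If $\kappa$ is a limit of supercompact cardinals, the defining parameters of any $\Sigma_2$-definable class $\Ce$ already lie in $V_\delta$ for some supercompact $\delta<\kappa$, and Theorem~\ref{theorem:JoanBoldSupercompactSigma2} applied at $\delta$ yields $\HSR_\Ce(\delta)$, which trivially implies $\HSR^-_\Ce(\kappa)$; the unboundedness of supercompacts below $\kappa$ also forces $\kappa\in C^{(2)}$. If $\kappa$ is strongly unfoldable, then $\kappa\in C^{(2)}$ is among the cited basic properties. To exhibit the reflecting structure for a given $B\in\Ce$ of cardinality $\kappa$, I would enclose $B$, the parameter $p$, and a copy of $V_\kappa$ inside a transitive ${<}\kappa$-closed $\mathrm{ZF}^-$-model $M$ of cardinality $\kappa$ containing $\kappa$, apply strong unfoldability with a target $\lambda\in C^{(2)}$ chosen large enough that the $\Sigma_2$-truth of $\phi(B,p)$ is witnessed inside $V_\lambda$, and obtain an elementary embedding $j\colon M\to N$ with $\crit{j}=\kappa$, $V_\lambda\subseteq N$ and $j(\kappa)\geq\lambda$. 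By $\Sigma_2$-absoluteness, $N$ sees $B$ as a member of $\Ce$ of cardinality less than $j(\kappa)$ elementarily embedding into $j(B)$; elementarity of $j$ reflects this statement back into $M$, and a final application of $\Sigma_2$-absoluteness combined with $\kappa\in C^{(2)}$ identifies the reflected witness as an actual $A\in\Ce\cap H_\kappa$ elementarily embedding into $B$.

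The implication $(3)\Rightarrow(2)$ unwinds by definition. The principle $\HSR^-_\Ce(\kappa)$ directly implies $\SR^-_\Ce(\kappa)$ because any $A\in\Ce\cap H_\kappa$ has cardinality strictly less than $\kappa$. Moreover $\Ce$ is non-empty and $\Sigma_2$-definable by $\phi(x,p)$ with $p\in V_\kappa$, so $\exists x\,\phi(x,p)$ is a true $\Sigma_2$-sentence; $\kappa\in C^{(2)}$ reflects it to $V_\kappa=H_\kappa$, and a further $\Sigma_2$-absoluteness check confirms that the reflected witness genuinely belongs to $\Ce$, giving $\SR^{--}_\Ce(\kappa)$.

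The direction $(2)\Rightarrow(1)$ is where I expect the main work. First I would establish $\kappa\in C^{(2)}$ by applying $\SR^{--}$ to $\Sigma_2$-classes engineered so that the mere existence of a member of cardinality less than $\kappa$ encodes reflection of a prescribed true $\Sigma_2$-sentence. Assuming that $\kappa$ is not a limit of supercompact cardinals, I would then verify strong unfoldability directly: given a transitive ${<}\kappa$-closed $\mathrm{ZF}^-$-model $M$ of cardinality $\kappa$ with $\kappa\in M$ and a target ordinal $\lambda$, I would design a $\Sigma_2$-definable class $\Ce_\lambda$ of structures encoding candidate strong-unfoldability data -- transitive $\mathrm{ZF}^-$-models $\bar M$, stretched models $\bar N\supseteq V_{\bar\lambda}$, and elementary embeddings $\bar\jmath\colon\bar M\to\bar N$ -- with $M$ pinned down among candidates by an internal property definable from parameters in $V_\kappa$. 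Applying $\WSR_{\Ce_\lambda}(\kappa)$ both delivers a small member (via $\SR^{--}_{\Ce_\lambda}$) and a reflection $A\to B$ in which $B$ codes the desired stretching, from which the embedding $j\colon M\to N$ with $V_\lambda\subseteq N$ can be extracted.

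The main obstacle is the construction of $\Ce_\lambda$. Since $M$ has cardinality $\kappa$ it does not lie in $V_\kappa$ and so cannot appear as a parameter; instead $M$ must be singled out among candidate models by a definable property anchored in $V_\kappa$, while the existence of the stretched model with $V_\lambda\subseteq N$ must still be expressible at the $\Sigma_2$ level. Finally, the reflection produced by $\WSR$ must be shown not to be spuriously explained by reflection around an intermediate supercompact cardinal, which is precisely the role of the assumption that $\kappa$ is not a limit of supercompacts.
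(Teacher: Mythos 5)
The main gap is in your plan for $(2)\Rightarrow(1)$. A principle of the form $\WSR_{\Ce}(\kappa)$ only does one thing: given a structure $B\in\Ce$ of cardinality $\kappa$ that you already have, it produces a small $A\in\Ce$ and an elementary embedding of $A$ into $B$. Your proposed class $\Ce_\lambda$ of ``strong-unfoldability data'' (models $\bar M$, stretched models $\bar N\supseteq V_{\bar\lambda}$, embeddings $\bar\jmath\colon\bar M\to\bar N$) is circular for exactly this reason: to extract from $\WSR_{\Ce_\lambda}(\kappa)$ an embedding $j\colon M\to N$ with $V_\lambda\subseteq N$, you would need $\Ce_\lambda$ to contain a structure of cardinality $\kappa$ that already codes such a stretching of $M$ --- which is the object you are trying to construct; reflection cannot conjure it. On top of this, neither $\lambda$ (which is $\geq\kappa$) nor $M$ (of cardinality $\kappa$, and in general not definable from parameters in $V_\kappa$) can enter the $\Sigma_2$-definition of $\Ce_\lambda$, and you offer no mechanism to ``pin down'' $M$; your final sentence about intermediate supercompacts also stays at the level of a hope. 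The paper avoids all of this by taking the other branch of the dichotomy: assume $(2)$ and that $\kappa$ is \emph{not} strongly unfoldable, apply $\WSR$ to a $\Sigma_2$-class of transitive collapses of elementary hulls of $H_\theta$'s (structures of cardinality $\kappa$ that provably exist), un-collapse the resulting embedding to get $j\colon X\to H_\theta$ with $V_{\bar\kappa}\cup\{\bar\kappa\}\subseteq X$ and $j(\bar\kappa)=\kappa$, use the failure of strong unfoldability (via the hull characterization of {\cite[Lemma 2.1]{luecke2021strong}}) to force $j\restriction\bar\kappa\neq\id_{\bar\kappa}$, and then apply Magidor's theorem that $\crit{j}$ is $\alpha$-supercompact for all $\alpha<\bar\kappa$, followed by a second reflection argument upgrading ``$\alpha$-supercompact for all $\alpha<\kappa$'' to ``supercompact''. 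These ingredients (Magidor's lemma, the Kunen inconsistency to control fixed points, and the upgrading claim) are exactly what is needed to make the hypothesis ``not a limit of supercompacts'' do work, and they are absent from your sketch; without them your decomposition, though logically equivalent to the paper's, has no proof.

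There is also a smaller but genuine problem in your $(1)\Rightarrow(3)$ for strongly unfoldable $\kappa$. You cannot in general place $B$ itself inside a transitive $\mathrm{ZF}^-$-model $M$ of cardinality $\kappa$, since the transitive closure of $\{B\}$ may have cardinality greater than $\kappa$ (and $\Ce$ need not be closed under isomorphism, so you cannot silently replace $B$ by a copy with domain $\kappa$). Even after collapsing $B$ to some $\bar B\in M$, the step ``$N$ sees $\bar B$ elementarily embedding into $j(\bar B)$'' requires $j\restriction\bar B\in N$, which the target models of strong unfoldability (unlike those of supercompactness) do not provide. The paper sidesteps both issues by using the hull-style characterization of strong unfoldability: it finds $\bar\kappa<\bar\theta<\kappa$, an elementary submodel $X\prec H_{\bar\theta}$ and an embedding $j\colon X\to H_\theta$ with $j\restriction\bar\kappa=\id_{\bar\kappa}$, $j(\bar\kappa)=\kappa$ and $B,z\in\ran{j}$, and then $A=j^{-1}(B)$ together with $j\restriction A$ is itself the required witness --- no internal witnessing inside a stretched model is needed. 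Your cycle $(1)\Rightarrow(3)\Rightarrow(2)\Rightarrow(1)$ and your treatment of $(3)\Rightarrow(2)$ and of the limit-of-supercompacts case are fine, but both embedding-theoretic directions need to be rebuilt along these lines.
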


The fact that supercompact cardinals are strongly unfoldable now directly yields a characterization of strongly unfoldable cardinals through weak principles of structural reflection:

\begin{corollary}\label{corollary:LeastSUNF}
 The following statements are equivalent for every cardinal $\kappa$:
 \begin{enumerate}
  \item $\kappa$ is the least strongly unfoldable cardinal. 
  
  \item $\kappa$ is the least cardinal with the property that the principle $\WSR_\Ce(\kappa)$ holds for every class $\Ce$ of structures of the same type that is definable by a $\Sigma_2$-formula with parameters in $V_\kappa$. 
  \item $\kappa$ is the least element of the class $C^{(2)}$ with the property that the principle $\HSR_\Ce^-(\kappa)$ holds for every class $\Ce$ of structures of the same type that is definable by a $\Sigma_2$-formula with parameters in $V_\kappa$. \qed 
 \end{enumerate} 
\end{corollary}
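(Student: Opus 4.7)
The plan is to derive the corollary as a direct formal consequence of Theorem \ref{theorem:CharSr-Sr--} together with the well-known implication that every supercompact cardinal is strongly unfoldable. By Theorem \ref{theorem:CharSr-Sr--}, the class of cardinals for which the reflection property in (2) holds and the class of cardinals in $C^{(2)}$ for which the variant in (3) holds both coincide with the class $\mathcal{K}$ of cardinals that are either strongly unfoldable or limits of supercompact cardinals. So the corollary reduces to the identity $\min\mathcal{K}=$ \emph{least strongly unfoldable cardinal}.

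For the directions $(1)\Rightarrow(2)$ and $(1)\Rightarrow(3)$, let $\kappa$ be the least strongly unfoldable cardinal. Then $\kappa\in\mathcal{K}$, so Theorem \ref{theorem:CharSr-Sr--} yields both the $\WSR$ statement of (2) and the $\HSR^-$ statement of (3) at $\kappa$ (the latter because strongly unfoldable cardinals lie in $C^{(2)}$, as noted in the paper). For minimality, suppose some $\bar\kappa<\kappa$ satisfied the structural reflection requirement in (2) or (3). Then $\bar\kappa\in\mathcal{K}$ by Theorem \ref{theorem:CharSr-Sr--}. Either $\bar\kappa$ is strongly unfoldable, immediately contradicting the minimality of $\kappa$, or $\bar\kappa$ is a limit of supercompact cardinals, in which case one picks a supercompact $\mu<\bar\kappa<\kappa$; since $\mu$ is supercompact it is also strongly unfoldable, again contradicting the choice of $\kappa$.

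For the converses, assume $\kappa$ is the least cardinal satisfying (2) (respectively, the least element of $C^{(2)}$ satisfying the condition in (3)). By Theorem \ref{theorem:CharSr-Sr--}, $\kappa\in\mathcal{K}$. It cannot be that $\kappa$ is a limit of supercompacts: any supercompact $\mu<\kappa$ would be strongly unfoldable, hence would lie in $\mathcal{K}$, and would therefore itself satisfy the reflection principle by Theorem \ref{theorem:CharSr-Sr--}, contradicting the minimality of $\kappa$. Hence $\kappa$ is strongly unfoldable, and since every strongly unfoldable cardinal belongs to $\mathcal{K}$, the minimality of $\kappa$ in $\mathcal{K}$ forces $\kappa$ to be the least strongly unfoldable cardinal. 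The argument is purely formal once Theorem \ref{theorem:CharSr-Sr--} is granted; there is no genuine obstacle beyond this bookkeeping, as all the combinatorial content is absorbed into that theorem and into the standard implication from supercompactness to strong unfoldability.
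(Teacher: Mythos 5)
Your argument is correct and is essentially the paper's own: the corollary is stated with a \qed precisely because it follows immediately from Theorem \ref{theorem:CharSr-Sr--} together with the fact that supercompact cardinals are strongly unfoldable, which is exactly the reduction and bookkeeping you carry out.
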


For classes of structures of higher definitional complexity, we shall prove analogous equivalences, using the following natural strengthening of strong unfoldability:

\begin{definition}
\label{defCnsf}
 Given a natural number $n$, an inaccessible cardinal $\kappa$ is \emph{$C^{(n)}$-strongly unfoldable} if for every ordinal $\lambda\in C^{(n)}$ greater than $\kappa$ and every transitive $\mathrm{ZF}^-$-model $M$ of cardinality $\kappa$ with $\kappa\in M$ and ${}^{{<}\kappa}M\subseteq M$, there is a transitive set $N$ with $V_\lambda\subseteq N$ and an elementary embedding $\map{j}{M}{N}$ with $\crit{j}=\kappa$,  $j(\kappa)>\lambda$ and $V_\lambda\prec_{\Sigma_n}V_{j(\kappa)}^N$. 
\end{definition}

By definition, a cardinal is strongly unfoldable if and only if it is $C^{(0)}$-strongly unfoldable. A short argument (see Proposition \ref{proposition:SunfC1Sunf} below) shows that strong unfoldability also coincides with $C^{(1)}$-strong unfoldability. Moreover, it is also easy to see that all $C^{(n)}$-extendible cardinals are $C^{(n+1)}$-strongly unfoldable (see Proposition \ref{proposition:CnExtendsibleCn+1Sunf} below). 
The following result, which  shall be proved in Section \ref{section9}, characterizes $C^{(n)}$-strongly unfoldable cardinals in terms of weak $\SR$. For every natural number $n>0$, the theorem shows that $C^{(n+1)}$-strongly unfoldable cardinals are related, via weak $\SR$,  to strongly unfoldable cardinals, as $C^{(n)}$-extendible cardinals are related, via $\SR$, to supercompact cardinals.

\begin{theorem} \label{C(n)StronglyUnfoldable}
 Given a natural number $n>1$, the following statements are equivalent for every  cardinal $\kappa$: 
 \begin{enumerate} 
  \item $\kappa$ is either $C^{(n)}$-strongly unfoldable or a limit of $C^{(n-1)}$-extendible cardinals. 
   
  \item  The principle $\WSR_\Ce(\kappa)$ holds for every class $\Ce$ of structures of the same type that is definable by a $\Sigma_{n+1}$-formula with parameters in $V_\kappa$. 
  
   \item $\kappa$ is an element of $C^{(n+1)}$ and the principle $\HSR_\Ce^-(\kappa)$ holds for every class $\Ce$ of structures of the same type that is definable by a $\Sigma_{n+1}$-formula with parameters in $V_\kappa$. 
 \end{enumerate}
\end{theorem}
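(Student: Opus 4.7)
The plan is to prove the cycle $(1)\Rightarrow(3)\Rightarrow(2)\Rightarrow(1)$, adapting the argument scheme of Theorem \ref{theorem:CharSr-Sr--} to higher definitional complexity. The key new mechanism is the extra clause $V_\lambda\prec_{\Sigma_n}V_{j(\kappa)}^N$ in Definition \ref{defCnsf}, which is exactly what is needed to transport $\Sigma_{n+1}$-assertions across the strong unfoldability embedding.

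For $(1)\Rightarrow(3)$, assume first that $\kappa$ is $C^{(n)}$-strongly unfoldable. I would begin by verifying $\kappa\in C^{(n+1)}$: given a $\Sigma_{n+1}$-formula $\varphi(x)\equiv\exists y\,\psi(x,y)$ with $\psi\in\Pi_n$ and parameters $p\in V_\kappa$, a witness $y_0$ to $\varphi(p)$ in $V$ lies in some $V_\lambda$ with $\lambda\in C^{(n+1)}$; choosing a transitive $\mathrm{ZF}^-$-model $M$ with $V_\kappa\cup\{\kappa\}\subseteq M$, $\betrag{M}=\kappa$, and ${}^{{<}\kappa}M\subseteq M$, and applying $C^{(n)}$-strong unfoldability, one obtains $\map{j}{M}{N}$ with $V_\lambda\subseteq N$, $\crit{j}=\kappa$, $j(\kappa)>\lambda$, and $V_\lambda\prec_{\Sigma_n}V_{j(\kappa)}^N$. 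The $\Sigma_n$-correctness yields $V_{j(\kappa)}^N\models\varphi(p)$, and elementarity of $j$ pulls this back to $V_\kappa^M=V_\kappa\models\varphi(p)$. For $\HSR_\Ce^-(\kappa)$ itself, I would run the same machinery with a given $B\in\Ce$ of cardinality $\kappa$ placed inside $M$: then $j\upharpoonright B$ is an elementary embedding of $B$ into $j(B)$, and inside $V_{j(\kappa)}^N$ the $\Sigma_{n+1}$-statement ``there exists a structure of cardinality less than $j(\kappa)$ in the class defined by $\varphi$ that elementarily embeds into $j(B)$'' is witnessed by $B$ itself; reflecting this statement back through $j$ produces $A\in V_\kappa$ of cardinality ${<}\kappa$ elementarily embedding into $B$ with $V_\kappa\models\varphi(A,p)$, and $\kappa\in C^{(n+1)}$ then delivers $A\in\Ce\cap H_\kappa$. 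The case when $\kappa$ is merely a limit of $C^{(n-1)}$-extendibles is handled using Theorem \ref{theorem:JoanBoldSupercompactSigma22}: the parameters of $\Ce$ sit below some $C^{(n-1)}$-extendible $\mu<\kappa$, and the full principle $\HSR_\Ce(\mu)$ at $\mu$ already produces a reflection witness inside $H_\mu\subseteq H_\kappa$.

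The direction $(3)\Rightarrow(2)$ is essentially bookkeeping: $\HSR_\Ce^-(\kappa)$ trivially implies $\SR_\Ce^-(\kappa)$, and $\SR_\Ce^{--}(\kappa)$ follows from $\kappa\in C^{(n+1)}$ by reflecting the $\Sigma_{n+1}$-statement $\exists x\,\varphi(x,p)$ (asserting that $\Ce$ is non-empty) down to $V_\kappa$, producing a structure in $\Ce$ of cardinality ${<}\kappa$.

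The main obstacle is $(2)\Rightarrow(1)$, which I would tackle by contraposition. Assuming that $\kappa$ is neither $C^{(n)}$-strongly unfoldable nor a limit of $C^{(n-1)}$-extendibles, fix a cutoff $\kappa_0<\kappa$ above which no $C^{(n-1)}$-extendible cardinal below $\kappa$ occurs, together with an explicit pair $(\lambda_0,M_0)$ witnessing the failure of $C^{(n)}$-strong unfoldability at $\kappa$. I would then design a $\Sigma_{n+1}$-definable class $\Ce$ of structures of the form $\langle V_\alpha,\in,\kappa_0,\ldots\rangle$, coding simultaneously the ``no suitable embedding exists'' obstruction at the candidate critical point and an internal $\Pi_n$-elementarity requirement, arranged so that (a) a suitable enrichment of $V_\kappa$ (or of $V_{\lambda_0}$) belongs to $\Ce$ and has cardinality $\kappa$, while (b) the existence of any $A\in\Ce\cap H_\kappa$ elementarily embedding into such a $B$ would force the existence of either a $C^{(n)}$-strong unfoldability embedding with critical point below $\kappa$, or of a $C^{(n-1)}$-extendibility embedding with critical point in $(\kappa_0,\kappa)$, both ruled out by our assumption. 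The crux of the construction is calibrating the defining formula so that its complexity is exactly $\Sigma_{n+1}$: the external existential quantifier captures the strong unfoldability embedding, while the $\Pi_n$-body encodes the $\Sigma_n$-correctness clause of Definition \ref{defCnsf}, which is precisely why $C^{(n)}$-strong unfoldability, and not merely strong unfoldability, is the exact large-cardinal notion matched by $\Sigma_{n+1}$-$\WSR$.
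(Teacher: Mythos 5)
Your direction $(1)\Rightarrow(3)$ has a gap at its central step. To reflect the statement ``some structure of size ${<}j(\kappa)$ in the class elementarily embeds into $j(B)$'' through the elementarity of $\map{j}{M}{N}$, that statement must hold \emph{inside} $N$; the witnessing embedding $j\restriction B$ must therefore be an \emph{element} of $N$, and Definition \ref{defCnsf} imposes no closure on the target beyond $V_\lambda\subseteq N$, so there is no reason why $j\restriction B\in N$. (Also note that $j(B)$ typically has rank at least $j(\kappa)$, since a structure of cardinality $\kappa$ has rank at least $\kappa$ when $\kappa\in C^{(1)}$, so the statement cannot even be formulated over $V_{j(\kappa)}^N$ as you write; only the membership $\varphi(\cdot,p)$ should be relativized there.) This is exactly why the paper's Lemma \ref{lemmaCnSR} goes through the downward, Magidor-style characterization of Theorem \ref{theorem:SigmaNsunf}\,(3): there one pulls $B$ back to $A=j^{-1}(B)$ along an embedding $\map{j}{X}{V_\gamma}$ living in $V$, and the witnessing embedding $j\restriction A$ never needs to belong to any internal model. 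Your route could in principle be repaired for $n>1$ by invoking Theorem \ref{theorem:SigmaNsunf}\,(4) with the $\Pi_1$-condition ``${}^{<v_1}v_0\subseteq v_0$'', which yields targets with ${}^{<j(\kappa)}N\subseteq N$ (the ``almost-hugely unfoldable'' corollary), but you neither notice the need for such closure nor supply it. (In the limit-of-$C^{(n-1)}$-extendibles case you also omit the verification that $\kappa\in C^{(n+1)}$, though this follows easily from Theorem \ref{theorem:JoanBoldSupercompactSigma22} together with Lemma \ref{lemma:Sigma2Reflex}.)

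The more serious problem is $(2)\Rightarrow(1)$, which is the heart of the theorem and where your sketch is only a placeholder, and moreover rests on an incorrect expectation. Your hypotheses rule out $C^{(n-1)}$-extendibles in $(\kappa_0,\kappa)$ and the $C^{(n)}$-strong unfoldability of $\kappa$ itself; they do \emph{not} rule out ``a $C^{(n)}$-strong unfoldability embedding with critical point below $\kappa$'', so the existence of some elementary embedding from an $A\in\Ce\cap H_\kappa$ into your designed $B$ gives no immediate contradiction. What the paper actually does is: (i) apply $\WSR$ to the $\Sigma_{n+1}$-definable class of transitive collapses of elementary hulls of $H_\nu$ with $\nu\in C^{(n)}$, to obtain, for \emph{arbitrary} $\theta\in C^{(n)}$ and prescribed $z\in H_\theta$, embeddings $\map{j}{X}{H_\theta}$ with $j(\bar\kappa)=\kappa$ and $z\in\ran{j}$; (ii) use the failure of $C^{(n)}$-strong unfoldability in the Magidor form of Theorem \ref{theorem:SigmaNsunf}\,(3), refined by Remark \ref{remarkCnsf}, at a \emph{larger} correct $\vartheta$ with a bad parameter $z'$, to force $j\restriction\bar\kappa\neq\id_{\bar\kappa}$ (so non-triviality is not built into the class but extracted from the failure hypothesis); and (iii) upgrade the resulting map $j\restriction V_{\bar\kappa}:V_{\bar\kappa}\to V_\kappa$ to actual $C^{(n-1)}$-extendibility of $j(\crit{j})$, which requires showing $\crit{j}\in C^{(n)}$ via the $\sup_{m<\omega}j^m(\crit{j})$ iteration, the Kunen inconsistency, and two rounds of reflection of extendibility statements between $V_{\bar\kappa}$, $V_\kappa$ and $V$ using $\bar\kappa,\kappa\in C^{(n)}$ and $\kappa\in C^{(n+1)}$. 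Step (iii) is exactly what converts an abstract embedding into the second disjunct of $(1)$ (and hence into a contradiction with your choice of $\kappa_0$), and neither it nor the mechanism of (i)--(ii) appears in your plan; fixing a single pair $(\lambda_0,M_0)$ and building $\Ce$ only from enrichments of $V_\kappa$ or $V_{\lambda_0}$ would not even provide the embeddings into hulls of the larger $H_\vartheta$ that step (ii) needs.
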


 By combining this theorem with Corollary \ref{corollary:LeastSUNF} and the fact that all $C^{(n)}$-extendible cardinals are $C^{(n+1)}$-strongly unfoldable, we now obtain a uniform characterization of the least $C^{(n)}$-strongly unfoldable cardinal through weak principles of structural reflection:

\begin{corollary}\label{corollary:LeastSUNF2}
 Given a natural number $n>0$, the following statements are equivalent for every cardinal $\kappa$:
 \begin{enumerate}
  \item $\kappa$ is the least $C^{(n)}$-strongly unfoldable cardinal. 
  
  \item $\kappa$ is the least cardinal with the property that the principle $\WSR_\Ce(\kappa)$ holds for every class $\Ce$ of structures of the same type that is definable by a $\Sigma_{n+1}$-formula with parameters in $V_\kappa$. 
  \item $\kappa$ is the least cardinal in $C^{(n+1)}$ with the property that the principle $\HSR_\Ce^-(\kappa)$ holds for every class $\Ce$ of structures of the same type that is definable by a $\Sigma_{n+1}$-formula with parameters in $V_\kappa$. \qed 
 \end{enumerate} 
\end{corollary}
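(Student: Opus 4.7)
The plan is to derive this corollary as a direct package of the preceding results, splitting the argument into the cases $n=1$ and $n>1$. The heavy lifting has been done in Theorem \ref{C(n)StronglyUnfoldable} and Corollary \ref{corollary:LeastSUNF}; what remains is to verify that, when one passes from the disjunctive condition \emph{``$\kappa$ is $C^{(n)}$-strongly unfoldable or a limit of $C^{(n-1)}$-extendible cardinals''} to \emph{``least $\kappa$ satisfying the reflection principle''}, the second disjunct never yields a smaller witness than the first.

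For $n=1$, Proposition \ref{proposition:SunfC1Sunf} identifies $C^{(1)}$-strong unfoldability with plain strong unfoldability, and with $\Sigma_{n+1}=\Sigma_2$ and $C^{(n+1)}=C^{(2)}$ the three clauses of the corollary become verbatim the three clauses of Corollary \ref{corollary:LeastSUNF}. So this case is immediate.

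For $n>1$, I would appeal directly to Theorem \ref{C(n)StronglyUnfoldable}. Write $\kappa_0$ for the least cardinal satisfying clause (1) of that theorem and $\kappa_1$ for the least $C^{(n)}$-strongly unfoldable cardinal; trivially $\kappa_0\leq\kappa_1$. For the reverse inequality, suppose towards a contradiction that $\kappa_0$ is not $C^{(n)}$-strongly unfoldable. Then by the theorem $\kappa_0$ must be a limit of $C^{(n-1)}$-extendible cardinals, and by Proposition \ref{proposition:CnExtendsibleCn+1Sunf} applied at index $n-1$ each such cardinal is $C^{(n)}$-strongly unfoldable. Hence $\kappa_0$ would be a limit of $C^{(n)}$-strongly unfoldable cardinals, forcing $\kappa_1<\kappa_0$ and contradicting $\kappa_0\leq\kappa_1$. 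Therefore $\kappa_0=\kappa_1$, and Theorem \ref{C(n)StronglyUnfoldable} then delivers the characterisation of this common value by either of the reflection principles in clauses (2) and (3) of the corollary.

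The only real piece of bookkeeping to watch is the index shift when invoking Proposition \ref{proposition:CnExtendsibleCn+1Sunf}: one must use it in the form \emph{every $C^{(n-1)}$-extendible cardinal is $C^{(n)}$-strongly unfoldable}, rather than the directly-stated version at the next level. Apart from this, no genuine obstacle presents itself; the corollary is in essence a clean repackaging of Theorem \ref{C(n)StronglyUnfoldable} via the containment of $C^{(n-1)}$-extendible cardinals inside the $C^{(n)}$-strongly unfoldable ones.
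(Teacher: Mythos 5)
Your proposal is correct and follows exactly the route the paper intends: the corollary is left without explicit proof precisely because it is obtained by combining Theorem \ref{C(n)StronglyUnfoldable} (for $n>1$), Corollary \ref{corollary:LeastSUNF} together with Proposition \ref{proposition:SunfC1Sunf} (for $n=1$), and Proposition \ref{proposition:CnExtendsibleCn+1Sunf} applied at index $n-1$ to see that the ``limit of $C^{(n-1)}$-extendible cardinals'' disjunct can never produce a smaller witness. Your minimality argument and your attention to the index shift are exactly the bookkeeping the paper's remark preceding the corollary alludes to.
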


We now want to use weak $\SR$ to isolate the canonical variations of Vop\v{e}nka's Principle and Vop\v{e}nka cardinals for the considered region of the large cardinal hierarchy. This analysis turns out to be closely related to the notion of \emph{subtle cardinals}, introduced by Jensen and Kunen in their work on the validity of strong diamond principles in the constructible universe $L$. Remember that, given a set $A$ of ordinals, a sequence $\seq{E_\alpha}{\alpha\in A}$ is called  an \emph{$A$-list} if $E_\alpha\subseteq\alpha$ holds for every $\alpha\in A$.

\begin{definition}[Jensen-Kunen, ~ \cite{jensennotes}]
\label{defsubtle}
 An infinite cardinal $\delta$ is \emph{subtle} if for every $\delta$-list $\seq{E_\gamma}{\gamma<\delta}$ and every closed unbounded subset $C$ of $\delta$, there exist $\beta<\gamma$ in $C$ with $E_\beta=E_\gamma\cap\beta$. 
\end{definition}

Subtle cardinals are strongly inaccessible,\footnote{Note that it is not necessary to include regularity in the definition of subtleness, because, if we assume that a singular cardinal $\delta$ possesses the above property, then we could pick a closed cofinal subset $C\subseteq\delta$ of order-type $\cof{\delta}$ with $\min(C)>\cof{\delta}$ and define $E_\gamma=\otp{C\cap\gamma}\subseteq\gamma$ for all $\gamma<\delta$.} and below the first subtle cardinal there are many totally indescribable cardinals. However, the first subtle cardinal is $\Pi^1_1$-describable, and therefore not weakly compact. Moreover, if $\delta$ is a subtle cardinal, then the set of $\kappa<\delta$ that are strongly unfoldable in $V_\delta$ is stationary in $\delta$ (see {\cite[Theorem 3]{MR2279655}}).

The notion of subtleness has a natural \emph{class}-version (``\emph{$\On$ is subtle}") that postulates, as a schema, that for every closed unbounded class $C$ of ordinals and every class sequence $\seq{E_\gamma}{\gamma\in\On}$, there exist $\beta<\gamma$ in $C$ with $E_\beta=E_\gamma\cap\beta$ (see {\cite[Section 5]{boney2022model}}). It turns out that for our purposes, a slight strengthening of this principle is needed. The formulation of this principle is motivated by the trivial observation that the Axiom of Choice allows us to show that a cardinal $\delta$ is subtle if and only if for every closed unbounded subset $C$ of $\delta$ and every sequence $\seq{\calE_\gamma}{\gamma<\delta}$ with $\emptyset\neq\calE_\gamma\subseteq\POT{\gamma}$ for all $\gamma<\delta$, there exist $\beta<\gamma$ in $C$ and $E\in\calE_\gamma$ with $E\cap\beta\in\calE_\beta$.

\begin{definition}
 Let ``\emph{$\On$ is essentially subtle}"  denote the axiom schema stating that for every closed unbounded class $C$ of ordinals and every class function $\calE$ on the ordinals with the property that $\emptyset\neq \calE(\gamma)\subseteq\POT{\gamma}$ for all $\gamma\in\On$, there exist $\beta<\gamma$ in $C$ and $E\in\calE(\gamma)$ with $E\cap\beta\in\calE(\beta)$.
\end{definition}

Note that this principle implies that $\On$ is subtle, and, in the presence of a definable well-ordering of $V$, both principles are equivalent. Moreover, the assumption that $\On$ is essentially subtle is obviously downwards absolute to $L$, and hence both principles have the same consistency strength over $\ZFC$. In contrast, the proof of {\cite[Theorem 5.6]{boney2022model}} produces a model of set-theory that witnesses that both principles are not equivalent over $\ZFC$. 
 The next result, proved in Section \ref{section9}, provides a direct analog of Theorem \ref{theorem:CharVP} by showing that strongly unfoldable and $C^{(n+1)}$-strongly unfoldable cardinals are related, via weak $\SR$,  to the assumption that $\On$ is essentially subtle, as supercompact and $C^{(n)}$-extendible cardinals are related, via $\SR$, to Vop\v{e}nka's Principle.

\begin{theorem}\label{theorem:OrdSubtle1}
 The following schemes of axioms are equivalent over $\ZFC$: 
 \begin{enumerate}
  \item $\On$ is essentially subtle. 

  \item For every natural number $n$, there exists a $C^{(n)}$-strongly unfoldable cardinal. 
  
  \item For every natural number $n$, there exists a proper class of $C^{(n)}$-strongly unfoldable cardinals. 
    
  \item  For every natural number $n$ and every class $\Ce$ of structures of the same type, there exists a cardinal $\kappa\in C^{(n)}$ with the property that   $\HSR^-_\Ce(\kappa)$ holds. 
 \end{enumerate}
\end{theorem}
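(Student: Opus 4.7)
The plan is to establish the theorem by following the template of the proof of Theorem \ref{theorem:CharVP}, with essential subtlety playing the role of Vop\v{e}nka's Principle and $C^{(n)}$-strong unfoldability playing the role of $C^{(n)}$-extendibility. The implication $(3)\Rightarrow(2)$ is trivial. For $(3)\Rightarrow(4)$, given $n$ and a class $\Ce$ definable by a $\Sigma_m$-formula with parameters in some $V_\alpha$, statement $(3)$ supplies a $C^{(\max(m,n+1))}$-strongly unfoldable cardinal $\kappa>\alpha$, and Theorem \ref{C(n)StronglyUnfoldable} then yields $\kappa\in C^{(n)}$ together with $\HSR^-_\Ce(\kappa)$.

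For $(4)\Rightarrow(1)$ I would argue by contrapositive. Given a club class $C$ and a class function $\calE$ with $\emptyset\neq\calE(\gamma)\subseteq\POT{\gamma}$ witnessing the failure of essential subtlety, both definable by $\Sigma_m$-formulas with parameters in some $V_\alpha$, let $\Ce$ be the class of all structures $A_{\gamma,E}=\langle\gamma;\in,C\cap\gamma,\calE\restriction\gamma,E\rangle$ with $\gamma\in C$ and $E\in\calE(\gamma)$. Then $\Ce$ is $\Sigma_m$-definable, and any elementary embedding $\map{j}{A_{\beta,E'}}{A_{\gamma,E}}$ between members of $\Ce$ with $\betrag{A_{\beta,E'}}<\betrag{A_{\gamma,E}}$ has a critical point, which an inspection of the chosen predicates forces to equal $\beta$ and to satisfy $E\cap\beta=E'\in\calE(\beta)$, contradicting the choice of $(C,\calE)$. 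Hence $\HSR^-_\Ce(\kappa)$ fails for every relevant $\kappa\in C^{(m+1)}$ above $\alpha$, contradicting (4).

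The heart of the argument is $(1)\Rightarrow(3)$. Fix $n$ and an ordinal $\alpha$. The plan is to define a club $C$ of inaccessible ordinals in $C^{(n+1)}$ above $\alpha$ and a class function $\calE$ such that each $\calE(\gamma)$ consists of codes, as bounded subsets of $\gamma$, of tuples $(\kappa,M,\lambda,N,j)$ with $\kappa<\gamma$ inaccessible, $M$ a transitive $\mathrm{ZF}^-$-model of cardinality $\kappa$ with $\kappa\in M$ and ${}^{{<}\kappa}M\subseteq M$, $\lambda<\gamma$ in $C^{(n)}$, $N$ transitive with $V_\lambda\subseteq N$, and $\map{j}{M}{N}$ elementary with $\crit{j}=\kappa$, $j(\kappa)>\lambda$ and $V_\lambda\prec_{\Sigma_n}V_{j(\kappa)}^N$. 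After checking nonemptiness of $\calE(\gamma)$ at all sufficiently large $\gamma\in C$, an application of essential subtlety to $(C,\calE)$ delivers $\beta<\gamma$ in $C$ and $E\in\calE(\gamma)$ with $E\cap\beta\in\calE(\beta)$; the coherence of the two codes forces the embedding encoded by $E$ to restrict to the witness encoded by $E\cap\beta$ of some inaccessible $\kappa<\beta$, and diagonalizing across all target heights $\lambda$ produces a single $\kappa>\alpha$ that is $C^{(n)}$-strongly unfoldable. The remaining implication $(2)\Rightarrow(1)$ is then established by a dual argument: given $(C,\calE)$ defined by $\Sigma_m$-formulas, a $C^{(m+n+2)}$-strongly unfoldable cardinal supplied by (2) provides an embedding whose $\Sigma_n$-correct target directly produces a coherent pair $\beta<\gamma$ by applying the embedding to a chosen element of $\calE(\gamma)$.

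The main obstacle will be the calibration of the coding inside $\calE(\gamma)$: the set $\calE(\gamma)$ must be rich enough that a single coherence instance delivered by essential subtlety simultaneously encodes both the underlying elementary embedding and the $\Sigma_n$-elementarity condition $V_\lambda\prec_{\Sigma_n}V_{j(\kappa)}^N$ of Definition \ref{defCnsf}, while still remaining a genuine set of subsets of $\gamma$. This refines the classical Hauser-type argument placing stationarily many strongly unfoldable cardinals below each subtle cardinal, and the extra $\Sigma_n$-correctness requirement is precisely what forces us to work with the essential strengthening of subtleness rather than subtlety itself.
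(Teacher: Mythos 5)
Your architecture hinges on a direct proof of $(1)\Rightarrow(3)$, and that step does not work as described. Applying the essential subtlety of $\On$ to a single pair $(C,\calE)$ yields exactly one coherence instance: ordinals $\beta<\gamma$ in $C$ and one $E\in\calE(\gamma)$ with $E\cap\beta\in\calE(\beta)$. Even if $E$ coded a tuple $(\kappa,M,\lambda,N,j)$ witnessing one instance of Definition \ref{defCnsf}, that gives an embedding for one model $M$ and one target height $\lambda$, whereas $C^{(n)}$-strong unfoldability of a fixed $\kappa$ demands embeddings for every $\lambda\in C^{(n)}$ above $\kappa$ and every suitable $M$; the phrase ``diagonalizing across all target heights'' is precisely the missing argument. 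The coherence mechanism itself also breaks: there is no reason the truncation $E\cap\beta$ of a code of such a tuple is again a code of a tuple of this shape (it loses $N\supseteq V_\lambda$ and the top of $j$), so $E\cap\beta\in\calE(\beta)$ cannot carry the intended meaning; moreover the inaccessible elements of $C^{(n+1)}$ do not form a closed class, so they cannot serve as $C$, and $\calE(\gamma)$ must be non-empty at \emph{every} ordinal, which in your positive formulation presupposes witnesses everywhere. The paper avoids all of this by proving $(1)\Rightarrow(2)$ \emph{contrapositively} via the shrewdness-style characterization of Theorem \ref{theorem:SigmaNsunf}: if no $C^{(m)}$-strongly unfoldable cardinal exists, every $\kappa\in C^{(1)}$ carries failure witnesses $E\subseteq\kappa$, one lets $\calE(\kappa)$ be the set of all such witnesses (and $\POT{\gamma}$ elsewhere), and a coherence pair on the club $C^{(m+1)}$ contradicts the non-reflection property of the witness, since $\kappa\in C^{(m+1)}$ permits finding the required $\Sigma_m$-correct level below $\kappa$. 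This also corrects your closing remark: essential subtlety is needed not because of the $\Sigma_n$-correctness clause, but because without a definable well-ordering one cannot pick a single canonical failure witness at each $\kappa$, only the definable set of all of them.

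Two further steps are gapped. First, your $(2)\Rightarrow(1)$ needs a $C^{(m+n+2)}$-strongly unfoldable cardinal lying above the parameters used to define $(C,\calE)$ and sitting inside (a limit point of) $C$; statement $(2)$ only asserts existence somewhere, so you must first prove something like $(2)\Rightarrow(3)$, which your plan omits. The paper supplies it by noting that otherwise the supremum of the $C^{(m)}$-strongly unfoldable cardinals would be definable without parameters, which by Proposition \ref{proposition:CnSUNF-Cn+1} is incompatible with the correctness of sufficiently unfoldable cardinals. Second, in $(4)\Rightarrow(1)$ your claim that an elementary embedding between members of your class is forced to have critical point $\beta$ is false (a non-identity embedding has critical point strictly below the height of its domain), ``$\calE\restriction\gamma$'' is not a legitimate predicate on the domain $\gamma$, and with structures whose universe is an ordinal you cannot run the Kunen-inconsistency argument needed to place the critical point in $C$; so an arbitrary embedding need not produce any coherence pair, and your contrapositive cannot refute $\HSR^-_\Ce(\kappa)$. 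The paper instead argues directly from $(4)$ with structures on rank-initial segments $V_\rho$, equipped with a choice function $e$ satisfying $e(\xi)\in\calE(\xi)$, the successor-in-$C$ function $s$, and (when $\cof{\rho}=\omega$) a distinguished cofinal sequence; the Kunen Inconsistency then forces $\crit{j}\in C$, and the coherence instance is extracted at the pair $(\crit{j},j(\crit{j}))$ using $j(e(\crit{j}))\cap\crit{j}=e(\crit{j})$.
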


Finally, we will also show that an analog of Theorem \ref{main:Vopenka} holds for this region of the large cardinal hierarchy. 
The following theorem, which will be proved in Section \ref{section5}, shows that subtle cardinals are related, via weak $\SR$, to strongly unfoldable and $C^{(n+1)}$-strongly unfoldable cardinals, as Vop\v{e}nka cardinals are related, via $\SR$, to supercompact and $C^{(n)}$-extendible cardinals.

\begin{theorem}\label{theorem:SubtleChar}
 The following statements are equivalent for every uncountable cardinal $\delta$: 
 \begin{enumerate}
  \item\label{item:SubtleLimSubtle} $\delta$ is either subtle or a limit of subtle cardinals. 
  
  \item\label{item:SR-SR--} For every set $\Ce$ of structures of the same type with $\Ce\in V_{\delta+1}\setminus V_\delta$, there exists a cardinal $\kappa<\delta$ with the property that  the principle $\WSR_\Ce(\kappa)$ holds. 
 \end{enumerate}
\end{theorem}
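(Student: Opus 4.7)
The plan is to follow the template of Theorem \ref{main:Vopenka} for Vop\v{e}nka cardinals, but with subtleness replacing Vop\v{e}nka's Principle and the weak form $\WSR$ replacing full $\SR$. The key observation making the forward direction work is that the matching condition in the definition of subtleness translates directly into the existence of a small elementary embedding between structures, once those structures are encoded by their full elementary diagrams on ordinal domains.

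For (\ref{item:SubtleLimSubtle})~$\Rightarrow$~(\ref{item:SR-SR--}), let $\delta$ be subtle or a limit of subtle cardinals and $\Ce\in V_{\delta+1}\setminus V_\delta$. Suppose towards a contradiction that $\WSR_\Ce(\kappa)$ fails for every $\kappa<\delta$. Since $\delta$ is strongly inaccessible and the ranks of elements of $\Ce$ are cofinal in $\delta$, for every sufficiently large cardinal $\kappa<\delta$ the set $\Ce$ must contain a structure $B_\kappa$ of cardinality $\kappa$ admitting no elementary embedding from a smaller structure in $\Ce$ --- otherwise $\SR^-_\Ce(\kappa)$ would be vacuously true and $\SR^{--}_\Ce(\kappa)$ would also hold, contradicting the failure. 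Fix a subtle cardinal $\kappa^*\le\delta$ (with $\kappa^*=\delta$ when $\delta$ is itself subtle, and otherwise some subtle $\kappa^*<\delta$ above the threshold past which all $B_\kappa$ exist). For each such $\kappa<\kappa^*$, fix a bijection $\pi_\kappa\colon\kappa\to B_\kappa$ and let $B_\kappa^*$ be the induced isomorphic copy on domain $\kappa$. Using a canonical G\"odel-pairing-style bijection $\mathrm{Fml}_\tau\times\kappa^{<\omega}\to\kappa$, encode the full elementary diagram of $B_\kappa^*$ as a subset $E_\kappa\subseteq\kappa$; let $C\subseteq\kappa^*$ be the club of infinite cardinals closed under this pairing. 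The key compatibility is that for $\beta<\gamma$ in $C$, the equality $E_\gamma\cap\beta=E_\beta$ is equivalent to the assertion that the inclusion $\beta\hookrightarrow\gamma$ is an elementary embedding $B_\beta^*\prec B_\gamma^*$. Applying the subtleness of $\kappa^*$ to the $\kappa^*$-list $\langle E_\kappa\mid\kappa<\kappa^*\rangle$ and the club $C$ produces $\beta<\gamma$ in $C$ with this matching; composing with $\pi_\beta,\pi_\gamma$ yields an elementary embedding $B_\beta\to B_\gamma$ between structures in $\Ce$ with $|B_\beta|=\beta<\gamma=|B_\gamma|$, contradicting the choice of $B_\gamma$.

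For (\ref{item:SR-SR--})~$\Rightarrow$~(\ref{item:SubtleLimSubtle}) we prove the contrapositive. If $\delta$ is not strongly inaccessible, a direct counterexample is available: when $\delta=\mu^+$ is a successor cardinal, the set of all well-orderings of $\mu$ lies in $V_{\delta+1}\setminus V_\delta$, contains only structures of the single cardinality $\mu$, and therefore makes $\WSR_\Ce(\kappa)$ fail at every $\kappa<\delta$. In the remaining case $\delta$ is inaccessible; fix $\delta_0<\delta$ such that $(\delta_0,\delta]$ contains no subtle cardinals, together with a $\delta$-list $\langle E_\alpha\mid\alpha<\delta\rangle$ and a club $C\subseteq\delta$ of cardinals witnessing the non-subtleness of $\delta$. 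For each $\gamma\in C$ above $\delta_0$ we build a structure $B_\gamma$ of cardinality $\gamma$ that rigidly codes $(\gamma,\in,E_\gamma)$ --- for example, a Skolemized expansion of the $E_\gamma$-relative Jensen level $J_\gamma[E_\gamma]$ with a constant naming $\gamma$ --- arranged so that any elementary embedding $B_\beta\to B_\gamma$ with $\beta<\gamma$ in $C$ is forced to fix every ordinal below $\beta$ and therefore delivers the matching $E_\beta=E_\gamma\cap\beta$ forbidden by our choice of the list. Setting $\Ce=\{B_\gamma\mid\gamma\in C\}$, enlarged by auxiliary structures covering the remaining cardinalities in $(\delta_0,\delta)$ via a signature-level marker preventing cross-elementary-embeddings with the $B_\gamma$, produces the required witness in $V_{\delta+1}\setminus V_\delta$ at which $\WSR_\Ce(\kappa)$ fails for every $\kappa<\delta$.

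The main obstacle is the rigid encoding in the second implication: whereas the elementary-diagram encoding in the first direction automatically converts embeddings into matching, forcing the reverse conversion of non-matching into non-embedding requires guaranteeing that every elementary embedding $B_\beta\to B_\gamma$ fixes ordinals below $\beta$ pointwise. The fine-structural properties of $J_\gamma[E_\gamma]$ --- in particular the fact that every ordinal $<\gamma$ is uniformly definable there from $\gamma$ --- are designed precisely to deliver this rigidity, but some care is needed both to transfer this rigidity to any isomorphic copy used in $\Ce$ and to ensure that the bookkeeping of auxiliary structures does not accidentally resuscitate $\WSR_\Ce(\kappa)$ at some cardinality outside of $C$.
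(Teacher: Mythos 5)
Your forward direction is essentially the paper's own argument (Lemma \ref{lemma:SubtleSR-SR--} together with Corollary \ref{corollary:SubtleYieldsSR-SR--}): coding the elementary diagrams of the putative counterexample structures $B_\kappa$ as a list of subsets of cardinals and applying subtleness on a club of cardinals closed under the pairing. Modulo trivial bookkeeping (restrict the club to cardinals above the minimal cardinality of structures in $\Ce$; note that a limit of subtle cardinals need only be a strong limit, not regular, which is all you use) this half is fine.

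The backward direction has a genuine gap, and it is exactly at the point you flag as ``the main obstacle''. The rigidity you need --- that \emph{every} elementary embedding $B_\beta\to B_\gamma$ fixes all ordinals below $\beta$, so that non-matching of the list translates into non-existence of embeddings --- cannot be arranged by taking Skolemized $J_\gamma[E_\gamma]$-structures. First, $\gamma\notin J_\gamma[E_\gamma]$, so ``definable from $\gamma$'' is not available inside the structure; more importantly, in a countable language a structure of uncountable cardinality $\beta$ has only countably many parameter-free definable elements, so elementarity cannot force an embedding to fix every ordinal $<\beta$, and nontrivial elementary embeddings between $J$-levels with critical points below the height of the smaller model exist in complete generality (condensation only tells you that $E_\beta$ is the transitive-collapse image of $E_\gamma\cap\ran{j}$, not that it equals $E_\gamma\cap\beta$). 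The Kunen-inconsistency tricks used elsewhere in the paper force the identity only below a \emph{fixed} uncountable regular parameter named by the structures, never up to the varying cardinality $\beta$ of the smaller structure. This is precisely why the paper does not build a counterexample family directly from a non-subtle witness: instead it shows that clause \eqref{item:SR-SR--} yields embeddings $\map{j}{X}{H_\theta}$ with $j\restriction\bar\nu=\id_{\bar\nu}$ fixing a prescribed $A\subseteq V_\delta$, and then Lemma \ref{lemma:SubtleFromSR-SR--} forces the critical point into the given club via a delicate least-counterexample argument (choosing $\vartheta$ and $A$ definably from a well-ordering of $V_\delta$ inside a sufficiently elementary $H_\theta$); that minimality argument is the missing idea in your sketch. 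In addition, your case analysis outside inaccessibility is incomplete and partly wrong: the set of all well-orderings of $\mu$ has rank about $\mu+4<\mu^+=\delta$, so it lies in $V_\delta$ and does not satisfy $\Ce\in V_{\delta+1}\setminus V_\delta$ (you must pad with copies of unbounded rank, or use structures as in Proposition \ref{proposition:Sr--C1_part2}); and the cases of $\delta$ singular or weakly but not strongly inaccessible are not covered at all, whereas the paper handles $\delta\notin C^{(1)}$ by Proposition \ref{proposition:Sr--C1_part2} and all remaining $\delta\in C^{(1)}$ (including singular strong limits) by the main reflection argument, which does not require regularity.
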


\begin{corollary}
  The following statements are equivalent for every uncountable cardinal $\delta$: 
  \begin{enumerate}
   \item $\delta$ is the least subtle cardinal. 
   
   \item $\delta$ is the least cardinal with the property that for every set $\Ce$ of structures of the same type with $\Ce\in V_{\delta+1}\setminus V_\delta$, there exists a cardinal $\kappa<\delta$ such that  the principle $\WSR_\Ce(\kappa)$ holds. \qed 
  \end{enumerate}
\end{corollary}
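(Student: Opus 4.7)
My plan is to prove the two directions of the equivalence separately, using subtleness (at $\delta$ or an auxiliary subtle $\delta_0<\delta$) to extract an elementary embedding from elementary diagrams in the forward direction, and constructing a canonical non-reflecting set of structures from a non-subtleness witness in the converse direction.

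For $(1)\Rightarrow(2)$, I first treat the case that $\delta$ is subtle. Given $\Ce\in V_{\delta+1}\setminus V_\delta$, I argue by contradiction: if $\WSR_\Ce(\kappa)$ fails at every $\kappa<\delta$, then since the cardinalities of members of $\Ce$ are cofinal in $\delta$, the clause $\SR^{--}_\Ce(\kappa)$ is automatic once $\kappa$ exceeds the cardinality of some fixed member of $\Ce$, forcing $\SR^-_\Ce(\kappa)$ to fail and yielding a witness $B_\kappa\in\Ce$ of cardinality $\kappa$ (WLOG with domain $\kappa$) admitting no elementary embedding from a smaller member of $\Ce$. Using a G\"odel pairing $\pi$ with the absorption property that $\pi(\bar a,n)<\beta$ holds iff every component is $<\beta$ whenever $\beta$ is an infinite cardinal, I encode the elementary diagram of $B_\kappa$ as a subset $E_\kappa\subseteq\kappa$. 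Applying the subtleness of $\delta$ to $\seq{E_\kappa}{\kappa<\delta}$ together with the club of infinite cardinals above the relevant threshold supplies $\beta<\gamma$ with $E_\beta=E_\gamma\cap\beta$; the absorption property of $\pi$ makes this equality equivalent to the statement that the inclusion $\beta\hookrightarrow\gamma$ is an elementary embedding $B_\beta\to B_\gamma$, contradicting the choice of $B_\gamma$. For the case that $\delta$ is only a limit of subtles, I reduce to the subtle case by picking a subtle $\delta_0<\delta$ such that $\{\rank{B}\mid B\in\Ce\}$ is cofinal in $\delta_0$---such $\delta_0$ exists because the limits of any cofinal subset of $\delta$ form a club in $\delta$, which meets the cofinal set of subtles---so that $\Ce\cap V_{\delta_0}\in V_{\delta_0+1}\setminus V_{\delta_0}$ and the subtle case delivers a $\kappa<\delta_0$ witnessing $\WSR_{\Ce\cap V_{\delta_0}}(\kappa)$; inaccessibility of $\delta_0$ ensures any structure of cardinality $<\kappa$ lies in $V_{\delta_0}$, so this also witnesses $\WSR_\Ce(\kappa)$.

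For $(2)\Rightarrow(1)$ I argue contrapositively. Assume $\delta$ is neither subtle nor a limit of subtles, and fix $\alpha<\delta$ with no subtle cardinals in $[\alpha,\delta]$. Using that $\delta$ itself is not subtle, select a $\delta$-list $\vec E$ and a club $C\subseteq\delta$---refined to consist of infinite cardinals greater than $\alpha$---with no $\beta<\gamma$ in $C$ satisfying $E_\beta=E_\gamma\cap\beta$; the plan is to take them as the $<_L$-least such pair inside an auxiliary $L[A]$-model coding $\delta$ and any needed parameters, so as to activate condensation for the $L[\vec E]$-hierarchy. For each infinite cardinal $\mu\in[\alpha,\delta)$, put $B_\mu=(L_\mu[\vec E\restriction\mu];\in,\vec E\restriction\mu,C\cap\mu,\mu)$ with a constant for $\mu$ and predicates for the list and club restrictions, and set $\Ce=\{B_\mu\mid \mu\in[\alpha,\delta) \text{ an infinite cardinal}\}$; cofinality of the ranks gives $\Ce\in V_{\delta+1}\setminus V_\delta$. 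The intended argument is that for every infinite cardinal $\kappa<\delta$, $\SR^-_\Ce(\kappa)$ fails at $B_\kappa$: any elementary embedding $j\colon B_{\kappa'}\to B_\kappa$ with $\kappa'<\kappa$ would, via Mostowski collapse of $j[B_{\kappa'}]\preceq B_\kappa$ and condensation inside $L_\kappa[\vec E\restriction\kappa]$, correspond to the canonical inclusion $B_{\kappa'}\hookrightarrow B_\kappa$ (the constants $\kappa',\kappa$ pin down the collapse), whose elementarity together with the $C$-predicate would yield $\beta<\gamma$ in $C$ with $E_\beta=E_\gamma\cap\beta$, contradicting the witness property of $(\vec E,C)$.

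The hard part will be the condensation/rigidity step in the converse direction: both making $\vec E$ definable enough inside the $L[\vec E]$-hierarchy to activate a Jensen-style condensation lemma, and extracting the subtleness match from the canonical inclusion when $\kappa',\kappa$ themselves need not lie in $C$. A plausible route is to work with $<_L$-least witnesses and to incorporate definable Skolem functions into the structures $B_\mu$ so that any elementary substructure $j[B_{\kappa'}]$ is already closed under the relevant Skolem functions and therefore collapses canonically; the witness pair $\beta<\gamma$ in $C$ is then located by applying the elementarity of the inclusion to formulas asserting that certain ordinals are simultaneously in $C$ and in $\vec E$-relation. A secondary technical point, in the forward direction, is handling constants in the elementary-diagram encoding, resolved either by restricting to relational languages or by absorbing constant denotations through the G\"odel pairing.
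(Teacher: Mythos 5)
Your forward direction in the subtle case is fine and is essentially the paper's own argument (Lemma \ref{lemma:SubtleSR-SR--}): encode satisfaction in the witnesses $B_\kappa$ as a $\delta$-list and apply subtleness on the club of cardinals above the minimal cardinality. But the converse direction has a fatal gap. You fix $\alpha<\delta$ with no subtle cardinals in $[\alpha,\delta]$, yet the mechanism you then describe uses only the non-subtleness witness $(\vec E,C)$ at $\delta$ itself: you claim that any elementary embedding $B_{\kappa'}\to B_\kappa$ between your $L[\vec E]$-structures yields a pair $\beta<\gamma$ in $C$ with $E_\beta=E_\gamma\cap\beta$. If that implication were provable by the tools you cite (condensation, definable Skolem functions, canonical collapse), the same argument would apply verbatim whenever $\delta$ is any non-subtle limit cardinal, in particular when $\delta$ is a non-subtle limit of subtle cardinals --- and there Theorem \ref{theorem:SubtleChar} (equivalently, Corollary \ref{corollary:SubtleYieldsSR-SR--} applied to your very class $\Ce$) guarantees that such embeddings \emph{do} exist while $(\vec E,C)$ remains a valid non-subtleness witness. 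So the key rigidity step cannot be correct as stated; any repair must use the absence of subtle cardinals below $\delta$ in an essential way, and nothing in your sketch does. Concretely, condensation identifies the transitive collapse of $\mathrm{ran}(j)$ but does not force $j$ to be the inclusion (inverse collapses of non-transitive hulls are typically not inclusions), and the Kunen-inconsistency arguments the paper uses to push critical points into $C$ are unavailable here, since non-trivial elementary self-embeddings of levels of an $L[\vec E]$-hierarchy are not ruled out by ZFC. You also never handle the case that $\delta$ is not a limit cardinal (e.g.\ a successor cardinal), where your $\Ce$ has bounded rank and so fails $\Ce\in V_{\delta+1}\setminus V_\delta$. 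This is precisely why the paper's converse goes a completely different route: it applies the hypothesis to a class of collapsed $H_\theta$-hulls and feeds the resulting embeddings into Lemma \ref{lemma:SubtleFromSR-SR--}, which by a minimal-counterexample argument produces a subtle cardinal somewhere in the interval $(\rho,\delta]$, rather than trying to refute reflection with a single counterexample class built from a witness at $\delta$.

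Two further points. First, your reduction in the limit-of-subtles case is also wrong: the claim that inaccessibility of $\delta_0$ ensures every structure of cardinality less than $\kappa$ lies in $V_{\delta_0}$ is false (cardinality does not bound rank), and, more importantly, $\SR^-_\Ce(\kappa)$ quantifies over \emph{all} $B\in\Ce$ of cardinality $\kappa$, including those of rank at least $\delta_0$, which $\WSR_{\Ce\cap V_{\delta_0}}(\kappa)$ says nothing about. Second, even granting both implications, you never assemble the stated ``least'' equivalence: the negation of ``$\delta$ is the least subtle cardinal'' is not ``$\delta$ is neither subtle nor a limit of subtle cardinals'', and clause (2) also requires minimality of $\delta$, so one must argue as in the paper, where the corollary is read off immediately from the full equivalence of Theorem \ref{theorem:SubtleChar} (the least cardinal that is subtle or a limit of subtles is the least subtle cardinal). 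For this corollary the limit-of-subtles case of the forward direction is in fact dispensable, but a correct converse is indispensable, and that is exactly what is missing.
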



\subsection{Product Structural Reflection}

In analogy with the characterization (given essentially in \cite{BagariaWilsonRefl}, but outlined below) of strong, $\Sigma_n$-strong, and Woodin cardinals, in terms of  \emph{product} $\SR$ (see Definition \ref{defpsr} below), we work towards  additional characterizations of  strongly unfoldable, $C^{(n)}$-strongly unfoldable, and subtle cardinals in terms of \emph{weak product} $\SR$ (see Definition \ref{wpsr} below).

Let us recall that for any set $S$ of structures of the same type,  the set-theoretic product $\prod S$  is the structure whose universe is the  set of all functions $f$ with domain $S$ such that $f(\mathcal{A})\in A$ for every $\mathcal{A}=\langle A, \ldots \rangle\in S$, and whose interpretation of constant, function and relation symbols is defined point-wise. In addition, note that, if $X$ is a substructure of $\prod S$ and $\mathcal{A}=\langle A, \ldots \rangle\in S$, then the canonical projection map $$\Map{p}{X}{A}{f}{f(\mathcal{A})}$$ is a homomorphism from $X$ into $\mathcal{A}$.\footnote{As defined in {\cite[Section 1.2]{MR1221741}}.} 

The following \emph{Product Reflection Principle} ($\PRP$) was introduced in \cite[Definition 3.1]{BagariaWilsonRefl}:

\begin{definition}[Bagaria-Wilson, \cite{BagariaWilsonRefl}]\label{defPRP}
 Given  a class $\Ce$ of structures of the same type, the principle $\PRP_\Ce$ asserts that 
there is a subset $S$ of $\Ce$ such that for $B\in\Ce$, there is a homomorphism from $\prod S$ to $B$.
\end{definition}

The principle $\PRP_\Ce$ holds for every class $\Ce$ that is definable by a $\Sigma_1$-formula with parameters, as witnessed by sets of the form $V_\kappa \cap \Ce$, where $\kappa \in C^{(1)}$ has the property that  $V_\kappa$ contains the parameters used in the definition of $\Ce$ (see \cite[Proposition 3.2]{BagariaWilsonRefl}). 
Also, if $\kappa$ is either a strong cardinal or a limit of strong cardinals, then  $\PRP_\Ce$ holds for every class $\Ce$ that is defined by a $\Sigma_2$-formula with parameters in $V_\kappa$, as  witnessed by $\Ce \cap V_\kappa$  (see \cite[Proposition 3.3]{BagariaWilsonRefl}). 
Moreover, the proof of  \cite[Theorem 4.1]{BagariaWilsonRefl} shows that if $\kappa$ is a cardinal such that $\Ce \cap V_\kappa$ is non-empty and witnesses $\PRP_\Ce$ for every class $\Ce$ that is defined by a $\Pi_1$-formula without parameters, then  there exists a strong cardinal less than or equal to $\kappa$. If the same holds for every class $\Ce$ that is defined by a $\Pi_1$-formula with parameters in $V_\kappa$, then $\kappa$ is either a strong cardinal or a limit of strong cardinals. Hence,  the following statements are equivalent for every infinite cardinal $\kappa$:

\begin{enumerate}
 \item $\kappa$ is either a strong cardinal or a limit of strong cardinals. 
 
 \item $\Ce \cap V_\kappa$ witnesses $\PRP_\Ce$ for all classes $\Ce$ of structures of the same type that are definable by a $\Pi_1$-formula  (equivalently, by a $\Sigma_2$-formula) with parameters in $V_\kappa$. 
\end{enumerate}

In view of this equivalence, we may reformulate the principle $\PRP$  as a principle of Structural Reflection for products, as follows:

\begin{definition}\label{defpsr}
 Given an infinite cardinal $\kappa$ and a class $\Ce$ of structures of the same type, we let $\PR_\Ce(\kappa)$ denote the statement that $\Ce\cap V_\kappa\neq\emptyset$ and for every $B\in\Ce$, there exists a homomorphism from $\prod(\Ce\cap V_\kappa)$ to $B$.  
\end{definition}

The above equivalence can now be stated in the following way:

\begin{theorem}[\cite{BagariaWilsonRefl}]\label{theorem:StrongProductSigma2}
 The following statements are equivalent for every cardinal $\kappa$: 
 \begin{enumerate}
  \item $\kappa$ is either a strong cardinal or a limit of strong cardinals. 
  
  \item The principle $\PR_\Ce(\kappa)$ holds for every class $\Ce$ of structures of the same type that is definable by a $\Sigma_2$-formula with parameters in $V_\kappa$. 
 \end{enumerate}
\end{theorem}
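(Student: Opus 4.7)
The plan is to derive the theorem by a direct translation of the equivalence displayed immediately above its statement, which is essentially proved in \cite{BagariaWilsonRefl}. The principle $\PR_\Ce(\kappa)$ is precisely the assertion that the non-empty set $\Ce \cap V_\kappa$ is a witness $S$ for the product reflection principle $\PRP_\Ce$ of Definition \ref{defPRP}. Consequently, Theorem \ref{theorem:StrongProductSigma2} unpacks into the already-stated equivalence between condition $(1)$ and the statement ``$\Ce \cap V_\kappa$ witnesses $\PRP_\Ce$ for every $\Sigma_2$-definable $\Ce$ with parameters in $V_\kappa$''.

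For $(1) \Rightarrow (2)$, the argument follows \cite[Proposition 3.3]{BagariaWilsonRefl}. Suppose first that $\kappa$ is strong, fix a $\Sigma_2$-definable class $\Ce = \{x : \exists y\, \psi(y, x, p)\}$ with $\psi$ of class $\Pi_1$ and $p \in V_\kappa$, and let $B \in \Ce$. Choose $\lambda \in C^{(2)}$ with $B \in V_\lambda$ and invoke $\lambda$-strength to obtain an elementary embedding $\map{j}{V}{M}$ with $\crit{j} = \kappa$, $V_\lambda \subseteq M$ and $j(\kappa) > \lambda$. Since $\lambda \in C^{(2)}$ there is a witness $y \in V_\lambda$ for $\psi^V(y, B, p)$, and $\Pi_1$-downward absoluteness from $V$ to the transitive class $M$ yields $\psi^M(y, B, p)$; hence $B \in j(\Ce) \cap V_{j(\kappa)}^M = j(\Ce \cap V_\kappa)$. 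Thus $B$ appears as a factor of $j(\prod(\Ce \cap V_\kappa))$ inside $M$, and composing $j$ with the canonical projection to $B$ gives the homomorphism $f \mapsto j(f)(B)$ from $\prod(\Ce \cap V_\kappa)$ to $B$. The non-emptiness clause of $\PR_\Ce(\kappa)$ is automatic from $\kappa \in C^{(2)}$ whenever $\Ce$ is non-empty. When $\kappa$ is a limit of strong cardinals, one picks a strong $\kappa' < \kappa$ with the parameters of $\Ce$ in $V_{\kappa'}$, applies the previous case to obtain a homomorphism $\prod(\Ce \cap V_{\kappa'}) \to B$, and precomposes with the canonical restriction homomorphism $\prod(\Ce \cap V_\kappa) \to \prod(\Ce \cap V_{\kappa'})$.

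For $(2) \Rightarrow (1)$, we invoke \cite[Theorem 4.1]{BagariaWilsonRefl} in contrapositive form. Assuming $\kappa$ is neither strong nor a limit of strong cardinals, fix $\gamma < \kappa$ exceeding every strong cardinal at most $\kappa$. For sufficiently large $\lambda$, one constructs a $\Pi_1$-definable (hence $\Sigma_2$-definable) class $\Ce$ with parameter $\gamma$ whose structures encode the failure of $\lambda$-strength in the interval $(\gamma, \kappa]$; applying $\PR_\Ce(\kappa)$ and performing an extender-type reduction on a homomorphism from $\prod(\Ce \cap V_\kappa)$ to a target structure $B \in \Ce$ produces a $\lambda$-strength witness below $\kappa$, contradicting the choice of $\gamma$. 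The main obstacle is exactly this extraction step: since the set-theoretic product $\prod(\Ce \cap V_\kappa)$ carries no canonical ultrafilter, recovering a genuine extender from a product homomorphism requires a \L{}o\'{s}-type analysis of the fibers of the homomorphism in order to produce enough $\Sigma_2$-elementarity, and this is the technical heart of \cite[Theorem 4.1]{BagariaWilsonRefl}.
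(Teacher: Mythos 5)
Your proposal matches the paper's treatment: Theorem \ref{theorem:StrongProductSigma2} is obtained there exactly by unpacking $\PR_\Ce(\kappa)$ as the assertion that $\Ce\cap V_\kappa$ witnesses $\PRP_\Ce$, citing {\cite[Proposition 3.3]{BagariaWilsonRefl}} for the forward direction and the proof of {\cite[Theorem 4.1]{BagariaWilsonRefl}} (the extender-extraction argument, recapped in Section \ref{prodref}) for the converse, just as you do. Your sketches of both directions, including the embedding argument for strong $\kappa$ and the restriction-homomorphism trick for limits of strong cardinals, are correct and essentially the intended ones.
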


\begin{corollary}
 The following statements are equivalent for every cardinal $\kappa$: 
 \begin{enumerate}
  \item $\kappa$ is the least strong cardinal. 
  
  \item $\kappa$ is the least cardinal with the property that $\PR_\Ce(\kappa)$ holds for every class $\Ce$ of structures of the same type that is definable by a $\Sigma_2$-formula with parameters in $V_\kappa$. 
 \end{enumerate}
\end{corollary}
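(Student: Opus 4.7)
The plan is to derive this corollary directly from Theorem \ref{theorem:StrongProductSigma2}. By that theorem, the condition in item (2) of the corollary characterizes precisely those cardinals that are either strong or limits of strong cardinals. So it suffices to show that the least cardinal in the class
\[
 \mathcal{S} \;=\; \{\kappa : \kappa \text{ is strong, or } \kappa \text{ is a limit of strong cardinals}\}
\]
coincides with the least strong cardinal.

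First I would verify the easy direction: if $\kappa$ is the least strong cardinal, then $\kappa\in\mathcal{S}$, so $\min\mathcal{S}\leq\kappa$. Conversely, let $\mu=\min\mathcal{S}$. Then $\mu$ is either strong, in which case $\mu\geq\kappa$ by minimality of $\kappa$ among strong cardinals, or $\mu$ is a limit of strong cardinals. In the latter case, there must exist a strong cardinal $\lambda<\mu$; but then $\lambda\in\mathcal{S}$, contradicting $\mu=\min\mathcal{S}$. Hence the only possibility is $\mu=\kappa$.

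Combining these two observations with Theorem \ref{theorem:StrongProductSigma2} immediately yields the equivalence in the corollary: the least cardinal $\kappa$ such that $\PR_\Ce(\kappa)$ holds for every $\Sigma_2$-definable class $\Ce$ with parameters in $V_\kappa$ is precisely the least element of $\mathcal{S}$, which by the above argument is the least strong cardinal. There is no serious obstacle here, as this is purely a minimality argument applied to the class characterization already established in Theorem \ref{theorem:StrongProductSigma2}; the only subtlety is the observation that limit points of the class of strong cardinals cannot precede the first strong cardinal, which is immediate from the definition of ``limit of strong cardinals''.
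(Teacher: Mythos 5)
Your argument is correct and is exactly the reasoning the paper intends: the corollary is stated as an immediate consequence of Theorem \ref{theorem:StrongProductSigma2}, and the only content is your observation that the least element of the class of cardinals that are strong or limits of strong cardinals must itself be strong, since a limit of strong cardinals has strong cardinals below it. Nothing further is needed.
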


More generally, recall the following strengthening of strongness introduced in \cite[Definition 5.1]{BagariaWilsonRefl}:

\begin{definition}[Bagaria-Wilson, \cite{BagariaWilsonRefl}]\label{defSigmaStrong}
 Let $n>0$ be a natural number. 
 \begin{enumerate}
     \item  Given an ordinal $\lambda$, a cardinal $\kappa$  is \emph{$\lambda$-$\Sigma_n$-strong}  if for every class $A$ that is definable by a $\Sigma_n$-formula without parameters, there is a transitive class $M$ with $V_\lambda \subseteq M$ and an elementary embedding $\map{j}{V}{M}$ with $\crit{j}=\kappa$, $j(\kappa)>\lambda$ and $A\cap V_\lambda \subseteq j(A)$. 
     
     \item A cardinal $\kappa$ is \emph{$\Sigma_n$-strong} if it is $\lambda$-$\Sigma_n$-strong for every ordinal $\lambda$. 
 \end{enumerate}
\end{definition}

Every strong cardinal is $\Sigma_2$-strong (see \cite[Proposition 5.2]{BagariaWilsonRefl}). Arguments contained in the proofs of  {\cite[Claim 5.12]{BagariaWilsonRefl}} and  {\cite[Theorem 5.13]{BagariaWilsonRefl}} now yield the following equivalence:

\begin{theorem}[\cite{BagariaWilsonRefl}]\label{theorem:StrongProductSigman}
 Given a natural number $n>1$, the  following statements are equivalent for every cardinal $\kappa$: 
 \begin{enumerate}
  \item $\kappa$ is either a $\Sigma_n$-strong cardinal or a limit of $\Sigma_n$-strong cardinals. 
  
  \item The principle $\PR_\Ce(\kappa)$ holds for every class $\Ce$ of structures of the same type that is definable by a $\Sigma_n$-formula with parameters in $V_\kappa$. 
 \end{enumerate}
\end{theorem}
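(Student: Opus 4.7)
The plan is to adapt, for general $n>1$, the arguments used in {\cite[Claim 5.12]{BagariaWilsonRefl}} and {\cite[Theorem 5.13]{BagariaWilsonRefl}} to establish the case $n=2$, adjusting definability and correctness levels throughout. The ``limit of $\Sigma_n$-strong'' clause in (1) reduces to the $\Sigma_n$-strong case via the observation that whenever $\kappa_0 < \kappa$ is $\Sigma_n$-strong and all parameters of $\Ce$ lie in $V_{\kappa_0}$, the restriction map $f \mapsto f \upharpoonright (\Ce \cap V_{\kappa_0})$ is a surjective homomorphism from $\prod(\Ce \cap V_\kappa)$ onto $\prod(\Ce \cap V_{\kappa_0})$ (product interpretations being pointwise), so $\PR_\Ce(\kappa_0)$ entails $\PR_\Ce(\kappa)$ by composition.

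For (1)$\Rightarrow$(2), assume $\kappa$ is $\Sigma_n$-strong and let $\Ce$ be a non-empty class defined by a $\Sigma_n$-formula $\varphi(x,p)$ with $p \in V_\kappa$. Given $B \in \Ce$, choose $\lambda \in C^{(n)}$ with $\lambda > \kappa$ and $B \in V_\lambda$. Encode the parameter $p$ into a parameter-free $\Sigma_n$-class $A = \{(q,x) : \varphi(x,q)\}$ and apply $\Sigma_n$-strongness to obtain $\map{j}{V}{M}$ with $V_\lambda \subseteq M$, $\crit{j} = \kappa$, $j(\kappa) > \lambda$, and $A \cap V_\lambda \subseteq j(A)$; since $\crit{j} = \kappa$ fixes $p$, this yields $\Ce \cap V_\lambda \subseteq j(\Ce)$, so in $M$ one has $B \in j(\Ce) \cap V_{j(\kappa)} = j(\Ce \cap V_\kappa)$, and non-emptiness $\Ce \cap V_\kappa \neq \emptyset$ now follows by elementarity of $j$. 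Defining $h(f) = j(f)(B)$ for $f \in \prod(\Ce \cap V_\kappa)$ yields a map into the universe of $B$, and because the product's interpretations of constant, function, and relation symbols are evaluated pointwise, elementarity of $j$ transfers these pointwise identities at the coordinate $B$ into the homomorphism equations satisfied by $h$.

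For (2)$\Rightarrow$(1), I plan to reverse-engineer the embedding from a single instance of $\PR_\Ce(\kappa)$. Given a parameter-free $\Sigma_n$-class $A$ and an ordinal $\lambda$, take $\Ce$ to be the class of expanded structures of the form $\langle V_\alpha, \in, A \cap V_\alpha, \langle a \rangle_{a \in V_\kappa}, \ldots \rangle$ with $\alpha \in C^{(n-1)}$ above a suitable threshold, enriched with finitely many predicates that internalize satisfaction of $\Sigma_{n-1}$-formulas in $V_\alpha$. Using $C^{(n-1)}$-correctness, one verifies that $\Ce$ admits a $\Sigma_n$-definition with parameters in $V_\kappa$. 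Applying $\PR_\Ce(\kappa)$ to a target $B$ of this form corresponding to some $\beta > \lambda$ in $C^{(n-1)}$ produces a homomorphism $h : \prod(\Ce \cap V_\kappa) \to B$; from its action on the constant/diagonal elements of the product one extracts an elementary embedding of some $V_\alpha$-like initial structure into $V_\beta$, which then extends along the directed family of factors to $\map{j}{V}{M}$ with $V_\lambda \subseteq M$, $\crit{j} = \kappa$, $j(\kappa) > \lambda$, and $A \cap V_\lambda \subseteq j(A)$, thereby witnessing $\lambda$-$\Sigma_n$-strongness of $\kappa$. The ``limit'' clause is handled by the dual observation that failure of $\Sigma_n$-strongness at $\kappa$ itself, varied over parameters in $V_\kappa$, translates into the existence of unboundedly many $\Sigma_n$-strong cardinals below $\kappa$.

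The main obstacle will be the complexity bookkeeping in (2)$\Rightarrow$(1): verifying that the class $\Ce$ of expanded $V_\alpha$-structures together with the $A$-predicate and $V_\kappa$-parameters has a $\Sigma_n$-definition (rather than $\Sigma_{n+1}$), and that the elementary embedding extracted from $h$ genuinely witnesses the full preservation clause $A \cap V_\lambda \subseteq j(A)$ demanded by Definition \ref{defSigmaStrong}, and not merely a weaker approximation of it. In the $n=2$ case of {\cite[Claim 5.12]{BagariaWilsonRefl}} this is handled via $C^{(1)}$-correctness together with the $\Pi_1$-character of ``$V_\alpha \models \mathrm{ZF}^-$''; at level $n$ one must replace these with $C^{(n-1)}$-correctness and verify that the amount of $\Sigma_{n-1}$-truth internalized in the structures of $\Ce$ suffices to secure full preservation of $A$ along the extracted $j$, without inflating the complexity of $\Ce$ beyond $\Sigma_n$.
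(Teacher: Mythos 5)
Your direction (1)$\Rightarrow$(2) is correct and is essentially the argument the paper itself uses in the analogous situations (Lemmas \ref{lemma:PRfromStrongUnfoldability} and \ref{lemma:PRfromStrongUnfoldability2}, and the first half of the proof of Theorem \ref{main:Woodin}): absorb the parameter into a parameter-free $\Sigma_n$-class, use $A\cap V_\lambda\subseteq j(A)$ to conclude $B\in j(\Ce\cap V_\kappa)$, and set $h(f)=j(f)(B)$; the reduction of the limit case via the restriction homomorphism onto the subproduct $\prod(\Ce\cap V_{\kappa_0})$ is also fine.

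The genuine gap is in (2)$\Rightarrow$(1), and it is structural, not just "complexity bookkeeping". From a homomorphism $h:\prod(\Ce\cap V_\kappa)\to B$ you cannot extract an embedding with critical point $\kappa$: what the diagonal elements give (Lemma \ref{lemma:Prod2}) is a $\Sigma_1$-elementary $j:V_\lambda\to V_\chi$ whose critical point $\mu\leq\lambda\leq\kappa$ you do not control, and this is exactly why clause (1) carries the disjunct ``or a limit of $\Sigma_n$-strong cardinals''; your sketch asserts $\crit{j}=\kappa$ outright and relegates the rest to a ``dual observation'', which is precisely the hard part. The actual reversal (in \cite{BagariaWilsonRefl}, and in the paper's analogues, Theorems \ref{theorem:CharWPR} and \ref{WPSRC(n)}) is by contraposition: assuming $\kappa$ is neither $\Sigma_n$-strong nor a limit of $\Sigma_n$-strong cardinals, one fixes $\alpha<\kappa$ with no $\Sigma_n$-strong cardinals in $[\alpha,\kappa)$ and builds the structures so that, besides the satisfaction predicates, they carry the constant $\alpha$, a predicate $\dot S$ coding $\gamma$-$\Sigma_n$-strongness, and a function coding $\rho\mapsto\eta_\rho$; the possible configurations of $\mu$ and $\lambda$ are then refuted in turn, using the $(\mu,\nu)$-extender derived from $h$ via $\mathsf{E}_a=\Set{E}{a\in h(f^E)}$ (Lemmas \ref{lemma:ProdReflExtender} and \ref{Lemma:HomStrongA}) to produce a $\Sigma_n$-strong cardinal inside the forbidden interval, and, in the remaining case, iterating $j$ to $\tau=\sup_{i<\omega}j^i(\mu)$ and hitting the Kunen inconsistency at $V_{\tau+2}$. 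Your step ``extends along the directed family of factors to $j:V\to M$'' is not an available operation; the only route from $h$ to a class embedding is the extender derivation just described, and to secure $A\cap V_\lambda\subseteq j(A)$ for $\Sigma_n$-classes one must additionally verify that the ultrapower is correct about $C^{(n-1)}$ below the relevant ordinal (the paper handles the analogous point in the proof of Theorem \ref{WPSRC(n)} via {\cite[Proposition 5.5]{BagariaWilsonRefl}} and the sets of singletons from $C^{(n)}$), which your structures as described do not guarantee. Finally, adding constants $\langle a\rangle_{a\in V_\kappa}$ to the signature conflicts both with the countable-language convention and with the requirement that $\Ce$ be definable with parameters in $V_\kappa$; the correct device is the family of diagonal functions $f_x$, which is available precisely because $\PR$ provides a homomorphism on the whole product.
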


\begin{corollary}
 Given a natural number $n>1$, the  following statements are equivalent for every cardinal $\kappa$: 
 \begin{enumerate}
  \item $\kappa$ is the least $\Sigma_n$-strong cardinal. 
  
  \item $\kappa$ is the least cardinal with the property that $\PR_\Ce(\kappa)$ holds for every class $\Ce$ of structures of the same type that is definable by a $\Sigma_n$-formula with parameters in $V_\kappa$. 
 \end{enumerate}
\end{corollary}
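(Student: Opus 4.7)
The plan is to deduce this corollary as an immediate consequence of Theorem \ref{theorem:StrongProductSigman}, exploiting the fact that the property at $\kappa$ appearing in item (2) — namely, that $\PR_\Ce(\kappa)$ holds for every class $\Ce$ definable by a $\Sigma_n$-formula with parameters in $V_\kappa$ — is a property that depends on $\kappa$ only through the structure of $V_\kappa$ and is therefore the very property characterized by Theorem \ref{theorem:StrongProductSigman}. The key auxiliary observation is that whenever $\lambda$ is a limit of $\Sigma_n$-strong cardinals, there is some $\Sigma_n$-strong cardinal strictly below $\lambda$, so the ``or a limit of'' clause in Theorem \ref{theorem:StrongProductSigman} never produces a witness smaller than the least $\Sigma_n$-strong cardinal.

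For the direction (1) $\Rightarrow$ (2), assume $\kappa$ is the least $\Sigma_n$-strong cardinal. Theorem \ref{theorem:StrongProductSigman} immediately yields that the principle $\PR_\Ce(\kappa)$ holds for every $\Sigma_n$-definable class $\Ce$ with parameters in $V_\kappa$. If some $\mu<\kappa$ also had this property, then applying Theorem \ref{theorem:StrongProductSigman} at $\mu$ would make $\mu$ either $\Sigma_n$-strong or a limit of $\Sigma_n$-strong cardinals; in either case there would be a $\Sigma_n$-strong cardinal $\leq\mu<\kappa$, contradicting the minimality of $\kappa$.

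For the converse (2) $\Rightarrow$ (1), let $\kappa$ be the least cardinal with the stated reflection property. By Theorem \ref{theorem:StrongProductSigman}, $\kappa$ is either $\Sigma_n$-strong or a limit of $\Sigma_n$-strong cardinals. In the latter case, pick any $\Sigma_n$-strong $\mu<\kappa$; by Theorem \ref{theorem:StrongProductSigman} applied to $\mu$, the principle $\PR_\Ce(\mu)$ holds for every $\Sigma_n$-definable class with parameters in $V_\mu$, contradicting the minimality of $\kappa$. Hence $\kappa$ itself is $\Sigma_n$-strong, and a parallel argument shows no smaller $\Sigma_n$-strong cardinal exists, completing the proof.

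Since each implication is a direct application of Theorem \ref{theorem:StrongProductSigman} combined with the trivial observation about limits, there is no genuine obstacle here; the only subtlety to verify is that the ``parameters in $V_\lambda$'' clause rescales appropriately when the cardinal in question is changed from $\kappa$ to some $\mu<\kappa$, which is built into the formulation of the property.
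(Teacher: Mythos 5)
Your proof is correct and follows exactly the route the paper intends: the corollary is stated there as an immediate consequence of Theorem \ref{theorem:StrongProductSigman}, using precisely your observation that any limit of $\Sigma_n$-strong cardinals lies above some $\Sigma_n$-strong cardinal, so the two minima coincide. Nothing further is needed.
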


The results of \cite{BagariaWilsonRefl} also show that the canonical variation of Vop\v{e}nka's Principle corresponding to the above large cardinal notions is the assumption that ``\emph{$\On$ is Woodin}". This  axiom schema asserts that for every class $A$, there exists some cardinal $\kappa$ which is $A$-strong, i.e., for every $\lambda$, there is an elementary embedding $\map{j}{V}{M}$ with $\crit{j}=\kappa$, $j(\kappa)>\lambda$, $V_\lambda \subseteq M$ and  $A\cap V_\lambda =j(A)\cap V_\lambda$.

\begin{theorem}[\cite{BagariaWilsonRefl}]\label{main:Woodin2}
 The following statements are equivalent: 
 \begin{enumerate}
  \item  $\On$ is  Woodin. 
  
  \item For every class $\Ce$ of structures of the same type  there exists a cardinal $\kappa$ with the property that the principle $\PR_\Ce(\kappa)$ holds. 
  
  \item For every natural number $n$, there exists a $\Sigma_n$-strong cardinal. 
  
  \item For every natural number $n$, there exists a proper class of $\Sigma_n$-strong cardinals. 
 \end{enumerate}
\end{theorem}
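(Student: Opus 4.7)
The plan is to establish the cycle $(4)\Rightarrow(3)\Rightarrow(2)\Rightarrow(1)\Rightarrow(4)$, where $(4)\Rightarrow(3)$ is immediate. For $(3)\Rightarrow(2)$, given a class $\Ce$ of structures of the same type, definable by a $\Sigma_n$-formula with parameter $p$: if $n\leq 1$, then $\PR_\Ce(\kappa)$ holds at any $\kappa\in C^{(1)}$ with $p\in V_\kappa$ by \cite[Proposition 3.2]{BagariaWilsonRefl}; if $n\geq 2$, hypothesis (3) yields a $\Sigma_n$-strong cardinal which may be chosen above $p$, and Theorem \ref{theorem:StrongProductSigman} then gives $\PR_\Ce(\kappa)$. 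For $(1)\Rightarrow(4)$, I would argue by internal reflection: for each $n$ the assertion ``$\kappa$ is $\Sigma_n$-strong'' has bounded definitional complexity, so applying $\On$ Woodin to a sufficiently complex universal $\Sigma_m$-class (for appropriate $m>n$) yields a cardinal whose witnessing embeddings reflect the existence of $\Sigma_n$-strong cardinals unboundedly below, producing the required proper class.

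The central implication is $(2)\Rightarrow(1)$. Given a class $A$, I would define $\Ce_A$ as the class of all structures of the form $B_\lambda=\langle V_\lambda,\in,A\cap V_\lambda\rangle$ for ordinals $\lambda$, in a fixed relational language with a binary relation interpreting $\in$ and a unary predicate interpreting $A$. Applying hypothesis (2) produces a cardinal $\kappa$ with $\Ce_A\cap V_\kappa\neq\emptyset$ such that, for every $\lambda$, there is a homomorphism $h_\lambda:\prod(\Ce_A\cap V_\kappa)\to B_\lambda$. From the family $(h_\lambda)_\lambda$ one extracts, in direct analogy with the derivation of strong cardinals from $\PRP$ in \cite[Theorem 4.1]{BagariaWilsonRefl}, a system of $(\kappa,\lambda)$-extenders $E_\lambda$ on $V$ whose ultrapower embeddings $\map{j_\lambda}{V}{M_\lambda}$ satisfy $\crit{j_\lambda}=\kappa$, $V_\lambda\subseteq M_\lambda$ and $A\cap V_\lambda=j_\lambda(A)\cap V_\lambda$. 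Since this can be done for every $\lambda$, the cardinal $\kappa$ is $A$-strong.

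The main obstacle will be $(2)\Rightarrow(1)$: one must extract, from each homomorphism $h_\lambda:\prod(\Ce_A\cap V_\kappa)\to B_\lambda$, an elementary embedding from a suitable expansion of $V_\kappa$ into $\langle V_\lambda,\in,A\cap V_\lambda\rangle$ with critical point $\kappa$, and verify that these embeddings cohere, as $\lambda$ varies, to yield an extender capturing the predicate for $A$. The delicate point, already present in the parameter-free case of \cite{BagariaWilsonRefl}, is that a homomorphism out of a product is not a priori elementary; the product must be shown to encode enough instances of the relevant theory for a Łoś-style analysis to produce elementarity. Including the predicate for $A$ in the structures forces a careful check that each $h_\lambda$ respects this predicate on the relevant generators, which is precisely what is needed to obtain the equality $A\cap V_\lambda=j_\lambda(A)\cap V_\lambda$ witnessing $A$-strongness.
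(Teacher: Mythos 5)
Your overall architecture --- the cycle $(4)\Rightarrow(3)\Rightarrow(2)\Rightarrow(1)\Rightarrow(4)$, using Theorem \ref{theorem:StrongProductSigman} for $(3)\Rightarrow(2)$ and an extender derivation from product homomorphisms for $(2)\Rightarrow(1)$ --- is the same route as in \cite{BagariaWilsonRefl} (the present paper states Theorem \ref{main:Woodin2} without proof, citing that article, and develops the relevant machinery in Section \ref{prodref}). But there is a genuine gap at the heart of $(2)\Rightarrow(1)$: the class $\Ce_A$ you apply hypothesis (2) to, consisting of the bare structures $\langle V_\lambda,\in,A\cap V_\lambda\rangle$, cannot support the extraction of extenders. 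A homomorphism from $\prod(\Ce_A\cap V_\kappa)$ into such a structure preserves only atomic formulas, and only in the positive direction; nothing forces it to respect negations, quantifiers, or even injectivity, so no \L o\'s-style analysis is available and no elementary (or even $\Sigma_1$-elementary) embedding can be extracted. You flag this as ``the delicate point'' but do not resolve it, and with bare $\in$-structures it is not resolvable. The standard resolution --- used throughout Section \ref{prodref} and in the proofs of Theorems \ref{theorem:CharWPR} and \ref{main:ProdSubtle}, and likewise in \cite{BagariaWilsonRefl} --- is to exploit the freedom in statement (2) and apply it to structures expanded by the full satisfaction predicates $\dot{T}_\varphi$ (together with constants such as $\dot{\kappa}$, $\dot{u}$ and a predicate for $A\cap V_{\theta}$): since the truth of every first-order formula is then recorded by an atomic fact, a mere homomorphism transfers all of it (Lemma \ref{lemma:Prod1}.\eqref{item:Prod1}), and only then can one define the functions $f_x,f^x$, locate the critical ordinal, and assemble the $(\mu,\nu)$-extenders of Lemma \ref{lemma:ProdReflExtender} in a way that captures the predicate for $A$.

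Two further points. First, the construction does not yield embeddings with critical point $\kappa$: as in {\cite[Theorem 4.1]{BagariaWilsonRefl}} (and as the ``strong or a limit of strong'' phrasing of Theorem \ref{theorem:StrongProductSigma2} reflects), each homomorphism produces a critical point $\mu_\lambda\leq\kappa$ that may depend on $\lambda$, so your claim $\crit{j_\lambda}=\kappa$ is not what the argument delivers; to conclude that $\On$ is Woodin you need an extra uniformization (pigeonhole) step fixing a single $\mu\leq\kappa$ that is $\lambda$-$A$-strong for all $\lambda$, which is all that (1) requires. Second, in $(3)\Rightarrow(2)$ you take a $\Sigma_n$-strong cardinal ``above $p$'', but at that point of your cycle you only have the existence of one $\Sigma_n$-strong cardinal for each $n$, which does not obviously give one above an arbitrary parameter; either reroute so that $(4)$ is available when proving $(2)$, or argue separately that higher-level strength yields $\Sigma_n$-strong cardinals above any prescribed ordinal. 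Your sketch of $(1)\Rightarrow(4)$ is also only a gesture: the standard argument applies $\On$ Woodin to a class coding both the universal $\Sigma_{n}$-truth predicate and a given ordinal bound, so that any $A$-strong cardinal for that class must lie above the bound and be $\Sigma_n$-strong; spelling this out would be needed for a complete proof.
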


Using the characterization of a cardinal $\delta$ being a Woodin cardinal in terms of the existence of cardinals $\kappa <\delta$ that are $A$-strong, for every subset $A$ of $V_\delta$ (see \cite[Theorem 26.14]{MR1994835}), 
we will extend the argument given in \cite[Theorem 5.13]{BagariaWilsonRefl}  to prove a version of Theorem \ref{main:Vopenka} for Woodinness and obtain the following characterization of Woodin cardinals in terms of $\PR$ in Section \ref{section:7}:

\begin{theorem}\label{main:Woodin}
 The following statements are equivalent for every uncountable cardinal $\delta$: 
 \begin{enumerate}
  \item  $\delta$ is a Woodin cardinal. 
  
  \item For every set $\Ce$ of structures of the same type with $\Ce\subseteq V_\delta$, there exists a cardinal $\kappa<\delta$ with the property that the principle $\PR_\Ce(\kappa)$ holds. 
 \end{enumerate}
\end{theorem}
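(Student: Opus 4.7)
The plan is to use the characterization of Woodinness given in \cite[Theorem 26.14]{MR1994835} stating that $\delta$ is a Woodin cardinal if and only if for every $A\subseteq V_\delta$ there exists some $\kappa<\delta$ which is $<\delta$-$A$-strong, in the sense that for every $\lambda<\delta$ there is an elementary embedding $\map{j}{V}{M}$ with $\crit{j}=\kappa$, $j(\kappa)>\lambda$, $V_\lambda\subseteq M$, and $A\cap V_\lambda=j(A)\cap V_\lambda$. With this at hand, both directions of the theorem become translations between the existence of such embeddings and the existence of homomorphisms from a product.

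For the direction $(1)\Rightarrow(2)$, I would fix a non-empty set $\Ce\subseteq V_\delta$ of structures of the same type and apply Woodinness to the subset $A=\Ce$ of $V_\delta$ to obtain a $<\delta$-$\Ce$-strong cardinal $\kappa<\delta$; since such cardinals are unbounded in $\delta$, I may further require $\Ce\cap V_\kappa\neq\emptyset$. Given $B\in\Ce$, I would then fix $\lambda<\delta$ with $B\in V_\lambda$ and an elementary embedding $\map{j}{V}{M}$ witnessing $\Ce$-strongness at $\kappa$ past $\lambda$, observe that $B\in j(\Ce)\cap V_{j(\kappa)}^M=j(\Ce\cap V_\kappa)$, and verify that the map $\Map{h}{\prod(\Ce\cap V_\kappa)}{B}{f}{j(f)(B)}$ is a homomorphism by appealing to the elementarity of $j$ together with the fact that the projection of $\prod j(\Ce\cap V_\kappa)^M$ onto its factor indexed by $B\in j(\Ce\cap V_\kappa)$ is a homomorphism in $M$.

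For the direction $(2)\Rightarrow(1)$, the plan is to localize the class-level argument of \cite[Theorem 5.13]{BagariaWilsonRefl}. Given $A\subseteq V_\delta$, I would construct a set $\Ce_A\subseteq V_\delta$ in a fixed countable first-order language expanding $\{\in\}$ by a unary predicate symbol for $A$ together with the auxiliary symbols needed for the extraction in \cite[Theorem 5.13]{BagariaWilsonRefl}, consisting of encodings $B_\lambda$ of $(V_\lambda,\in,A\cap V_\lambda)$ for $\lambda$ ranging over an unbounded subset of $\delta$ such as $C^{(1)}\cap\delta$. Hypothesis $(2)$ applied to $\Ce_A$ yields $\kappa<\delta$ such that $\PR_{\Ce_A}(\kappa)$ holds, producing for each $\lambda$ a homomorphism $\map{h_\lambda}{\prod(\Ce_A\cap V_\kappa)}{B_\lambda}$. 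From each $h_\lambda$, the extender-derivation argument in \cite[Theorem 5.13]{BagariaWilsonRefl} would yield a $(\kappa,\lambda)$-extender on $V_\kappa$ whose ultrapower embedding $\map{j}{V}{M}$ satisfies $\crit{j}=\kappa$, $j(\kappa)>\lambda$, $V_\lambda\subseteq M$, and $A\cap V_\lambda=j(A)\cap V_\lambda$, thereby witnessing $<\delta$-$A$-strongness of $\kappa$; since $A$ was arbitrary, this gives Woodinness of $\delta$.

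The main obstacle will be in carrying out this second direction, namely faithfully adapting the extraction of \cite[Theorem 5.13]{BagariaWilsonRefl} to the set-sized context. In the class version one works with a class $\Ce$ that is $\Pi_1$-definable from $A$ and the resulting strong cardinal is allowed to act on all of $V$; here everything must happen inside $V_\delta$, and the set $\Ce_A$ must simultaneously be a subset of $V_\delta$ and contain enough structures to recover $A\cap V_\lambda$ for arbitrarily large $\lambda<\delta$. The delicate technical point will be to verify that the homomorphisms produced by $\PR_{\Ce_A}(\kappa)$ yield extenders whose lengths approach $\delta$, in direct parallel to the way Theorem \ref{main:Vopenka} localizes the $\SR$-characterization of supercompact and extendible cardinals from \cite{zbMATH06029795} into a set-sized characterization of Vop\v{e}nka cardinals.
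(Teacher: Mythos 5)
Your first direction matches the paper's argument: both use the characterization from {\cite[Theorem 26.14]{MR1994835}} to get a cardinal $\kappa<\delta$ that is $\gamma$-strong for $\Ce$ for all $\gamma<\delta$, and then check that $f\mapsto j(f)(B)$ is a homomorphism. The gap is in the converse. Your plan for $(2)\Rightarrow(1)$ starts from an arbitrary uncountable $\delta$ and immediately builds $\Ce_A$ from expansions of $\langle V_\lambda,\in,A\cap V_\lambda\rangle$ with $\lambda$ ranging over ``an unbounded subset of $\delta$ such as $C^{(1)}\cap\delta$''. For a general uncountable cardinal $\delta$ no such unbounded family of suitable levels need exist: if $\delta$ is a successor cardinal, or more generally if $\beth_\gamma\geq\delta$ for some $\gamma<\delta$, then $C^{(1)}\cap\delta$ is bounded in $\delta$, so the homomorphisms supplied by $\PR_{\Ce_A}(\kappa)$ can only yield extenders whose lengths stay below that bound, and you never witness $\gamma$-$A$-strongness for $\gamma$ cofinal in $\delta$. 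This is exactly why the paper first proves Lemma \ref{lemma:WPRimpliesInaccessible}, showing that the reflection hypothesis (even in its weak form $\WPR$) already forces $\delta$ to be inaccessible; that lemma is a genuine piece of work (limit cardinal, uncountable cofinality, regularity, $\betrag{V_\gamma}<\delta$ for all $\gamma<\delta$, each via a tailor-made counterexample class and the Kunen Inconsistency), and only once it is in place does the localization of {\cite[Theorem 5.13]{BagariaWilsonRefl}} make sense. Your proposal omits this step entirely, and without it the argument does not get off the ground for the cardinals $\delta$ that the theorem is supposed to exclude.

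A secondary imprecision: the extender derived from a homomorphism $\map{h}{X}{B}$ does not in general have critical point $\kappa$; its critical point is the least $\mu$ with $h(f_\mu)\neq\mu$, and this can depend on the target structure $B_\lambda$. Since Woodinness requires a single $\kappa'<\delta$ that is $\gamma$-$A$-strong for every $\gamma<\delta$, the adaptation of {\cite[Theorem 5.13]{BagariaWilsonRefl}} has to build into the structures the data needed to control the critical point (the role played by the predicates $\dot{S}$ and $\dot{e}$, coding the map $\rho\mapsto\eta_\rho$, in the arguments of Sections \ref{section7} and \ref{section10}) and argue by contradiction from the assumption that no $\kappa<\delta$ is $<\delta$-$A$-strong. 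Deferring to the cited class-level proof is fair --- the paper itself only says ``a direct adaptation'' --- but the assertion that $\crit{j}=\kappa$ for the $\kappa$ given by $\PR_{\Ce_A}(\kappa)$ is not what the derivation yields and should be dropped from the plan.
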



\subsection{Weak Product Structural Reflection}

We now introduce a weakening of $\PR$ and show that this principle produces  analogous characterizations of strongly unfoldable, $C^{(n)}$-strongly unfoldable, and subtle cardinals.

\begin{definition}\label{wpsr}
 Given an infinite cardinal $\kappa$ and a class $\Ce$ of structures of the same type, we let $\WPR_\Ce(\kappa)$ denote the  statement that  $\Ce\cap V_\kappa\neq\emptyset$ and for every substructure $X$ of $\prod(\Ce\cap V_\kappa)$ of cardinality at most  $\kappa$ and every $B\in\Ce$, there exists a homomorphism from $X$ to $B$.  
\end{definition}

Note that, if $X$ is a substructure of $\prod(\Ce\cap V_\kappa)$, then the identity map $\map{\id}{X}{\prod(\Ce\cap V_\kappa)}$ is a homomorphism. In particular, for every  class $\Ce$ and every cardinal $\kappa$, the principle $\PR_\Ce(\kappa)$ implies the principle $\WPR_\Ce(\kappa)$. 
In Section \ref{section7}, we will prove the following theorem that yields another canonical characterization of strong unfoldability:

\begin{theorem}\label{theorem:CharWPR}
 The following statements are equivalent for every  cardinal $\kappa$: 
 \begin{enumerate}
   \item\label{item:EiterSunfOrStrong}  $\kappa$ is either strongly unfoldable or a limit of strong  cardinals. 
   
  \item\label{item:BothWeakPSR} The principle $\WPR_\Ce(\kappa)$ holds for every class $\Ce$ of structures of the same type that is definable by a $\Sigma_2$-formula with parameters in $V_\kappa$. 
 \end{enumerate}
\end{theorem}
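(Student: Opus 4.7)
We prove both implications, in each case splitting on the two disjuncts of (1).

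For $(1) \Rightarrow (2)$, assume first that $\kappa$ is a limit of strong cardinals. Then $\kappa \in C^{(2)}$, so given any nonempty $\Sigma_2$-definable class $\Ce$ with parameters in $V_\kappa$, we may choose a strong cardinal $\kappa_0 < \kappa$ with $\Ce \cap V_{\kappa_0} \neq \emptyset$ (strong cardinals lie in $C^{(2)}$). Theorem \ref{theorem:StrongProductSigma2} yields $\PR_\Ce(\kappa_0)$, hence for every $B \in \Ce$ a homomorphism $\map{h}{\prod(\Ce \cap V_{\kappa_0})}{B}$. The canonical restriction $\prod(\Ce \cap V_\kappa) \to \prod(\Ce \cap V_{\kappa_0})$ given by $f \mapsto f \upharpoonright (\Ce \cap V_{\kappa_0})$ is a homomorphism, and its composition with $h$ and with the inclusion of any substructure $X \leq \prod(\Ce \cap V_\kappa)$ produces the homomorphism $X \to B$ required by $\WPR_\Ce(\kappa)$. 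Now assume $\kappa$ is strongly unfoldable. Fix a $\Sigma_2$-definable class $\Ce$ with parameter $p \in V_\kappa$, a substructure $X \leq \prod(\Ce \cap V_\kappa)$ of cardinality at most $\kappa$, and $B \in \Ce$. Pick $\lambda \in C^{(2)}$ with $B \in V_\lambda$, and construct a transitive $\mathrm{ZF}^-$-model $M$ of cardinality $\kappa$ as the Mostowski collapse of an elementary substructure $M_0 \prec V_\eta$ (for some $\eta \in C^{(2)}$ sufficiently large) containing $V_\kappa \cup \{X, p\}$ and closed under ${<}\kappa$-sequences. Then $\kappa \in M$, ${}^{{<}\kappa}M \subseteq M$, the collapse is the identity on $V_\kappa$, and $M$ computes the defining $\Sigma_2$-formula of $\Ce$ correctly on $V_\kappa$; in particular $(\Ce \cap V_\kappa)^M = \Ce \cap V_\kappa$. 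Strong unfoldability supplies a transitive $N$ with $V_\lambda \subseteq N$ and an elementary embedding $\map{j}{M}{N}$ with $\crit{j} = \kappa$ and $j(\kappa) \geq \lambda$. Since $\lambda \in C^{(2)}$ and $V \models \varphi(B, p)$, one finds a $\Sigma_2$-witness $x_0 \in V_\lambda$ for $\varphi(B, p)$ in $V$, and $\Delta_0$-absoluteness for the transitive set $N$ transfers this to $N \models \varphi(B, p)$. Combined with $B \in V_\lambda \subseteq V_{j(\kappa)}^N$ and elementarity, this yields $B \in j(\Ce \cap V_\kappa)$ in $N$. The restriction $j \upharpoonright X$ is an elementary embedding of structures $X \to j(X)$, and composing with the projection $\prod^N(j(\Ce \cap V_\kappa)) \to B$ in $N$ restricted to the substructure $j(X)$ yields the desired homomorphism $X \to B$.

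For $(2) \Rightarrow (1)$, assume $\WPR_\Ce(\kappa)$ holds for every $\Sigma_2$-definable class $\Ce$ with parameters in $V_\kappa$, and suppose that $\kappa$ is not a limit of strong cardinals; we show that $\kappa$ is strongly unfoldable. Standard applications of $\WPR$ to simple reflecting classes first give that $\kappa$ is inaccessible and lies in $C^{(2)}$. To establish the unfoldability clause, fix an ordinal $\lambda > \kappa$ and a transitive $\mathrm{ZF}^-$-model $M$ of cardinality $\kappa$ with $\kappa \in M$ and ${}^{{<}\kappa}M \subseteq M$. The plan is to define a $\Sigma_2$-definable class $\Ce$ of structures of the form $(V_\mu, \in, \ldots)$ for $\mu \in C^{(2)}$, augmented with sufficiently many definable Skolem functions that every homomorphism between its members is forced to be elementary; a substructure $X \leq \prod(\Ce \cap V_\kappa)$ of size $\kappa$ that internally encodes $M$ by selecting, for each $m \in M$, a function whose value at each $\calA \in \Ce \cap V_\kappa$ names a canonical placeholder for $m$; and a target $B \in \Ce$ that codes $V_\lambda$ together with enough auxiliary data to locate the critical point at $\kappa$. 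The homomorphism $X \to B$ supplied by $\WPR_\Ce(\kappa)$ then decodes, via the Skolem augmentation, into an elementary embedding $\map{j}{M}{N}$ with $V_\lambda \subseteq N$ and $\crit{j} = \kappa$.

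The main obstacle is the simultaneous satisfaction of three constraints: (i) $\Ce$ must be $\Sigma_2$-definable with parameters in $V_\kappa$, although the data $\lambda, M$ it is required to witness lie outside $V_\kappa$, which is handled by a uniform parameterization over all candidate pairs and decoding the intended pair from $B$; (ii) the homomorphism $X \to B$, which a priori preserves only atomic formulas, must yield an elementary embedding, which is forced by the Skolem-function augmentation of the structures in $\Ce$; and (iii) the hypothesis that $\kappa$ is not a limit of strong cardinals is invoked to prevent the homomorphism from factoring through $\prod(\Ce \cap V_{\kappa_0})$ for some $\kappa_0 < \kappa$, which would dilute the encoded information above $\kappa_0$ and obstruct the recovery of an embedding defined on all of $M$.
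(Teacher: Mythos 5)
Your forward direction is essentially the paper's: the strongly unfoldable case is the argument of Lemma \ref{lemma:PRfromStrongUnfoldability} (collapse a suitable elementary submodel containing $V_\kappa\cup\{X\}$, embed, and take $f\mapsto j(f)(B)$), and the limit-of-strong-cardinals case follows even more directly from Theorem \ref{theorem:StrongProductSigma2}, since $\PR_\Ce(\kappa)$ itself implies $\WPR_\Ce(\kappa)$ by composing with the inclusion of $X$ into $\prod(\Ce\cap V_\kappa)$; your detour through a strong cardinal $\kappa_0<\kappa$ is harmless (provided you also put the parameter of $\Ce$ inside $V_{\kappa_0}$), just unnecessary.

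The converse direction, however, is only a plan, and the plan has genuine gaps. First, your opening claim that ``standard applications of $\WPR$ to simple reflecting classes'' give that $\kappa$ is inaccessible is unsupported: clause (2) is compatible with singular $\kappa$ (a limit of strong cardinals), so inaccessibility cannot come from simple reflection, and under your case hypothesis it is not cheaply available either; the paper never proves it in this direction, because it does not construct unfoldability embeddings at all but argues by contradiction. Second, and more seriously, a homomorphism $h$ from a size-$\kappa$ substructure $X$ of $\prod(\Ce\cap V_\kappa)$ into $B$, even after upgrading with satisfaction predicates or Skolem functions, only yields (as in Lemma \ref{lemma:Prod2}) a $\Sigma_1$-elementary map $x\mapsto h(f_x)$ defined on some $V_\lambda$ with critical point some $\mu\le\kappa$; nothing in your encoding forces $\mu=\kappa$, forces the decoded embedding to be defined on the prescribed $<\kappa$-closed model $M$, or yields $j(\kappa)\ge\lambda$, so there is no route from $h$ to the embedding required by strong unfoldability. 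The paper's actual proof instead uses the shrewdness characterization of strong unfoldability ({\cite[Theorem 1.3]{luecke2021strong}}, cf.\ Theorem \ref{theorem:SigmaNsunf}) to extract a formula $\Phi$ and a set $E\subseteq V_\kappa$ witnessing non-shrewdness, encodes $E$ together with a strongness predicate and the function $\rho\mapsto\eta_\rho$ into the structures of $\calS_\calL$ from Section \ref{prodref}, and then rules out all positions of the critical point: $\mu=\lambda=\kappa$ is excluded by $E$; $\mu=\lambda<\kappa$ is excluded because the extender of Lemmas \ref{lemma:ProdReflExtender} and \ref{Lemma:HomStrongA} would make $\mu$ a $\kappa$-strong cardinal in the forbidden interval; and $\mu<\lambda$ is excluded by iterating $j$ through increasingly strong cardinals below $\xi$ and contradicting the Kunen Inconsistency at $\sup_n j^n(\mu)$. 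Your item (iii) misidentifies how the hypothesis that $\kappa$ is not a limit of strong cardinals enters: it is not used to prevent the homomorphism from ``factoring through a subproduct,'' but to contradict the strongness of the small critical point produced by the derived extender. Without these ingredients the implication $(2)\Rightarrow(1)$ is not proved.
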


The fact that strong cardinals are strongly unfoldable now yields  the following corollary:

\begin{corollary}\label{corollary:LeastSUNF-WPR}
 The following statements are equivalent for every cardinal $\kappa$:
 \begin{enumerate}
  \item $\kappa$ is the least strongly unfoldable cardinal. 

  \item $\kappa$ is the least cardinal with the property that the principle $\WPR_\Ce(\kappa)$  holds for every class $\Ce$ of structures of the same type that is definable by a $\Sigma_2$-formula with parameters in $V_\kappa$. \qed 
 \end{enumerate} 
\end{corollary}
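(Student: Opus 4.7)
The plan is to derive the corollary directly from Theorem \ref{theorem:CharWPR}, using only the well-known fact (cited immediately above the corollary) that every strong cardinal is strongly unfoldable.

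First I would verify the direction $(1)\Rightarrow(2)$. Suppose $\kappa$ is the least strongly unfoldable cardinal. By clause (1)$\Rightarrow$(2) of Theorem \ref{theorem:CharWPR}, the principle $\WPR_\Ce(\kappa)$ holds for every class $\Ce$ definable by a $\Sigma_2$-formula with parameters in $V_\kappa$. To see that $\kappa$ is \emph{least} with this property, suppose for contradiction that $\kappa' < \kappa$ also has it. Then by (2)$\Rightarrow$(1) of Theorem \ref{theorem:CharWPR}, $\kappa'$ is either strongly unfoldable or a limit of strong cardinals. The first case directly contradicts the minimality of $\kappa$. In the second case, since every strong cardinal is strongly unfoldable, $\kappa'$ is a limit of strongly unfoldable cardinals, so there is a strongly unfoldable cardinal below $\kappa' < \kappa$, again contradicting the minimality of $\kappa$.

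For the converse $(2)\Rightarrow(1)$, suppose $\kappa$ is the least cardinal for which the principle $\WPR_\Ce(\kappa)$ holds for all $\Sigma_2$-definable classes with parameters in $V_\kappa$. Let $\mu$ denote the least strongly unfoldable cardinal. By the first direction, $\mu$ has this reflection property, so $\kappa \leq \mu$. On the other hand, by Theorem \ref{theorem:CharWPR}, $\kappa$ itself must be either strongly unfoldable or a limit of strong cardinals; in the latter case $\kappa$ is a limit of strongly unfoldable cardinals (again using that strong implies strongly unfoldable), so there is a strongly unfoldable cardinal below $\kappa$, giving $\mu < \kappa$, a contradiction. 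Hence $\kappa$ is strongly unfoldable, which forces $\mu \leq \kappa$ and therefore $\kappa = \mu$.

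There is no real obstacle here beyond invoking Theorem \ref{theorem:CharWPR} and the implication ``strong $\Rightarrow$ strongly unfoldable''. The only point deserving attention is ruling out the ``limit of strong cardinals'' alternative in both directions, and in each case this is immediate once one observes that such a limit already lies strictly above the least strongly unfoldable cardinal.
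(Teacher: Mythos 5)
Your argument is correct and is exactly the route the paper intends: the corollary is stated with no separate proof precisely because it follows, as you show, from Theorem \ref{theorem:CharWPR} together with the fact that strong cardinals are strongly unfoldable, which rules out the ``limit of strong cardinals'' alternative below the least strongly unfoldable cardinal. The only cosmetic point is that in the direction $(2)\Rightarrow(1)$ one should note first (as your own case analysis in fact yields) that Theorem \ref{theorem:CharWPR} applied to $\kappa$ already guarantees the existence of a strongly unfoldable cardinal, so that the least one, $\mu$, is well defined.
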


The  generalization of the last theorem to $C^{(n)}$-strongly unfoldable  cardinals is given by the following  theorem, which shall  be proved in Section \ref{section10}:

\begin{theorem}\label{WPSRC(n)}
  Given a natural number $n>1$, the following statements are equivalent for every cardinal $\kappa$:
  \begin{enumerate}
   \item $\kappa$ is either $C^{(n)}$-strongly unfoldable or a limit of $\Sigma_{n+1}$-strong cardinals. 
   
   \item The principle $\WPR_\Ce(\kappa)$ holds for every class $\Ce$ of structures of the same type that is definable by a $\Sigma_{n+1}$-formula with parameters in $V_\kappa$.
  \end{enumerate}
\end{theorem}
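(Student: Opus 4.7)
The plan is to mirror the proof of Theorem \ref{theorem:CharWPR} (the $n=1$ case), lifting every ingredient one level in the definability hierarchy: $\Sigma_2$ becomes $\Sigma_{n+1}$, strongly unfoldable becomes $C^{(n)}$-strongly unfoldable, and strong becomes $\Sigma_{n+1}$-strong. An important preliminary I would use throughout is that every $C^{(n)}$-strongly unfoldable cardinal belongs to $C^{(n+1)}$, mimicking the known inclusion of strongly unfoldable cardinals in $C^{(2)}$.

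For the implication from (1) to (2), I would split into two cases. If $\kappa$ is a limit of $\Sigma_{n+1}$-strong cardinals, then given a $\Sigma_{n+1}$-definable class $\Ce$ with parameters in $V_\kappa$, I would choose a $\Sigma_{n+1}$-strong $\kappa' < \kappa$ whose $V_{\kappa'}$ already contains the parameters; Theorem \ref{theorem:StrongProductSigman} then supplies $\PR_\Ce(\kappa')$, and precomposition with the canonical restriction homomorphism $\prod(\Ce \cap V_\kappa) \to \prod(\Ce \cap V_{\kappa'})$ delivers the desired witness for $\WPR_\Ce(\kappa)$. If $\kappa$ is instead $C^{(n)}$-strongly unfoldable, I would write the defining formula of $\Ce$ as $\varphi(v,p) \equiv \exists x\,\psi(v,p,x)$ with $\psi \in \Pi_n$ and $p \in V_\kappa$, fix a substructure $X \subseteq \prod(\Ce \cap V_\kappa)$ of cardinality at most $\kappa$ together with a $B \in \Ce$, and pick $\lambda \in C^{(n+1)}$ above $\kappa$ with $B \in V_\lambda$; reflection then produces $x \in V_\lambda$ with $V_\lambda \models \psi(B,p,x)$. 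Next, I would build a transitive $\mathrm{ZF}^-$-model $M$ of cardinality $\kappa$ containing $V_\kappa \cup \{X,\,\Ce \cap V_\kappa,\,p\}$, closed under ${<}\kappa$-sequences, and $\Sigma_{n+1}$-elementary in some $V_\theta$ with $\theta \in C^{(n+1)}$, ensuring that $M$ computes $\Ce \cap V_\kappa$ correctly and that $\kappa \in (C^{(n+1)})^M$. Applying $C^{(n)}$-strong unfoldability yields $\map{j}{M}{N}$ with the usual properties; the transfer $V_\lambda \prec_{\Sigma_n} V_{j(\kappa)}^N$ lifts $\psi(B,p,x)$ to $V_{j(\kappa)}^N$, and $j(\kappa) \in (C^{(n+1)})^N$ (obtained from $\kappa \in (C^{(n+1)})^M$ by elementarity of $j$) promotes this to $N \models \varphi(B,p)$, placing $B$ inside $j(\Ce \cap V_\kappa)$ as computed in $N$. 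The composition of $j \restriction X$ with the canonical evaluation projection $\prod(j(\Ce \cap V_\kappa))^N \to B$ is then the required homomorphism $X \to B$.

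For the implication from (2) to (1), I would assume (2) together with the failure of the limit-of-$\Sigma_{n+1}$-strong alternative. First, standard applications of $\WPR$ to suitable $\Sigma_{n+1}$-definable classes establish inaccessibility of $\kappa$ and $\kappa \in C^{(n+1)}$. To prove $C^{(n)}$-strong unfoldability itself, I would fix $\lambda \in C^{(n)}$ above $\kappa$ and a transitive $\mathrm{ZF}^-$-model $M$ of size $\kappa$ with $\kappa \in M$ and ${}^{<\kappa}M \subseteq M$. Following the template from the proof of Theorem \ref{theorem:CharWPR}, I would define a $\Sigma_{n+1}$-definable class $\Ce$ of structures of the form $(V_\mu,\,\in,\,\vec R,\,\vec c)$ with $\mu \in C^{(n)}$, where $\vec R$ interprets a Henkin-style family of predicates for each first-order formula and its negation (so that any homomorphism between two such structures is automatically an elementary embedding) and $\vec c$ encodes an elementary embedding of $M$ into $V_\mu$ together with the $\Sigma_n$-correctness data demanded by Definition \ref{defCnsf}. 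Choosing $B$ of this form for $\mu \in C^{(n)}$ sufficiently large, I would apply $\WPR_\Ce(\kappa)$ to a substructure $X$ of $\prod(\Ce \cap V_\kappa)$ that internally codes $M$; the resulting homomorphism $X \to B$ can then be decoded to produce the desired $\map{j}{M}{N}$ together with the property $V_\lambda \prec_{\Sigma_n} V_{j(\kappa)}^N$. The assumption that $\kappa$ is not a limit of $\Sigma_{n+1}$-strong cardinals is invoked to force the critical point of the decoded embedding to be exactly $\kappa$, rather than some smaller large cardinal lying below.

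The main obstacle I expect lies in the precise design of $\Ce$ and $X$ for the second direction: the product-and-substructure formalism must simultaneously accommodate $\Sigma_{n+1}$-definability with parameters in $V_\kappa$, carry enough relational content for homomorphisms to translate into elementary embeddings, and internally encode the $\Sigma_n$-correctness of $V_\lambda$ in the target structure, all while exploiting the failure of the limit-of-$\Sigma_{n+1}$-strong alternative to pin the critical point of the extracted embedding at $\kappa$. Balancing these constraints, and verifying that the decoding procedure genuinely reconstructs a witness to $C^{(n)}$-strong unfoldability at $\lambda$, will be where the substantive work lies.
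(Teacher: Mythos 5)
Your direction $(1)\Rightarrow(2)$ is essentially the paper's argument: in the limit case one can simply quote Theorem \ref{theorem:StrongProductSigman} (and $\PR_\Ce(\kappa)$ trivially implies $\WPR_\Ce(\kappa)$, so the detour through a smaller $\kappa'$ and a restriction homomorphism is unnecessary but harmless), and in the $C^{(n)}$-strongly unfoldable case your argument is the paper's Lemma \ref{lemma:PRfromStrongUnfoldability2}: take a hull $M$ of size $\kappa$ containing $V_\kappa\cup\{X,\Ce\cap V_\kappa\}$, closed under ${<}\kappa$-sequences, apply Definition \ref{defCnsf} to get $\map{j}{M}{N}$ with $V_\delta\prec_{\Sigma_n}V^N_{j(\kappa)}$ for some $\delta\in C^{(n+1)}$ above $\kappa$ with $B\in V_\delta$, observe $B\in\Ce\cap V_\delta\subseteq j(\Ce\cap V_\kappa)$, and send $f\mapsto j(f)(B)$. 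That half is fine.

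The genuine gap is in $(2)\Rightarrow(1)$, which is where all the work of the theorem lies and which your outline does not actually contain. Your plan is to decode, from a single homomorphism $\map{h}{X}{B}$, a literal witness $\map{j}{M}{N}$ to Definition \ref{defCnsf} with $\crit{j}=\kappa$, $j(\kappa)>\lambda$ and $V_\lambda\subseteq N$, $V_\lambda\prec_{\Sigma_n}V^N_{j(\kappa)}$. This cannot work as described: $X$ has cardinality at most $\kappa$ and $\Ce\cap V_\kappa\subseteq V_\kappa$, so neither $X$ nor its $h$-image can carry $V_\lambda$ elementwise for $\lambda>\kappa$ (so nothing decoded from $h$ can certify $V_\lambda\subseteq N$), your requirement that the small structures' constants ``encode an elementary embedding of $M$ into $V_\mu$'' is vacuous for $\mu<\kappa$ since $M$ has size $\kappa$, and, most importantly, the map one actually extracts from $h$ (the $\Sigma_1$-elementary $\Map{j}{V_\lambda}{V_\chi}{x}{h(f_x)}$ of Lemma \ref{lemma:Prod2}) has critical point $\mu$ that the hypothesis ``$\kappa$ is not a limit of $\Sigma_{n+1}$-strong cardinals'' in no way forces to equal $\kappa$. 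The paper's proof never builds a witnessing embedding at all; it is a contradiction argument whose essential ingredients are absent from your sketch: (i) the internal characterization of $C^{(n)}$-strong unfoldability as a shrewdness-type reflection property (Theorem \ref{theorem:SigmaNsunf}, clause \eqref{item:shrewd}), whose failure supplies a formula $\Phi$ and a non-reflecting set $E\subseteq V_\kappa$; (ii) a class $\Ce$ of structures $V_{\theta_A+1}$ with $\kappa_A,\theta_A\in C^{(n)}$ carrying, besides satisfaction predicates, the predicates $\dot{S}$ and $\dot{e}$ recording $\gamma$-$\Sigma_{n+1}$-strongness data on an interval $[\alpha,\kappa_A)$ free of $\Sigma_{n+1}$-strong cardinals; (iii) the case analysis on $\mu\leq\lambda\leq\kappa$, where $\mu=\lambda=\kappa$ is excluded by $E$ via Lemma \ref{lemma:ProdCons}, $\mu=\lambda<\kappa$ is excluded by turning $h$ into a $(\mu,\kappa)$-extender (Lemmas \ref{lemma:ProdReflExtender} and \ref{Lemma:HomStrongA}) and verifying ``$\kappa\in C^{(n)}$'' in its ultrapower so that \cite[Proposition 5.5]{BagariaWilsonRefl} makes $\mu$ $\kappa$-$\Sigma_{n+1}$-strong, contradicting the choice of $\alpha$; and (iv) the endgame for $\mu<\lambda$, iterating $j$ along the $\dot{S}$-data to get a fixed $V_{\tau+2}$ and a Kunen contradiction. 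You explicitly defer exactly these points as ``where the substantive work lies''; that deferred work is the proof of the hard direction, so the proposal is incomplete there rather than merely under-detailed.
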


The next result shows that the above pattern also holds for the class-version of $\WPR$. The proof will be given in Section \ref{section10}.

\begin{theorem}\label{theorem:Ordsubtle2}
 The following schemes of axioms are equivalent over $\ZFC$: 
 \begin{enumerate}
  \item $\On$ is essentially subtle. 
  
  \item  For every non-empty class $\Ce$ of structures of the same type, there exists a cardinal $\kappa$ such that $\WPR_\Ce(\kappa)$ holds. 
 \end{enumerate}
\end{theorem}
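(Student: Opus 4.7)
The argument proceeds by reducing each direction to the characterizations of $C^{(n)}$-strongly unfoldable cardinals already established, via Theorem~\ref{theorem:OrdSubtle1} (which identifies essential subtleness of $\On$ with the existence of a proper class of $C^{(n)}$-strongly unfoldable cardinals for every $n$) together with Theorems~\ref{theorem:CharWPR} and~\ref{WPSRC(n)} (which characterize $C^{(n)}$-strongly unfoldable cardinals and their limits in terms of $\WPR$).

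For $(1) \Rightarrow (2)$, given a non-empty class $\Ce$ of structures of the same type, first fix $m$ and parameters $p \in V_\alpha$ witnessing that $\Ce$ is $\Sigma_m$-definable. By (1) and Theorem~\ref{theorem:OrdSubtle1}, the class of $C^{(n)}$-strongly unfoldable cardinals is proper for every natural number $n$, so we pick $n \geq \max(1, m-1)$ and a $C^{(n)}$-strongly unfoldable cardinal $\kappa > \alpha$. Then $\Ce$ is $\Sigma_{n+1}$-definable with parameters in $V_\kappa$, and $\WPR_\Ce(\kappa)$ follows from Theorem~\ref{theorem:CharWPR} (when $n = 1$) or Theorem~\ref{WPSRC(n)} (when $n > 1$).

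For the converse $(2) \Rightarrow (1)$, by Theorem~\ref{theorem:OrdSubtle1} it suffices to produce, for every natural number $n \geq 1$, a $C^{(n)}$-strongly unfoldable cardinal. Fix such an $n$. The plan is to exhibit a single non-empty, parameter-free, $\Sigma_{n+1}$-definable class $\Ce_n$ of structures with the following property: any cardinal $\kappa$ satisfying $\WPR_{\Ce_n}(\kappa)$ must fall into one of the two alternatives of Theorem~\ref{WPSRC(n)} (respectively Theorem~\ref{theorem:CharWPR} for $n = 1$), namely, $\kappa$ is $C^{(n)}$-strongly unfoldable or a limit of $\Sigma_{n+1}$-strong cardinals. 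In either case $C^{(n)}$-strongly unfoldable cardinals exist, since a $\Sigma_{n+1}$-strong cardinal is itself $C^{(n)}$-strongly unfoldable. The class $\Ce_n$ will be modelled on the specific test class used in the corresponding direction of Theorem~\ref{WPSRC(n)}, but with the parameters internalized by expanding the signature of the structures in $\Ce_n$; concretely, one takes suitable expansions of tuples of the form $\langle V_\alpha, \in, \alpha, \ldots\rangle$ indexed by ordinals $\alpha$ carrying the relevant $\Sigma_n$-correctness data, chosen so that the overall definitional complexity remains $\Sigma_{n+1}$.

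The principal technical obstacle is precisely this construction of $\Ce_n$. Theorems~\ref{theorem:CharWPR} and~\ref{WPSRC(n)} extract their conclusions from the uniform hypothesis that $\WPR_\Ce(\kappa)$ holds for \emph{every} $\Sigma_{n+1}$-definable class with parameters in $V_\kappa$, whereas assumption (2) supplies this principle only for one class at a time. Aggregating the required family of test classes into a single parameter-free class, without exceeding complexity $\Sigma_{n+1}$, is the central point, and we expect the same packaging device already used in the analogous step of the proof of Theorem~\ref{main:Woodin} to go through in the present setting.
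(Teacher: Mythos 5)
Your direction $(1)\Rightarrow(2)$ is correct and is essentially the paper's argument: combine Theorem \ref{theorem:OrdSubtle1} with Lemma \ref{lemma:PRfromStrongUnfoldability2} (equivalently, with the easy directions of Theorems \ref{theorem:CharWPR} and \ref{WPSRC(n)}). The converse, however, contains a genuine gap, and it is precisely the one you flag yourself. Theorems \ref{theorem:CharWPR} and \ref{WPSRC(n)} cannot be invoked as black boxes: their hypotheses require $\WPR_\Ce(\kappa)$ for \emph{every} $\Sigma_{n+1}$-definable class with parameters in $V_\kappa$ at one and the same $\kappa$. This uniformity is used essentially in their proofs, e.g.\ to get $\kappa\in C^{(n+1)}$ via Lemma \ref{lemma:Sigma2Reflex}, and to build the test class from an ordinal $\alpha$ bounding the $\Sigma_{n+1}$-strong cardinals below $\kappa$ together with a failure-of-shrewdness witness $E\subseteq V_\kappa$ --- data that depend on $\kappa$. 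Assumption (2) supplies, for a single class, only \emph{some} cardinal about which nothing further is known, so the whole burden of your argument rests on the construction of the parameter-free class $\Ce_n$ and on re-running the product--homomorphism analysis of Section \ref{prodref} at an arbitrary cardinal witnessing $\WPR_{\Ce_n}(\kappa)$; none of this is carried out. The appeal to a ``packaging device'' from Theorem \ref{main:Woodin} does not close the gap: there the hypothesis quantifies over \emph{all} sets of structures contained in $V_\delta$, so each test class may freely use the relevant subset of $V_\delta$ as a parameter, and no aggregation into one parameter-free class is needed or performed. In addition, your reduction uses the claim that every $\Sigma_{n+1}$-strong cardinal is $C^{(n)}$-strongly unfoldable, which the paper records only in the case $n=1$ (strong implies strongly unfoldable) and which would itself require proof.

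For comparison, the paper's proof of $(2)\Rightarrow(1)$ sidesteps your reduction entirely: given a putative counterexample to essential subtleness, i.e.\ a club class $C$ and a class function $\calE$, it forms one class $\Ce$ of structures of the form $\langle V_{\theta+1},\in,\ldots\rangle$ that internalize $C$ and $\calE$ through the symbols $\dot{C},\dot{e},\dot{s}$ (as in the proof of Theorem \ref{main:ProdSubtle}), applies (2) to this single class to obtain $\zeta$ with $\WPR_\Ce(\zeta)$, and then uses the machinery of Section \ref{prodref} (the functions $f_x,f^x$, a homomorphism $h$, and the $\Sigma_1$-elementary embedding of Lemma \ref{lemma:Prod2}, together with the Kunen inconsistency) to produce $\beta<\gamma$ in $C$ and $E\in\calE(\gamma)$ with $E\cap\beta\in\calE(\beta)$, a contradiction. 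Since essential subtleness is itself a scheme over class counterexamples, the ``one class at a time'' hypothesis (2) suffices directly on that route; to make your route work you would have to carry out in full the construction and analysis that your proposal only promises.
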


Finally, we also have the analog characterization of subtle cardinals  in terms of weak product $\SR$. The proof of this result is contained  in Section \ref{section8}.

\begin{theorem}\label{main:ProdSubtle}
 The following statements are equivalent for every uncountable cardinal $\delta$: 
 \begin{enumerate}
  \item $\delta$ is a subtle cardinal. 
  
  \item For every set $\Ce$ of structures of the same type with $\Ce\subseteq V_\delta$, there exists a cardinal $\kappa<\delta$ with the property that the principle $\WPR_\Ce(\kappa)$ holds. 
 \end{enumerate}
\end{theorem}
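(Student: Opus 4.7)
I will prove the two directions of the equivalence separately, following the pattern of Theorem \ref{theorem:SubtleChar} (subtle via $\WSR$) and Theorem \ref{main:Woodin} (Woodin via $\PR$).

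For direction (2) $\Rightarrow$ (1), I fix a $\delta$-list $\langle E_\gamma \mid \gamma < \delta\rangle$ and a club $C \subseteq \delta$, with the goal of producing $\beta < \gamma$ in $C$ satisfying $E_\beta = E_\gamma \cap \beta$. I construct $\Ce = \{A_\gamma \mid \gamma \in C\} \subseteq V_\delta$, where each $A_\gamma$ is a structure of fixed type with domain $V_{\gamma+1}$, enriched with a constant interpreted as $\gamma$, a unary predicate picking out $E_\gamma$ as an element of the domain, and a binary relation $R_\gamma$ coding the operation $(y,z) \mapsto (y = E_\gamma \cap z)$. By (2), some $\kappa < \delta$ satisfies $\WPR_\Ce(\kappa)$. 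I pick $\beta \in C \cap \kappa$ (using $\Ce \cap V_\kappa \neq \emptyset$) and $\gamma \in C$ with $\gamma > \kappa$, and I build a substructure $X$ of $\prod(\Ce \cap V_\kappa)$ of cardinality at most $\kappa$ whose generators include the diagonal $\bar c$ interpreting the constant, an element $\hat E$ projecting at coordinate $A_{\beta'}$ to $E_{\beta'}$, and probes $g, f$ linked by the relation $R$ and satisfying $g^{\prod}(A_{\beta'}) = \beta$ and $f^{\prod}(A_{\beta'}) = E_{\beta'} \cap \beta$. Any homomorphism $X \to A_\gamma$ furnished by $\WPR_\Ce(\kappa)$ must, by preservation of the constant, the unary predicate, and $R$, send $\bar c$ to $\gamma$, $\hat E$ to $E_\gamma$, $g$ to some $\hat\beta \in V_{\gamma+1}$, and $f$ to $E_\gamma \cap \hat\beta$. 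Comparing this to the canonical projection $X \to A_\beta$ obtained by evaluating each generator at coordinate $A_\beta$ identifies $\hat\beta = \beta$ and yields the desired equality $E_\beta = E_\gamma \cap \beta$.

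For direction (1) $\Rightarrow$ (2), let $\delta$ be subtle and $\Ce \subseteq V_\delta$. If $\Ce \in V_\delta$, fix $\kappa < \delta$ with $\Ce \subseteq V_\kappa$; then $\Ce \cap V_\kappa = \Ce$ and the canonical projections $\prod \Ce \to B$ for $B \in \Ce$ restrict to homomorphisms from any substructure, so $\WPR_\Ce(\kappa)$ holds trivially. Otherwise $\Ce$ is unbounded in $V_\delta$, so $|\Ce| = \delta$, and I well-order $\Ce = \{B_\alpha \mid \alpha < \delta\}$ in order of non-decreasing rank; on a club $C_0$ of inaccessible $\kappa < \delta$ one then has $\Ce \cap V_\kappa = \{B_\alpha \mid \alpha < \kappa\}$. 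Suppose toward contradiction that $\WPR_\Ce(\kappa)$ fails for every $\kappa \in C_0$ and pick witnesses $(X_\kappa, B_{\alpha_\kappa})$ with $\alpha_\kappa \geq \kappa$; necessarily $B_{\alpha_\kappa} \notin V_\kappa$, as otherwise the projection $\prod(\Ce \cap V_\kappa) \to B_{\alpha_\kappa}$ would restrict to a homomorphism. Using a coherent bijection $V_\kappa \leftrightarrow \kappa$ (chosen uniformly via a canonical well-ordering), I define $\calE_\kappa \subseteq \POT{\kappa}$ as the set of codes of bad witnesses, coupled with a \emph{local failure certificate}: the positive quantifier-free diagram over $X_\kappa$ whose realization is obstructed in $B_{\alpha_\kappa}$, which lives in $V_\kappa$. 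The equivalent reformulation of subtleness for families of subsets, recorded just before Definition \ref{defsubtle} in the introduction, supplies $\beta < \gamma$ in $C_0$ and $E \in \calE_\gamma$ with $E \cap \beta \in \calE_\beta$. This coherence identifies $X_\beta$ as the $\beta$-restriction of $X_\gamma$ and aligns the respective failure certificates, enabling a gluing argument in the spirit of \cite[Theorem 5.13]{BagariaWilsonRefl} that amalgamates the data into a homomorphism $X_\gamma \to B_{\alpha_\gamma}$, contradicting the badness of the witness at $\gamma$.

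The main technical obstacle is the encoding step in direction (1) $\Rightarrow$ (2): the target $B_{\alpha_\kappa}$ has rank above $\kappa$ and index $\alpha_\kappa \geq \kappa$, so it is not directly codable as an element of $\POT{\kappa}$. I intend to overcome this by encoding not the target itself but the local data witnessing the failure, namely the positive quantifier-free diagram over $X_\kappa$ whose realization is obstructed in $B_{\alpha_\kappa}$; this data lives in $V_\kappa$, restricts coherently under the bijections $V_\kappa \leftrightarrow \kappa$, and supports the gluing step that delivers the contradiction. Making the gluing precise -- in particular, verifying that the pieces obtained from the failure certificate at level $\beta$ fit together with the fragment of $X_\gamma$ it controls to realize a full homomorphism into the target at level $\gamma$ -- is the heart of the argument.
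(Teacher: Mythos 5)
Both directions of your proposal contain genuine gaps, and the one in the direction $(2)\Rightarrow(1)$ is fatal. Your plan there hinges on the claim that a homomorphism $h$ from your substructure $X$ of $\prod(\Ce\cap V_\kappa)$ into $A_\gamma$ must send the probe $g$ (the function with constant value $\beta$) to $\beta$ itself, ``by comparison with the canonical projection onto $A_\beta$.'' Nothing forces this: $h$ is only required to preserve atomic formulas positively, it bears no relation to the coordinate projection at $A_\beta$, and in general the induced map $x\mapsto h(f_x)$ moves ordinals. This is exactly the difficulty the paper's proof is built to confront: one first enriches the structures with satisfaction predicates $\dot{T}_\varphi$ for all formulas (the class $\calS_\calL$ of Section \ref{prodref}), so that mere homomorphisms transfer full first-order information (Lemma \ref{lemma:Prod1}), and then one analyzes the ordinals $\lambda$, $\chi$, the $\Sigma_1$-elementary embedding $\map{j}{V_\lambda}{V_\chi}$, $x\mapsto h(f_x)$, and its critical point $\mu$ (Lemmas \ref{lemma:ProdCons} and \ref{lemma:Prod2}), using the Kunen Inconsistency repeatedly together with structures that code the \emph{entire} list $\seq{E_\gamma}{\gamma<\kappa_A}$ via $\dot{e}$, the club via $\dot{C}$, and its successor function via $\dot{s}$, in order to force $\mu\in C$ and extract $\beta\in C\cap\mu$ with $E_\mu\cap\beta=E_\beta$. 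Your structures code only the single set $E_\gamma$ at the top and carry no satisfaction predicates, so even a repaired version of your argument cannot pin down the image of the probe; the claimed identification $\hat\beta=\beta$ is simply unavailable.

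In the direction $(1)\Rightarrow(2)$ your overall strategy (code failure data into a $\delta$-list and apply subtleness) is the right one, but two steps break. First, the inaccessible cardinals below $\delta$ do not form a club, so you cannot ``pick witnesses on a club $C_0$ of inaccessibles''; the paper handles this by working with a club of strong limit cardinals and building the list so that at singular strong limits it codes a cofinal sequence, which forces both reflection points produced by subtleness to be inaccessible. Second, your gluing is aimed the wrong way: from coherence between levels $\beta<\gamma$ you try to manufacture a homomorphism from $X_\gamma$ into its bad target $B_{\alpha_\gamma}$, for which no mechanism exists. The correct move (as in Lemma \ref{lemma:SubtleYieldsProdRefl}) is to code the diagram of $X_\kappa$ itself through a bijection $\map{b_\kappa}{\kappa}{X_\kappa}$, so that $D_\beta=D_\gamma\cap\beta$ yields a map $\map{b_\gamma\circ b_\beta^{-1}}{X_\beta}{X_\gamma}$ preserving the relevant formulas, and then to compose with the coordinate projection of $X_\gamma$ onto $B_\beta$ --- which requires thinning the club in advance so that the rank of the bad target chosen at level $\beta$ lies below the next club point, guaranteeing $B_\beta\in\Ce\cap V_\gamma$. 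This produces a homomorphism into the bad target at the \emph{lower} level $\beta$ and contradicts the choice of witnesses there; your ``failure certificate'' device is neither needed for this nor sufficient to replace it.
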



\section{Vop\v{e}nka cardinals}\label{section2}

In this section, we present the proof of Theorem \ref{main:Vopenka}.  Recall that an inaccessible cardinal $\delta$ is a \emph{Vop\v{e}nka cardinal} if for every set $\Ce\in V_{\delta+1}\setminus V_\delta$ of structures of the same type, there exist distinct $A,B\in\Ce$ with an elementary embedding from $A$ to $B$.

\begin{lemma}\label{lemma:VopenkeSR}
 If $\delta$ is a Vop\v{e}nka cardinal and $\Ce\subseteq V_\delta$  is a set of structures of the same type, then there exists a cardinal $\kappa<\delta$ with the property that $\HSR_\Ce(\kappa)$ (and hence also $\SR_\Ce(\kappa)$ and  $\VSR_\Ce(\kappa)$) holds. 
\end{lemma}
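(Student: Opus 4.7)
The plan is to argue by contradiction, using the Vop\v{e}nka property at $\delta$ applied to a carefully indexed family of enhanced structures built from hypothetical witnesses to the failure of $\HSR_\Ce$.

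First, if $\Ce\in V_\delta$, then $\Ce\subseteq V_\kappa$ for any sufficiently large strong limit cardinal $\kappa<\delta$, and $\HSR_\Ce(\kappa)$ is immediate via the identity. Assume henceforth that $\Ce\in V_{\delta+1}\setminus V_\delta$ and, for contradiction, that $\HSR_\Ce(\kappa)$ fails for every $\kappa<\delta$. Since $\delta$ is inaccessible, strong limit cardinals are unbounded below $\delta$. For each such strong limit $\kappa<\delta$, I select $B_\kappa\in\Ce$ of minimum rank for which no $A\in\Ce\cap H_\kappa=\Ce\cap V_\kappa$ elementarily embeds into $B_\kappa$, and write $\rho(\kappa)$ for the rank of $B_\kappa$. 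Since the identity is an elementary embedding $B_\kappa\to B_\kappa$, we must have $B_\kappa\notin V_\kappa$, and hence $\rho(\kappa)\geq\kappa$.

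The principal obstacle is to guarantee that any Vop\v{e}nka embedding between two such enhanced witness structures places the smaller witness inside $V_{\kappa_j}$ for the larger index $\kappa_j$; a priori, $\rho(\kappa_i)$ could exceed many cardinals above $\kappa_i$. I would overcome this by a sparsification step: recursively define an increasing sequence $\seq{\kappa_i}{i<\delta}$ of strong limit cardinals below $\delta$, taking $\kappa_{i+1}$ to be the least strong limit cardinal strictly greater than $\rho(\kappa_i)$ and taking suprema at limit stages (using that suprema of strong limits are strong limit). The regularity of $\delta$ together with $\rho(\kappa_i)<\delta$ keeps the sequence below $\delta$, and by construction $\rho(\kappa_i)<\kappa_{i+1}\leq\kappa_j$ whenever $i<j<\delta$.

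For each $i<\delta$ I form the structure
\[
\mathcal{M}_i \;=\; \langle V_{\rho(\kappa_i)+\omega},\, \in,\, \{B_{\kappa_i}\},\, \{\kappa_i\}\rangle
\]
in a fixed type with one binary relation and two unary predicates, and let $\De=\{\mathcal{M}_i : i<\delta\}$. Because the members of $\De$ have unbounded rank below $\delta$, $\De\in V_{\delta+1}\setminus V_\delta$, and the Vop\v{e}nka property at $\delta$ produces distinct indices $i,j$ together with an elementary embedding $\map{J}{\mathcal{M}_i}{\mathcal{M}_j}$. The singleton predicates force $J(\kappa_i)=\kappa_j$ and $J(B_{\kappa_i})=B_{\kappa_j}$, and since elementary embeddings of transitive $\in$-structures are non-decreasing on ordinals we obtain $\kappa_i<\kappa_j$, hence $i<j$. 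The sparsification then yields $\rho(\kappa_i)<\kappa_j$, so $B_{\kappa_i}\in V_{\kappa_j}=H_{\kappa_j}$. A standard absoluteness argument shows that $J$ restricts to an elementary embedding of the structure $B_{\kappa_i}$ into $B_{\kappa_j}$ in the language of $\Ce$, contradicting the choice of $B_{\kappa_j}$ as a witness to the failure of $\HSR_\Ce(\kappa_j)$.
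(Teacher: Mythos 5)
Your proposal is correct, and it takes a genuinely different route from the paper. The paper does not argue by contradiction at all: it invokes the known characterization of Vop\v{e}nka cardinals ({Kanamori, Exercise 24.19}) to obtain a single cardinal $\kappa<\delta$ that is $\eta$-extendible for $\Ce$ for every $\eta<\delta$, i.e.\ for each $\eta$ there is an elementary embedding $\map{j}{\langle V_\eta,\in,\Ce\cap V_\eta\rangle}{\langle V_\zeta,\in,\Ce\cap V_\zeta\rangle}$ with $\crit{j}=\kappa$ and $j(\kappa)>\eta$; given $B\in\Ce\cap V_\eta$, the restriction $j\restriction B$ embeds $B$ into $j(B)$, so elementarity of $j$ (applied inside the target structure, where $B\in\Ce\cap V_{j(\kappa)}$) yields some $A\in\Ce\cap V_\kappa=\Ce\cap H_\kappa$ embedding into $B$. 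Thus the paper exhibits one uniform $\kappa$ working for all $B\in\Ce$, delegating the heavy lifting to the cited exercise. You instead apply the Vop\v{e}nka property of $\delta$ directly to a family of counterexample structures: assuming $\HSR_\Ce(\kappa)$ fails everywhere below $\delta$, your sparsified sequence $\seq{\kappa_i}{i<\delta}$ together with the singleton predicates $\{B_{\kappa_i}\}$, $\{\kappa_i\}$ forces any Vop\v{e}nka embedding $\map{J}{\mathcal{M}_i}{\mathcal{M}_j}$ to satisfy $i<j$, $B_{\kappa_i}\in\Ce\cap H_{\kappa_j}$, and (by absoluteness of satisfaction for a fixed formula in the transitive limit-rank sets $V_{\rho(\kappa_i)+\omega}$, $V_{\rho(\kappa_j)+\omega}$) to restrict to an elementary embedding of $B_{\kappa_i}$ into $B_{\kappa_j}$, contradicting the choice of $B_{\kappa_j}$; the logic is sound because the contradiction refutes the global failure assumption, so $\HSR_\Ce(\kappa)$ (and hence $\SR_\Ce(\kappa)$ and $\VSR_\Ce(\kappa)$) holds at some $\kappa<\delta$. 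What each approach buys: the paper's argument is short, constructive in the sense of exhibiting the reflecting cardinal, and reuses a standard result; yours is self-contained, essentially re-proving the needed fragment of that exercise via the sparsification trick, at the price of being a pure contradiction argument that does not identify $\kappa$. All the delicate points in your write-up (the case $\Ce\in V_\delta$, $H_\kappa=V_\kappa$ at strong limits, $\rho(\kappa)\geq\kappa$, $J(\kappa_i)=\kappa_j$ and monotonicity of $J$ on ordinals, $\rho(\kappa_i)<\kappa_j$) check out.
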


\begin{proof}
 %
 Assume that $\delta$ is a Vop\v{e}nka cardinal and $\Ce\subseteq V_\delta$ is a set of structures of the same type. By \cite[Exercise 24.19]{MR1994835}, there is  a cardinal $\kappa<\delta$   that is \emph{$\eta$-extendible for $\Ce$}, for every $\eta<\delta$, i.e., for every ordinal $\kappa<\eta<\delta$, there is an ordinal $\zeta$ and an elementary embedding
   $$\map{j}{\langle V_\eta ,\in, \Ce \cap V_\eta\rangle}{\langle V_\zeta,\in ,\Ce\cap V_\zeta \rangle}$$
   with $\crit{j}=\kappa$ and $j(\kappa)>\eta$. 
   Now, fix a structure $B$ in $\Ce$ and let $\kappa<\eta<\delta$ be a limit ordinal such that $B\in V_\eta$. Since $B$ is an element of $\Ce\cap V_{j(\kappa)}$ and $j$ induces an elementary embedding of $B$ into $j(B)$, elementarity yields an element $A$ of $\Ce\cap V_\kappa$ and an elementary embedding of $A$ into $B$. Moreover, since $\kappa$ is inaccessible, we know that $A$ is contained in $H_\kappa$. These computations show that $\HSR_\Ce(\kappa)$ holds.   
\end{proof}

\begin{lemma}\label{lemma:BadC1singular}
 If $\delta$ is a singular cardinal, then there exists a class $\Ce$ of structures of the same type with $\Ce\subseteq V_\delta$ and the property that the principles $\SR_\Ce(\kappa)$ and $\VSR_\Ce(\kappa)$ fail for every cardinal $\kappa<\delta$. 
\end{lemma}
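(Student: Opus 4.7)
Let $\mu = \cf(\delta) < \delta$, and fix a strictly increasing continuous sequence $\seq{\delta_\alpha}{\alpha < \mu}$ of cardinals cofinal in $\delta$. The plan is to build a set $\Ce = \{B_\alpha \mid \alpha < \mu\} \subseteq V_\delta$ of structures in a fixed countable language such that $|B_\alpha| = \delta_\alpha$ and no elementary embedding $B_\beta \to B_\alpha$ exists for $\beta \neq \alpha$ in $\mu$. Granted such a rigid family, for each cardinal $\kappa < \delta$ I pick $\alpha < \mu$ with $\delta_\alpha > \kappa$; by rigidity, the only $A \in \Ce$ that elementarily embeds into $B_\alpha$ is $B_\alpha$ itself (up to isomorphism), of cardinality $\delta_\alpha > \kappa$. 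This rules out any $A \in \Ce$ with $|A| < \kappa$ and hence shows $\SR_\Ce(\kappa)$ fails. For $\VSR_\Ce(\kappa)$, the universe of $B_\beta$ is built on an ordinal of rank at least $\delta_\beta$, so $B_\beta \in V_\kappa$ implies $\delta_\beta < \kappa$, and the same argument applies.

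When $\mu = \omega$ a direct construction works. Take the countable language with a binary symbol $<$ and constants $c_m$ for $m < \omega$, and set $B_n = (\delta_n + n + 1, <, \{c_m\}_{m < \omega})$ with $<$ the ordinal ordering and $c_m^{B_n} = \delta_n + \min(m, n)$. The first-order sentence
\begin{equation*}
\phi_n \equiv \bigwedge_{m<n}(c_m < c_{m+1}) \wedge (c_n = c_{n+1})
\end{equation*}
is satisfied in $B_{n'}$ if and only if $n' = n$: for $n' < n$ the conjunct $c_{n'} < c_{n'+1}$ fails since both sides equal $\delta_{n'} + n'$; for $n' > n$ the conjunct $c_n = c_{n+1}$ fails since the sides equal $\delta_{n'} + n$ and $\delta_{n'} + n + 1$. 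Hence no elementary embedding $B_m \to B_n$ exists for $m \neq n$.

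When $\mu$ is uncountable, only countably many first-order theories are available over a countable language, so pure theory-distinguishing breaks down once $\mu > 2^{\aleph_0}$. Instead, I would use a language with a binary order $<$, a unary predicate $P$, a constant $c$, and an auxiliary function symbol $f$: in $B_\alpha$, the predicate $P$ marks an initial segment of order type $\mu$, the constant $c$ picks out the element of $P$ corresponding to the index $\alpha$, and the function $f$ encodes the cofinal decomposition of the non-$P$ part according to $\seq{\delta_\xi}{\xi \leq \alpha}$. Any elementary embedding $B_\beta \to B_\alpha$ is then forced to send $c^{B_\beta}$ to $c^{B_\alpha}$ and to respect the level structure encoded by $f$, thereby preserving the index and hence the cardinality $\delta_\alpha$.

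The main obstacle is the rigidity construction in the uncountable-$\mu$ case: the countable signature precludes theory-based distinguishing and requires a structural encoding of the cofinal data. The cofinal sequence itself provides the required first-order definable scaffold, turning the singularity of $\delta$ into a first-order obstacle to reflection, exactly as needed.
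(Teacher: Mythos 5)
Your $\cof{\delta}=\omega$ case is correct and is essentially the paper's first case (the paper uses the structures $\langle\delta_n,\in,n\rangle$ with a single constant naming $n$; your coding with infinitely many constants and the sentences $\phi_n$ achieves the same separation of theories). The problem is the uncountable-cofinality case, which in your write-up is only a sketch, and the sketch does not work as stated. The crucial claim is that any elementary embedding $B_\beta\to B_\alpha$ ``is forced to \ldots preserve the index.'' An elementary embedding must send $c^{B_\beta}$ to $c^{B_\alpha}$, but nothing forces it to be the identity (or even position-preserving) on the $P$-part: the position of a point in a linear order of type $\mu$ is not first-order expressible, and elementary embeddings of such orders (even expanded by a marked point and by a function $f$ recording which $\delta_\xi$-block an element lies in) can move points. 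Indeed, your own observation that there are only $2^{\aleph_0}$ complete theories in a countable language shows that for $\mu>2^{\aleph_0}$ many of your $B_\alpha$ must be elementarily equivalent, so the obstruction can no longer be a difference of theories; but your proposal supplies no other obstruction, only the assertion that the ``level structure'' is respected. As it stands there is no argument ruling out an elementary embedding $B_\beta\to B_\alpha$ with $\beta\neq\alpha$, and for the kind of order-plus-block-function structures you describe such embeddings can typically be built.

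The missing idea, and the way the paper handles this case, is to make the structures second-order-rich enough that the Kunen Inconsistency applies. The paper takes $A^\xi=\langle V_{\delta_\xi},\in,\cof{\delta},\xi\rangle$ (with $\delta_\xi>\cof{\delta}$); if $j\colon A^\xi\to A^\zeta$ is elementary, then $j(\cof{\delta})=\cof{\delta}$, so $j$ restricts to an elementary self-embedding of $V_{\cof{\delta}}$. If this restriction were non-trivial, the supremum of its critical sequence would be a fixed point $\lambda<\cof{\delta}$ (here regularity and uncountability of $\cof{\delta}$ are used), yielding a non-trivial elementary self-embedding of $V_{\lambda+2}$, contradicting Kunen's theorem. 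Hence $j$ is the identity on $V_{\cof{\delta}}$, so $\xi=j(\xi)=\zeta$, and the failure of $\SR_\Ce(\kappa)$ and $\VSR_\Ce(\kappa)$ for all $\kappa<\delta$ follows as in your reduction. So your overall reduction (rigid family of unboundedly many cardinalities below $\delta$ implies the lemma) is fine, but to complete the proof you need to replace the sketched first-order scaffold by structures carrying enough of the $\in$-structure of $V_{\cof{\delta}}$ (or some comparable device) so that a non-trivial embedding can be refuted by Kunen's theorem rather than by elementary equivalence considerations.
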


\begin{proof}
  Fix a strictly increasing sequence $\seq{\delta_\xi}{\xi<\cof{\delta}}$ of cardinals greater than $\cof{\delta}$ that is cofinal in $\delta$. 
  Define $\calL$ to be the first-order language that extends $\calL_\in$ by two constant symbols. 
 Given $\xi<\cof{\delta}$, define $A^\xi$ to be the $\calL$-structure $\langle V_{\delta_\xi},\in,\cof{\delta},\xi\rangle$. Set  $\Ce=\Set{A^\xi}{\xi<\cof{\delta}}\subseteq V_\delta$.

 \begin{claim*}
  If $\xi,\zeta<\cof{\delta}$ have the property that there exists an elementary embedding from $A^\xi$ to $A^\zeta$, then $\xi=\zeta$. 
 \end{claim*}
 
 \begin{proof}[Proof of the Claim]
  Let $\map{j}{V_{\delta_\xi}}{V_{\delta_\zeta}}$ be an elementary embedding with $j(\cof{\delta})=\cof{\delta}$ and $j(\xi)=\zeta$. 
  Then $j(V_{\cof{\delta}+2})=V_{\cof{\delta}+2}$ and the map $\map{j\restriction V_{\cof{\delta}+2}}{V_{\cof{\delta}+2}}{V_{\cof{\delta}+2}}$ is an elementary embedding. 
  The \emph{Kunen Inconsistency} then ensures that $j\restriction V_{\cof{\delta}+2}=\id_{V_{\cof{\delta}+2}}$ and we can conclude that $\xi=j(\xi)=\zeta$. 
 \end{proof}
 
 The above claim directly shows that the principle $\SR_\Ce(\kappa)$, and also $\VSR_\Ce(\kappa)$,  fail for every cardinal $\kappa<\delta$. 
 \end{proof}

\begin{proposition}\label{proposition:Sr--C1}
 If $\delta$ is an uncountable cardinal that is not a limit of inaccessible cardinals, then there exists a class $\Ce$ of structures of the same type, which is $\Pi_1$-definable in  $V_\delta$ with an ordinal as a parameter, and such that the principles $\SR_\Ce(\kappa)$ and  $\VSR_\Ce(\kappa)$ fail for every cardinal $\kappa < \delta$. 
\end{proposition}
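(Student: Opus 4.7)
The plan is to build a single class $\Ce$ that works uniformly for every cardinal $\kappa < \delta$, using as ordinal parameter the supremum $\mu < \delta$ of the inaccessible cardinals strictly below $\delta$ (or $\mu = \omega$ if there are none). Since $\delta$ is not a limit of inaccessibles, $\mu < \delta$ and no inaccessible cardinal lies in $(\mu, \delta)$. I would then set
\[
 \Ce = \Set{\langle V_{\alpha+1}, \in, \mu, \alpha\rangle}{\mu < \alpha < \delta, \; \alpha \text{ is a limit ordinal}},
\]
so that $\Ce$ is $\Pi_1$-definable in $V_\delta$ with parameter $\mu$, using that ``$Y = V_{\alpha+1}$'' is $\Delta_0$-definable from $\alpha$.

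I would then verify that $\SR_\Ce(\kappa)$ and $\VSR_\Ce(\kappa)$ both fail for every cardinal $\kappa < \delta$, splitting into two cases. If $\kappa \leq \mu$, every structure in $\Ce$ has cardinality strictly greater than $\mu \geq \kappa$ and rank above $\kappa$, so both principles fail vacuously. If instead $\mu < \kappa < \delta$, the designated witness of failure will be $B = \langle V_{\kappa+1}, \in, \mu, \kappa\rangle \in \Ce$. Assuming for contradiction that some $j : \langle V_{\alpha+1}, \in, \mu, \alpha\rangle \to B$ is elementary with $|V_{\alpha+1}| < \kappa$, preservation of constants forces $j(\mu) = \mu$ and $j(\alpha) = \kappa$, and the cardinality hypothesis gives $\alpha < \kappa$, so $j$ is nontrivial with some critical point $\rho \leq \alpha$. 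A standard derived-ultrafilter argument applied to $U = \Set{X \subseteq \rho}{\rho \in j(X)}$ then forces $\rho$ to be measurable, and in particular inaccessible.

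The hard part will be that, by the choice of $\mu$, this inaccessible $\rho$ might legitimately sit strictly below $\mu$, and the condition ``no inaccessibles in $(\mu, \delta)$'' cannot directly rule this out. My plan is to turn this apparent obstruction into the contradiction via the Kunen inconsistency: monotonicity of $j$ on ordinals together with $j(\mu) = \mu$ forces the iterates $\rho_n := j^n(\rho)$ to remain strictly below $\mu$ (otherwise $j$ would collapse $\rho_{n-1}$ and $\mu$ to the same value, contradicting injectivity), so $\lambda := \sup_n \rho_n \leq \mu$ with $j(\lambda) = \lambda$. Since $\alpha$ is a limit ordinal strictly greater than $\mu$, one has $\alpha \geq \mu + \omega > \lambda + 2$, hence $V_{\lambda+2} \in V_{\alpha+1}$, and the restriction $j \restriction V_{\lambda+2}$ is then a nontrivial elementary self-embedding of $V_{\lambda+2}$, contradicting Kunen's theorem. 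The essential role of $\mu$ is twofold: it fixes the region of the cumulative hierarchy in which $\Ce$ lives, and simultaneously serves as a fixed point of $j$ preventing the critical sequence of $\rho$ from escaping to $\alpha$, which is precisely what makes Kunen applicable.
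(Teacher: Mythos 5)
Your proposal is correct and is essentially the paper's argument: a class of tagged rank-initial segments sitting just above the last inaccessible, from which any reflecting elementary embedding yields a contradiction by combining the Kunen inconsistency with the inaccessibility of critical points (only note that ``$Y=V_{\alpha+1}$'' is $\Pi_1$ rather than $\Delta_0$, which is still exactly what the statement asks for). The paper's choice of structures $\langle V_{\gamma+2},\in,\lambda\rangle$ lets it skip your case split and critical-sequence iteration: since $j(\lambda)=\lambda$ and $V_{\lambda+2}$ lies in the domain and is fixed setwise, Kunen forces $j\restriction V_{\lambda+2}=\id$, so $\crit{j}>\lambda$ at once, and its inaccessibility contradicts the choice of the parameter.
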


\begin{proof}
 Fix an ordinal $\lambda<\delta$ with the property that  there are no inaccessible cardinals between $\lambda$ and $\delta$. 
 Let $\calL$ denote the first-order language that extends $\calL_\in$  by a constant symbol and define $\Ce$ to be the set of all $\calL$-structures of the form $\langle V_{\gamma+2},\in,\lambda\rangle$ with $\lambda<\gamma<\delta$.
 Then $\emptyset\neq\Ce\subseteq V_\delta$ and $\Ce$ is definable in $V_\delta$ by a $\Pi_1$-formula with parameter $\lambda$. 
 Assume, towards a contradiction, that there is a cardinal $\kappa<\delta$ with the property that either  $\SR_\Ce(\kappa)$ or $\VSR_\Ce(\kappa)$ holds. Then there is an ordinal $\lambda<\gamma<\kappa$ with the property that there exists an elementary embedding $\map{j}{V_{\gamma+2}}{V_{\kappa+2}}$ with $j(\lambda)=\lambda$. Since elementarity ensures that $j(\gamma)=\kappa$, we know that $j$ is non-trivial. Moreover, we have $j(V_{\lambda+2})=V_{\lambda+2}$ and the \emph{Kunen Inconsistency} ensure that the elementary embedding $\map{j\restriction V_{\lambda+2}}{V_{\lambda+2}}{V_{\lambda+2}}$ is trivial. This allows us to conclude that $\crit{j}$ is an inaccessible cardinal greater than $\lambda$, a contradiction.   
\end{proof}

\begin{proof}[Proof of Theorem \ref{main:Vopenka}]
    The implication $\eqref{item:Vopenka1}\Rightarrow \eqref{item:Vopenka2}$ is given by Lemma \ref{lemma:VopenkeSR}. 
    In order to prove the implication $\eqref{item:Vopenka2}\Rightarrow \eqref{item:Vopenka1}$, first notice that  Lemma \ref{lemma:BadC1singular} and Proposition \ref{proposition:Sr--C1}  show that $\eqref{item:Vopenka2}$ implies that  $\delta$ is inaccessible. The implication $\eqref{item:Vopenka2}\Rightarrow \eqref{item:Vopenka1}$ then directly follows from the simple observation that for every set $\Ce\in V_{\delta+1}\setminus V_\delta$ of structures of the same type and every cardinal $\kappa<\delta$, both $\SR_\Ce(\kappa)$ and $\VSR_\Ce(\kappa)$ imply that there exist distinct $A,B\in\Ce$ with an elementary embedding from $A$ to $B$.  
 \end{proof}


\section{Strongly unfoldable reflection}\label{section3}

We shall next demonstrate the connection between  strong unfoldability and the validity of the principle $\WSR$ for $\Sigma_2$-definable classes by proving Theorem \ref{theorem:CharSr-Sr--}. 
The starting point  is the following characterization of the elements of the class $C^{(n)}$-cardinals through the principle $\mathrm{SR}^{--}$.

\begin{lemma}\label{lemma:Sigma2Reflex}
 Given a natural number $n>1$, the following statements are equivalent for every cardinal $\kappa$: 
 \begin{enumerate}
     \item\label{item:Cn} The cardinal $\kappa$ is an element of $C^{(n)}$. 
     
      \item\label{item:SRminus} The principle $\SR^{--}_\Ce(\kappa)$ holds for every non-empty class $\Ce$ of structures of the same type that is definable by a $\Sigma_n$-formula with parameters in $V_\kappa$. 
     
     \item\label{item:ElementsInVkappa} Every non-empty class of structures of the same type that is definable by a $\Sigma_n$-formula with parameters in $H_\kappa$ contains an element of $V_\kappa$. 
     
    \item\label{item:ElementsInCardLesskappa}  Every non-empty class of structures of the same type that is definable by a $\Sigma_n$-formula with parameters in $H_\kappa$ contains a structure of cardinality less than $\kappa$. 
     
     \item\label{item:ElementsInHkappa} Every non-empty class of structures of the same type that is definable by a $\Sigma_n$-formula with parameters in $V_\kappa$ contains an element of $H_\kappa$. 
      \item\label{item:ElementsInVkappa2} Every non-empty class of structures of the same type that is definable by a $\Sigma_n$-formula with parameters in $V_\kappa$ contains an element of $V_\kappa$. 
 \end{enumerate}
\end{lemma}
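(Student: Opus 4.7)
My plan pivots around (1), with (2) as the principal reflection principle into which the other weak forms collapse. For (1) $\Rightarrow$ each of (2)--(6): since $n > 1$, $\kappa \in C^{(n)} \subseteq C^{(1)}$, so $V_\kappa = H_\kappa$ and $\kappa$ is a strong limit cardinal, whence every element of $V_\kappa$ has cardinality less than $\kappa$. For any non-empty $\Sigma_n$-definable class $\Ce$ with parameters in $V_\kappa$, the $\Sigma_n$-statement asserting that $\Ce$ is non-empty reflects to $V_\kappa$, yielding a witness in $V_\kappa = H_\kappa$ of cardinality less than $\kappa$, which simultaneously verifies the conclusions of (2), (3), (4), (5), and (6).

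The key reverse implication is (2) $\Rightarrow$ (1), which I prove by induction on $m$ with $1 \leq m \leq n$ that $V_\kappa \prec_{\Sigma_m} V$. Given $V \models \exists v\,\psi(v, a)$ with $\psi \in \Pi_{m-1}$ and $a \in V_\kappa$, consider the non-empty class
\[
\Ce \;=\; \bigl\{\langle \tc{\{v\}}, \in, v, a\rangle : \psi(v, a)\bigr\},
\]
which is $\Sigma_m$-definable (hence $\Sigma_n$-definable) with parameter $a$. By (2), $\Ce$ contains a structure of cardinality less than $\kappa$, yielding some $v$ with $|\tc{\{v\}}| < \kappa$, so $v \in H_\kappa \subseteq V_\kappa$. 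The induction hypothesis supplies $V_\kappa \prec_{\Pi_{m-1}} V$, so $V_\kappa \models \psi(v, a)$ and therefore $V_\kappa \models \exists v\,\psi(v, a)$; the $\Sigma_m$-upward direction is immediate from $\Pi_{m-1}$-absoluteness.

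To close the cycle, (5) $\Rightarrow$ (2) is immediate since elements of $H_\kappa$ have cardinality less than $\kappa$. For (3), (4), (6) $\Rightarrow$ (2), I first argue each forces $V_\kappa = H_\kappa$. If some $\lambda < \kappa$ satisfied $2^\lambda \geq \kappa$, then the non-empty $\Sigma_2$-definable singleton class $\bigl\{\langle 2^\lambda, \in, \lambda\rangle\bigr\}$ with parameter $\lambda \in H_\kappa \subseteq V_\kappa$ would have its unique element of rank at least $2^\lambda \geq \kappa$, lying outside $V_\kappa$ and contradicting (3) and (6); the analogous singleton $\bigl\{\langle \POT{\lambda}, \in, \lambda\rangle\bigr\}$ refutes (4) by cardinality. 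Once $\kappa$ is known to be a strong limit, the parameter spaces $V_\kappa$ and $H_\kappa$ coincide and elements of $V_\kappa$ have cardinality less than $\kappa$, so each of (3), (4), (6) reduces to (2).

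The main obstacle will be the delicate navigation between the two parameter spaces ($V_\kappa$ versus $H_\kappa$) and the three flavors of conclusion (cardinality bound, membership in $V_\kappa$, membership in $H_\kappa$). The key insight is that the weak principles (3), (4), (6) each either directly control cardinality or, via a single $\Sigma_2$-definable singleton class exploiting the $\Sigma_2$-definability of $2^\lambda$ and $\POT{\lambda}$, force $\kappa$ to be a strong limit, after which all the variants coalesce.
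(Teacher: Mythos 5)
Most of your plan is sound, and your proof of (2) $\Rightarrow$ (1) is a genuinely different route from the paper's: you run an induction on $m\leq n$ using the classes of structures $\langle\tc{\{v\}},\in,v\rangle$ coding witnesses, whereas the paper instead works from (3)/(4)/(6), first showing that any ordinal whose singleton is $\Sigma_n$-definable from parameters in $H_\kappa$ lies below $\kappa$, deducing that $\kappa$ is a limit of $C^{(n-1)}$-cardinals, and then reflecting a $\Sigma_n$-statement by taking the least $\rho\in C^{(n-1)}$ containing a witness. Your induction is correct (one cosmetic slip: $a$ need not lie in $\tc{\{v\}}$, so either omit the constant for $a$ or take $\tc{\{\langle v,a\rangle\}}$ as the domain), and (1) $\Rightarrow$ (2)--(6) and (5) $\Rightarrow$ (2) are fine.

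The genuine gap is in your reduction of (3), (4), (6) to (2). Your singleton classes $\{\langle 2^\lambda,\in,\lambda\rangle\}$ and $\{\langle\POT{\lambda},\in,\lambda\rangle\}$ only yield $2^\lambda<\kappa$ for all $\lambda<\kappa$, i.e.\ that $\kappa$ is a strong limit cardinal, and from this you conclude that ``$V_\kappa$ and $H_\kappa$ coincide and elements of $V_\kappa$ have cardinality less than $\kappa$''. That inference is false: it needs $\kappa$ to be a $\beth$-fixed point, not merely a strong limit. For example, $\kappa=\beth_\omega$ is a strong limit, yet $V_{\omega_1}\in V_{\beth_\omega}$ has cardinality $\beth_{\omega_1}>\beth_\omega$ and $H_{\beth_\omega}\neq V_{\beth_\omega}$; so the passage from ``contains an element of $V_\kappa$'' in (3) and (6) to ``contains a structure of cardinality less than $\kappa$'' in (2) breaks down, as does the identification of the parameter spaces $H_\kappa$ and $V_\kappa$ needed to apply (3) and (4) to the classes occurring in (2). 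The repair stays entirely within your method: for each $\lambda<\kappa$ use instead the $\Sigma_2$-definable (hence $\Sigma_n$-definable) singleton, with parameter $\lambda\in H_\kappa$, whose unique structure has domain the cardinal $\betrag{V_\lambda}$ (for (3) and (6), via rank) or $V_\lambda$ itself (for (4), via cardinality); this forces $\betrag{V_\lambda}<\kappa$ for every $\lambda<\kappa$, so $\kappa$ is a $\beth$-fixed point, $V_\kappa=H_\kappa$, and your reductions to (2) then go through. The paper sidesteps this issue because its argument makes $\kappa$ a limit point of $C^{(n-1)}\subseteq C^{(1)}$, which gives $V_\kappa=H_\kappa$ outright.
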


\begin{proof}
 First, assume that \eqref{item:Cn} holds. 
 Fix a $\Sigma_n$-formula $\varphi(v_0,v_1)$,  and $a\in V_\kappa$ with the property that the class $\Ce=\Set{A}{\varphi(A,a)}$ is non-empty and consists of structures of the same type. 
 Then  the $\Sigma_n$-statement $\exists v_0   \varphi(v_0,a)$ holds in $V$ and our assumptions imply that it also holds in $V_\kappa$. Hence, there is $A\in V_\kappa$ with the property that $\varphi(A,a)$ holds in $V_\kappa$. 
 Our assumption now implies that $\varphi(A,a)$ holds in $V$. But our assumption also ensures that  $H_\kappa=V_\kappa$ and hence we know that $\Ce\cap H_\kappa\neq\emptyset$.  
 This shows  that \eqref{item:SRminus},\eqref{item:ElementsInVkappa}, \eqref{item:ElementsInCardLesskappa}, \eqref{item:ElementsInHkappa} and \eqref{item:ElementsInVkappa2} all hold in this case.

 Next, assume that either \eqref{item:SRminus}, or \eqref{item:ElementsInVkappa}, or \eqref{item:ElementsInCardLesskappa}, or \eqref{item:ElementsInHkappa}, or \eqref{item:ElementsInVkappa2} holds, and we shall prove \eqref{item:Cn}.

  \begin{claim*}
  If $\rho$ is an ordinal with the property that the set $\{\rho\}$ is definable by a $\Sigma_n$-formula with parameters in $H_\kappa$, then $\rho<\kappa$.
 \end{claim*}
 
 \begin{proof}[Proof of the Claim]
  Let $\calL$ denote the trivial first-order language and let $B$ be the unique $\calL$-structure with domain $\rho$. Then the class $\Ce=\{B\}$ is definable by a $\Sigma_n$-formula with parameters in $H_\kappa$ and hence each of our assumptions allows us to conclude that   $\rho<\kappa$. 
 \end{proof}

 The above claim  implies that $\kappa$ is a limit point of $C^{(n-1)}$, because for every $\alpha<\kappa$, the least cardinal in $C^{(n-1)}$ greater than $\alpha$ is definable by a $\Sigma_n$-formula with parameter $\alpha$. 
 To show \eqref{item:Cn}, fix a $\Sigma_n$-formula $\exists x \psi(x,y)$, with $\psi$ being $\Pi_{n-1}$, and some $a\in V_\kappa$ with the property that $\exists x \psi(x,a)$ holds. 
  Let $\rho$ denote the least element of $C^{(n-1)}$ such that $a\in V_\rho$ and for some $b\in V_\rho$,  $\psi(b,a)$ holds in $V_\rho$. 
  Then the set $\{\rho\}$ is definable by a $\Sigma_n$-formula with parameter $a\in H_\kappa$ and the previous claim shows that $\rho<\kappa$. 
  Since $\kappa\in C^{(n-1)}$ and $\Pi_{n-1}$-formulas are upwards absolute between $V_\rho$ and $V_\kappa$, we can now conclude that $\psi(b,a)$ holds in $V_\kappa$, for some $b$. This shows that    \eqref{item:Cn} holds. 
\end{proof}

Let us also observe that Clause \eqref{item:Cn} of the above lemma  is also equivalent to each of \eqref{item:SRminus}, \eqref{item:ElementsInVkappa}, \eqref{item:ElementsInCardLesskappa}, \eqref{item:ElementsInHkappa}, \eqref{item:ElementsInVkappa2}, restricted to $\Sigma_n$-definable classes $\Ce$ that are closed under isomorphic images. The reason is that the above claim also holds under this restriction by taking $\Ce$ in the proof to be the class of all $\mathcal{L}$-structures  of cardinality $\rho$, which is closed under isomorphic images, instead of the singleton $\{B\}$. 
%

 We are now ready to prove the equivalences stated in Theorem \ref{theorem:CharSr-Sr--}. The following arguments rely on Magidor's characterization of supercompactness in \cite{MR295904} and the analysis of strong unfoldability provided by the results of \cite{luecke2021strong} and \cite{SRminus}.

\begin{proof}[Proof of Theorem \ref{theorem:CharSr-Sr--}]
\label{proof1.7}
  First, note that Lemma \ref{lemma:Sigma2Reflex} directly yields the implication $\eqref{item:BothWeakHSR} \Rightarrow \eqref{item:BothWeakSR}$. 
 
  Next, assume that $\kappa$ is a strongly unfoldable cardinal. 
 Then $\kappa$ is an element of $C^{(2)}$ (see {\cite[Section 3]{luecke2021strong}} or Proposition \ref{proposition:CnSUNF-Cn+1} below). 
 Fix  a class $\Ce$ of structures of the same type that is definable by a $\Sigma_2$-formula with parameters in $V_\kappa$ and a structure $B$ in $\Ce$ of cardinality $\kappa$.
 Pick a $\Sigma_2$-formula $\varphi(v_0,v_1)$ and an element $z$ of $H_\kappa$  with $\Ce=\Set{A}{\varphi (A, z)}$, and let $\theta >\kappa$ be a cardinal with the property that $B \in H_\theta$ and $\varphi (B, z)$ holds in $H_\theta$. 
 Using {\cite[Theorem 1.3]{luecke2021strong}} and {\cite[Lemma 2.1]{SRminus}} (see also Theorem \ref{theorem:SigmaNsunf} below), we can find cardinals $\bar{\kappa}<\bar{\theta}<\kappa$, an
elementary submodel $X$ of $H_{\bar{\theta}}$ with $\bar{\kappa} +1\subseteq X$ and an elementary
embedding $\map{j}{X}{H_\theta}$ with $j\restriction \bar{\kappa} = \id_{\bar{\kappa}}$, $j(\bar{\kappa}) = \kappa$, and $B, z \in  \ran{j}$. 
 We then have $j \restriction (H_{\bar{\kappa}} \cap X) = id_{H_{\bar{\kappa}}\cap X}$, and therefore $z \in H_{\bar{\kappa}}$ 
and $j(z) = z$. Pick $A \in X$ with $j(A) = B$. Then elementarity and $\Sigma_1$-absoluteness implies that $\varphi(A, z)$ holds and hence $A$ is an element of $\Ce$. Since  $A$ belongs to $X$, and therefore to  $H_{\bar{\theta}}$, it also belongs to $H_\kappa$. Moreover,  since  $\bar{\kappa}$ is a subset of $X$, and $A$ has cardinality $\bar{\kappa}$ in $X$,  the restriction of $j$ to $A$  yields an elementary embedding of $A$ into $B$. This shows that $\HSR^-_\Ce(\kappa)$ holds in this case.  

 Now, assume that $\kappa$ is a limit of supercompact cardinals. Then $\kappa$ is an element of $C^{(2)}$. Moreover, Theorem \ref{theorem:JoanBoldSupercompactSigma2} shows that $\HSR_\Ce(\kappa)$ holds for every  class $\Ce$ of structures of the same type that is definable by a $\Sigma_2$-formula with parameters in $V_\kappa$.
 In combination with the above computations, this yields the implication $\eqref{item:EiterSunfOrSC} \Rightarrow \eqref{item:BothWeakHSR}$.

Finally,  assume that  $\kappa$ is a cardinal that is not strongly unfoldable and the principle $\WSR_\Ce(\kappa)$ holds for every class $\Ce$ of structures of the same type that is definable by a $\Sigma_2$-formula with parameters in $V_\kappa$. 
By Lemma \ref{lemma:Sigma2Reflex}, we know that $\kappa$ is an element of $C^{(2)}$. In particular, $\kappa$ is a limit cardinal and the set $V_\kappa$ has cardinality $\kappa$.

 \begin{claim*}
  If $\theta>\kappa$ is a cardinal, $y\in V_\kappa$ and $z\in H_\theta$, then there are cardinals $\bar{\kappa}<\bar{\theta}<\kappa$ with $y\in V_{\bar{\kappa}}$, an elementary submodel $X$ of $H_{\bar{\theta}}$ with $V_{\bar{\kappa}}\cup\{\bar{\kappa}\}\subseteq X$ and an elementary embedding $\map{j}{X}{H_\theta}$ with $j(\bar{\kappa})=\kappa$, $j(y)=y$ and $z\in\ran{j}$. 
 \end{claim*}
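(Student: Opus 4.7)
The plan is to apply $\WSR_\Ce(\kappa)$ to a carefully chosen $\Sigma_2$-definable class and then exploit the hypothesis $\kappa\in C^{(2)}$ (already derived in this proof via Lemma~\ref{lemma:Sigma2Reflex}) to reflect the internal existential witness down to $V_\kappa$. Let $\Ce$ be the class of all structures of the form $\langle M,\in,c,u,y\rangle$, in the language with one binary relation symbol and three constant symbols (the last interpreted as the parameter $y$), such that $M$ is transitive, $c$ is a cardinal of $V$, $V_c\cup\{c\}\subseteq M$, $u\in M$, $y\in V_c$, and there exist a cardinal $\tau>c$ together with an elementary embedding $\map{\pi}{M}{H_\tau}$ with $\pi\restriction V_c=\id$, $\pi(c)=c$ and $\pi(y)=y$.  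The first group of requirements is a conjunction of $\Pi_1$- and $\Delta_1$-statements, while the existence clause is $\Sigma_2$; hence $\Ce$ is $\Sigma_2$-definable with parameter $y\in V_\kappa$.

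To produce a structure of cardinality $\kappa$ in $\Ce$, let $Y$ be the Skolem hull of $V_\kappa\cup\{\kappa,y,z\}$ inside $\langle H_\theta,\in\rangle$, so that $Y\prec H_\theta$ and $|Y|=\kappa$, and let $\map{\pi_M}{Y}{\bar Y}$ denote its Mostowski collapse; since $V_\kappa\cup\{\kappa,y\}$ is transitive, $\pi_M$ fixes it pointwise, and we set $\bar z:=\pi_M(z)$.  Then $B_0:=\langle\bar Y,\in,\kappa,\bar z,y\rangle$ lies in $\Ce$, as witnessed by $\tau=\theta$ and the elementary embedding $\pi_M^{-1}\colon\bar Y\to Y\hookrightarrow H_\theta$, and has cardinality $\kappa$.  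Applying $\WSR_\Ce(\kappa)$ to $B_0$ yields a structure $A=\langle M_A,\in,c_A,u_A,y\rangle\in\Ce$ of cardinality less than $\kappa$ together with an elementary embedding $\map{j_A}{A}{B_0}$.  Since $c_A\subseteq V_{c_A}\subseteq M_A$, we obtain $c_A<\kappa$; and since $\kappa$ is a limit cardinal and $M_A$ is transitive of cardinality less than $\kappa$, also $M_A\in V_\kappa$.  Therefore the $\Sigma_2$-statement ``$A\in\Ce$'' has all of its parameters $M_A,c_A,y$ in $V_\kappa$ and, by $\kappa\in C^{(2)}$, reflects from $V$ down to $V_\kappa$.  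This produces a cardinal $\bar\theta<\kappa$ and an elementary embedding $\map{\pi_A}{M_A}{H_{\bar\theta}}$ with $\pi_A\restriction V_{c_A}=\id$, $\pi_A(c_A)=c_A$ and $\pi_A(y)=y$.

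Finally, set $\bar\kappa:=c_A$ and $X:=\pi_A(M_A)\prec H_{\bar\theta}$; then $V_{\bar\kappa}\cup\{\bar\kappa\}=\pi_A(V_{c_A}\cup\{c_A\})\subseteq X$ and $\bar\kappa<\bar\theta<\kappa$.  The composition $\pi_M^{-1}\circ j_A\circ\pi_A^{-1}$ yields an elementary embedding $\map{j}{X}{H_\theta}$, and since each factor preserves the distinguished constants one computes $j(\bar\kappa)=\pi_M^{-1}(j_A(c_A))=\pi_M^{-1}(\kappa)=\kappa$, $j(y)=y$, and $j(\pi_A(u_A))=\pi_M^{-1}(\bar z)=z$, so $z\in\ran{j}$.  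The delicate point of the whole argument is the bound $\bar\theta<\kappa$: this is precisely the reason for placing the $\Sigma_2$-existential ``$\exists\tau\,\exists\pi$'' into the definition of $\Ce$ and for invoking $\kappa\in C^{(2)}$.
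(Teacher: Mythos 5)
Your proof is correct and follows essentially the same route as the paper's: you introduce the same auxiliary $\Sigma_2$-definable class (phrased via elementary embeddings into some $H_\tau$ fixing $V_c\cup\{c\}$, rather than via transitive collapses of elementary submodels of $H_\nu$, which is an equivalent formulation), apply $\WSR_\Ce(\kappa)$ to the collapse of a hull of $V_\kappa\cup\{\kappa,z\}$ in $H_\theta$, use $\kappa\in C^{(2)}$ to pull the witnessing cardinal $\bar\theta$ below $\kappa$, and compose the same three maps. The only cosmetic differences are that you omit the (inessential for this claim) requirement that the distinguished cardinal be in $C^{(1)}$ and you spell out the reflection step that the paper compresses into ``we may assume $\bar\theta<\kappa$''.
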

 
 \begin{proof}[Proof of the Claim]
  Let $\calL$ denote the first-order language that extends $\calL_\in$ by three constant symbols and let $\Ce$ denote the class of all $\calL$-models of the form $\langle M,\in,\mu,a,y\rangle$ such that $\mu$ is a cardinal in $C^{(1)}$, $y\in V_\mu$ and there exists a cardinal $\nu>\mu$ and  an elementary submodel $X$ of $H_{\nu}$ with $V_\mu\cup\{\mu\}\subseteq X$ and the property that $M$ is the transitive collapse of $X$. 
  It is then easy to see that the class $\Ce$ is definable by a $\Sigma_2$-formula with parameter $y$. 
  Now, let $Y$ be an elementary submodel of $H_{\theta}$ of cardinality $\kappa$ with $V_\kappa\cup\{\kappa,z\}\subseteq Y$ and let $\map{\tau}{Y}{N}$ denote the corresponding transitive collapse. Then $\theta$ and $Y$ witness that $B=\langle N,\in,\kappa,\tau(z),y\rangle$ is an element of $\Ce$ of cardinality $\kappa$. 
  Our assumptions then yield cardinals $\bar{\kappa}<\bar{\theta}$ with $\bar{\kappa}\in C^{(1)}\cap\kappa$, an elementary submodel $X$ of $H_{\bar{\theta}}$ of cardinality less than $\kappa$ with $V_{\bar{\kappa}}\cup\{\bar{\kappa}\}\subseteq X$ and an elementary embedding $\map{i}{M}{N}$ with $i(\bar{\kappa})=\kappa$, $i(y)=y$ and $\tau(z)\in\ran{i}$, where $\map{\pi}{X}{M}$ denotes the corresponding transitive collapse.  
  Since $\kappa\in C^{(2)}$ and $M\in V_\kappa$, we may assume that $\bar{\theta}<\kappa$. Define $$\map{j ~ = ~ \tau^{{-}1}\circ i\circ\pi}{X}{H_{\theta}}.$$ Then $j$ is an elementary embedding with $j(\bar{\kappa})=\kappa$, $j(y)=y$ and $z\in\ran{j}$.  
 \end{proof}

 \begin{claim*}
  If $\theta>\kappa$ is a cardinal, $y\in V_\kappa$ and $z\in H_\theta$, then there are cardinals $\bar{\kappa}<\bar{\theta}<\kappa$ with $y\in V_{\bar{\kappa}}$, an elementary submodel $X$ of $H_{\bar{\theta}}$ with $V_{\bar{\kappa}}\cup\{\bar{\kappa}\}\subseteq X$ and an elementary embedding $\map{j}{X}{H_\theta}$ with $j(\bar{\kappa})=\kappa$, $j(y)=y$, $z\in\ran{j}$ and $j\restriction\bar{\kappa}\neq\id_{\bar{\kappa}}$. 
 \end{claim*}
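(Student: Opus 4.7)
The plan is to adapt the argument of the first claim by applying $\WSR$ to a refined $\Sigma_2$-definable class $\Ce^*$ designed so that every reflecting elementary embedding must move an ordinal strictly below the critical cardinal. I would extend the language $\calL$ by one additional constant symbol and let $\Ce^*$ denote the class of all $\calL^*$-structures of the form $\langle M,\in,\mu,a,y,\eta\rangle$ where $\langle M,\in,\mu,a,y\rangle\in\Ce$ and $\eta$ is an ordinal with $\eta<\mu$. This class remains $\Sigma_2$-definable with parameter $y\in V_\kappa$.

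Given parameters $\theta,y,z$ as in the claim, I would then form the witness $B^*=\langle N,\in,\kappa,\tau(z),y,\eta^*\rangle\in\Ce^*$ by augmenting the structure $B=\langle N,\in,\kappa,\tau(z),y\rangle$ from the proof of the first claim with an interpretation $\eta^*<\kappa$. Applying $\WSR_{\Ce^*}(\kappa)$ to $B^*$ would yield a structure $A^*=\langle M,\in,\bar\kappa,a,y,\eta\rangle\in\Ce^*\cap V_\kappa$ of cardinality less than $\kappa$ together with an elementary embedding $\map{i^*}{A^*}{B^*}$ satisfying $i^*(\bar\kappa)=\kappa$, $i^*(\eta)=\eta^*$, and $i^*(y)=y$, with $\eta<\bar\kappa$ forced by the definition of $\Ce^*$. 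If I can ensure that $\eta^*\geq\bar\kappa$, then the chain of inequalities $\eta<\bar\kappa\leq\eta^*=i^*(\eta)$ forces $i^*$ to move $\eta$, yielding $\crit{i^*}\leq\eta<\bar\kappa$. The embedding $j=\tau^{-1}\circ i^*\circ\pi$, defined exactly as in the proof of the first claim, would then satisfy $j\restriction\bar\kappa\neq\id_{\bar\kappa}$, as required.

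To arrange $\eta^*\geq\bar\kappa$, I would first apply the first claim itself to $(\theta,y,z)$ in order to extract a specific cardinal $\bar\kappa_0<\kappa$ witnessing its conclusion, and then set $\eta^*=\bar\kappa_0$. I would additionally enrich the $a$-component of $B^*$ with auxiliary data (for instance, an encoding of the transitive collapse produced by the first application, or a distinguished element of rank $\bar\kappa_0+1$) so that any reflection $A^*$ given by $\WSR_{\Ce^*}(\kappa)$ must, by elementarity, pull back this data to a structure of rank at most $\bar\kappa_0+1$, thereby bounding $\bar\kappa\leq\bar\kappa_0=\eta^*$.

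The hard part will be precisely this last step: arranging the auxiliary data so that the reflection is forced to satisfy the rank bound $\bar\kappa\leq\eta^*$ while keeping $\Ce^*$ $\Sigma_2$-definable with parameters in $V_\kappa$. If direct rank-bounding through the $a$-component turns out to be insufficient, I would fall back on a contradiction argument exploiting the standing assumption that $\kappa$ is not strongly unfoldable: if no reflection satisfying $\bar\kappa\leq\eta^*$ exists for any choice of $\eta^*<\kappa$, then the resulting uniformly-trivial embeddings produced by the first claim should allow one to assemble a strong-unfoldability witness for $\kappa$, contradicting the assumption and completing the proof.
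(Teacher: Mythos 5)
There is a genuine gap, and it sits exactly where you locate the ``hard part''. The principle $\WSR_{\Ce^*}(\kappa)$ only guarantees \emph{some} structure $A^*\in\Ce^*$ of cardinality less than $\kappa$ together with \emph{some} elementary embedding into $B^*$; you have no control whatsoever over where the reflected cardinal $\bar{\kappa}$ lands relative to a previously fixed ordinal $\eta^*<\kappa$. Decorating the $a$-component of $B^*$ with a collapse code or with an element of rank $\bar{\kappa}_0+1$ does not impose the bound $\bar{\kappa}\leq\bar{\kappa}_0$: the reflected structure interprets these constants by elements of its own transitive collapse, and a statement like ``my domain has rank at most $\bar{\kappa}_0+1$'' is not a first-order property of the structure that an elementary embedding pulls back (the reflected structure is the collapse of a submodel of some $H_{\bar{\theta}}$ of arbitrary size below $\kappa$). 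If the reflection happens to have $\bar{\kappa}>\eta^*$, then nothing rules out $\eta=\eta^*$ and $i^*\restriction\bar{\kappa}=\id_{\bar{\kappa}}$, and your argument produces nothing. Worse, your primary route never uses the standing assumption that $\kappa$ is \emph{not} strongly unfoldable, and this hypothesis is indispensable: if $V=L$ and $\kappa$ is strongly unfoldable, then $\WSR$ for $\Sigma_2$-classes holds and the first claim is true, but the present claim is false, since any $j$ with $j\restriction\bar{\kappa}\neq\id_{\bar{\kappa}}$ has a critical point that (by Magidor's lemma, as used later in the proof) is measurable, which is impossible in $L$. So no argument that only refines the first claim's reflection scheme can succeed.

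Your fallback points in the right direction but is not yet a proof, and it is where the actual argument lives. What is needed is the embedding characterization of strong unfoldability ({\cite[Lemma 2.1]{luecke2021strong}}, i.e.\ clause \eqref{item:Magidor} of Theorem \ref{theorem:SigmaNsunf}): since $\kappa$ is not strongly unfoldable, there is a \emph{single} cardinal $\vartheta>\theta$ and a single parameter $z'\in V_\vartheta$ such that no elementary submodel $X$ of any $H_{\bar{\vartheta}}$ with $V_{\bar{\kappa}}\cup\{\bar{\kappa}\}\subseteq X$ admits an embedding $\map{j}{X}{H_\vartheta}$ with $j\restriction\bar{\kappa}=\id_{\bar{\kappa}}$, $j(\bar{\kappa})=\kappa$ and $z,z',\theta\in\ran{j}$. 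One then applies the \emph{first} claim at the level $\vartheta$ with all of $z,z',\theta$ placed in the range; the resulting embedding $\map{i}{Y}{H_\vartheta}$ is forced to move some ordinal below $\bar{\kappa}$, and finally one restricts to $X=Y\cap H_{\bar{\theta}}$, where $\bar{\theta}\in Y$ satisfies $i(\bar{\theta})=\theta$, to obtain the desired embedding into $H_\theta$. Your version quantifies over all $\eta^*<\kappa$ and only over embeddings attached to the fixed pair $\theta,z$, which cannot contradict non--strong-unfoldability (that would require trivial-below-$\bar{\kappa}$ reflections for all sufficiently large targets and all parameters), and it also omits the restriction step back down to $H_\theta$.
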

 
 \begin{proof}[Proof of the Claim]
  %
  %

  %
  Since $\kappa$ is not strongly unfoldable, we can combine {\cite[Theorem 1.3]{luecke2021strong}} with {\cite[Lemma 2.1]{SRminus}} (see also Theorem \ref{theorem:SigmaNsunf}.\eqref{item:Magidor} below and the Remark \ref{remarkCnsf} that follows) to find a cardinal $\vartheta>\theta$ and $z'\in V_\vartheta$ such that for all cardinals $\bar{\kappa}<\bar{\vartheta}$ and all elementary submodels $X$ of $H_{\bar{\vartheta}}$ with $V_{\bar{\kappa}}\cup \{ \bar{\kappa}\}\subseteq X$, there is no elementary embedding $\map{j}{X}{H_\vartheta}$ with $j\restriction\bar{\kappa}=\id_{\bar{\kappa}}$,  $j(\bar{\kappa})=\kappa$, and $z, z', \theta \in \ran{j}$.
 %
  An application of our previous claim now yields cardinals $\bar{\kappa}<\bar{\vartheta}<\kappa$ with $y\in V_{\bar{\kappa}}$, an elementary submodel $Y$ of $H_{\bar{\vartheta}}$ with $V_{\bar{\kappa}}\cup\{\bar{\kappa}\}\subseteq Y$ and an elementary embedding $\map{i}{Y}{H_\vartheta}$ with $i(\bar{\kappa})=\kappa$, $i(y)=y$ and $z,z',\theta\in\ran{i}$. 
 Therefore, we must have $i\restriction\bar{\kappa}\neq\id_{\bar{\kappa}}$. Pick $\bar{\theta}\in Y$ with $i(\bar{\theta})=\theta$. Then elementarity implies that $\bar{\theta}$ is a cardinal. Set $X=Y\cap H_{\bar{\theta}}$ and $j=i\restriction X$. 
 In this situation, we can conclude that $\bar{\kappa}<\bar{\theta}<\kappa$, $X$  is an elementary submodel of $H_{\bar{\theta}}$ with $V_{\bar{\kappa}}\cup\{\bar{\kappa}\}\subseteq X$ and $\map{j}{X}{H_\theta}$ is an elementary embedding  with $j(\bar{\kappa})=\kappa$, $j(y)=y$, $z\in\ran{j}$ and $j\restriction\bar{\kappa}\neq\id_{\bar{\kappa}}$. 
  \end{proof}

 \begin{claim*}
  There are unboundedly many cardinals below $\kappa$ that are $\alpha$-supercompact for every $\alpha<\kappa$. 
 \end{claim*}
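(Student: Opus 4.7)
Fix an arbitrary $\lambda<\kappa$. The plan is to produce a cardinal $\mu$ with $\lambda<\mu<\kappa$ that is $\alpha$-supercompact for every $\alpha<\kappa$; since $\lambda$ is arbitrary, this yields the claimed unboundedness. The first step will produce, for each $\eta$ with $\lambda<\eta<\kappa$, an elementary embedding $\map{j_\eta}{X_\eta}{H_\theta}$ supplied by the previous claim whose critical point $\mu_\eta$ satisfies $\lambda<\mu_\eta$ and whose associated cardinal $\bar{\kappa}_\eta$ with $j_\eta(\bar{\kappa}_\eta)=\kappa$ satisfies $\bar{\kappa}_\eta>\eta$. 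This is achieved by applying the previous claim with $y=\lambda+1$, so that $j_\eta(y)=y$ forces $\crit{j_\eta}>\lambda$, and by carefully selecting $\theta>\kappa$ and $z\in H_\theta$ so that the requirement $z\in\ran{j_\eta}$ enforces $\bar{\kappa}_\eta>\eta$; the required choice of $z$ may exploit, for instance, a coded record of $V_\eta$ that pins $\bar{\kappa}_\eta$ above $\eta$ via elementarity.

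From each such $j_\eta$ I extract supercompactness measures via the standard Magidor-style construction. For every $\beta$ with $\mu_\eta\leq\beta<\bar{\kappa}_\eta$, the fact that $\bar{\kappa}_\eta$ is a strong limit (since $\bar{\kappa}_\eta\in C^{(1)}$), combined with $V_{\bar{\kappa}_\eta}\subseteq X_\eta=\dom{j_\eta}$, ensures that every subset of $\Set{s\subseteq\beta}{\betrag{s}<\mu_\eta}$ lies in $\dom{j_\eta}$, so the derived ultrafilter
\[
 U_{\eta,\beta} ~ = ~ \Set{A\subseteq\Set{s\subseteq\beta}{\betrag{s}<\mu_\eta}}{j_\eta[\beta]\in j_\eta(A)}
\]
is well-defined; a routine verification shows that $U_{\eta,\beta}$ is a $\mu_\eta$-complete normal fine ultrafilter in $V$, witnessing that $\mu_\eta$ is $\beta$-supercompact in $V$. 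I conclude by pigeonholing on the sequence $\seq{\mu_\eta}{\lambda<\eta<\kappa}$: since its values lie in $(\lambda,\kappa)$ and $\kappa\in C^{(2)}$ is regular, if the sequence is bounded below $\kappa$ then there exist $\mu^*\in(\lambda,\kappa)$ and an unbounded $E\subseteq(\lambda,\kappa)$ with $\mu_\eta=\mu^*$ for all $\eta\in E$; for each $\eta\in E$, $\mu^*$ is $\beta$-supercompact for all $\beta<\bar{\kappa}_\eta$, and as $\bar{\kappa}_\eta>\eta$ ranges over an unbounded set of $\eta<\kappa$, $\mu^*$ is $\alpha$-supercompact for every $\alpha<\kappa$ with $\alpha\geq\mu^*$, as desired.

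The principal obstacle is the arrangement in the first step: the choice of $z$ must force $\bar{\kappa}_\eta>\eta$, while the choice of $y$ must not inadvertently push $\mu_\eta$ above $\eta$ (the latter would trivialize the $\eta$-supercompactness target). The secondary obstacle is excluding the alternative in the pigeonhole step in which the critical points $\mu_\eta$ escape to $\kappa$; this can be addressed by showing that such a configuration would, through the $\WSR$-reflection combined with the supercompactness-up-to-$\bar{\kappa}_\eta$ already established, render $\kappa$ strongly unfoldable and thus contradict our standing hypothesis.
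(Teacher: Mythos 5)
Your plan has a genuine gap at its core step. The ``routine verification'' that $U_{\eta,\beta}=\Set{A\subseteq\Set{s\subseteq\beta}{\betrag{s}<\mu_\eta}}{j_\eta[\beta]\in j_\eta(A)}$ is a $\mu_\eta$-complete normal fine ultrafilter for \emph{every} $\beta<\bar{\kappa}_\eta$ is not routine and is in general false: for $U_{\eta,\beta}$ even to contain the whole space one needs $j_\eta[\beta]\in j_\eta(\Set{s\subseteq\beta}{\betrag{s}<\mu_\eta})$, i.e.\ $\betrag{\beta}<j_\eta(\mu_\eta)$, and nothing in your construction controls where the critical point is sent, so one may well have $j_\eta(\mu_\eta)\leq\beta<\bar{\kappa}_\eta$. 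Handling exactly this is the non-trivial content of Magidor's Lemma~2, which the paper cites rather than re-proves, and it is also why the paper does not need $\bar{\kappa}$ to sit above any prescribed $\eta$: it applies the second claim once with a regular uncountable $\rho<\kappa$, restricts to $j=i\restriction V_{\bar{\kappa}}:V_{\bar{\kappa}}\to V_\kappa$, invokes Magidor to get that $\crit{j}$ is $\alpha$-supercompact for all $\alpha<\bar{\kappa}$, uses $\bar{\theta}\in C^{(1)}$ to see that $X$ computes this correctly, and then pushes the statement up through the ambient embedding $i:X\to H_\theta$; the witness is $i(\crit{j})\in(\rho,\kappa)$, \emph{not} the critical point itself. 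Your proposal has no analogue of this push-up, and the measures you would need in its place are precisely the ones your argument does not supply.

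There are further problems. Taking $y=\lambda+1$ does not force $\crit{j_\eta}>\lambda$: an elementary embedding can fix the ordinal $\lambda+1$ (hence $\lambda$) while moving ordinals below $\lambda$; you must fix a regular uncountable cardinal $\rho>\lambda$ (with $V_\rho$ contained in the domain) and use the Kunen inconsistency, as the paper does with $\rho$. The pigeonhole step is also unsound: $\kappa\in C^{(2)}$ does not imply that $\kappa$ is regular --- in this direction of the theorem $\kappa$ may be a singular limit of supercompact cardinals --- so even a bounded sequence of critical points indexed by a set cofinal in $\kappa$ need not be constant on an unbounded set; and your remedy for the case where the $\mu_\eta$ escape to $\kappa$ (that this ``would render $\kappa$ strongly unfoldable'') is asserted without argument and does not follow from anything established: the failure of strong unfoldability was used only to make the embeddings non-trivial below $\bar{\kappa}$, and places no bound whatsoever on their critical points. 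The paper's argument avoids all of this because a single application of the second claim already produces, via the push-up, one cardinal above $\rho$ that is $\alpha$-supercompact for every $\alpha<\kappa$.
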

 
 \begin{proof}[Proof of the Claim]
  Fix an uncountable regular cardinal $\rho<\kappa$ and a cardinal $\theta$ in $C^{(1)}$ above $\kappa$. By our previous claim, we can find cardinals $\rho<\bar{\kappa}<\bar{\theta}<\kappa$, an elementary submodel $X$ of $H_{\bar{\theta}}$ with $V_{\bar{\kappa}}\cup\{\bar{\kappa}\}\subseteq X$ and an elementary embedding $\map{i}{X}{H_\theta}$ with $i(\bar{\kappa})=\kappa$, $j(\rho)=\rho$ and $i\restriction\bar{\kappa}\neq\id_{\bar{\kappa}}$. 
  Set $\map{j=i\restriction V_{\bar{\kappa}}}{V_{\bar{\kappa}}}{V_\kappa}$. Our setup then ensures that $j$ is a non-trivial elementary embedding. Since the \emph{Kunen Inconsistency} implies that $i\restriction V_\rho=\id_\rho$, we know that $\crit{j}>\rho$. 
  Moreover, {\cite[Lemma 2]{MR295904}} directly shows that $\crit{j}$ is $\alpha$-supercompact for all $\alpha<\bar{\kappa}$. 
  Since $\theta$ is an element of $C^{(1)}$, we know that $\theta$ is a limit cardinal with $H_\theta=V_\theta$. 
  Therefore, elementarity implies that the Power Set Axiom holds in $H_{\bar{\theta}}$ and for every ordinal $\gamma<\bar{\theta}$, the set $H_{\bar{\theta}}\cap V_\gamma$ is an element of $H_{\bar{\theta}}$. Since $H_{\bar{\theta}}$ computes power sets correctly, it follows that $H_{\bar{\theta}}=V_{\bar{\theta}}$ and we now know that $\bar{\theta}$ is an element of $C^{(1)}$.
  This implies that $H_{\bar{\theta}}$ contains all ultrafilters witnessing that $\crit{j}$ is $\alpha$-supercompact for all $\alpha<\bar{\kappa}$ and hence it follows that, in $X$, the cardinal  $\crit{j}$ is $\alpha$-supercompact for all $\alpha<\bar{\kappa}$. 
  But this allows us to conclude that $j(\crit{j})$ is a cardinal in the interval $(\rho,\kappa)$ that is $\alpha$-supercompact for all $\alpha<\kappa$.  
 \end{proof}

 \begin{claim*}
  Every cardinal below $\kappa$ that is $\alpha$-supercompact for every $\alpha<\kappa$ is supercompact. 
 \end{claim*}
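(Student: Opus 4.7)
My plan is to express the supercompactness of $\lambda$ as a $\Pi_2$-formula in the parameter $\lambda$ and then invoke the $\Sigma_2$-correctness of $\kappa$, already established earlier in this proof, to transfer this formula from $V_\kappa$ to $V$.

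First, fix a cardinal $\lambda<\kappa$ that is $\alpha$-supercompact for every $\alpha<\kappa$. Recall that $\lambda$ is $\alpha$-supercompact if and only if there exists a normal, fine, $\lambda$-complete ultrafilter on $\mathcal{P}_\lambda(\alpha)$, and any such ultrafilter is an element of $V_{\alpha+2}$. Hence the statement ``$\lambda$ is $\alpha$-supercompact'' is expressed by a $\Sigma_1$-formula in the parameters $\lambda$ and $\alpha$, and the statement ``$\lambda$ is supercompact'' is expressed by the $\Pi_2$-formula in the parameter $\lambda$ obtained by universally quantifying over $\alpha$.

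Next, since $\kappa\in C^{(2)}\subseteq C^{(1)}$, we have $V_\kappa=H_\kappa$, and in particular $V_{\alpha+2}\subseteq V_\kappa$ for every $\alpha<\kappa$. Combined with the hypothesis that $\lambda$ is $\alpha$-supercompact for every $\alpha<\kappa$, this yields that $V_\kappa\models$ ``$\lambda$ is supercompact''. Finally, since $V_\kappa\prec_{\Sigma_2}V$ and ``$\lambda$ is supercompact'' is $\Pi_2$ in a parameter in $V_\kappa$, $\Sigma_2$-elementarity transfers this assertion to $V$, completing the proof.

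The main point to watch is the complexity bookkeeping; once the $\Pi_2$-expressibility of supercompactness is in hand, the conclusion is a direct application of $\Sigma_2$-correctness. An alternative route, which would fit equally well with the use of {\cite[Lemma 2]{MR295904}} in the preceding claim, is to carry out the same argument using Magidor's embedding characterization of supercompactness instead of ultrafilters: the existence of an elementary embedding $\map{j}{V_{\bar{\alpha}}}{V_\alpha}$ with $\crit{j}<\bar{\alpha}<\lambda$ and $j(\crit{j})=\lambda$ is again a $\Sigma_1$-assertion in $\lambda,\alpha$, whose universal closure over $\alpha$ is a $\Pi_2$-characterization of supercompactness to which the same $\Sigma_2$-absoluteness applies.
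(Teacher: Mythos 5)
Your argument is correct in substance, but it takes a genuinely different route from the paper's. The paper stays inside the embedding framework it has just built: given an arbitrary $\lambda>\kappa$ and $\theta>\lambda$ in $C^{(1)}$, it applies the first claim of the proof (the reflection property extracted from $\WSR$) to get cardinals $\mu<\bar{\kappa}<\bar{\theta}<\kappa$, an elementary submodel $X$ of $H_{\bar{\theta}}$ and an elementary embedding $j\colon X\to H_\theta$ with $j(\mu)=\mu$, $j(\bar{\kappa})=\kappa$ and $\lambda\in\ran{j}$; it then pulls $\lambda$ back to some $\bar{\lambda}<\kappa$, uses the hypothesis to see that $\mu$ is $\bar{\lambda}$-supercompact, uses $\bar{\theta}\in C^{(1)}$ to see that $X$ correctly reflects this, and pushes the statement up by elementarity, with $\theta\in C^{(1)}$ guaranteeing correctness at the top. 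You instead use only the already-established fact $\kappa\in C^{(2)}$ together with the $\Pi_2$-expressibility of supercompactness; this is shorter and avoids re-invoking the embedding claim, at the price of relying on the (standard, but nowhere proved in the paper) syntactic analysis of supercompactness and on checking that $V_\kappa$ computes $\alpha$-supercompactness correctly for $\alpha<\kappa$, which does hold since $V_\kappa=H_\kappa$ contains $\mathcal{P}(\mathcal{P}_\lambda(\alpha))$ and all relevant small sequences and functions.

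One piece of bookkeeping should be corrected, though it does not sink the proof. The statement ``$\lambda$ is $\alpha$-supercompact'' is not $\Sigma_1$: verifying that a candidate $U$ is an ultrafilter, $\lambda$-complete and normal requires universal quantification over all subsets of $\mathcal{P}_\lambda(\alpha)$ (and over short sequences of them), and bounding those quantifiers by a rank-initial segment costs a $\Pi_1$ clause of the form ``$y=V_\beta$''; also, $U$ is a subset of $V_{\alpha+2}$, hence an element of $V_{\alpha+3}$, not of $V_{\alpha+2}$. So ``$\lambda$ is $\alpha$-supercompact'' is $\Sigma_2$, and your literal recipe (universally quantify a $\Sigma_1$ matrix over $\alpha$) would only give $\Pi_3$. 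The conclusion you need is nevertheless true and standard: writing supercompactness as $\forall\alpha\,\forall y\,[\,y=V_{\alpha+n}\rightarrow\exists U\in y\;\psi(U,\lambda,\alpha,y)\,]$ with $\psi$ having all quantifiers bounded by $y$, the $\Pi_1$ clause ``$y=V_{\alpha+n}$'' is absorbed into the universal block, so the whole statement is $\Pi_2$ and the transfer from $V_\kappa$ to $V$ via $\kappa\in C^{(2)}$ (equivalently, $\Pi_2$-elementarity) goes through. The same remark applies to your Magidor-style variant: the existence of an elementary $j\colon V_{\bar{\alpha}}\to V_\alpha$ with $j(\crit{j})=\lambda$ is $\Sigma_2$ rather than $\Sigma_1$, again because of the clause pinning down $V_\alpha$, but its universal closure over $\alpha$ is still $\Pi_2$, which is all you use.
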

  
 \begin{proof}[Proof of the Claim]
  Let $\mu<\kappa$ be a cardinal that is $\alpha$-supercompact for all $\alpha<\kappa$, let $\lambda>\kappa$ be an ordinal and let $\theta>\lambda$ be an element of $C^{(1)}$. 
  By our first claim, there exist cardinals $\mu<\bar{\kappa}<\bar{\theta}<\kappa$, an elementary submodel $X$ of $H_{\bar{\theta}}$ with $V_{\bar{\kappa}}\cup\{\bar{\kappa}\}\subseteq X$ and an elementary embedding $\map{j}{X}{H_\theta}$ with $j(\bar{\kappa})=\kappa$, $j(\mu)=\mu$ and $\lambda\in\ran{j}$. 
  Pick $\bar{\lambda}\in X$ with $j(\bar{\lambda})=\lambda$. Then $\mu<\bar{\lambda}<\kappa$ and $\mu$ is $\bar{\lambda}$-supercompact. 
  Since elementarity ensures that $\bar{\theta}$ is an element of $C^{(1)}$, we now know that $\mu$ is $\bar{\lambda}$-supercompact in $X$ and this shows that $\mu$ is $\lambda$-supercompact. 
 \end{proof}

 The combination of the above claims now shows that $\kappa$ is a limit of supercompact cardinals in this case. 
 In particular, these arguments prove the implication $\eqref{item:BothWeakSR} \Rightarrow \eqref{item:EiterSunfOrSC}$. 
\end{proof}

 In addition to Corollary \ref{corollary:LeastSUNF}, Theorem \ref{theorem:CharSr-Sr--} can directly be used to derive several interesting equivalences. 
 For example, it shows that for cardinals that are not strongly unfoldable, the validity of the principle $\mathrm{SR}$ for $\Sigma_2$-definable classes is equivalent to the  validity of the principle $\WSR$, and also to $\HSR^-$, for these classes. In particular, this equivalence holds for all singular cardinals.

\begin{corollary}
 The following statements are equivalent for every cardinal $\kappa$ that is not strongly unfoldable: 
 \begin{enumerate}
  \item The cardinal $\kappa$ is a limit of supercompact cardinals. 
 
  
  \item The principle $\WSR_\Ce(\kappa)$  holds for every class $\Ce$ of structures of the same type that is definable by a $\Sigma_2$-formula with parameters in $V_\kappa$. 
  
  \item The principle $\HSR_\Ce^-(\kappa)$  holds for every class $\Ce$ of structures of the same type that is definable by a $\Sigma_2$-formula with parameters in $V_\kappa$. 
   \qed 
 \end{enumerate}
\end{corollary}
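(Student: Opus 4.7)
The plan is to deduce the corollary as an almost immediate consequence of Theorem \ref{theorem:CharSr-Sr--}. Indeed, for any cardinal $\kappa$ that is not strongly unfoldable, item (1) of Theorem \ref{theorem:CharSr-Sr--} collapses to ``$\kappa$ is a limit of supercompact cardinals,'' which is exactly item (1) of the corollary. Hence the equivalence (1) $\Leftrightarrow$ (2) of the corollary is simply the restriction of (1) $\Leftrightarrow$ (2) of the theorem to this subclass of cardinals, and nothing further needs to be said for that equivalence.

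For the implication (1) $\Rightarrow$ (3) of the corollary, I would use the standard fact that every supercompact cardinal lies in $C^{(2)}$ (this is an immediate consequence of Magidor's reflection characterization cited in the paper) together with the closedness of $C^{(2)}$ to conclude that any limit of supercompact cardinals is itself an element of $C^{(2)}$. Item (1) of the corollary therefore implies the full conjunction ``$\kappa \in C^{(2)}$ and $\HSR_\Ce^-(\kappa)$ holds for every $\Sigma_2$-definable $\Ce$'' of item (3) of the theorem, and in particular the weaker statement (3) of the corollary.

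The delicate direction is (3) $\Rightarrow$ (1), since item (3) of the corollary omits the $C^{(2)}$-clause appearing in item (3) of the theorem. My plan is to show that $\HSR_\Ce^-(\kappa)$ for every $\Sigma_2$-definable class $\Ce$ with parameters in $V_\kappa$ already forces $\kappa \in C^{(2)}$. Given a $\Sigma_2$-formula $\exists x\, \psi(x, v)$ (with $\psi$ a $\Pi_1$-formula) and a parameter $a \in V_\kappa$ such that $V \models \exists x\, \psi(x, a)$, I would design a $\Sigma_2$-definable class $\Ce$ of structures encoding witnesses for $\psi(-, a)$---for instance, expansions of $\langle V_\lambda, \in, a \rangle$ for $\lambda \in C^{(1)}$ with $V_\lambda \models \exists x\, \psi(x, a)$, together with a coding device that ensures $\Ce$ provably contains an element of cardinality exactly $\kappa$. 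Applying $\HSR_\Ce^-(\kappa)$ produces an element of $\Ce \cap H_\kappa$, which by the upward $\Sigma_1$-absoluteness between $V_\lambda$ (for $\lambda \in C^{(1)}$) and $V$ decodes to a witness for $\exists x\, \psi(x, a)$ inside $V_\kappa$. Once $\kappa \in C^{(2)}$ is secured, (3) $\Rightarrow$ (1) of Theorem \ref{theorem:CharSr-Sr--} yields that $\kappa$ is either strongly unfoldable or a limit of supercompact cardinals, and the standing hypothesis of the corollary singles out item (1).

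The main obstacle I foresee lies in engineering the $\Sigma_2$-definable class in the previous paragraph so that membership of a structure of cardinality exactly $\kappa$ can be verified without circularly invoking the very $\Sigma_2$-reflection we are trying to prove. Once this technical point is handled, the corollary reduces entirely to an application of Theorem \ref{theorem:CharSr-Sr--}.
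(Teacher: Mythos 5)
Your handling of (1) $\Leftrightarrow$ (2) and of (1) $\Rightarrow$ (3) matches what the paper intends: the corollary is presented as an immediate specialization of Theorem \ref{theorem:CharSr-Sr--} to cardinals that are not strongly unfoldable, and for these parts nothing is needed beyond quoting the theorem together with the observation (recorded inside the theorem's own proof) that a limit of supercompact cardinals belongs to $C^{(2)}$, so that item (3) of the theorem, and a fortiori the weaker item (3) of the corollary, follows.

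The gap is in (3) $\Rightarrow$ (1), the only place where you attempt to go beyond the theorem, and there your argument is not actually carried out. You rightly notice that item (3) of the corollary omits the conjunct ``$\kappa\in C^{(2)}$'' present in item (3) of the theorem, so you need to show that the validity of $\HSR^-_\Ce(\kappa)$ for all $\Sigma_2$-definable classes already forces $\kappa\in C^{(2)}$; but the construction you propose is only described in outline, and the obstacle you flag at the end is the substance of the problem, not a technicality. Unlike $\SR^{--}_\Ce(\kappa)$, the principle $\HSR^-_\Ce(\kappa)$ is vacuously true for any class $\Ce$ with no member of cardinality exactly $\kappa$, so the argument of Lemma \ref{lemma:Sigma2Reflex} does not transfer: if you force the structures in $\Ce$ to pin down the witnessing level (say, by requiring them to contain a $V_\lambda$ with $\lambda\in C^{(1)}$ satisfying the given $\Sigma_2$-statement), then when the least such $\lambda$ has cardinality above $\kappa$ the class may contain no structure of cardinality $\kappa$ and $\HSR^-$ yields nothing; if instead you close under elementary submodels of such $V_\lambda$'s to guarantee members of size exactly $\kappa$, then the reflected structure lying in $H_\kappa$ only bounds the size of the submodel, not the level $\lambda'$ it comes from, so no witness inside $V_\kappa$ is produced. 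Thus the claim ``(3) of the corollary implies $\kappa\in C^{(2)}$'' remains unproved in your write-up, and your proof of (3) $\Rightarrow$ (1) is incomplete at precisely the point where the corollary's wording diverges from the theorem's; the paper itself offers no separate argument here (the corollary is stated with no proof, as a direct restriction of Theorem \ref{theorem:CharSr-Sr--}), so the missing step cannot simply be borrowed, and a complete proof should either supply this implication or argue via the $C^{(2)}$-clause as in item (3) of the theorem.
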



\section{Subtle reflection}\label{section5}

We will next give a proof of Theorem \ref{theorem:SubtleChar}. 
Recall that, given an ordinal $\delta$, a sequence $\seq{E_\gamma}{\gamma <\delta}$ is a \emph{$\delta$-list} if $E_\gamma\subseteq\gamma$ holds for every $\gamma <\delta$.

\begin{lemma}\label{lemma:SubtleSR-SR--}
 Let $\delta$ be an element of $C^{(1)}$ with the property that for every $\delta$-list $\seq{E_\gamma}{\gamma<\delta}$ and every $\rho<\delta$, there exist cardinals $\rho<\mu<\nu<\delta$ with $E_\mu\subseteq E_\nu$. 
 If $\Ce$ is a non-empty set of structures of the same type with $\Ce\subseteq V_\delta$, then there exists a cardinal $\kappa<\delta$ with the property that  $\WSR_\Ce(\kappa)$ holds. 
\end{lemma}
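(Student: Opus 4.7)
The plan is to argue by contradiction, converting a failure of $\WSR_\Ce$ at all $\kappa<\delta$ into a $\delta$-list that violates the hypothesis of the lemma. Fix any $B_0\in\Ce$ and let $\rho_0$ be an infinite cardinal with $\lvert B_0\rvert\leq\rho_0$. For every cardinal $\kappa$ with $\rho_0<\kappa<\delta$, the principle $\SR^{--}_\Ce(\kappa)$ is witnessed by $B_0$, so if $\WSR_\Ce(\kappa)$ fails then $\SR^{-}_\Ce(\kappa)$ also fails and provides a structure $B_\kappa\in\Ce$ of cardinality $\kappa$ into which no member of $\Ce$ of cardinality less than $\kappa$ elementarily embeds. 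Replacing each $B_\kappa$ by an isomorphic copy, I may assume its domain is the ordinal $\kappa$.

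The crux is a coherent encoding of these structures into subsets of ordinals. Let $\calL$ be the language of $\Ce$ and, for each infinite cardinal $\gamma<\delta$, let $\calL^\gamma$ be the expansion of $\calL$ by constants $\{c_\alpha:\alpha<\gamma\}$ and $B_\gamma^\ast$ the expansion of $B_\gamma$ in which $c_\alpha$ is interpreted as $\alpha$. By recursion on infinite cardinals below $\delta$ I build a coherent family of bijections $\sigma_\gamma\colon\mathrm{Sent}(\calL^\gamma)\to\gamma$ satisfying $\sigma_\gamma\restriction\mathrm{Sent}(\calL^\mu)=\sigma_\mu$ for all infinite cardinals $\mu<\gamma$: at limit $\gamma$ I take unions (every $\calL^\gamma$-sentence has finite support, hence lies in $\calL^\mu$ for some cardinal $\mu<\gamma$), and at successor $\gamma=\mu^+$ I extend $\sigma_\mu$ via any bijection between $\mathrm{Sent}(\calL^\gamma)\setminus\mathrm{Sent}(\calL^\mu)$ and $\gamma\setminus\mu$. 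Now define a $\delta$-list by $E_\gamma=\sigma_\gamma[\mathrm{Th}(B_\gamma^\ast)]\subseteq\gamma$ when $\gamma$ is an infinite cardinal in $(\rho_0,\delta)$, and $E_\gamma=\emptyset$ otherwise.

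Applying the hypothesis with $\rho=\rho_0$ yields cardinals $\rho_0<\mu<\nu<\delta$ with $E_\mu\subseteq E_\nu$. For any $\varphi\in\mathrm{Th}(B^\ast_\mu)$, coherence gives $\sigma_\nu(\varphi)=\sigma_\mu(\varphi)\in E_\mu\subseteq E_\nu$, and injectivity of $\sigma_\nu$ forces $\varphi\in\mathrm{Th}(B^\ast_\nu)$, so $\mathrm{Th}(B^\ast_\mu)\subseteq\mathrm{Th}(B^\ast_\nu)$. Applying this to both $\psi(c_{\alpha_1},\ldots,c_{\alpha_n})$ and its negation, for every $\calL$-formula $\psi$ and every $\alpha_1,\ldots,\alpha_n<\mu$, shows that the inclusion $\mu\hookrightarrow\nu$ is an elementary embedding of $B_\mu$ into $B_\nu$, contradicting the choice of $B_\nu$.

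The main obstacle is constructing the coherent family $(\sigma_\gamma)$ and verifying that the weaker subset inclusion $E_\mu\subseteq E_\nu$ (rather than the subtle-style condition $E_\mu=E_\nu\cap\mu$) already suffices. The key point is that expanding by constants $c_\alpha$ converts ordinary theory inclusion into an elementary embedding along the identity, so inclusion alone — with no matching-on-initial-segment requirement — is enough to force the contradiction.
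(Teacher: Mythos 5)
Your proof is correct and follows essentially the same route as the paper: assuming $\SR^-_\Ce(\kappa)$ fails at every cardinal in $(\rho_0,\delta)$, you code the complete elementary diagram of a witnessing structure of cardinality $\kappa$ (with its domain identified with $\kappa$) coherently as a subset of $\kappa$, apply the list hypothesis to get $E_\mu\subseteq E_\nu$, and read off an elementary embedding between two witnesses --- exactly the paper's argument, with your recursively built coherent bijections $\sigma_\gamma$ playing the role of the paper's G\"odel-pairing codes $\goedel{\ell}{\alpha_0,\ldots,\alpha_{k-1}}$. The only point left tacit is that $\rho_0$ can be chosen below $\delta$, which is precisely where the hypothesis $\delta\in C^{(1)}$ (so that $V_\delta=H_\delta$ and every structure in $\Ce$ has cardinality less than $\delta$) is needed.
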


\begin{proof}
 Let $\calL$ denote the signature of $\Ce$ and let $\rho$ denote the minimal cardinality of structures in $\Ce$. Since $\delta\in C^{(1)}$, we know that $\rho<\delta$. 
 In addition, let $\mathsf{Sat}_\calL$ denote the formalized satisfaction relation for $\calL$ and let $\mathsf{Fml}_\calL$ denote the set of G\"odel numbers of formalized $\calL$-formulas. 
 Assume, towards a contradiction, that the principle $\SR^-_\Ce(\kappa)$ fails for every cardinal $\rho<\kappa<\delta$. 
 Given a cardinal $\rho<\kappa<\delta$, we can now fix a structure $A_\kappa\in\Ce$ of cardinality $\kappa$ with the property that there is no elementary embedding of a structure in $\Ce$ of cardinality less than $\kappa$ into $A_\kappa$. Let $b_\kappa$ be a bijection between $\kappa$ and the domain of $A_\kappa$. 
 Let $\seq{E_\gamma}{\gamma<\delta}$ be a $\delta$-list with the property that for every cardinal  $\rho<\kappa<\delta$, the set $E_\kappa$ consists of all ordinals of the form\footnote{Here, we let $\map{\goedel{\cdot}{\ldots,\cdot}}{\On^n}{\On}$ denote the \emph{iterated G\"odel pairing function}.} $$\goedel{\ell}{\alpha_0,\ldots,\alpha_{k-1}}$$ for some $\ell\in\mathsf{Fml}_\calL$ that codes a formula with $k$ free variables and $\alpha_0,\ldots,\alpha_{k-1}<\kappa$ with $$\mathsf{Sat}_\calL(A_\kappa,\ell,\langle b_\kappa(\alpha_0),\ldots,b_\kappa(\alpha_{k-1})\rangle).$$ 
 
 By our assumptions, there exist cardinals $\rho<\mu<\nu<\delta$ with the property that $E_\mu\subseteq E_\nu$. 
 
 \begin{claim*}
  The map $b_\nu\circ b_\mu^{{-}1}$ is an elementary embedding of $A_\mu$ into $A_\nu$. 
 \end{claim*}
 
 \begin{proof}[Proof of the Claim]
  Fix an $\calL$-formula $\varphi(v_0,\ldots,v_{k-1})$ and $\alpha_0,\ldots,\alpha_{k-1}<\mu$ such that $$A_\mu\models\varphi(b_\mu(\alpha_0),\ldots,b_\mu(\alpha_{k-1})).$$ If $\ell_\varphi$ is the canonical element of $\mathsf{Fml}_\calL$ corresponding to $\varphi$, then we have $$\goedel{\ell_\varphi}{\alpha_0,\ldots,\alpha_{k-1}} ~ \in ~ E_\mu ~ \subseteq ~ E_\nu$$ and this implies that $$A_\nu\models\varphi(b_\nu(\alpha_0),\ldots,b_\nu(\alpha_{k-1})).$$ By also considering negated formulas, the derived  implication yields the statement of the claim. 
 \end{proof}

 Since the above claim yields a contradiction, we now know that the principle $\SR^-_\Ce(\kappa)$ holds for some cardinal $\rho<\kappa<\delta$. Moreover, our setup also ensures that $\SR_\Ce^{--}(\kappa)$ holds. 
\end{proof}

The above lemma directly yields a proof of the forward implication of Theorem \ref{theorem:SubtleChar}.

\begin{corollary}\label{corollary:SubtleYieldsSR-SR--}
 If $\delta$ is either a subtle cardinal or a limit of subtle cardinals and $\Ce$ is a non-empty class of structures of the same type with $\Ce\subseteq V_\delta$, then there exists a cardinal $\kappa<\delta$ with the property that  $\WSR_\Ce(\kappa)$ holds. 
\end{corollary}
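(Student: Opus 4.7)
The plan is to apply Lemma~\ref{lemma:SubtleSR-SR--} directly, splitting the argument into two cases according to whether $\delta$ is itself subtle or merely a limit of subtle cardinals.

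In the first case, where $\delta$ is subtle, I would verify both hypotheses of Lemma~\ref{lemma:SubtleSR-SR--} for $\delta$. Since subtle cardinals are strongly inaccessible (as noted just after Definition~\ref{defsubtle}), we have $V_\delta = H_\delta$, and hence $\delta \in C^{(1)}$. For the list condition, given a $\delta$-list $\seq{E_\gamma}{\gamma < \delta}$ and $\rho < \delta$, I would take $C$ to be the set of cardinals $\alpha < \delta$ with $\alpha > \rho$; this is closed unbounded in $\delta$ by inaccessibility. Applying subtleness to $C$ and $\seq{E_\gamma}{\gamma < \delta}$ produces $\mu < \nu$ in $C$ with $E_\mu = E_\nu \cap \mu$, which in particular gives $E_\mu \subseteq E_\nu$ with $\rho < \mu < \nu < \delta$ both cardinals, as required.

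In the second case, where $\delta$ is a limit of subtle cardinals, I would reduce to the first. Since $\Ce$ is a class contained in the set $V_\delta$, it is itself a set, and so there exists some ordinal $\alpha < \delta$ with $\Ce \subseteq V_\alpha$. By hypothesis one can pick a subtle cardinal $\delta'$ with $\alpha < \delta' < \delta$, so that $\Ce \subseteq V_{\delta'}$. The first case applied to $\delta'$ then delivers a cardinal $\kappa < \delta' < \delta$ with $\WSR_\Ce(\kappa)$, which is what is needed.

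I do not anticipate any serious obstacle: the argument is essentially a matter of reading the weaker list condition required by Lemma~\ref{lemma:SubtleSR-SR--} off of the full ``$E_\mu = E_\nu \cap \mu$'' diamond-like property of subtle cardinals, and handling the ``limit of subtle'' case by a trivial reduction exploiting the fact that $\Ce$ is bounded inside $V_\delta$.
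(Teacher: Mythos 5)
Your first case is correct and is exactly the paper's argument: subtleness gives inaccessibility, hence $\delta\in C^{(1)}$, and applying subtleness to the club of cardinals in the interval $(\rho,\delta)$ yields cardinals $\rho<\mu<\nu<\delta$ with $E_\mu=E_\nu\cap\mu\subseteq E_\nu$, so Lemma~\ref{lemma:SubtleSR-SR--} applies to $\delta$.

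The second case, however, contains a genuine gap. From $\Ce\subseteq V_\delta$ you infer that $\Ce\subseteq V_\alpha$ for some $\alpha<\delta$, but a subset of $V_\delta$ need not be bounded in rank below $\delta$: the elements of $\Ce$ may have ranks cofinal in $\delta$, in which case $\Ce\in V_{\delta+1}\setminus V_\delta$ and no such $\alpha$ exists. This is not a marginal omission, since it is precisely the situation in which the corollary is used in the proof of Theorem~\ref{theorem:SubtleChar}, whose second clause concerns sets $\Ce\in V_{\delta+1}\setminus V_\delta$; your reduction to a subtle $\delta'<\delta$ with $\Ce\subseteq V_{\delta'}$ fails exactly where the statement matters. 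Nor can the argument be repaired by cutting $\Ce$ down to $\Ce\cap V_{\delta'}$, because $\WSR_\Ce(\kappa)$ quantifies over \emph{all} structures in $\Ce$ of cardinality $\kappa$, and a structure of small cardinality can sit arbitrarily high in $V_\delta$. The correct move, which is the paper's, is to leave $\Ce$ alone and apply Lemma~\ref{lemma:SubtleSR-SR--} to $\delta$ itself: a limit of subtle cardinals is a limit of inaccessibles, hence lies in $C^{(1)}$, and given a $\delta$-list $\seq{E_\gamma}{\gamma<\delta}$ and $\rho<\delta$ one fixes a subtle cardinal $\delta_0$ with $\rho<\delta_0<\delta$, restricts the list to a $\delta_0$-list, and applies subtleness of $\delta_0$ to the club of cardinals in $(\rho,\delta_0)$ to obtain $\rho<\mu<\nu<\delta_0<\delta$ with $E_\mu\subseteq E_\nu$. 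This verifies the hypothesis of the lemma at $\delta$, which is where it is needed.
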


\begin{proof}
 Let $\seq{E_\gamma}{\gamma<\delta}$ be a $\delta$-list and let $\rho$ be an ordinal smaller than $\delta$. 
 If $\delta$ is a subtle cardinal, then we can consider the closed unbounded subset of all cardinals in the interval $(\rho,\delta)$ and find elements $\mu<\nu$ of this set with the property that $E_\mu=E_\nu\cap\mu$. 
 In the other case, if $\delta$ is a limit of subtle cardinals, then we can fix a subtle cardinal $\delta_0$ with $\rho<\delta_0<\delta$ and repeat the above argument with the $\delta_0$-list $\seq{E_\gamma}{\gamma<\delta_0}$ to find cardinals $\mu<\nu$ with $\rho<\mu<\nu<\delta_0$ and $E_\mu=E_\nu\cap\mu$.  
 Since our assumptions imply that $\delta$ is an element of $C^{(1)}$, these computations allow us to apply Lemma \ref{lemma:SubtleSR-SR--} to derive the desired conclusion. 
\end{proof}

\begin{lemma}\label{lemma:SubtleFromSR-SR--}
 Let $\rho$ be an ordinal with the property that there exists a cardinal $\varepsilon$ greater than $\rho$ such that for every $A\subseteq V_\varepsilon$ and every sufficiently large cardinal $\theta$, there exist cardinals $\rho<\bar{\kappa}<\kappa<\varepsilon$, an elementary submodel $X$ of $H_\theta$ with $\bar{\kappa}\cup\{\bar{\kappa},A\}\subseteq X$ and an elementary embedding $\map{j}{X}{H_\theta}$ with $j\restriction\bar{\kappa}=\id_{\bar{\kappa}}$, $j(\bar{\kappa})=\kappa$ and $j(A)=A$. 
 If $\delta$ is the least cardinal greater than $\rho$ with this property, then $\delta$ is a subtle cardinal. 
\end{lemma}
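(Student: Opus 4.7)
I argue by contradiction: suppose a $\delta$-list $\seq{E_\gamma}{\gamma<\delta}$ and a club $C\subseteq\delta$ witness that $\delta$ is not subtle, so $E_\beta\neq E_\gamma\cap\beta$ for all $\beta<\gamma$ in $C$. Via a standard pairing I code $\vec E$, $C$, and $\delta$ into a single set $A\subseteq V_\delta$ from which each of them is definable, and apply the hypothesis to $A$ and a sufficiently large $\theta$. This produces cardinals $\rho<\bar\kappa<\kappa<\delta$, an elementary submodel $X\prec H_\theta$ with $\bar\kappa\cup\{\bar\kappa,A\}\subseteq X$, and an elementary embedding $\map{j}{X}{H_\theta}$ with $j\restriction\bar\kappa=\id_{\bar\kappa}$, $j(\bar\kappa)=\kappa$, and $j(A)=A$. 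Since $\vec E$, $C$, and $\delta$ are all definable from $A$, the embedding $j$ fixes each of them; in particular $j(E_{\bar\kappa})=E_\kappa$. Combining elementarity of $j$ with $j\restriction\bar\kappa=\id$ then gives $\alpha\in E_{\bar\kappa}$ if and only if $\alpha\in E_\kappa$ for every $\alpha<\bar\kappa$, whence $E_{\bar\kappa}=E_\kappa\cap\bar\kappa$.

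To reach the desired contradiction I must place $\bar\kappa$ and $\kappa$ in $C$. If $\bar\kappa$ is a limit point of $C$, closedness of $C$ forces $\bar\kappa\in C$, and transferring the statement ``$\bar\kappa$ is a limit point of $C$'' through $j$ gives that $\kappa=j(\bar\kappa)$ is also a limit point of $C$, and hence $\kappa\in C$. Otherwise, set $\eta_0:=\sup(C\cap\bar\kappa)$, which lies in $C$ with $\eta_0<\bar\kappa$, and $\gamma^*:=\min(C\setminus(\eta_0+1))\in C$; both lie in $X$, one has $j(\eta_0)=\eta_0$ because $\eta_0<\bar\kappa$, and by elementarity $j(\gamma^*)=\gamma^*$. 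If $\gamma^*=\kappa$, then $j(\gamma^*)=\kappa=j(\bar\kappa)$, and injectivity of $j$ forces $\gamma^*=\bar\kappa$, contradicting $\gamma^*\in C$ and $\bar\kappa\notin C$.

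The remaining subcase $\gamma^*>\kappa$ is the main obstacle, and I plan to resolve it by invoking the minimality of $\delta$. In this subcase $\bar\kappa$ and $\kappa$ both lie in the gap $(\eta_0,\gamma^*)$ of $C$, while $j$ fixes $\gamma^*$ and has critical point $\bar\kappa<\gamma^*$. The idea is to show that the cardinal $\gamma^*<\delta$ itself satisfies the hypothesis of the lemma: given any $B\subseteq V_{\gamma^*}$ and any sufficiently large $\theta_0$, I re-apply the hypothesis at $\delta$ to a set $A''\subseteq V_\delta$ that codes both $B$ and the ordinal $\gamma^*$; the resulting embedding automatically fixes $\gamma^*$, and by packing enough of the structure of $V_{\gamma^*}$ into $A''$ I expect to force the critical point, and thereby its image, strictly below $\gamma^*$. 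This produces an embedding witnessing the hypothesis at $\gamma^*$, contradicting the minimality of $\delta$ and completing the proof. The delicate technical point---and the main obstacle---is precisely the forcing of the critical point below $\gamma^*$ in this last step.
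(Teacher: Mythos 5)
Your reduction to the case $\gamma^*>\kappa$ follows the paper's proof closely (coding $\vec{E}$ and $C$ into one fixed set, deducing $E_{\bar\kappa}=E_\kappa\cap\bar\kappa$, and using that $\min(C\setminus(\eta_0+1))$ is definable from parameters fixed by $j$), but the step you yourself flag as the main obstacle is a genuine gap, and the route you sketch for it does not work. The hypothesis at $\delta$ only provides, for each $A''$, \emph{some} embedding whose critical point and its image lie somewhere in $(\rho,\delta)$; nothing in the hypothesis gives any control placing them below a prescribed $\gamma^*<\delta$, and packing more of the structure of $V_{\gamma^*}$ into $A''$ only adds constraints that the embedding must respect -- it exerts no downward pressure on the critical point. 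So there is no way to verify the full reflection property at $\gamma^*$ (for all $B\subseteq V_{\gamma^*}$ and all large $\theta_0$) from the property at $\delta$, and indeed under the minimality of $\delta$ that property must \emph{fail} at every cardinal in $(\rho,\delta)$, so your intended intermediate statement cannot be provable. A secondary problem: you work with an arbitrary club, so $\gamma^*$ need not be a cardinal, while the minimality of $\delta$ is minimality among cardinals; the paper avoids this by first proving that $\delta$ is a limit cardinal, establishing the claim for clubs of cardinals, and only upgrading to arbitrary clubs at the very end (using that $\delta$ then has uncountable cofinality). (Also, your case split omits the possibility that $\bar\kappa\in C$ without being a limit point of $C$, and when $C\cap\bar\kappa=\emptyset$ one must take $\gamma^*=\min(C)$; these are easily repaired.)

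The paper's resolution of the main case aims at something weaker and is the idea you are missing: since by minimality the property fails at the cardinal $\gamma^*$, fix the least cardinal $\vartheta>\beth_{\gamma^*}$ for which some $A\subseteq V_{\gamma^*}$ witnesses the failure, and take $A$ to be the $\lhd$-least such witness, where $\lhd$ is a well-ordering of $V_\delta$ that was included among the data fixed by $j$ from the start (together with choosing $\theta$ large and $H_\theta$ sufficiently elementary in $V$, so that this minimization is correctly computed in $H_\theta$). Then $\vartheta$ and $A$ are definable in $H_\theta$ from $\gamma^*$ and $\lhd$, hence lie in $X$ and are fixed by $j$, and the restriction $j\restriction(X\cap H_\vartheta)$ is itself an elementary embedding into $H_\vartheta$ of exactly the kind that the pair $(\vartheta,A)$ was chosen to exclude -- its critical point $\bar\kappa$ and image $\kappa$ are already below $\gamma^*$. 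Thus one contradicts a single, definably chosen counterexample using the original embedding, rather than manufacturing new embeddings with controlled critical points; without this device your argument does not go through.
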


\begin{proof}
 We start our proof with the following claim. 
 
 \begin{claim*}
  $\delta$ is a limit cardinal. 
 \end{claim*}
 
 \begin{proof}[Proof of the Claim]
  First, note that our assumptions imply that $\delta$ is an uncountable cardinal. Next, assume towards a contradiction that $\delta=\gamma^+$ for some infinite cardinal $\gamma$. Let $A$ be a subset of $V_\gamma$ and let $\theta$ be a cardinal that is sufficiently large with respect to $\delta$ and the above property. 
  Then we can find cardinals $\rho<\bar{\kappa}<\kappa<\delta$, an elementary submodel $X$ of $H_\theta$ with $\bar{\kappa}\cup\{\gamma,\delta,\bar{\kappa},A\}\subseteq X$ and an elementary embedding $\map{j}{X}{H_\theta}$ with $j\restriction\bar{\kappa}=\id_{\bar{\kappa}}$, $j(\bar{\kappa})=\kappa$, $j(\gamma)=\gamma$, $j(\delta)=\delta$ and $j(A)=A$. 
 Since $\kappa\leq\gamma$, $j(\gamma)=\gamma$ and $j(\bar{\kappa})=\kappa$, we know that $\kappa<\gamma$. 
 But this shows that $\gamma$ also possesses the relevant property, contradicting the minimality of $\delta$.  
 \end{proof}
 
 Let $\vec{E}=\seq{E_\gamma}{\gamma<\delta}$ be a $\delta$-list and let $C$ be a closed unbounded subset of $\delta$ that consists of cardinals. 
 Now, pick a well-ordering $\lhd$ of $V_\delta$ and a cardinal $\theta>\beth_\delta$ such that $\theta$ is sufficiently large and $H_\theta$ is sufficiently elementary in $V$. 
 In this situation, our assumptions yields cardinals $\rho<\bar{\kappa}<\kappa<\delta$, an elementary submodel $X$ of $H_\theta$ with $\bar{\kappa}\cup\{\bar{\kappa},C,\vec{E},\lhd\}\subseteq X$ and an elementary embedding $\map{j}{X}{H_\theta}$ with $j\restriction\bar{\kappa}=\id_{\bar{\kappa}}$, $j(\bar{\kappa})=\kappa$, $j(C)=C$, $j(\vec{E})=\vec{E}$ and $j(\lhd)=\lhd$.

 \begin{claim*}
  $\bar{\kappa}\in C$. 
 \end{claim*}
 
 \begin{proof}[Proof of the Claim]
  Assume, towards a contradiction, that $\bar{\kappa}$ is not an element of $C$. 
  Define $$\gamma ~ = ~ \min(C\setminus\bar{\kappa}) ~ \in ~ X\cap(\bar{\kappa},\delta).$$ 
  If $C\cap\bar{\kappa}=\emptyset$, then $\gamma=\min(C)$ and the ordinal $\gamma$ is definable in $H_\theta$ by a formula with parameter $C$. 
  In the other case, if $C\cap\bar{\kappa}\neq\emptyset$, then $\sup(C\cap\bar{\kappa})<\bar{\kappa}$, $\gamma=\min(C\setminus(\sup(C\cap\bar{\kappa})+1))$ and therefore the ordinal $\gamma$ is definable in $H_\theta$ by a formula with parameters in $\bar{\kappa}\cup\{C\}$. 
  In both cases, the ordinal $\gamma$ is definable in $H_\theta$ by a formula whose parameters are elements of $X$ and that are fixed by the embedding $j$. 
  This shows that $\gamma$ is a cardinal greater than $\bar{\kappa}$ that is  fixed by $j$. Since $j(\bar{\kappa})=\kappa$, this also shows that $\gamma$ is bigger than $\kappa$.

  Using the minimality of $\delta$, we can now find a cardinal $\vartheta>\beth_\gamma$ and a subset $A$ of $V_\gamma$, with the property that for all cardinals $\rho<\bar{\mu}<\mu<\gamma$ and every elementary submodel $Y$ of $H_\vartheta$ with $\bar{\mu}\cup\{\bar{\mu},A\}\subseteq Y$, there is no elementary embedding $\map{i}{Y}{H_\vartheta}$ with $i\restriction\bar{\mu}=\id_{\bar{\mu}}$, $i(\bar{\mu})=\mu$ and $j(A)=A$. 
  Let $\vartheta$ denote the least cardinal greater than $\beth_\gamma$ such that there exists a subset of $V_\gamma$ with these properties and let $A$ denote the $\lhd$-least subset of $V_\gamma$ witnessing this statement with respect to $\gamma$. 
  Since $H_\theta$ was chosen sufficiently elementary in $V$, it follows that  $\vartheta$ and $A$ are elements of $H_\theta$ and both sets are definable in $H_\theta$ from the parameters $\gamma$ and $\lhd$. 
  But this implies that $\vartheta,A\in X$, $j(\vartheta)=\vartheta$ and $j(A)=A$. 
  Set $Y=H_\vartheta\cap X$ and $\map{i=j\restriction Y}{Y}{H_\vartheta}$. 
  Since our assumptions on $\theta$ ensure that $H_\vartheta\in X$, it follows that $Y$ is an elementary submodel of $H_\vartheta$ with $\bar{\kappa}\cup\{\bar{\kappa},A\}\subseteq Y$ and $i$ is an elementary embedding with $i\restriction\bar{\kappa}=\id_{\bar{\kappa}}$, $i(\bar{\kappa})=\kappa$ and $j(A)=A$, contradicting the properties of $\vartheta$ and $A$.  
 \end{proof}
 
 By elementarity, the above claim directly implies that $\kappa$ is an element of $C$. Moreover, our setup ensures that $E_{\bar{\kappa}}=j(E_{\bar{\kappa}})\cap\bar{\kappa}=E_\kappa\cap\bar{\kappa}$. 
 
 The above computations show that $\delta$ is a limit cardinal with the property that for every $\delta$-list $\seq{E_\gamma}{\gamma<\delta}$ and every closed unbounded set $C$ in $\delta$ that consists of cardinals, there are $\mu<\nu$ in $C$ with $E_\mu=E_\nu\cap\nu$. This directly implies that $\delta$ has uncountable cofinality and we can conclude that for every $\delta$-list $\seq{E_\gamma}{\gamma<\delta}$ and every closed unbounded set $C$ in $\delta$, there are $\mu<\nu$ in $C$ with $E_\mu=E_\nu\cap\nu$. 
\end{proof}

\begin{proposition}\label{proposition:Sr--C1_part2}
 If $\delta$ is an uncountable cardinal that is not an element of $C^{(1)}$, then there exists a class $\Ce$ of structures of the same type with $\Ce\subseteq V_\delta$ and the property that the principle $\SR^{--}_\Ce(\kappa)$ fails for every cardinal $\kappa\leq\delta$. 
\end{proposition}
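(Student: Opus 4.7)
The plan is to convert the hypothesis $\delta\notin C^{(1)}$ directly into an oversized element of $V_\delta$ and then package that element as a structure. Recall that, for an uncountable cardinal $\delta$, the condition $\delta\in C^{(1)}$ is equivalent to $V_\delta=H_\delta$, i.e.\ to $|V_\alpha|<\delta$ holding for every $\alpha<\delta$. Hence the hypothesis supplies some ordinal $\alpha<\delta$ with $|V_\alpha|\geq\delta$, and I fix such an $\alpha$ once and for all.

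With $\alpha$ chosen, let $\calL$ denote the empty first-order language, and let $B$ be the $\calL$-structure with domain $V_\alpha$. Since every infinite cardinal is a limit ordinal, $\alpha+2<\delta$, so $B\in V_\delta$, and the singleton $\Ce=\{B\}$ is a class of $\calL$-structures of the same type with $\Ce\subseteq V_\delta$. The cardinality of $B$ equals $|V_\alpha|\geq\delta$, so for every cardinal $\kappa\leq\delta$ the unique member of $\Ce$ has cardinality at least $\kappa$. By the definition of $\SR^{--}_\Ce(\kappa)$, this means that the principle fails for every cardinal $\kappa\leq\delta$, as required.

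There is no real obstacle in this argument; the substance of the proposition is the observation that the failure $\delta\notin C^{(1)}$ is \emph{itself} an instance of the failure of $\SR^{--}$-style reflection at cardinality $\delta$, once one notes that the oversized set $V_\alpha$ can be regarded as a single structure. Together with Lemma \ref{lemma:BadC1singular} and Proposition \ref{proposition:Sr--C1}, which handle the singular case and the case in which $\delta$ is not a limit of inaccessibles, this proposition accounts for the remaining ``large $V_\alpha$'' obstruction and will be used in the proof of Theorem \ref{main:Vopenka} to force $\delta$ into $C^{(1)}$ before any of the stronger inaccessibility or reflection arguments are applied.
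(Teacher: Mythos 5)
Your argument is correct for the proposition as literally stated, and it rests on the same core observation as the paper's proof: since $\delta$ is an uncountable cardinal, $\delta\notin C^{(1)}$ is equivalent to $V_\delta\neq H_\delta$, which yields an ordinal $\alpha<\delta$ with $\betrag{V_\alpha}\geq\delta$, and any class all of whose members have cardinality at least $\delta$ refutes $\SR^{--}_\Ce(\kappa)$ for every $\kappa\leq\delta$. The only real difference is the choice of witness: you take the singleton $\Ce=\{B\}$ with $B$ of domain $V_\alpha$, whereas the paper fixes $\beta<\delta$ with $\betrag{V_\beta}>\delta$ and takes $\Ce=\Set{\langle V_\gamma,\in\rangle}{\beta\leq\gamma<\delta}$, a family whose members have ranks cofinal in $\delta$, so that $\Ce\in V_{\delta+1}\setminus V_\delta$. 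This is not needed for the proposition itself, but it matters for the way the proposition is consumed: it is invoked in the proof of Theorem \ref{theorem:SubtleChar} (not of Theorem \ref{main:Vopenka}, whose non-trivial direction uses Lemma \ref{lemma:BadC1singular} and Proposition \ref{proposition:Sr--C1}), and clause \eqref{item:SR-SR--} of that theorem quantifies only over classes $\Ce\in V_{\delta+1}\setminus V_\delta$; your singleton lies in $V_\delta$ and hence would not by itself witness the failure of that clause, though the defect is harmless since your $\Ce$ can be inflated to a rank-cofinal family (e.g.\ by taking all structures with domain $V_\gamma$ for $\alpha\leq\gamma<\delta$), which is exactly what the paper does. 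So: proof fine, but if you intend it to slot into the subtle-cardinal characterization, choose the cofinal family rather than the singleton, and correct the attribution of where the proposition is used.
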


\begin{proof}
 Our assumptions on $\delta$ directly yield an ordinal $\beta<\delta$ with the property that the set $V_\beta$ has cardinality greater than $\delta$. 
 Let $\Ce$ denote the set of all $\calL_\in$-structures of the form $\langle V_\gamma,\in\rangle$ with $\beta\leq\gamma<\delta$. Then $\emptyset\neq\Ce\subseteq V_\delta$ and $\SR^{--}_\Ce(\kappa)$ fails for every cardinal $\kappa\leq\delta$, because $\Ce$ contains no structures of cardinality less than or equal to $\delta$. 
\end{proof}

\begin{proof}[Proof of Theorem \ref{theorem:SubtleChar}]
 Let $\delta$ be an uncountable cardinal with the property that for every set $\Ce$ of structures of the same type with $\Ce\subseteq V_\delta$, there exists a cardinal $\kappa<\delta$ such that  the principle $\WSR_\Ce(\kappa)$ holds. 
 Then Proposition \ref{proposition:Sr--C1_part2} shows that $\delta$ is an element of $C^{(1)}$. 
 Assume, towards a contradiction, that $\kappa$ is neither a subtle cardinal nor a limit of subtle cardinals. 
 Pick an uncountable regular cardinal $\rho<\delta$ with the property that the interval $(\rho,{\delta}]$ contains no subtle cardinals. 
 
\begin{claim*}
  If $A\subseteq V_\delta$ and $\theta>\beth_\delta$ is a cardinal, then there exist cardinals $\rho<\bar{\nu}<\nu<\delta$, an elementary submodel $X$ of $H_\theta$ with $\bar{\nu}\cup\{\bar{\nu},A\}\subseteq X$ and an elementary embedding $\map{j}{X}{H_\theta}$ with $j\restriction\bar{\nu}=\id_{\bar{\nu}}$, $j(\bar{\nu})=\nu$ and $j(A)=A$.  
\end{claim*}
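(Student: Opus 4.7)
The plan is to apply the hypothesized weak structural reflection principle to a carefully chosen set of structures living in $V_\delta$ and then transfer the resulting reflection back to an elementary embedding into $H_\theta$. Fix a well-ordering $\lhd$ of $H_{\theta^+}$ and use it to define canonical Skolem functions for $H_\theta$. For each cardinal $\mu$ with $\rho<\mu<\delta$, let $Y_\mu$ be the $\lhd$-Skolem hull of $\mu\cup\{A,\rho,\theta\}$ inside $H_\theta$ and let $\map{\pi_\mu}{Y_\mu}{N_\mu}$ denote its transitive collapse, so that $|N_\mu|=\mu$, $\pi_\mu\restriction\mu=\id_\mu$, and $\pi_\mu(\mu)=\mu$. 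Form the structure $B_\mu=\langle N_\mu,\in,\mu,\rho,\pi_\mu(A),\pi_\mu(\theta)\rangle$ and set $\Ce=\Set{B_\mu}{\mu\text{ is a cardinal with }\rho<\mu<\delta}$. Since $N_\mu$ has rank below $\mu^+\leq\delta$ and $\delta$ is a limit cardinal (which will follow from what has already been established about $\delta$ in the containing argument), we have $\Ce\subseteq V_\delta$ and indeed $\Ce\in V_{\delta+1}\setminus V_\delta$.

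By the standing hypothesis on $\delta$, there is a cardinal $\kappa<\delta$ for which $\WSR_\Ce(\kappa)$ holds. The conjunct $\SR^{--}_\Ce(\kappa)$ forces $\kappa>\rho$, so $B_\kappa\in\Ce$ has cardinality $\kappa$, and $\SR^-_\Ce(\kappa)$ supplies a cardinal $\bar{\kappa}$ with $\rho<\bar{\kappa}<\kappa$ together with an elementary embedding $\map{j_0}{B_{\bar{\kappa}}}{B_\kappa}$; preservation of the distinguished constants forces $j_0(\bar{\kappa})=\kappa$, $j_0(\rho)=\rho$, $j_0(\pi_{\bar{\kappa}}(A))=\pi_\kappa(A)$, and $j_0(\pi_{\bar{\kappa}}(\theta))=\pi_\kappa(\theta)$. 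Setting $X=Y_{\bar{\kappa}}$ and $j=\pi_\kappa^{-1}\circ j_0\circ\pi_{\bar{\kappa}}$ yields an elementary embedding $\map{j}{X}{H_\theta}$ with $X\prec H_\theta$, $\bar{\kappa}\cup\{\bar{\kappa},A\}\subseteq X$, $j(\bar{\kappa})=\kappa$, and $j(A)=A$.

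The hard part will be to secure the final requirement $j\restriction\bar{\kappa}=\id_{\bar{\kappa}}$, equivalently $\crit{j_0}=\bar{\kappa}$; I expect this to be the main obstacle. To force it, the idea is to enrich each $B_\mu$ with additional $\mu$-indexed rigidifying data—for instance the canonical enumeration of $N_\mu$ induced by $\lhd$, together with the sequence $\seq{\pi_\mu(A\cap V_\alpha)}{\alpha<\mu}$—so that elementarity of $j_0$ leaves no freedom on ordinals below $\bar{\kappa}$. Should a residual critical point $\mu^\ast$ with $\rho<\mu^\ast<\bar{\kappa}$ nevertheless appear, the restriction of $j_0$ transferred through $\pi_{\bar{\kappa}}$ and $\pi_\kappa$ would itself realize, at $\mu^\ast$, the hypothesis of Lemma \ref{lemma:SubtleFromSR-SR--}; since $\delta\in C^{(1)}$ renders the relevant data absolute between $H_\theta$ and $V$, the minimality clause of that lemma would then place a subtle cardinal inside $(\rho,\delta]$, contradicting the choice of $\rho$. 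Thus $\crit{j_0}=\bar{\kappa}$ in either case, completing the proof of the claim and setting up the subsequent appeal to Lemma \ref{lemma:SubtleFromSR-SR--} to derive the overall contradiction.
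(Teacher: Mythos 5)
Your overall skeleton is the same as the paper's: build a set of structures in $V_\delta$ by collapsing hulls of $H_\theta$, apply $\WSR_\Ce(\kappa)$ for some $\kappa<\delta$, and uncollapse the reflected embedding to an elementary $\map{j}{X}{H_\theta}$ with $j(\bar{\kappa})=\kappa$ and $j(A)=A$. The genuine gap is in the last step, which you yourself flag as ``the hard part'': you try to force $\crit{j}=\bar{\kappa}$, but this is neither achievable nor needed. It is not achievable because no amount of ``rigidifying data'' ($\lhd$-enumerations, the sequence of $\pi_\mu(A\cap V_\alpha)$'s) can pin down the non-definable ordinals below $\bar{\kappa}$; the critical point of the reflected embedding can genuinely be smaller than $\bar{\kappa}$. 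Your fallback via Lemma \ref{lemma:SubtleFromSR-SR--} does not repair this: the hypothesis of that lemma is universally quantified over \emph{all} $A\subseteq V_\varepsilon$ and all sufficiently large $\theta$, so a single embedding with critical point $\mu^\ast$ does not ``realize'' it, and even if it did, concluding ``thus $\crit{j_0}=\bar{\kappa}$'' is a non sequitur -- inside the reductio, a small critical point is not in itself contradictory. The resolution the paper uses is much simpler: since $\bar{\nu}$ and $\nu$ are existentially quantified in the claim, one just takes $\bar{\nu}$ to be the least ordinal of $X$ moved by $j$ (which exists since $j(\bar{\kappa})=\kappa\neq\bar{\kappa}$, and is a cardinal by a standard argument) and sets $\nu=j(\bar{\nu})\leq\kappa<\delta$; there is no need for $\bar{\nu}=\bar{\kappa}$.

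With that fix, the remaining obligation is to show $\bar{\nu}>\rho$, and here your construction is also missing an ingredient: your hulls $Y_\mu$ contain only the ordinals below $\mu$ together with $\{A,\rho,\theta\}$ (note also that $\theta\notin H_\theta$, so it cannot be thrown into the hull), and mere elementarity plus $j(\rho)=\rho$ does not rule out a critical point below $\rho$. The paper arranges the structures so that $V_\rho\subseteq X$ and $j(V_\rho)=V_\rho$ (the structures have cardinality strictly between $\beth_\rho$ and $\delta$ and collapse submodels containing $V_\rho$); then, since $\rho$ is regular and uncountable, the Kunen Inconsistency forces $j\restriction V_\rho=\id_{V_\rho}$, whence $\bar{\nu}>\rho$. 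So: drop the attempt to control the critical point, enlarge your hulls to contain $V_\rho$ (and add $\mu$ itself as a generator so that $\bar{\kappa}\in X$), and take $\bar{\nu}=\crit{j}$, $\nu=j(\bar{\nu})$; as written, the proposal does not prove the claim.
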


 \begin{proof}[Proof of the Claim]
  Let $\calL$ denote the first order-language that extends $\calL_\in$ by three constant symbols and let $\Ce$ denote the set of all $\calL$-structures of the form $\langle M,\in,\rho,\mu,B\rangle$ such that $\mu$ is a cardinal strictly between $\beth_\rho$ and $\delta$,  and there exists an elementary submodel $X$ of $H_\theta$ of cardinality $\mu$ with the property that   $V_\rho\cup\mu\cup\{\mu,A\}\subseteq X$, $M$ is the transitive collapse of $X$ and $\pi(A)=B$, where $\map{\pi}{X}{M}$ denotes the corresponding  collapsing map. 
 We then have $\Ce\subseteq V_\delta$ and,  
  by our assumptions on $\delta$, there exists a cardinal $\kappa<\delta$ with the property that $\WSR_\Ce(\kappa)$ holds. 
  Since every structure in $\Ce$ has cardinality greater than $\beth_\rho$, we know that $\kappa>\rho$. 
  
  Now, pick an elementary submodel $Y$ of $H_\theta$ of cardinality $\kappa$ with $V_\rho\cup\kappa\cup\{\kappa,A\}\subseteq Y$ and let $\map{\pi}{Y}{N}$ denote the corresponding transitive collapse. Then $\langle N,\in,\rho,\kappa,\pi(A)\rangle$ is a structure in $\Ce$ of cardinality $\kappa$. 
  By our assumptions, we can now find a cardinal $\bar{\kappa}$ strictly between  $\beth_\rho$ and $\kappa$, an elementary submodel $X$ of $H_\theta$ of cardinality $\bar{\kappa}$ with $V_\rho\cup\bar{\kappa}\cup\{\bar{\kappa},A\}\subseteq X$ and  an elementary embedding $i$ of $\langle M,\in,\rho,\bar{\kappa},\bar{\pi}(A)\rangle$ into  $\langle N,\in,\rho,\kappa,\pi(A)\rangle$, where $\map{\bar{\pi}}{X}{M}$ denotes the corresponding transitive collapse. 
  We define $$\map{j ~ = ~ \pi^{{-}1}\circ i\circ\bar{\pi}}{X}{H_\theta}.$$ Then $j$ is an elementary embedding with $j(\rho)=\rho$, $j(\bar{\kappa})=\kappa$ and $j(A)=A$. Moreover, since $\rho$ is an uncountable regular cardinal,  $V_\rho\subseteq X$ and $j(V_\rho)=V_\rho$, the \emph{Kunen Inconsistency} implies that $j\restriction V_\rho=\id_\rho$. 
  Let $\bar{\nu}$ denote the least ordinal in $X$ that is moved by $j$. Then $\bar{\nu}$ is a cardinal with $\rho<\bar{\nu}\leq\bar{\kappa}$. Set $\nu=j(\bar{\nu})$. We then know that $\nu$ is a cardinal with $\bar{\nu}<\nu\leq\kappa<\delta$.  
 \end{proof}

 By Lemma \ref{lemma:SubtleFromSR-SR--}, the above claim shows that the interval $(\rho,\delta]$ contains a subtle cardinal, contradicting our assumptions.

 The above computations show that  \eqref{item:SR-SR--} in Theorem \ref{theorem:SubtleChar} implies \eqref{item:SubtleLimSubtle} of the theorem. 
 In the other direction, Corollary \ref{corollary:SubtleYieldsSR-SR--} states that \eqref{item:SubtleLimSubtle} also implies \eqref{item:SR-SR--}. 
\end{proof}

Note that results of Matet show that a cardinal $\delta$ is either subtle or a limit of subtle cardinals if and only if for every $\delta$-list $\seq{E_\gamma}{\gamma<\delta}$ and every $\rho<\delta$, there exist  $\rho<\mu<\nu<\delta$ satisfying  $E_\mu=E_\nu\cap\mu$ (see {\cite[Corollary 3]{MR0930262}}). 
 An alternative proof of this equivalence can be obtained from a combination of Theorem \ref{theorem:SubtleChar} and Lemma \ref{lemma:SubtleSR-SR--}.


\section{Extenders deriving from Product Reflection}\label{prodref}

The Product Reflection Principle ($\PRP$) yields the existence of strong cardinals, via extenders derived from the principle holding for a single $\Pi_1$-definable class $\Ce$ of structures (see \cite[Theorem 4.1]{BagariaWilsonRefl}). In this section, we shall derive extenders in a more general context, including weak forms of Product Structural Reflection ($\WPR$) restricted to $\Pi_1$-definable classes of structures, which shall then be used in the proofs of Theorems \ref{theorem:CharWPR} and \ref{main:ProdSubtle}, given below in Sections \ref{section7} and \ref{section8}, respectively.

In the following, let $\calL^\prime$ denote a first-order language that extends the language of set theory  by constants $\dot{\kappa}$ and $\dot{u}$, plus, possibly,  finitely-many other predicate, constant, and function symbols.
Let $\calL$ denote the first-order language that extends $\calL^\prime$ by an $(n+1)$-ary predicate symbol $\dot{T}_\varphi$ for  every $\calL^\prime$-formula $\varphi(v_0,\ldots,v_n)$ with $(n+1)$-many free variables. 
 Define $\calS_\calL$ to be the class of all $\calL$-structures $A$ such that there exists a cardinal $\kappa_A$ in $C^{(1)}$ and a limit  ordinal $\theta_A>\kappa_A$ 
 such that: 
 \begin{enumerate}
     \item The domain of $A$ is $V_{\theta_A+1}$. 
     
     \item $\in^A={\in}\restriction{V_{\theta_A+1}}$, $\dot{\kappa}^A=\kappa_A$ and $\dot{u}^A=\theta_A$. 
     
     \item If $\varphi(v_0,\ldots,v_n)$ is an $\calL^\prime$-formula, then $$\dot{T}_\varphi^A ~ = ~ \Set{\langle x_0,\ldots,x_n\rangle\in V_{\theta_A+1}^{n+1}}{A\models\varphi(x_0,\ldots,x_n)}.$$
 \end{enumerate}
 Note that   $\calS_\calL$ is definable by a $\Pi_1$-formula  without parameters.

For the rest of the section, we assume that 
 \begin{itemize}
     \item $\Ce$ be a subclass of $\calS_\calL$, and 
     
     \item $\zeta$ is an ordinal with $\Ce\cap V_\zeta\neq\emptyset$. 
 \end{itemize}
   We define  $$\kappa ~ = ~ \sup\Set{\kappa_A}{A\in\Ce\cap V_\zeta} ~ \leq ~ \zeta.$$
 For every set $x$, we let $f_x$ denote the unique  function with domain $\Ce \cap V_\zeta$ such that  that $f_x(A)=x$ for all $A$ with $x\in V_{\kappa_A}$, and $f_x(A)=\dot{u}^A$ otherwise. 
 In addition, let $f^x$ denote the unique   function with domain $\Ce \cap V_\zeta$ and such that $f^x(A)=x\cap V_{\kappa_A}$, for all $A$ in the domain.
 For the remainder of this section, we also assume that 
 \begin{itemize}
  
  
  
  
  \item $X$ is a substructure of $\prod(\Ce\cap V_\zeta)$ with $f_x\in X$, for all $x\in V_\kappa $, 
  
  \item $B$ is an element of $\calS_\calL$ with $\kappa_B\geq\kappa$, and 
  
  \item   $\map{h}{X}{B}$ is a homomorphism. 
 \end{itemize}
 
 \begin{lemma}\label{lemma:Prod1}
 The following hold: 
 \begin{enumerate}
     \item\label{item:Prod1} If $\varphi(v_0,\ldots,v_{n-1})$ is an $\calL^\prime$-formula and $g_0,\ldots,g_{n-1}\in X$ with   $$A\models\varphi(g_0(A),\ldots,g_{n-1}(A))$$ for all $A\in\Ce\cap V_\zeta$, then $$B\models\varphi(h(g_0),\ldots,h(g_{n-1})).$$  
     
     
     \item\label{item:Prod2} If $x\in V_\kappa$ with $h(f_x)\neq \dot{u}^B$, then $h(f_x)\in V_{\kappa_B}$. 
      
     \item\label{item:Prod3} If $\alpha<\kappa$ with $h(f_\alpha)\neq \dot{u}^B$, then $h(f_\alpha)<\kappa_B$. 
     
     
     \item\label{item:Prod+1} If $E_0,E_1\subseteq V_\zeta$ with $f^{E_0},f^{E_1}\in X$, then $f^{E_0\cap E_1}\in X$ implies that $h(f^{E_0})\cap h(f^{E_1})=h(f^{E_0\cap E_1})$ and $f^{E_0\cup E_1}\in X$ implies that $h(f^{E_0})\cup h(f^{E_1})=h(f^{E_0\cup E_1})$. Moreover, if $f^{E_0\setminus E_1}\in X$, then $h(F^{E_0})\setminus h(f^{E_1})=h(f^{E_0\setminus E_1})$. 
     
      \item\label{item:Prod+2} If $E_0\subseteq E_1\subseteq V_\zeta$ with $f^{E_0},f^{E_1}\in X$, then $h(f^{E_0})\subseteq h(f^{E_1})$. 
 \end{enumerate}
\end{lemma}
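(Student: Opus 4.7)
The plan is to reduce the entire lemma to item \eqref{item:Prod1}, from which items \eqref{item:Prod2}--\eqref{item:Prod+2} all follow by feeding \eqref{item:Prod1} an appropriately chosen $\calL^\prime$-formula. The key observation is that the language $\calL$ has been designed precisely so that, for each $\calL^\prime$-formula $\varphi(v_0,\ldots,v_{n-1})$, the atomic $\calL$-formula $\dot{T}_\varphi(v_0,\ldots,v_{n-1})$ encodes satisfaction of $\varphi$ in every structure from $\calS_\calL$. Since homomorphisms preserve atomic formulas, this encoding is what makes \eqref{item:Prod1} work.

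To prove \eqref{item:Prod1}, fix $\varphi(v_0,\ldots,v_{n-1})$ and $g_0,\ldots,g_{n-1}\in X$ satisfying the hypothesis. By the definition of $\dot{T}_\varphi^A$, the hypothesis says $(g_0(A),\ldots,g_{n-1}(A))\in\dot{T}_\varphi^A$ for every $A\in\Ce\cap V_\zeta$. Since relations in $\prod(\Ce\cap V_\zeta)$ are interpreted pointwise, this is precisely the statement that the tuple $(g_0,\ldots,g_{n-1})$ belongs to $\dot{T}_\varphi^{\prod(\Ce\cap V_\zeta)}$, and hence to $\dot{T}_\varphi^X$ because $X$ is a substructure of $\prod(\Ce\cap V_\zeta)$ containing the $g_i$'s. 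As $h$ is a homomorphism and $\dot{T}_\varphi$ is a relation symbol of $\calL$, we conclude $(h(g_0),\ldots,h(g_{n-1}))\in\dot{T}_\varphi^B$, i.e., $B\models\varphi(h(g_0),\ldots,h(g_{n-1}))$.

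For \eqref{item:Prod2}, apply \eqref{item:Prod1} to the $\calL^\prime$-formula $\psi(v_0)\equiv v_0\in V_{\dot{\kappa}}\vee v_0=\dot{u}$ and $g_0=f_x$. By the definition of $f_x$, for every $A\in\Ce\cap V_\zeta$ we have $A\models\psi(f_x(A))$, so \eqref{item:Prod1} gives $h(f_x)\in V_{\kappa_B}\cup\{\dot{u}^B\}$, and the hypothesis $h(f_x)\neq\dot{u}^B$ finishes the argument. Item \eqref{item:Prod3} is analogous, with $\psi(v_0)\equiv v_0\in\dot{\kappa}\vee v_0=\dot{u}$, using that each $\kappa_A$ is an uncountable cardinal so $\alpha<\kappa_A$ iff $\alpha\in\kappa_A$. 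For \eqref{item:Prod+1}, apply \eqref{item:Prod1} to the $\calL^\prime$-formulas $\varphi_\star(v_0,v_1,v_2)\equiv v_0=v_1\star v_2$ for $\star\in\{\cap,\cup,\setminus\}$, together with the elementary identity $(E_0\star E_1)\cap V_{\kappa_A}=(E_0\cap V_{\kappa_A})\star(E_1\cap V_{\kappa_A})$ valid for each such $\star$ and each $A$; and for \eqref{item:Prod+2}, apply it to $\varphi(v_0,v_1)\equiv v_0\subseteq v_1$ together with the obvious monotonicity of $E\mapsto E\cap V_{\kappa_A}$.

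No step is genuinely difficult. The one conceptual point worth noting is that a homomorphism only preserves \emph{atomic} formulas, so we could not transfer a non-positive formula like $v_0=v_1\setminus v_2$ through $h$ directly; it is the rewriting of every $\calL^\prime$-formula as an atomic $\calL$-formula via the predicates $\dot{T}_\varphi$ that unlocks the full first-order expressive power of $\calL^\prime$ inside the proof of \eqref{item:Prod1}.
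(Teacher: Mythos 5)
Your proof is correct and follows essentially the same route as the paper: item \eqref{item:Prod1} is established via the predicates $\dot{T}_\varphi$ and the pointwise interpretation of relations in $\prod(\Ce\cap V_\zeta)$, and the remaining items are obtained by applying \eqref{item:Prod1} to suitable $\calL^\prime$-formulas. The only differences are cosmetic (you derive \eqref{item:Prod3} directly from \eqref{item:Prod1} instead of via \eqref{item:Prod2}, and \eqref{item:Prod+2} via a $\subseteq$-formula instead of via \eqref{item:Prod+1}), and your closing remark correctly identifies why the expanded language is needed.
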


\begin{proof}
 \eqref{item:Prod1} 
  Fix an $\calL^\prime$-formula $\varphi(v_0,\ldots,v_{n-1})$   and $g_0,\ldots,g_{n-1}\in X$ with   $A\models\varphi(g_0(A),\ldots,g_{n-1}(A))$  for all $A\in\Ce\cap V_\zeta$. 
  %
  Then $A\models \dot{T}_\varphi(g_0(A),\ldots,g_{n-1}(A))$ holds for all $A\in\Ce\cap V_\zeta$. By the definition of the product structure, this implies that $\prod(\Ce\cap V_\zeta)\models\dot{T}_\varphi(g_0,\ldots,g_{n-1})$ and hence $X\models\dot{T}_\varphi(g_0,\ldots,g_{n-1})$. 
  Since $h$ is a homomorphism, this shows that  $B\models\dot{T}_\varphi(h(g_0),\ldots,h(g_{n-1}))$ holds, and therefore  $B\models\varphi(h(g_0),\ldots,h(g_{n-1}))$. 
  
  
 \eqref{item:Prod2} Pick $x\in V_\kappa$ with $h(f_x)\neq \dot{u}^B$.  Given $A\in\Ce\cap V_\zeta$, we  have $$A\models ``f_x(A)\neq\dot{u}\longrightarrow \textit{$f_x(A)$ is an element of $V_{\dot{\kappa}}$}".$$ Using \eqref{item:Prod1}, the fact that $h(f_x)\neq \dot{u}^B$ now directly implies that $h(f_x)$ is an element of $V_{\kappa_B}$.

  \eqref{item:Prod3} This implication follows directly from the combination of \eqref{item:Prod1} and \eqref{item:Prod2}.

  
  \eqref{item:Prod+1} Since the given definitions ensure that  $f^{E_0}(A)\cap f^{E_1}(A)=f^{E_0\cap E_1}(A)$, $f^{E_0}(A)\cup f^{E_1}(A)=f^{E_0\cup E_1}(A)$, and $f^{E_0}(A)\setminus f^{E_1}(A)=f^{E_0\setminus E_1}(A)$,  for all $A\in\Ce\cap V_\zeta$, the desired implications follow immediately from \eqref{item:Prod1}. 
  
  \eqref{item:Prod+2} Since our assumptions imply that $E_0\cap E_1=E_0$, a combination of \eqref{item:Prod1} and \eqref{item:Prod+1} shows that $h(f^{E_0})\cap h(f^{E_1})=h(f^{E_0})$ and hence $h(f^{E_0})\subseteq h(f^{E_1})$. 
\end{proof}

 Note that we have  $f_\kappa(A)=\dot{u}^A=\theta_A$ for all $A\in\Ce\cap V_\zeta$ and this implies that $f_\kappa\in X$ with $h(f_\kappa)=\dot{u}^B=\theta_B>\kappa_B\geq\kappa$. In particular, we know that $f_\mu\in X$ holds for all ordinals $\mu\leq\kappa$.

\begin{lemma}\label{lemma:ProdCons}
 Suppose that $\mu$ is an ordinal less than or equal to $\kappa$ such that $h(f_\mu)\neq\mu$, $f^\mu\in X$, and $h(f_x)=x$ for all $x\in V_\mu$. 
 Then the following hold: 
 \begin{enumerate}
  \item\label{item:ProdCons1} If $E\subseteq V_\mu$ with $f^E\in X$, then $h(f^E)\cap V_\mu=E$. 
  
    \item\label{item:ProdCons3} If $h(f_\mu)=\dot{u}^B$, then $h(f^\mu)=\kappa_B$. 
  
  \item\label{item:ProdCons4} If $h(f_\mu)\neq\dot{u}^B$, then $h(f^\mu)=h(f_\mu)<\kappa_B$. 
  
  \item\label{item:ProdCons2.1} $\mu$ is a strong limit cardinal. 
  
    \item\label{item:ProdCons2.2New} If there exists $E\subseteq\mu$ of order-type $\cof{\mu}$ with $f^E\in X$ and either $\mu<h(f^\mu)$ or $\kappa_A<\kappa_B$ holds for all $A\in\Ce\cap V_\zeta$, then $\mu$ is regular and, by \eqref{item:ProdCons2.1}, it follows that $\mu$ is  an inaccessible cardinal. 
  
  \item\label{item:ProdCons5} $f^\emptyset\in X$ with $h(f^\emptyset)=\emptyset$ and, if $f^{[\mu]^{{<}\omega}}\in X$, then $h(f^{[\mu]^{{<}\omega}})=[h(f^\mu)]^{{<}\omega}$. 
 \end{enumerate}
\end{lemma}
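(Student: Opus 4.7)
My overall strategy is to choose, for each clause, an $\calL^\prime$-formula $\varphi$ that is satisfied uniformly in every $A\in\Ce\cap V_\zeta$ on the relevant tuples from $X$, and then to transfer satisfaction to $B$ via Lemma \ref{lemma:Prod1}\eqref{item:Prod1}. The recurring technical nuisance is that for $x\in V_\kappa$ the value $f_x(A)$ bifurcates as either $x$ or $\dot u^A=\theta_A$, depending on whether $x\in V_{\kappa_A}$; I would systematically augment the natural formulas with auxiliary disjuncts of the form ``$\ldots\lor v_i=\dot u\lor v_j=\dot\kappa$'' to force uniform satisfaction in the product, and then use the hypotheses on $\mu$ (chiefly $h(f_x)=x$ for $x\in V_\mu$ and $h(f_\mu)\neq\mu$) to rule out the auxiliary cases after transfer.

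For \eqref{item:ProdCons1}, the conditionals ``$v_0\in V_{\dot\kappa}\to v_0\in v_1$'' and ``$v_0\in V_{\dot\kappa}\to v_0\notin v_1$'' applied to $v_0=f_x$ and $v_1=f^E$ are uniformly true, since whenever $x\in V_{\kappa_A}$ one has $x\in f^E(A)=E\cap V_{\kappa_A}$ iff $x\in E$; after transfer the hypothesis of the conditional holds in $B$ because $h(f_x)=x\in V_\mu\subseteq V_{\kappa_B}$. For \eqref{item:ProdCons3}, the biconditional ``$v_0=\dot u\leftrightarrow v_1=\dot\kappa$'' on $(f_\mu,f^\mu)$ is uniformly true because $f_\mu(A)=\theta_A$ iff $\mu\notin V_{\kappa_A}$ iff $f^\mu(A)=\mu\cap V_{\kappa_A}=\kappa_A$. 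For \eqref{item:ProdCons4}, combine this biconditional (in its contrapositive) with the uniform implication ``$v_0\neq\dot u\to v_0=v_1$'' (valid because $\mu<\kappa_A$ forces $f_\mu(A)=\mu=f^\mu(A)$) and the uniform bound ``$v_1\leq\dot\kappa$'' on $f^\mu$. Clause \eqref{item:ProdCons5} is immediate from transferring ``$v_0=\emptyset$'' and ``$v_1=[v_0]^{{<}\omega}$'' on $(f^\mu,f^{[\mu]^{<\omega}})$, the latter because in each $A$ one has $[\mu]^{<\omega}\cap V_{\kappa_A}=[\mu\cap V_{\kappa_A}]^{<\omega}=[f^\mu(A)]^{<\omega}$.

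For \eqref{item:ProdCons2.1}, I first derive $h(f_\mu)>\mu$ by transferring the uniformly valid ``$v_0\geq v_1\lor v_0=\dot u\lor v_1=\dot u$'' with $v_0=f_\mu$ and $v_1=f_\alpha$ for every $\alpha<\mu$, which together with $h(f_\mu)\neq\mu$ yields strict inequality. Then I rule out each obstruction to $\mu$ being a strong limit cardinal. If $\mu=\nu+1$ were a successor, transferring the uniformly valid ``$v_0=v_1+1\lor(v_0=\dot u\land v_1=\dot u)$'' on $(f_\mu,f_\nu)$ would force $h(f_\mu)=\nu+1=\mu$ (the second disjunct failing because $h(f_\nu)=\nu\neq\dot u^B$), contradicting the hypothesis. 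If $\mu$ were not a cardinal, pick $\alpha=|\mu|^V<\mu$ and a bijection $\pi\colon\alpha\to\mu$; observing that $\pi\subseteq V_\mu$ so that \eqref{item:ProdCons1} gives $h(f^\pi)\cap V_\mu=\pi$ for any $f^\pi\in X$ that arises, transfer the uniformly valid ``$v_0$ is a surjection onto $v_2\lor v_2=\dot u$'' on $(f^\pi,f_\alpha,f_\mu)$: the first alternative combined with $h(f^\pi)\supseteq\pi$ pins $h(f_\mu)=\mu$, contradicting the hypothesis, while the second alternative places $h(f_\mu)=\dot u^B$, whence \eqref{item:ProdCons3} gives $h(f^\mu)=\kappa_B$ and a further transfer of the uniformly valid ``$|v_0|=v_1\lor v_0=\dot u$'' collapses the cardinality of $\kappa_B$ in $B$ to $\alpha$, contradicting that $\kappa_B\in C^{(1)}$ is a cardinal. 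The strong-limit clause is handled analogously, replacing the bijection $\pi$ by a surjection from $|V_\alpha|$ onto $V_\alpha$ and transferring the uniformly valid ``$v_1=|V_{v_0}|\lor v_0=\dot u$''.

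For \eqref{item:ProdCons2.2New}, the given $E\subseteq\mu$ of order type $\cof{\mu}$ yields via \eqref{item:ProdCons1} that $h(f^E)\cap V_\mu=E$; transferring ``$\sup v_0=v_1\lor v_1=\dot\kappa$'' on $(f^E,f^\mu)$ (uniform because $\mu\leq\kappa_A$ gives $\sup f^E(A)=\sup E=\mu=f^\mu(A)$ and $\mu>\kappa_A$ gives $f^\mu(A)=\kappa_A$) places $h(f^E)$ cofinally in $h(f^\mu)$ or forces $h(f^\mu)=\kappa_B$, while transferring ``$\otp(v_0)\leq v_1\lor v_1=\dot u$'' on $(f^E,f_{\cof\mu})$ bounds $\otp(h(f^E))\leq\cof{\mu}$ in $B$. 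Under either of the two stated side hypotheses one deduces $\mu<h(f^\mu)$ (immediately, in the first case; in the second, by checking that $h(f^\mu)=\kappa_B>\kappa>\mu$ using $\kappa_A<\kappa_B$ uniformly, which also forces $h(f_\mu)=\dot u^B$ via \eqref{item:ProdCons3}). Then a cofinal subset of $h(f^\mu)$ of order type at most $\cof{\mu}$ whose part below $\mu$ is exactly $E\subseteq\mu$ must jump across the ordinal $\mu$ inside $h(f^\mu)$, and this is compatible with $\sup h(f^E)=h(f^\mu)$ only when $\mu$ itself is regular of cofinality $\cof\mu=\mu$.

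The principal obstacle is clause \eqref{item:ProdCons2.1}: the combinatorics of ruling out every possible failure of strong-limit-cardinality via a single uniformly true $\calL^\prime$-formula is delicate, because the auxiliary $\dot u$-disjuncts used to secure uniformity interact subtly with the escape case $h(f_\mu)=\dot u^B$, and one must repeatedly reinstate the contradiction by combining transfer with the rigidifying consequence $h(f_x)=x$ on $V_\mu$.
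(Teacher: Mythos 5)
Your transfer arguments for clauses \eqref{item:ProdCons1}, \eqref{item:ProdCons3}, \eqref{item:ProdCons4} and \eqref{item:ProdCons5} are fine and essentially the paper's own. The genuine gap is in clause \eqref{item:ProdCons2.1}. First, your non-cardinal and strong-limit subcases lean on functions such as $f^\pi$ (for a bijection $\map{\pi}{\alpha}{\mu}$), or an analogous $f^s$, lying in $X$; but the hypotheses only guarantee $f_x\in X$ for $x\in V_\kappa$ and $f^\mu\in X$, and clause \eqref{item:ProdCons2.1}, unlike \eqref{item:ProdCons1} and \eqref{item:ProdCons2.2New}, is unconditional, so you may not assume any further $f^E\in X$. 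Second, and more seriously, your escape case $h(f_\mu)=\dot u^B$ cannot be closed by the cardinality-collapsing transfer you propose: any formula you transfer must hold in every $A\in\Ce\cap V_\zeta$, including those with $\kappa_A\leq\mu$, and such structures genuinely satisfy $f_\mu(A)=\dot u^A$ and $f^\mu(A)=\dot\kappa^A$ with $\dot\kappa^A$ a cardinal; $B$ together with $h$ simply imitates this configuration, so no uniformly valid formula of the kind you describe (e.g. ``$|v_0|=v_1\lor v_0=\dot u$'') can force $\kappa_B$ to have cardinality $\alpha$ in $B$ --- the $\dot u$/$\dot\kappa$ disjunct that you need for uniformity is exactly the disjunct realized after transfer. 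You flag this tension in your closing paragraph but do not resolve it, and the ``rigidifying'' hypothesis $h(f_x)=x$ on $V_\mu$ is too weak to resolve it: one needs rigidity at a level strictly above $\mu$.

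The missing idea is the paper's mechanism for this clause. Assuming $\rho<\mu$ with $2^\rho\geq\mu$, it uses that every $\kappa_A$ lies in $C^{(1)}$, hence is closed under $\sigma\mapsto 2^\sigma$, so $2^\rho<\kappa_A$ whenever $\rho<\kappa_A$; transferring conditional formulas then gives $h(f_{2^\rho})=2^\rho$, $h(f_{V_{2^\rho}})=V_{2^\rho}$ and $h(f_x)\in V_{2^\rho}$ for all $x\in V_{2^\rho}$, so that $x\mapsto h(f_x)$ is an elementary embedding of $V_{2^\rho}$ into itself. Since $\cof{2^\rho}>\rho\geq\omega$, the Kunen Inconsistency forces this map to be the identity, and $\mu\leq 2^\rho$ then yields $h(f_\mu)=\mu$, a contradiction --- and this works whatever the value of $h(f_\mu)$ is, in particular in your escape case, using only the $f_x$'s, which are guaranteed to lie in $X$. (Your successor-ordinal subcase is correct, but only because each $\kappa_A$ is a limit ordinal, which you use tacitly; and in clause \eqref{item:ProdCons2.2New} your treatment of the second side hypothesis is also not right as stated: $\kappa_A<\kappa_B$ is not a property of $A$ alone, so it cannot be ``used uniformly'' in a transferred formula, and the inequalities $\kappa_B>\kappa>\mu$ need not hold; the paper instead derives $\mu=\kappa_B=\sup(E)$ in that configuration and then applies Lemma \ref{lemma:Prod1}.\eqref{item:Prod1} in contrapositive form to produce some $A$ with $\kappa_A=\kappa_B$, contradicting the side hypothesis.)
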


\begin{proof}
 \eqref{item:ProdCons1}   First, pick $x\in E$. For each $A\in\Ce\cap V_\zeta$,  we have 
 $$A\models ``f_x(A)\ne \dot{u}\longrightarrow f_x(A)  \in   f^E(A)".$$ 
 By Lemma \ref{lemma:Prod1}.\eqref{item:Prod1}, the fact that $h(f_x)=x\neq \dot{u}^B$  implies that $x=h(f_x)\in h(f^E)$. This shows that $E\subseteq h(f^E)\cap V_\mu$. 
    
    In the other direction, pick $x\in V_\mu\setminus E$. Then $$A\models ``f_x(A)  \in  f^E(A)\longrightarrow f_x(A)=\dot{u}"$$ holds for all $A\in\Ce\cap V_\zeta$, and since $h(f_x)=x\neq \dot{u}^B$, by  Lemma \ref{lemma:Prod1}.\eqref{item:Prod1} we must have that   $x=h(f_x)\notin h(f^E)$. This shows $h(f^E)\cap V_\mu\subseteq E$, thus proving the desired equality. 
 
   \eqref{item:ProdCons3} For each $A\in\Ce\cap V_\zeta$, we have $$A\models ``f_\mu(A)=\dot{u} ~ \longrightarrow ~ f^\mu(A)=\dot{\kappa}".$$ 
  By Lemma \ref{lemma:Prod1}.\eqref{item:Prod1}, this yields the desired implication. 
  
   \eqref{item:ProdCons4} For each $A\in\Ce\cap V_\zeta$, we have $$A\models ``f_\mu(A)\neq\dot{u} ~ \longrightarrow ~ f^\mu(A)=f_\mu(A)<\dot{\kappa}".$$ The desired implication now follows from another application of Lemma \ref{lemma:Prod1}.\eqref{item:Prod1}. 
   
  \eqref{item:ProdCons2.1} First, note that since $h(f_\mu)\neq\mu$,  Lemma \ref{lemma:Prod1}.\eqref{item:Prod1} directly implies that $\mu>\omega$. 
  Now assume, towards a contradiction, that there is a cardinal $\rho<\mu$ with $2^\rho\geq\mu$. Given $A\in\Ce\cap V_\zeta$ with $\rho<\kappa_A$, the fact that $\kappa_A\in C^{(1)}$ implies that $2^\rho<\kappa_A\leq\kappa$. 
  Using Lemma \ref{lemma:Prod1}.\eqref{item:Prod1}, we can now show that $h(f_{2^\rho})=2^\rho$, $h(f_{V_{2^\rho}})=V_{2^\rho}$ and $h(f_x)\in V_{2^\rho}$ for all $x\in V_{2^\rho}$. 
  In particular, since $h(f_\mu)\ne \mu$ and $2^\rho \geq \mu$, we must have $\mu<2^\rho$. Another application of  Lemma \ref{lemma:Prod1}.\eqref{item:Prod1}  shows that the map $$\Map{j}{V_{2^\rho}}{V_{2^\rho}}{x}{h(f_x)}$$ is an elementary embedding. 
  Since $\cof{2^\rho}>\rho \geq\omega$, the \emph{Kunen Inconsistency} implies that $j$ is the identity on $V_{2^\rho}$ and hence $h(f_\mu)=j(\mu)=\mu$, a contradiction. 
  
    \eqref{item:ProdCons2.2New} Assume, towards a contradiction, that $\mu$ is singular and pick $E\subseteq\mu$ of order-type $\cof{\mu}$ with $f^E\in X$. 
  We then have 
   \begin{equation*}
    \begin{split}
     A\models ``[ & f_{\cof{\mu}}(A)\neq\dot{u} ~ \wedge ~ f_\mu(A)=\dot{u} ~ \wedge ~ \otp{f^E(A)}\geq f_{\cof{\mu}}(A)] \\ 
     & \longrightarrow ~ \textit{$f^E(A)$ is a cofinal subset of $\dot{\kappa}$ of order-type $f_{\cof{\mu}}(A)$}"
    \end{split}
   \end{equation*} 
  for all $A\in\Ce\cap V_\zeta$. By Lemma  \ref{lemma:Prod1}.\eqref{item:Prod1}, and the assumption that  $h(f_x)=x$ for all $x\in V_\mu$, which in particular yields $h(f_{\cof{\mu}})=\cof{\mu}$, we have that
   \begin{equation*}
    \begin{split}
     B\models ``[ & \cof{\mu}\neq\dot{u} ~ \wedge ~ h(f_\mu)=\dot{u} ~ \wedge ~ \otp{h(f^E)}\geq \cof{\mu}] \\ 
     & \longrightarrow ~ \textit{$h(f^E)$ is a cofinal subset of $\dot{\kappa}$ of order-type $\cof{\mu}$}".
    \end{split}
   \end{equation*} 

  \begin{claim*}
   $h(f_\mu)\neq\dot{u}^B$. 
  \end{claim*}
  
  \begin{proof}[Proof of the Claim]
   Assume, towards a contradiction, that $h(f_\mu)=\dot{u}^B$. Since \eqref{item:ProdCons1} shows that we have  $\otp{h(f^E)}\geq\otp{E}=\cof{\mu}$, a combination of the above observation with the fact that $\cof{\mu}<\mu \leq \kappa < \dot{u}^B$ then shows that $\mu=\kappa_B=\sup(E)$. 
   By \eqref{item:ProdCons3}, this also shows that $h(f^\mu)=\mu$ and our assumptions imply that $\kappa_A<\kappa_B$ holds for all $A\in\Ce\cap V_\zeta$. But then Lemma \ref{lemma:Prod1}.\eqref{item:Prod1} allows us to find $A\in\Ce\cap V_\zeta$ with $\cof{\mu}<\kappa_A\leq\mu$ and $\kappa_A=\sup(E)=\kappa_B$, contradicting the fact that $\kappa_A<\kappa_B$ holds for all $A\in\Ce\cap V_\zeta$.  
  \end{proof}

  %
  %
  %
  %
  %
  Now using Lemma \ref{lemma:Prod1}.\eqref{item:Prod1} again, the fact that $$A\models ``f_\mu(A)\neq\dot{u}\longrightarrow \otp{f^E(A)}=f_{\cof{\mu}}(A) ~ \wedge ~ f_\mu(A)=\sup(f^E(A))"$$ holds for all $A\in \Ce \cap V_\zeta$, implies that $\otp{h(f^E)}=\cof{\mu}$ and $h(f_\mu)=\sup(h(f^E))$. Since $E\subseteq h(f^E)$, this implies $h(f_\mu)=\sup(E)=\mu$, contradicting our initial assumptions.  
  
  \eqref{item:ProdCons5} Since $f_\emptyset(A)=\emptyset=f^\emptyset(A)$ holds for all $A\in\Ce\cap V_\zeta$, we know that $f^\emptyset\in X$ and Lemma \ref{lemma:Prod1}.\eqref{item:Prod1} directly implies that $h(f^\emptyset)=\emptyset$. 
  Next, assume that $f^{[\mu]^{{<}\omega}}\in X$ and note that we have $$f^{[\mu]^{{<}\omega}}(A) ~ = ~ [\mu]^{{<}\omega}\cap V_{\kappa_A} ~ = ~ [\mu\cap\kappa_A]^{{<}\omega} ~ = ~ [f^\mu(A)]^{{<}\omega}$$ for all $A\in\Ce\cap V_\zeta$. By Lemma \ref{lemma:Prod1}.\eqref{item:Prod1}, this implies that $h(f^{[\mu]^{{<}\omega}})=[h(f^\mu)]^{{<}\omega}$.  
\end{proof}

\begin{lemma}\label{lemma:ProdReflExtender}
Let $\mu$ be as in Lemma \ref{lemma:ProdCons} and assume, moreover, that $\mu<h(f^\mu)$ and $f^E\in X$ for all $E\subseteq V_\mu$. Set $\nu=h(f^\mu)$ and define $$\mathsf{E}_a ~ = ~ \Set{E\subseteq[\mu]^{\betrag{a}}}{a\in h(f^E)}$$ for all $a\in[\nu]^{{<}\omega}$. Then the resulting system $$\Ee ~ = ~ \seq{\mathsf{E}_a}{a\in[\nu]^{{<}\omega}}$$ is a \emph{$(\mu,\nu)$-extender} (as defined in {\cite[pp. 354--355]{MR1994835}}). 
\end{lemma}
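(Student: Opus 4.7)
The plan is to verify the standard axioms of a $(\mu,\nu)$-extender (as in \cite[pp.~354--355]{MR1994835}) one by one, in each case translating the axiom into an assertion about the sets $h(f^E)\subseteq [\nu]^{|a|}$ and then invoking Lemma \ref{lemma:Prod1}.\eqref{item:Prod1}. The template in every case is to identify an $\calL^\prime$-formula whose instance holds uniformly across $\Ce\cap V_\zeta$ at a suitable tuple of functions of the form $f_x$ (with $x\in V_\kappa$) and $f^E$ (with $E\subseteq V_\mu$), all of which lie in $X$ by hypothesis; the transfer principle then delivers the desired instance in $B$. The identity $f^{[\mu]^{|a|}}(A)=[f^\mu(A)]^{|a|}$ holds uniformly and hence transfers to $h(f^{[\mu]^{|a|}})=[\nu]^{|a|}$; together with $h(f^\emptyset)=\emptyset$ and Lemma \ref{lemma:Prod1}.\eqref{item:Prod+1}--\eqref{item:Prod+2}, this immediately yields that each $\mathsf{E}_a$ is an ultrafilter on $[\mu]^{|a|}$, since $a\in [\nu]^{|a|}$.

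To prove $\mu$-completeness, given $\gamma<\mu$ and $\seq{E_i}{i<\gamma}$ with each $E_i\in\mathsf{E}_a$, I use that $\mu$ is a strong limit cardinal (Lemma \ref{lemma:ProdCons}.\eqref{item:ProdCons2.1}) to encode the sequence of complements as the single set $E^{\ast}=\Set{\langle i,s\rangle}{i<\gamma,\ s\in [\mu]^{|a|}\setminus E_i}\subseteq V_\mu$, so that $f^{E^\ast}\in X$; applying Lemma \ref{lemma:Prod1}.\eqref{item:Prod1} to the formulas computing the $i$-th slice of $f^{E^\ast}(A)$ and the union of these slices, together with the identities $h(f_i)=i$ for $i<\mu$ supplied by the hypotheses of Lemma \ref{lemma:ProdCons}, yields the decomposition $h(f^{[\mu]^{|a|}\setminus\bigcap_i E_i})=\bigcup_i h(f^{[\mu]^{|a|}\setminus E_i})$, from which $\bigcap_i E_i\in\mathsf{E}_a$. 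Non-triviality is witnessed by $a=\{\mu\}\in [\nu]^1$: for each $\alpha<\mu$ a direct computation yields $h(f^{\{\{\alpha\}\}})=\{\{\alpha\}\}$, so the sets $[\mu]^1\setminus\{\{\alpha\}\}$ exhibit $\mu$ members of $\mathsf{E}_{\{\mu\}}$ whose intersection is empty. For compatibility, fixing $a\subseteq b$ in $[\nu]^{<\omega}$, $E\subseteq [\mu]^{|a|}$ and its canonical lift $E^{ab}\subseteq [\mu]^{|b|}$, the identity $f^{E^{ab}}(A)=\Set{t\in [V_{\dot{\kappa}}]^{|b|}}{\pi_{ba}(t)\in f^E(A)}$ holds uniformly, and Lemma \ref{lemma:Prod1}.\eqref{item:Prod1} yields $b\in h(f^{E^{ab}})$ iff $\pi_{ba}(b)=a\in h(f^E)$.

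Normality is handled the same way: given $g\colon [\mu]^{|a|}\to\mu$ regressive on an $\mathsf{E}_a$-large set, the graph of $g$ is a subset of $V_\mu$ and so $f^g\in X$; transferring via Lemma \ref{lemma:Prod1}.\eqref{item:Prod1} the uniformly true assertion that ``$g$ is a regressive function on the appropriate subset of $[\mu]^{|a|}$'' produces an ordinal $\beta<\max(a)$ below $\nu$ playing the role of $g(a)$, after which, setting $b=a\cup\{\beta\}$, I verify $\Set{t\in[\mu]^{|b|}}{g(\pi_{ba}(t))=t_\beta}\in\mathsf{E}_b$ by one further application of the transfer principle. I expect the main obstacle to lie here, in the delicate bookkeeping required both to identify $\beta$ as an element of $\nu$ and to check the $\mathsf{E}_b$-largeness at the extended index $b$; in addition, since the hypothesis supplies $f^E\in X$ only for $E\subseteq V_\mu$ (and not for arbitrary sets in $V_\kappa$, which is a real constraint when $\mu=\kappa$), the overall strategy is to replace every auxiliary infinitary object by a single subset of $V_\mu$ before invoking the transfer principle.
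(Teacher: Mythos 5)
Your verification of the ultrafilter, $\mu$-completeness, non-$\mu^+$-completeness, coherence and normality clauses follows essentially the paper's own route: encode each auxiliary infinitary object (the $\gamma$-sequence of measure-one sets, the regressive function) as a single subset of $V_\mu$, so that the corresponding $f^E$ lies in $X$, and then transfer uniform first-order identities through the homomorphism $h$ via Lemma \ref{lemma:Prod1}.\eqref{item:Prod1}, using $h(f_x)=x$ for $x\in V_\mu$ and the Boolean-algebra facts of Lemma \ref{lemma:Prod1}.\eqref{item:Prod+1}--\eqref{item:Prod+2}. These parts are fine in outline (the paper codes the sets $E_\alpha$ directly rather than their complements, which is immaterial), and your normality sketch --- extracting $\beta=h(f^r)(a)<\max(a)$ from the transferred statement and then checking largeness at $b=a\cup\{\beta\}$ by one more transfer --- is exactly what the paper does, so the obstacle you anticipate there is not the real one. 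The paper also verifies, as a separate claim, that $\mathsf{E}_{\{\alpha\}}$ is principal, generated by $\{\alpha\}$, for every $\alpha<\mu$; your computation $h(f^{\{\{\alpha\}\}})=\{\{\alpha\}\}$ essentially supplies this, though you do not flag it as a condition to check.

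The genuine gap is that you never address the well-foundedness clause of the cited definition of a $(\mu,\nu)$-extender: whenever $a_n\in[\nu]^{{<}\omega}$ and $E_n\in\mathsf{E}_{a_n}$ for all $n<\omega$, there must exist a single function $\map{d}{\bigcup_n a_n}{\mu}$ with $d[a_n]\in E_n$ for every $n$. This clause is not an instance of your template, because well-foundedness of the limit ultrapower is not a first-order property that can be read off one transferred sentence, and it is precisely the point where being a mere homomorphism of first-order structures must be supplemented by the special shape of the structures in $\calS_\calL$. The paper's argument: reduce via coherence to $\betrag{a_n}=n$, $a_{n+1}=a_n\cup\{\max(a_{n+1})\}$ and $E_{n+1}\subseteq\pi^{-1}_{a_{n+1},a_n}E_n$; code a putative counterexample as the tree $T\subseteq{}^{{<}\omega}\mu$ of strictly increasing $t$ with $\ran{t}\in E_{\length{t}}$ (a subset of $V_\mu$, so $f^T\in X$) together with the single set $E=\Set{\langle x,n\rangle}{n<\omega,\ x\in E_n}\subseteq V_\mu$; transfer the uniformly true statement that $f^T(A)$ is a well-founded subtree of ${}^{{<}\omega}f^\mu(A)$ --- this is legitimate because the domains are full rank-initial segments $V_{\theta+1}$ containing all candidate branches, and $\POT{\mu}\subseteq V_{\theta_B}$, so $B$ computes well-foundedness correctly --- to conclude that $h(f^T)$ is a genuinely well-founded subtree of ${}^{{<}\omega}\nu$; and then observe that $n\mapsto\max(a_{n+1})$ would give an infinite branch through $h(f^T)$, since $a_n\in h(f^{E_n})$ and the slices of $h(f^E)$ describe $h(f^T)$. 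Without this (or some substitute proof that $\Ee$ yields a well-founded limit), the lemma as stated is not proved.
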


\begin{proof}
 We prove the lemma through a series of claims. 
 
 \begin{claim*}
  For every $a\in [\nu]^{{<}\omega}$, the collection $\mathsf{E}_a$ is a ${<}\mu$-complete ultrafilter on $[\mu]^{\betrag{a}}$. 
 \end{claim*}
 
 \begin{proof}[Proof of the Claim]
  First, note that a combination of clauses \eqref{item:Prod+1} and \eqref{item:Prod+2} of Lemma \ref{lemma:Prod1} and Lemma \ref{lemma:ProdCons}.\eqref{item:ProdCons5} shows that  $\mathsf{E}_a$ is an ultrafilter on $[\mu]^{\betrag{a}}$. 
  Now, fix $\rho<\mu$ and a sequence $\seq{E_\alpha}{\alpha<\rho}$ of elements of $\mathsf{E}_a$. Define $G=\bigcap\Set{E_\alpha}{\alpha<\rho}$ and $H=\Set{\langle x,\alpha\rangle}{\alpha<\rho, ~ x\in E_\alpha}\subseteq V_\mu$. 
  Given $A\in\Ce\cap V_\zeta$ with $\rho\leq \kappa_A$, the fact that $\mu$ and $\kappa_A$ are both cardinals implies that  $$x ~ \in ~  f^{E_\alpha}(A) ~   =  ~ E_\alpha\cap[\kappa_A]^{\betrag{a}} ~ \Longleftrightarrow ~ \langle x,\alpha\rangle ~ \in ~  f^H(A) ~ = ~ H\cap V_{\kappa_A}$$ holds for all $\alpha<\rho$ and this allows us to conclude that $$f^G(A) ~ = ~ \Set{x\in[f^\mu(A)]^{\betrag{a}}}{\textit{$\langle x,\alpha\rangle\in H$ for all $\alpha<\rho$}}.$$
  Since $h(f_\alpha)=\alpha$ for all $\alpha\leq\rho$, we can use Lemma \ref{lemma:Prod1}.\eqref{item:Prod1} to show that $$h(f^{E_\alpha}) ~ = ~ \Set{x\in[\nu]^{\betrag{a}}}{\langle x,\alpha\rangle\in h(f^H)}$$ for all $\alpha<\rho$ and $$a ~ \in ~ \bigcap\Set{h(f^{E_\alpha})}{\alpha<\rho} ~ = ~ \Set{x\in[\nu]^{\betrag{a}}}{\textit{$\langle x,\alpha\rangle\in h(f^H)$ for all $\alpha<\rho$}} ~ = ~   h(f^G).$$
  This proves that $\bigcap\Set{E_\alpha}{\alpha<\rho}\in\mathsf{E}_\alpha$. 
 \end{proof}

 \begin{claim*}
  $\mathsf{E}_{\{\mu\}}$ is not ${<}\mu^+$-complete. 
 \end{claim*}
 
 \begin{proof}[Proof of the Claim]
  Given $\alpha<\mu$, set $E_\alpha=\Set{\{\beta\}}{\alpha<\beta<\mu}\subseteq[\mu]^1$.  For each $\alpha<\mu$, Lemma \ref{lemma:Prod1}.\eqref{item:Prod1} then implies that $h(f^{E_\alpha})=\Set{\{\beta\}}{\alpha<\beta<\nu}$ and this shows that $E_\alpha\in\mathsf{E}_{\{\mu\}}$. But $\bigcap\Set{E_\alpha}{\alpha<\mu}=\emptyset$ and this yields the statement of the claim. 
 \end{proof}
 
  \begin{claim*}
  If $\alpha<\mu$, then $\mathsf{E}_{\{\alpha\}}=\Set{x\subseteq[\mu]^1}{\{\alpha\}\in x}$. 
 \end{claim*}
 
  \begin{proof}[Proof of the Claim]
  Fix $x\subseteq[\mu]^1$. If $\{\alpha\}\in x$, then $$A\models ``f_\alpha(A)\neq\dot{u} ~ \longrightarrow ~ f_{\{\alpha\}}(A)\in f^x(A)"$$ holds for all $A\in\Ce\cap V_\zeta$, and hence Lemma \ref{lemma:Prod1}.\eqref{item:Prod1} implies that $\{\alpha\}=h(f_{\{\alpha\}})\in h(f^x)$, which yields  $x\in\mathsf{E}_{\{\alpha\}}$. 
  In the other direction, if $x\in\mathsf{E}_{\{\alpha\}}$, then $\{\alpha\}\in h(f^x)\cap[\mu]^1$ and Lemma \ref{lemma:ProdCons}.\eqref{item:ProdCons1} implies that $\{\alpha\}\in x$. 
 \end{proof}

 Given $a\subseteq b\in[\nu]^{{<}\omega}$, we let $\map{\pi_{b,a}}{[\mu]^{\betrag{b}}}{[\mu]^{\betrag{a}}}$ denote the canonical induced map, {i.e.} if $\beta_0<\ldots<\beta_{\betrag{b}-1}$ is the monotone enumeration of $b$ and $i_0<\ldots<i_{\betrag{a}-1}$ is the unique strictly increasing sequence of natural numbers less than $\betrag{b}$ such that $a=\{\beta_{i_0},\ldots,\beta_{i_{\betrag{a}-1}}\}$, then $$\pi_{b,a}(x) ~ = ~ \{\alpha_{i_0},\ldots,\alpha_{i_{\betrag{a}-1}}\}$$ for all $x\in[\mu]^{\betrag{b}}$ with monotone enumeration $\alpha_0<\ldots<\alpha_{\betrag{b}-1}$.

 \begin{claim*}[Coherence]
  If $a\subseteq b\in[\nu]^{{<}\omega}$, then $\mathsf{E}_a=\Set{E\subseteq[\mu]^{\betrag{a}}}{\pi_{b,a}^{{-}1}[E]\in\mathsf{E}_b}$. 
 \end{claim*}
 
 \begin{proof}[Proof of the Claim] 
  Given $A\in\Ce\cap V_\zeta$, let $\map{\pi^A_{b,a}}{[f^\mu(A)]^{\betrag{b}}}{[f^\mu(A)]^{\betrag{a}}}$ denote the induced canonical projection map. 
   
  Fix $E\subseteq[\mu]^{\betrag{a}}$. If $A\in\Ce\cap V_\zeta$ is such that  $\mu<\kappa_A$, then we have $f^\mu(A)=\mu$, $f^E(A)=E$, $f^{\pi^{{-}1}_{b,a}[E]}(A)=\pi^{{-}1}_{b,a}[E]$, $\pi^A_{b,a}=\pi_{b,a}$, and hence $$x\in f^{\pi^{{-}1}_{b,a}[E]}(A) ~ \Longleftrightarrow ~ \pi^A_{b,a}(x)\in f^E(A)$$  for all $x\in[f^\mu(A)]^{\betrag{b}}$. 
 In the other case, namely if  $\mu\geq\kappa_A$,  then we have $f^\mu(A)=\kappa_A$, $f^E(A)=E\cap V_{\kappa_A}$, $f^{\pi^{{-}1}_{b,a}[E]}(A)=(\pi^{{-}1}_{b,a}[E])\cap V_{\kappa_A}$ and therefore $$x\in f^{\pi^{{-}1}_{b,a}[E]}(A) ~ \Longleftrightarrow ~ \pi_{b,a}(x)=\pi^A_{b,a}(x)\in E ~ \Longleftrightarrow ~ \pi^A_{b,a}(x)\in f^E(A)$$  for all $x\in[f^\mu(A)]^{\betrag{b}}\subseteq V_{\kappa_A}$.  
 If we now define $\map{\tilde{\pi}_{b,a}}{[\nu]^{\betrag{b}}}{[\nu]^{\betrag{a}}}$ to be the induced canonical projection map, then an application of  Lemma \ref{lemma:Prod1}.\eqref{item:Prod1} yields $h(f^{\pi^{{-}1}_{b,a}[E]})=\tilde{\pi}_{b,a}^{{-}1}[h(f^E)]$.  
  This equality allows us now to conclude that $$E\in\mathsf{E}_a ~ \Longleftrightarrow ~ \tilde{\pi}_{b,a}(b)=a\in h(f^E) ~ \Longleftrightarrow ~ b\in h(f^{\pi^{{-}1}_{b,a}[E]}) ~ \Longleftrightarrow ~ \pi^{{-}1}_{b,a}[E]\in\mathsf{E}_b$$ holds for all $E\subseteq[\mu]^{\betrag{a}}$.  
 \end{proof}

 \begin{claim*}[Well-foundedness]
  If $\seq{a_n}{n<\omega}$ is a sequence of elements of $[\nu]^{{<}\omega}$ and $\seq{E_n}{n<\omega}$ is such that  $E_n\in\mathsf{E}_{a_n}$ for all $n<\omega$, then there exists a function $$\map{d}{\bigcup\Set{a_n}{n<\omega}}{\mu}$$ with $d[a_n]\in E_n$ for all $n<\omega$. 
 \end{claim*}
 
 \begin{proof}[Proof of the Claim]
  Assume, towards a contradiction, that there exist sequences $\seq{a_n}{n<\omega}$ and $\seq{E_n}{n<\omega}$ witnessing that the  claim fails. 
  Then by the previous Claim (Coherence) we may assume that  $\betrag{a_n}=n$, $a_{n+1}=a_n\cup\{\max(a_{n+1})\}$ and $E_{n+1}\subseteq\pi^{{-}1}_{a_{n+1},a_n}E_n$ hold for all $n<\omega$. 
  Define $T$ to be the subset of ${}^{{<}\omega}\mu$ consisting of all  strictly increasing sequences $t$  with $\ran{t}\in E_{\length{t}}$. Our assumptions then imply that $T$ is a subtree of ${}^{{<}\omega}\mu$. 
  Moreover, it follows that $T$ is well-founded, because if $\map{c}{\omega}{\mu}$ was a cofinal branch through $T$, then the map $$\Map{d}{\bigcup\Set{a_n}{n<\omega}}{\mu}{\max(a_{n+1})}{c(n)}$$ would satisfy $d[a_n]\in E_n$ for all $n<\omega$. 
 For each $A\in\Ce\cap V_\zeta$,  $$A\models\anf{\textit{$f^T(A)$ is a well-founded subtree of ${}^{{<}\omega}f^\mu(A)$}}.$$ Hence, by  Lemma \ref{lemma:Prod1}.\eqref{item:Prod1} and the fact $\POT{\mu}\subseteq V_{\theta_B}$, we have that $h(f^T)$ is a well-founded subtree of ${}^{{<}\omega}\nu$. 
  
  Define $E=\Set{\langle x,n\rangle}{n<\omega,  ~ x\in E_n}\subseteq V_\mu$. Given $n<\omega$, we have $$A\models ``f^{E_n}(A)=\Set{x\in[f^\mu]^n}{\langle x,n\rangle\in f^E(A)}"$$  for all $A\in\Ce\cap V_\zeta$, and hence $h(f^{E_n})=\Set{x\in[\nu]^n}{\langle x,n\rangle\in h(f^E)}$. 
  Moreover, since we also have that  $$A\models\anf{\textit{$f^T(A)$ consists of  all  strictly increasing  $t$ in  ${}^{{<}\omega}f^\mu(A)$ with $\langle\ran{t},\length{t}\rangle\in f^E(A)$}},$$ we can conclude that $h(f^T)$ consists of all strictly increasing sequences $t$ in ${}^{{<}\omega}\nu$ with the property that $\langle\ran{t},\length{t}\rangle\in h(f^E)$. 
  Now define $$\Map{c}{\omega}{\nu}{n}{\max(a_{n+1})}.$$ Given $n<\omega$, we  have that $\ran{c\restriction n}=a_n\in h(f^{E_n})$,  and therefore $\langle \ran{c\restriction n},n\rangle\in h(f^E)$. This shows that $c\restriction n\in h(f^T)$ for all $n<\omega$, contradicting the well-foundedness of the tree $h(f^T)$. 
 \end{proof}

 \begin{claim*}[Normality]
  If $ a\in[\nu]^{{<}\omega}$ and $\map{r}{[\mu]^{\betrag{a}}}{\mu}$ is such that  $$\Set{x\in[\mu]^{\betrag{a}}}{r(x)<\max(x)} ~ \in ~ \mathsf{E}_a,$$ then there exists $a\subseteq b\in[\nu]^{{<}\omega}$ with $\max(a)=\max(b)$ and $$\Set{x\in[\mu]^{\betrag{b}}}{(r\circ \pi_{b,a})(x)\in x} ~ \in ~ \mathsf{E}_b.$$
 \end{claim*}
 
 \begin{proof}[Proof of the Claim]
  Set $E=\Set{x\in[\mu]^{\betrag{a}}}{r(x)<\max(x)}$. Let $A\in\Ce\cap V_\zeta$. If $\mu<\kappa_A$, then $f^r(A)=r$, and therefore $$f^E(A) ~ = ~ E ~ = ~ \Set{x\in[f^\mu(A)]^{\betrag{a}}}{x\in\dom{f^r(A)}, ~ f^r(A)(x)<\max(x)}.$$ 
 Now assume   $\mu\geq\kappa_A$ and let us see that the last equality also holds in this case. 
 If $x\in f^E(A)=E\cap V_{\kappa_A}$, then  $x\in[\kappa_A]^{\betrag{a}}$ and  $r(x)<\max(x)<\kappa_A$, hence $x\in\dom{f^r(A)}$ with $f^r(A)(x)=r(x)$. In the other direction, if $x\in[f^\mu (A)]^{|a|}=[\kappa_A]^{\betrag{a}}$ with $x\in\dom{f^r(A)}$ and $f^r(A)(x)<\max(x)$, then $r(x)=f^r(A)(x)$ and $x\in E$. 
 
 An application of Lemma \ref{lemma:Prod1}.\eqref{item:Prod1} now yields $$h(f^E) ~ = ~ \Set{x\in[\nu]^{\betrag{a}}}{x\in\dom{h(f^r)}, ~ h(f^r)(x)<\max(x)}.$$ Since $E\in \mathsf{E}_a$, and so $a\in h(f^E)$, the equality above implies that $a\in\dom{h(f^r)}$ and $h(f^r)(a)<\max(a)$. Set $b=a\cup\{h(f^r)(a)\}$ and $$D ~ = ~ \Set{x\in[\mu]^{\betrag{b}}}{(r\circ\pi_{b,a})(x)\in x}.$$ 
 Letting $\map{\tilde{\pi}_{b,a}}{[\nu]^{\betrag{b}}}{[\nu]^{\betrag{a}}}$ be the induced  canonical projection, a variation of the above argument now shows that $$h(f^D) ~ = ~ \Set{x\in[\nu]^{\betrag{b}}}{\tilde{\pi}_{b,a}(x)\in\dom{h(f^r)}, ~ (h(f^r)\circ\tilde{\pi}_{b,a})(x)\in x}.$$ 
 Since $\tilde{\pi}_{b,a}(b)=a$ and $h(f^r)(a)\in b$, we can conclude that $b\in h(f^D)$ and hence $D\in\mathsf{E}_b$.  
 \end{proof}
 
 This completes the proof of the lemma. 
\end{proof}

\begin{lemma}\label{Lemma:HomStrongA}
 Under the assumptions of Lemma \ref{lemma:ProdReflExtender},  the following hold: 
 \begin{enumerate}
  \item\label{item:HomStrong1}  The cardinal $\mu$ is inaccessible and $\nu$ is an element of $C^{(1)}$.  
  
  \item\label{item:HomStrong2} Let $\Ee  =  \seq{\mathsf{E}_a}{a\in[\nu]^{{<}\omega}}$ be the \emph{$(\mu,\nu)$-extender} given by Lemma \ref{lemma:ProdReflExtender}, 
 let $$\langle\seq{M_a}{a\in[\nu]^{{<}\omega}}, ~ \seq{\map{j_a}{V}{M_a}}{a\in[\nu]^{{<}\omega}}\rangle$$ denote the induced system of ultrapowers of $V$ and ultrapower embeddings (see {\cite[p. 355]{MR1994835}}), 
 let $$\seq{\map{i_{a,b}}{M_a}{M_b}}{a\subseteq b\in[\nu]^{{<}\omega}}$$ denote the system of elementary embeddings induced by the projections $\map{\pi_{b,a}}{[\mu]^{\betrag{b}}}{[\mu]^{\betrag{a}}}$ (see {\cite[p. 354]{MR1994835}}), 
 let $$\langle M_\Ee, ~ \seq{\map{k_a}{M_a}{M_\Ee}}{a\in[\nu]^{{<}\omega}}\rangle$$ denote the direct limit of the directed system $$\langle\seq{M_a}{a\in[\nu]^{{<}\omega}}, ~ \seq{\map{i_{a,b}}{M_a}{M_b}}{a\subseteq b\in[\nu]^{{<}\omega}}\rangle$$ and let $\map{j_\Ee}{V}{M_\Ee}$ denote the induced  embedding. 
 We then have $\crit{j_\Ee}=\mu$, $j_\Ee(\mu)\geq\nu$ and $V_\nu\subseteq M_\Ee$. 
 \end{enumerate} 
\end{lemma}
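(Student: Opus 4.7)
The proof divides naturally according to the two items of the lemma.

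For item \eqref{item:HomStrong1}, the inaccessibility of $\mu$ is already essentially available: Lemma \ref{lemma:ProdCons}.\eqref{item:ProdCons2.1} provides that $\mu$ is a strong limit cardinal, while the current hypotheses $\mu<h(f^\mu)$ and $f^E\in X$ for every $E\subseteq V_\mu$ (in particular for any $E\subseteq\mu$ of order-type $\cof{\mu}$) furnish exactly the ingredients needed to invoke Lemma \ref{lemma:ProdCons}.\eqref{item:ProdCons2.2New} and obtain regularity. To show $\nu\in C^{(1)}$, I would use the bijection $\map{g}{[\mu]^1}{V_\mu}$ fixed in item \eqref{item:HomStrong2}, whose existence is guaranteed by the inaccessibility of $\mu$. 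For each $A\in\Ce\cap V_\zeta$, a short case analysis according to whether $\mu<\kappa_A$ or $\mu\geq\kappa_A$---using $\kappa_A\in C^{(1)}$ together with the coherence condition $g[[\rho]^1]=V_\rho$ for $\rho\in C^{(1)}\cap\mu$---shows that $A$ satisfies the $\calL^\prime$-statement asserting that $f^g(A)$ is a bijection from $[f^\mu(A)]^1$ onto $V_{f^\mu(A)}$. An application of Lemma \ref{lemma:Prod1}.\eqref{item:Prod1}, combined with the transitivity of the domain of $B$, then produces a genuine bijection $h(f^g)\colon[\nu]^1\to V_\nu$ in $V$. In particular $\betrag{V_\nu}=\nu$, so $\nu$ is a strong limit cardinal, $V_\nu=H_\nu$, and therefore $\nu\in C^{(1)}$.

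For item \eqref{item:HomStrong2}, the assertions $\crit{j_\Ee}=\mu$ and $j_\Ee(\mu)\geq\nu$ are formal consequences of $\Ee$ being a $(\mu,\nu)$-extender, as recorded in \cite[pp.~354--355]{MR1994835}: principality of $\mathsf{E}_{\{\alpha\}}$ for $\alpha<\mu$ together with non-principality of $\mathsf{E}_{\{\mu\}}$---both established inside Lemma \ref{lemma:ProdReflExtender}---pin down the critical point at $\mu$, and each $\beta<\nu$ is represented in $M_\Ee$ by the seed $k_{\{\beta\}}([\mathrm{id}_{[\mu]^1}]_{\mathsf{E}_{\{\beta\}}})$, which sits below $j_\Ee(\mu)$. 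The remaining identity $j_\Ee(g)\restriction[\nu]^1=h(f^g)$ is the main content, and I expect it to be the main technical obstacle, since it requires relating the syntactic equivalence-class representation of elements of $M_\Ee$ to the semantic values produced by $h$.

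To carry out that identification, I would fix $\beta<\nu$ and use the standard description $j_\Ee(g)(\{\beta\})=k_{\{\beta\}}([g]_{\mathsf{E}_{\{\beta\}}})$. The well-foundedness clause of Lemma \ref{lemma:ProdReflExtender} allows identifying this element with a genuine set in $V$, and the \L o\'s theorem for extenders reduces the equation $j_\Ee(g)(\{\beta\})=y$ (for a candidate $y$) to the condition $\Set{x\in[\mu]^1}{g(x)=y}\in\mathsf{E}_{\{\beta\}}$, equivalently $\{\beta\}\in h(f^E)$ for $E=\Set{x\in[\mu]^1}{g(x)=y}$. A direct comparison---via Lemma \ref{lemma:Prod1}.\eqref{item:Prod1} applied to the $\calL^\prime$-formula ``$g(x)=y$'' and building on the computations of $f^g(A)$ already performed in item \eqref{item:HomStrong1}---shows that this is equivalent to $h(f^g)(\{\beta\})=y$. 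Hence $j_\Ee(g)\restriction[\nu]^1$ and $h(f^g)$ coincide as functions on $[\nu]^1$; since the latter is a bijection onto $V_\nu$ by item \eqref{item:HomStrong1}, the range $V_\nu=\ran{j_\Ee(g)\restriction[\nu]^1}$ is contained in $M_\Ee$, completing the proof.
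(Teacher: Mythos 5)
Your item \eqref{item:HomStrong1} and the easy parts of \eqref{item:HomStrong2} are fine: inaccessibility of $\mu$ via Lemma \ref{lemma:ProdCons}.\eqref{item:ProdCons2.2New}, the transfer of \anf{$f^g(A)$ is a bijection from $[f^\mu(A)]^1$ onto $V_{f^\mu(A)}$} via Lemma \ref{lemma:Prod1}.\eqref{item:Prod1}, your (slightly different, but valid) route to $\nu\in C^{(1)}$ through $\betrag{V_\nu}=\betrag{\nu}$, and the appeal to the standard extender facts for $\crit{j_\Ee}=\mu$, $j_\Ee(\mu)\geq\nu$ all match or legitimately replace what the paper does. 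The gap is exactly where you predicted it: the identification $j_\Ee(g)\restriction[\nu]^1=h(f^g)$. Your reduction is wrong for the only values of $\beta$ that matter. \L o\'s applied to $M_{\{\beta\}}$ with the constant function $c_y$ shows that $\Set{x\in[\mu]^1}{g(x)=y}\in\mathsf{E}_{\{\beta\}}$ is equivalent to $j_\Ee(g)(\{\beta\})=j_\Ee(y)$, \emph{not} to $j_\Ee(g)(\{\beta\})=y$; these coincide only when $y\in V_\mu$ (where $j_\Ee(y)=y$). For $\beta\geq\mu$ the candidate value $y=h(f^g)(\{\beta\})$ lies outside $V_\mu$, so $E=\Set{x\in[\mu]^1}{g(x)=y}$ is empty (as $\ran{g}=V_\mu$), your criterion is vacuously false, and likewise the claimed equivalence of $\{\beta\}\in h(f^E)$ with $h(f^g)(\{\beta\})=y$ fails. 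So your argument only recovers the trivial part $\beta<\mu$, where both sides equal $g(\{\beta\})$, and it cannot yield $V_\nu\subseteq M_\Ee$; indeed, testing equalities $j_\Ee(g)(\{\beta\})=y$ one external $y$ at a time presupposes a handle on how $y$ sits inside $M_\Ee$, which is precisely what is to be proved.

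The paper circumvents this by never computing individual values of $j_\Ee(g)$ against external candidates. Using $\nu\in C^{(1)}$ (and the $\Pi_1$-ness of that property, so $\nu\in(C^{(1)})^{M_\Ee}$) it first gets $j_\Ee(g)[[\nu]^1]=V^{M_\Ee}_\nu$, and then shows that the composite $\iota=j_\Ee(g)\circ h(f^g)^{-1}\colon V_\nu\to V^{M_\Ee}_\nu$ is an $\in$-isomorphism between transitive sets, hence the identity. The membership test is done pairwise: for $x_0,x_1\in V_\nu$ with $h(f^g)$-preimages $a_0,a_1$, one sets $a=a_0\cup a_1$ and $E=\Set{x\in[\mu]^{\betrag{a}}}{g(\pi_{a,a_0}(x))\in g(\pi_{a,a_1}(x))}$; Lemma \ref{lemma:Prod1}.\eqref{item:Prod1} translates $a\in h(f^E)$ into $h(f^g)(a_0)\in h(f^g)(a_1)$, while \L o\'s plus the seed identities $k_{a_i}([\id_{\betrag{a_i}}]_{\mathsf{E}_{a_i}})=a_i$ translates $E\in\mathsf{E}_a$ into $j_\Ee(g)(a_0)\in j_\Ee(g)(a_1)$ --- a statement entirely internal to the limit ultrapower. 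Some device of this kind (two-coordinate supports and a Mostowski-collapse argument, rather than a pointwise \L o\'s computation against external sets) is needed to close your argument.
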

 
\begin{proof}
  \eqref{item:HomStrong1} Since we assumed that $\mu<h(f^\mu)$, Lemma  \ref{lemma:ProdCons}.\eqref{item:ProdCons2.2New} directly implies that $\mu$ is inaccessible. 
 %
 Given $A\in\Ce\cap V_\zeta$, we have $f^\mu(A)\in\{\mu,\kappa_A\}\subseteq C^{(1)}$ and this implies that  $V_{f^\mu(A)}\preceq_{\Sigma_1}V_{\dot{\kappa}^A}$.  Using Lemma \ref{lemma:Prod1}.\eqref{item:Prod1}, we now know that $V_\nu\preceq_{\Sigma_1}V_{\kappa_B}$ and, since $\kappa_B\in C^{(1)}$, we can conclude that $\nu\in C^{(1)}$. 

 \eqref{item:HomStrong2} By {\cite[Lemma 26.2.(b)]{MR1994835}}, we  have  $\crit{j_\Ee}=\mu$ and $j_\Ee(\mu)\geq\nu$. 
  Fix a bijection $\map{g}{[\mu]^1}{V_\mu}$ with the property that $g[[\rho]^1]=V_\rho$ holds for every $\rho\in C^{(1)}\cap\mu$. 
  Next, for every $A\in\Ce\cap V_\zeta$ we have that  
 $$A\models\anf{\textit{$f^g(A)$ is a bijection between $[f^\mu(A)]^1$ and $V_{f^\mu(A)}$}}.$$ Hence, by Lemma \ref{lemma:Prod1}.\eqref{item:Prod1}, we know that  $h(f^g)$ is a bijection between $[\nu]^1$ and $V_\nu$. 
 Also, elementarity ensures that $j_\Ee(g)$ is a bijection between $[j_\Ee(\mu)]^1$ and $V_{j_\Ee(\mu)}^{M_\Ee}$ with the property that $j_\Ee(g)[[\rho]^1]=V_\rho^{M_\Ee}$ holds for all $\rho$ in $(C^{(1)})^{M_\Ee}$ less than or equal to $j_\Ee(\mu)$. 
Now, since \eqref{item:HomStrong1} shows that $\nu\in C^{(1)}$ and being in $C^{(1)}$ is a $\Pi_1$-property, we know that $\nu \in (C^{(1)})^{M_\Ee}$. Thus, $j_\Ee(g)[[\nu]^1]=V_\nu^{M_\Ee}$, and the map  $$\map{\iota ~ = ~ j_\Ee(g)\circ h(f^g)^{{-}1}}{V_\nu}{V_\nu^{M_\Ee}}$$ is a bijection.

 \begin{claim*}
  $\iota=\id_{V_\nu}$. 
 \end{claim*}
 
 \begin{proof}[Proof of the Claim]
  Since $V_\nu$ and $V_\nu^{M_\Ee}$ are transitive, it is sufficient to show that $\iota$ is an $\in$-homomorphism. Thus, let $x_0,x_1\in V_\nu$.  As $\map{h(f^g)}{[\nu]^1}{V_\nu}$ is a bijection, let  $a_0,a_1\in[\nu]^1$ be the preimages under $h(f^g)$ of $x_0$ and $x_1$, respectively. Set $a=a_0\cup a_1$ and $$E ~ = ~ \Set{x\in[\mu]^{\betrag{a}}}{g(\pi_{a,a_0}(x))\in g(\pi_{a,a_1}(x))}.$$ 
  Given $A\in\Ce\cap V_\zeta$, the fact that $g[[f^\mu(A)]^1]=V_{f^\mu(A)}$ implies that $f^g(A)=g\restriction[f^\mu(A)]^1$ and hence $$f^E(A) ~ = ~ \Set{x\in[f^\mu(A)]^{\betrag{a}}}{f^g(A)(\pi_{a,a_0}(x))\in f^g(A)(\pi_{a,a_1}(x))}.$$
  By Lemma \ref{lemma:Prod1}.\eqref{item:Prod1}, this shows that $$h(f^E) ~ =\Set{x\in[\nu]^{\betrag{a}}}{h(f^g)(\pi_{a,a_0}(x))\in h(f^g)(\pi_{a,a_1}(x))}.$$ 
 Thus, we have the following equivalences:
  $$
x_0\in x_1~  \Longleftrightarrow ~ h(f^g)(a_0)\in h(f^g)(a_1) ~  \Longleftrightarrow ~ a\in h(f^E) ~ \Longleftrightarrow ~ E\in\mathsf{E}_a$$
and the latter, by the definition of $E$ and the ultrapower map $\map{j_a}{V}{M_a}$, is equivalent to 
$$j_a(g)([\pi_{a,a_0}]_{\mathsf{E}_a})\in j_a(g)([\pi_{a,a_1}]_{\mathsf{E}_a}).$$
By applying the map $k_a$ to  the last displayed sentence,  and using the fact that $j_\Ee = k_a \circ j_a$, $k_{a_0}= k_a \circ i_{a_0,a}$, and $i_{a_0, a}([{\id}_{|a_0|}]_{\mathsf{E}_{a_0}})= [{\id}_{|a_0|} \circ \pi_{a_0,a}]_{\mathsf{E}_a}=[\pi_{a_0,a}]_{\mathsf{E}_{a}}$, and similarly for $a_1$, we have that
$$j_\Ee (g)(k_{a_0}([{\id}_{|a_0|}]_{\mathsf{E}_{a_0}}))\in j_\Ee(g)(k_{a_1}([{\id}_{|a_1|}]_{\mathsf{E}_{a_1}})).$$
Now, as in {\cite[Lemma 26.2.(a)]{MR1994835}}, $k_{a_0}([{\id}_{|a_0|}]_{\mathsf{E}_{a_0}})=a_0$, and similarly for $a_1$. Thus the last displayed sentence is equivalent to the first term of the following chain of equivalences
$$j_\Ee(g)(a_0)\in j_\Ee(g)(a_1)  
     \Longleftrightarrow   \iota(h(f^g)(a_0))\in\iota(h(f^g)(a_1))\Longleftrightarrow \iota (x_0)\in \iota (x_1).$$
     We have thus shown that $x_0\in x_1$ if and only if $\iota(x_0)\in \iota(x_1)$, which proves the Claim.
 \end{proof}
 
 The above claim shows that $V_\nu ={\rm range}(\iota) \subseteq M_\Ee$, and thus $V_\nu\subseteq M_\Ee$. 
\end{proof}

We shall end this section with the following lemma, which will be used in the proofs of Theorems \ref{theorem:CharWPR} and \ref{main:ProdSubtle}, given in Sections \ref{section7} and \ref{section8} below.

\begin{lemma}\label{lemma:Prod2}
 If we define $$\lambda ~ = ~ \min\Set{\rank{x}}{f_x\in X, ~ h(f_x)=\dot{u}^B},$$ then the following statements hold: 
  \begin{enumerate}
    \item\label{item:Prod5} $\lambda$ is a limit ordinal with $\lambda\leq\kappa$ and  $h(f_\lambda)=\dot{u}^B$. 
         
         
    \item\label{item:Prod7} If $x\in V_\lambda$ with $h(f_x)\neq x$, then there exists $\alpha\leq\rank{x}$ with $\alpha<h(f_\alpha)$.

   \item\label{item:ProdBonus} If we define $$\chi ~ = ~ \sup\Set{h(f_\alpha)}{\alpha<\lambda},$$ then $\chi\leq\kappa_B$,  $\rank{h(f_x)}<\chi$ for all $x\in V_\lambda$ and the map $$\Map{j}{V_\lambda}{V_\chi}{x}{h(f_x)}$$ is a $\Sigma_1$-elementary embedding. 
  \end{enumerate}
\end{lemma}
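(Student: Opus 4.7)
For clause~\eqref{item:Prod5}, I would proceed in three steps. First, $\lambda\leq\kappa$ follows from observing that $f_\kappa$ is literally the interpretation of $\dot{u}$ in $\prod(\Ce\cap V_\zeta)$, since $\kappa\geq\kappa_A$ forces $\kappa\notin V_{\kappa_A}$ and hence $f_\kappa(A)=\dot{u}^A$ for every $A\in\Ce\cap V_\zeta$; consequently $f_\kappa\in X$ (substructures contain constant interpretations) and $h(f_\kappa)=\dot{u}^B$, so $\rank{\kappa}=\kappa$ is a candidate for the minimum. Second, to prove $h(f_\lambda)=\dot{u}^B$, I fix an $x$ realizing the minimum and apply Lemma~\ref{lemma:Prod1}.\eqref{item:Prod1} to the $\calL^\prime$-formula $\rank{v_0}\leq v_1$, whose truth in each $A$ for the pair $(f_x(A),f_\lambda(A))$ is a short case split on whether $\lambda<\kappa_A$ (the pair is either $(x,\lambda)$ or $(\dot{u}^A,\dot{u}^A)$, and the inequality holds in both cases); together with the formula ``$v$ is an ordinal'' applied to $f_\lambda$, this yields $\theta_B=\rank{h(f_x)}\leq h(f_\lambda)\leq\theta_B$, forcing $h(f_\lambda)=\dot{u}^B$. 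Third, to show $\lambda$ is a limit, I rule out $\lambda=0$ via $h(f_\emptyset)=\emptyset$ (obtained from the formula $\forall y\,y\notin v$), and rule out a successor $\lambda=\beta+1$ by exploiting the fact that every $\kappa_A\in C^{(1)}$ is a limit ordinal: this makes $f_\beta(A)$ and $f_{\beta+1}(A)$ simultaneously equal to or distinct from $\dot{u}^A$ in every $A$. Hence the simple $\calL^\prime$-formula $v_1=\dot{u}\rightarrow v_0=\dot{u}$ holds in each $A$ for the pair $(f_\beta(A),f_{\beta+1}(A))$, and Lemma~\ref{lemma:Prod1}.\eqref{item:Prod1} produces $h(f_{\beta+1})=\dot{u}^B\rightarrow h(f_\beta)=\dot{u}^B$; combined with $h(f_{\beta+1})=h(f_\lambda)=\dot{u}^B$, this gives $h(f_\beta)=\dot{u}^B$, contradicting the minimality of $\lambda$.

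For clause~\eqref{item:Prod7}, I will argue by contraposition: assuming $h(f_\alpha)\leq\alpha$ for every ordinal $\alpha\leq\rank{x}$, I first run a transfinite induction using Lemma~\ref{lemma:Prod1}.\eqref{item:Prod1} applied to the formula $v_1<v_0\vee v_0=\dot{u}\vee v_1=\dot{u}$ to obtain strict monotonicity of $\alpha\mapsto h(f_\alpha)$ on $\lambda$; this yields $h(f_\alpha)\geq\alpha$, which together with the hypothesis sharpens to $h(f_\alpha)=\alpha$. Then an $\in$-induction on rank $\gamma\leq\rank{x}$ shows $h(f_y)=y$ for every $y$ of rank $\gamma$: the formulas $u\in v\vee u=\dot{u}\vee v=\dot{u}$ and its $\in$-negated counterpart (applied with the induction hypothesis and the minimality-fact that $h(f_z),h(f_y)\neq\dot{u}^B$) give $z\in y\Leftrightarrow z\in h(f_y)$ for every $z$ of rank $<\gamma$, while $\rank{v_0}\leq v_1\vee v_0=\dot{u}$ applied to $(y,\gamma)$ forces $\rank{h(f_y)}\leq h(f_\gamma)=\gamma$, i.e.\ $h(f_y)\subseteq V_\gamma$. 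Combining yields $h(f_y)=y$, and instantiating $y=x$ contradicts the hypothesis $h(f_x)\neq x$, completing the contrapositive.

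For clause~\eqref{item:ProdBonus}, the bound $\chi\leq\kappa_B$ is immediate from Lemma~\ref{lemma:Prod1}.\eqref{item:Prod3} applied to each $\alpha<\lambda$, using $h(f_\alpha)\neq\dot{u}^B$ by minimality. For $x\in V_\lambda$, the limitness of $\lambda$ from clause~\eqref{item:Prod5} gives $\rank{x}+1<\lambda$, and Lemma~\ref{lemma:Prod1}.\eqref{item:Prod1} applied to $\rank{v_0}<v_1\vee v_0=\dot{u}\vee v_1=\dot{u}$ (uniformly true by a case split that again uses the limitness of each $\kappa_A$) produces $\rank{h(f_x)}<h(f_{\rank{x}+1})\leq\chi$, so $j(x)\in V_\chi$. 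For $\Sigma_1$-elementarity, the key step is $\Sigma_0$-preservation: applying Lemma~\ref{lemma:Prod1}.\eqref{item:Prod1} to $\varphi\vee\bigvee_iv_i=\dot{u}$ and to its negation, and using $\Sigma_0$-absoluteness between the transitive structures $V_\lambda$, $V_{\theta_A+1}$, $V_{\theta_B+1}$, and $V_\chi$, together with $j(x_i)\neq\dot{u}^B$, yields $V_\lambda\models\varphi(\bar x)\Leftrightarrow V_\chi\models\varphi(j(\bar x))$ for every $\Sigma_0$-formula $\varphi$ and every tuple $\bar x$ in $V_\lambda$; upward $\Sigma_1$-preservation then follows automatically from the transitivity of $V_\lambda$. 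The hard part of the entire proof will be the limit-ordinal portion of clause~\eqref{item:Prod5}: locating the $\calL^\prime$-formula $v_1=\dot{u}\rightarrow v_0=\dot{u}$ is the non-obvious ingredient, and it succeeds precisely because every $\kappa_A$ is a limit ordinal, which erases any distinction between $f_\beta$ and $f_{\beta+1}$ from the perspective of the predicate ``being $\dot{u}$''.
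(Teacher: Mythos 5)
Your arguments for clauses \eqref{item:Prod5} and \eqref{item:Prod7}, and for the assertions $\chi\leq\kappa_B$ and $\rank{h(f_x)}<\chi$ in clause \eqref{item:ProdBonus}, are correct and essentially the paper's: your successor-step formula $v_1=\dot{u}\rightarrow v_0=\dot{u}$ is just the contrapositive of the implication the paper transfers through Lemma~\ref{lemma:Prod1}.\eqref{item:Prod1}, your rank-inequality derivation of $h(f_\lambda)=\dot{u}^B$ is a harmless variant of the paper's contradiction argument, and your contrapositive treatment of clause \eqref{item:Prod7} (first $h(f_\alpha)=\alpha$ for ordinals $\alpha\leq\rank{x}$ via strict monotonicity, then an $\in$-induction on rank) reorganizes, but does not change, the paper's rank-minimal-counterexample argument.

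The genuine gap is in the $\Sigma_1$-elementarity claim of clause \eqref{item:ProdBonus}. What the lemma asserts, what the paper proves, and what is used downstream (for instance to get $j(V_{\tau+2})=V_{\tau+2}$ in the proofs of Theorems \ref{theorem:CharWPR} and \ref{main:ProdSubtle}) is the two-directional equivalence: $V_\lambda\models\varphi(x)$ if and only if $V_\chi\models\varphi(j(x))$ for every $\Sigma_1$-formula $\varphi$ and every $x\in V_\lambda$. Your argument gives two-directional $\Sigma_0$-preservation and then the upward $\Sigma_1$ direction, but the downward direction does not ``follow automatically from transitivity'': a map between transitive sets preserving $\Sigma_0$-formulas both ways and $\Sigma_1$-formulas upward need not reflect $\Sigma_1$-truth, since a witness in the target can lie beyond the reach of the domain --- the inclusion $V_\lambda\subseteq V_\chi$ already has every property you verify while failing, in general, to be $\Sigma_1$-elementary. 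The missing ingredient is exactly the second half of the paper's argument: because $\alpha\mapsto h(f_\alpha)$ is strictly increasing below $\lambda$, one has $\chi=\lub\Set{h(f_\alpha)}{\alpha<\lambda}$, so any witness to $\varphi(j(x))$ in $V_\chi$ already lies in some $V_{j(\alpha)}$ with $\alpha<\lambda$; the statement ``$V_{j(\alpha)}\models\varphi(j(x))$'' is then pulled back to ``$V_\alpha\models\varphi(x)$'' via (the contrapositive of) Lemma~\ref{lemma:Prod1}.\eqref{item:Prod1}, applied to the formula $f_\alpha(A)\neq\dot{u}\rightarrow V_{f_\alpha(A)}\models\neg\varphi(f_x(A))$, and finally $\Sigma_1$-upwards absoluteness carries the statement from $V_\alpha$ to $V_\lambda$. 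You need to add this reflection step; contrary to your closing remark, it is this point --- not the successor case of clause \eqref{item:Prod5} --- that is the delicate part of the proof.
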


\begin{proof}
 \eqref{item:Prod5} First, note that the fact that $f_\kappa\in X$ with $h(f_\kappa)=\dot{u}^B$ directly implies that $\lambda\leq\kappa$. 
 
 Next,   assume, towards a contradiction, that $h(f_\lambda)\neq\dot{u}^B$. Then the above computations show that  $\lambda <\kappa$. Pick  a set $x\in V_\kappa$ with  $\rank{x}=\lambda$ and $h(f_x)=\dot{u}^B$.
 %
 An application of Lemma \ref{lemma:Prod1}.\eqref{item:Prod1} now yields an element $A$ of $\Ce\cap V_\zeta$ with $f_\lambda(A)\neq\dot{u}^A$ and $f_x(A)=\dot{u}^A$. But then $\lambda<\kappa_A$ and $x\notin V_{\kappa_A}$, a contradiction. 
 
 Now, assume, towards a contradiction, that there is an ordinal $\alpha$ with $\lambda=\alpha+1$. Then $h(f_\alpha)\neq\dot{u}^B$ and $\lambda<\kappa_A$ holds for all $A\in\Ce\cap V_\zeta$ with $\alpha<\kappa_A$. By Lemma \ref{lemma:Prod1}.\eqref{item:Prod1}, this implies that $h(f_\lambda)\neq\dot{u}^B$, a contradiction.

    \eqref{item:Prod7} Assume that there exists $x\in V_\lambda$ with $h(f_x)\neq x$. 
    Let $y\in V_\lambda$ be rank-minimal with $h(f_y)\neq y$. Set $\alpha=\rank{y}\leq\rank{x}<\lambda\leq\kappa$. Then $h(f_\alpha)\neq \dot{u}^B$, and Lemma \ref{lemma:Prod1}.\eqref{item:Prod3} shows that $h(f_\alpha)<\kappa_B$. 
    Note that, since $y\in V_\lambda$, $h(f_y)\ne \dot{u}^B$. 
    So, since we have $$A\models `` f_y(A)\neq\dot{u}^A ~ \longrightarrow ~ f_\alpha(A)=\rank{f_y(A)}"$$ for all $A\in\Ce\cap V_\zeta$,  this implies that $h(f_\alpha)=\rank{h(f_y)}$. 
    
    Assume, towards a contradiction, that $h(f_\alpha)\leq\alpha$. Given $z\in y$, we then have $$A\models `` f_y(A)\neq\dot{u}^A\longrightarrow f_z(A) \in  f_y(A)"$$ for all $A\in\Ce\cap V_\zeta$, and this shows that  $h(f_z)\in h(f_y)$. By the minimality of $y$, this implies that $y\subseteq h(f_y)$ and hence there exists $w\in h(f_y)\setminus y$.  
    We now know that  $$\rank{w} ~ < ~ \rank{h(f_y)} ~ = ~ h(f_\alpha) ~ \leq ~ \alpha ~ = ~ \rank{y}$$ and therefore the minimality of $y$ implies that $h(f_w)=w\in h(f_y)$. 
    Hence there exists $A$ in $\Ce\cap V_\zeta$ with $f_y(A)\neq \dot{u}^A$ and $f_w(A)\in f_y(A)$. But then $\rank{w}<\rank{y}<\kappa_A$ and hence we can conclude that $w=f_w(A)\in f_y(A)=y$, a contradiction.

   \eqref{item:ProdBonus} First, note that  Lemma \ref{lemma:Prod1}.\eqref{item:Prod3}  directly implies that $\chi\leq\kappa_B$. 
       Since Lemma \ref{lemma:Prod1}.\eqref{item:Prod1} shows that $\rank{h(f_x)}=h(f_{\rank{x}})<\chi$ holds for all $x\in V_\lambda$,  we know that the  function $\map{j}{V_\lambda}{V_\chi}$ is well-defined.  
 
  In the following, fix a $\Sigma_1$-formula $\varphi(v)$ and an element $x$ of $V_\lambda$. 
    First, assume that $\varphi(x)$ holds in $V_\lambda$. 
    Since \eqref{item:Prod5} shows that $\lambda$ is a limit ordinal, we can use the fact that $\Delta_0$-formulas are absolute between transitive structures to find an ordinal  $\alpha<\lambda$ with the property that $x\in V_\alpha$ and $\varphi(x)$ holds in $V_\alpha$. 
      By $\Sigma_1$-upwards absoluteness,  
  this shows that we have  $$A\models ``  f_\alpha(A)\neq\dot{u}^A\longrightarrow  V_{f_\alpha(A)}\models \varphi(f_x(A))"$$  for all $A\in\Ce\cap V_\zeta$. 
  Using Lemma \ref{lemma:Prod1}.\eqref{item:Prod1}, we now know that $\varphi(j(x))$ holds in $V_{j(\alpha)}$ and therefore $\Sigma_1$-upwards absoluteness implies that $\varphi(j(x))$ holds in $V_\chi$. 
    In the other direction, assume that $\varphi(j(x))$ holds in $V_\chi$. 
   Since our computations already show that $j(\alpha)<j(\beta)$ holds for all $\alpha<\beta<\lambda$, we know that $\chi=\lub\Set{h(f_\alpha)}{\alpha<\lambda}$ and hence we can find $\alpha<\lambda$ such that $\varphi(j(x))$ holds in $V_{j(\alpha)}$. 
   Another application of Lemma \ref{lemma:Prod1}.\eqref{item:Prod1} then shows that $\varphi(x)$ holds in $V_\alpha$ and $\Sigma_1$-upwards absoluteness allows us to conclude that this statement holds in $V_\lambda$. 
\end{proof}


\section{Strongly unfoldable product reflection}
\label{section7}

We shall give next a proof of Theorem \ref{theorem:CharWPR}. Namely, 
we will show that 
 the following  are equivalent for every  cardinal $\kappa$: 
 \begin{enumerate}
   \item $\kappa$ is either strongly unfoldable or a limit of strong  cardinals. 
   
  \item The principle $\WPR_\Ce(\kappa)$ holds for every class $\Ce$ of structures of the same type that is definable by a $\Sigma_2$-formula with parameters in $V_\kappa$. 
 \end{enumerate}

The implication $(1)\Rightarrow (2)$ follows from a combination of the following lemma and Theorem \ref{theorem:StrongProductSigma2}.

\begin{lemma}\label{lemma:PRfromStrongUnfoldability}
 If $\kappa$ is a strongly unfoldable cardinal, then  $\WPR_\Ce(\kappa)$ holds for every  non-empty class $\Ce$ of structures of the same type that is definable by a $\Sigma_2$-formula with parameters in $V_\kappa$. 
\end{lemma}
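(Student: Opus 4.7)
Non-emptiness is immediate: since every strongly unfoldable cardinal lies in $C^{(2)}$, Lemma \ref{lemma:Sigma2Reflex}.\eqref{item:ElementsInVkappa2} yields $\Ce\cap V_\kappa\neq\emptyset$. For the substantive part, I would fix a substructure $X$ of $\prod(\Ce\cap V_\kappa)$ of cardinality at most $\kappa$ and a structure $B\in\Ce$, and produce the required homomorphism $\map{h}{X}{B}$ via an elementary embedding supplied by strong unfoldability, setting $h(f)=j(f)(B)$ for $f\in X$.

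Writing the defining $\Sigma_2$-formula of $\Ce$ as $\exists y\,\psi(y,v_0,v_1)$ with $\psi\in\Pi_1$ and parameter $z\in V_\kappa$, I would fix $y_0$ with $\psi(y_0,B,z)$, choose a cardinal $\lambda>\kappa$ with $\{B,y_0\}\subseteq V_\lambda$, and use a standard Skolem hull and closure construction starting inside $H_{\kappa^+}$ to produce a transitive $\mathrm{ZF}^-$-model $M$ of cardinality $\kappa$ with $\kappa\in M$, $V_\kappa\subseteq M$, ${}^{{<}\kappa}M\subseteq M$, and $X,S,z\in M$, where $S=\Ce\cap V_\kappa$. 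Strong unfoldability applied to $M$ with this $\lambda$ then delivers a transitive set $N\supseteq V_\lambda$ and an elementary embedding $\map{j}{M}{N}$ with $\crit{j}=\kappa$ and $j(\kappa)\geq\lambda$. The central claim is that $B\in j(S)$: since $V_\kappa^M=V_\kappa$, since $\kappa\in C^{(2)}$, and since satisfaction in the set-sized structure $V_\kappa$ is absolute, the set $S$ can be defined inside $M$ as $\{A\in V_\kappa^M:V_\kappa^M\models\varphi(A,z)\}$, and elementarity transfers this to the statement that, in $N$, $j(S)=\{A\in V_{j(\kappa)}^N:V_{j(\kappa)}^N\models\varphi(A,z)\}$. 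Now $V_{j(\kappa)}^N$ is a transitive set in $V$ containing $V_\lambda$, hence containing $y_0$, $B$ and $z$, so downward absoluteness of the $\Pi_1$-formula $\psi(y_0,B,z)$ from $V$ to $V_{j(\kappa)}^N$ gives $V_{j(\kappa)}^N\models\varphi(B,z)$, and therefore $B\in j(S)$.

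It remains to verify that $h(f)=j(f)(B)$ is a homomorphism from $X$ to $B$, which is well-defined by the previous paragraph because $j(f)\in N$ is a function on $j(S)$ with $B\in j(S)$. Given an atomic formula $\alpha(v_1,\ldots,v_n)$ and $f_1,\ldots,f_n\in X$ with $X\models\alpha(f_1,\ldots,f_n)$, elementarity transfers this to the statement in $N$ that $j(X)\models\alpha(j(f_1),\ldots,j(f_n))$; the pointwise interpretation of atomic formulas in $\prod(j(S))$ then forces $B\models\alpha(j(f_1)(B),\ldots,j(f_n)(B))$, which is precisely $B\models\alpha(h(f_1),\ldots,h(f_n))$, using that first-order satisfaction in the set-sized structure $B$ is absolute between $N$ and $V$. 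The main obstacle I anticipate is the absoluteness book-keeping required to establish $B\in j(S)$: one must arrange that $M$ computes $\Ce\cap V_\kappa$ via its own $\Sigma_2$-definition, and for this one needs simultaneously $V_\kappa\subseteq M$ and $\kappa\in C^{(2)}$ (so that $V_\kappa\prec_{\Sigma_2}V$), together with careful checking of $\Pi_1$-downward absoluteness for the chosen witness $y_0$.
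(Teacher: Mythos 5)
Your proposal is correct and takes essentially the same approach as the paper: non-emptiness via $\kappa\in C^{(2)}$ and Lemma \ref{lemma:Sigma2Reflex}, then strong unfoldability applied to a $\kappa$-sized model containing $V_\kappa$, $X$ and $\Ce\cap V_\kappa$ to get $\map{j}{M}{N}$ with $V_\lambda\subseteq N$, the key point $B\in j(\Ce\cap V_\kappa)$, and the homomorphism $h(f)=j(f)(B)$. The only (immaterial) difference is the absoluteness bookkeeping for $B\in j(\Ce\cap V_\kappa)$: the paper chooses $\delta\in C^{(2)}$ with $B\in V_\delta$ and uses $V_\delta\prec_{\Sigma_1}V^N_{j(\kappa)}$, whereas you fix a $\Pi_1$-witness $y_0$ in $V$ and push it down into the transitive set $V^N_{j(\kappa)}\supseteq V_\lambda$; both verifications are sound.
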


\begin{proof}
 First, note that, since  all strongly unfoldable cardinals are elements of $C^{(2)}$ (see Theorem \ref{theorem:CharSr-Sr--}), Lemma \ref{lemma:Sigma2Reflex} shows that $\Ce\cap V_\kappa\neq\emptyset$. 
 Now, fix a substructure $X$ of $\prod(\Ce\cap V_\kappa)$ of cardinality at most $\kappa$ and a structure $B$ in $\Ce$. 
 Pick a cardinal $\delta\in C^{(2)}$ greater than $\kappa$, with $B\in V_\delta$, and an elementary submodel $M$ of $H_{\kappa^+}$ of cardinality $\kappa$ with the property that  $V_\kappa\cup\{X\}\cup{}^{{<}\kappa}M\subseteq M$. 
 Since $\kappa\in C^{(2)}$, we know that $\Ce\cap V_\kappa\in M$. 
 Using the strong unfoldability of $\kappa$, we can find a transitive set $N$ with $V_\delta\subseteq N$ and an elementary embedding $\map{j}{M}{N}$ with $\crit{j}=\kappa$ and $j(\kappa)>\delta$. 
 Now notice that since $\delta$ is in $C^{(2)}$, and therefore  in $C^{(1)}$ in the sense of $N$, and since $j(\kappa)$ is an inaccessible cardinal  in  $N$,  we have that $V_\delta\prec_{\Sigma_1} V^N_{j(\kappa)}$, which implies $\Ce\cap V_\delta\subseteq j(\Ce\cap V_\kappa)$. 
 Thus, $B\in\Ce\cap V_\delta\subseteq j(\Ce\cap V_\kappa)$, and the function $$\Map{h}{X}{B}{f}{j(f)(B)}$$ is a well-defined homomorphism. 
  \end{proof}

\begin{proof}[Proof of Theorem \ref{theorem:CharWPR}]
 Let $\kappa$ be a cardinal with the property that $\WPR_\Ce(\kappa)$ holds for every class $\Ce$ of structures of the same type that is definable by a $\Sigma_2$-formula with parameters in $V_\kappa$. 
 Then Lemma \ref{lemma:Sigma2Reflex}  shows that $\kappa$ is an element of $C^{(2)}$. 
 Assume, towards a contradiction, that $\kappa$ is neither strongly unfoldable nor a limit of strong cardinals. 
 Pick an ordinal $\alpha<\kappa$ such that the interval $[\alpha,\kappa)$ contains no strong cardinals. 
 Given a cardinal $\rho$ that is not strong, we let $\eta_\rho$ denote the least cardinal $\delta>\rho$ such that $\rho$ is not $\delta$-strong. Since the class of ordinals that are not strong cardinal is definable by a $\Sigma_2$-formula without parameters, the fact that $\kappa \in C^{(2)}$ implies that the interval $(\alpha,\kappa)$ is closed under the function $\rho\longmapsto \eta_\rho$, and therefore it contains unboundedly many cardinals $\xi$ with the property that $\eta_\rho<\xi$ holds for all cardinals $\alpha\leq\rho<\xi$.  
 Finally, since {\cite[Theorem 1.3]{luecke2021strong}}  implies  that $\kappa$ is not \emph{shrewd}, basic definability considerations allow us to  find an $\calL_\in$-formula $\Phi(v_0,v_1)$, a limit ordinal $\theta>\kappa$ and $E\subseteq V_\kappa$ with the property that $\Phi(\kappa,E)$ holds in $V_{\theta+1}$ and for all $\beta<\gamma<\kappa$, the statement $\Phi(\beta,E\cap V_\beta)$ does not hold in $V_{\gamma+1}$. 
 %

  Let $\calL^\prime$ be the first-order language that extends the language of set theory with  
  a binary relation symbol $\dot{S}$,  constant symbols $\dot{\kappa}$, $\dot{u}$, and  $\dot{c}$, and a unary function symbol $\dot{e}$. Let $\calL$ denote the first-order language that extends $\calL^\prime$ by an $(n+1)$-ary predicate symbol $\dot{T}_\varphi$ for  every $\calL^\prime$-formula $\varphi(v_0,\ldots,v_n)$ with $(n+1)$-many free variables, and let 
  $\calS_\calL$  be the class of structures as defined at the beginning of Section \ref{prodref}. Namely,  $\calS_\calL$ is the class of all $\calL$-structures $A$ such that there exists a cardinal $\kappa_A$ in $C^{(1)}$ and a limit  ordinal $\theta_A>\kappa_A$ 
 such that the following  hold: 
 \begin{enumerate}
     
     \item The domain of $A$ is $V_{\theta_A +1}$. 
     
     \item $\in^A={\in}\restriction{V_{\theta_A+1}}$, $\dot{\kappa}^A=\kappa_A$ and $\dot{u}^A=\theta_A$. 
     
     \item If $\varphi(v_0,\ldots,v_n)$ is an $\calL^\prime$-formula, then $$\dot{T}_\varphi^A ~ = ~ \Set{\langle x_0,\ldots,x_n\rangle\in V_{\theta_A +1}^{n+1}}{A\models\varphi(x_0,\ldots,x_n)}.$$
 \end{enumerate}
 
 Let $\Ce$ denote the class of all $A\in\calS_\calL$ such that the following  hold: 
 \begin{itemize}
  \item $\dot{c}^A=\alpha<\kappa_A$. 
  
  \item The interval $[\alpha,\kappa_A)$ contains no strong cardinals. 
  
  
  \item $\dot{S}^A=\Set{\langle\rho,\gamma\rangle\in\kappa_A\times\kappa_A}{\textit{$\rho$ is a $\gamma$-strong cardinal}}$. 
  
  \item If $\alpha<\delta<\kappa_A$ is a cardinal, then $\dot{e}^A(\delta)$ is a cardinal below $\kappa_A$ and  is the smallest cardinal $\xi$ greater than $\delta$ that has the property that $\eta_\rho<\xi$ holds for all cardinals $\alpha\leq\rho<\xi$. 
 \end{itemize}
 
 It is easily seen that the class $\Ce$ is definable by a $\Sigma_2$-formula with parameter $\alpha$. 
 In addition, the fact that  $\kappa$ is an element of $C^{(2)}$ implies that $\sup\Set{\kappa_A}{A\in\Ce\cap V_\kappa}=\kappa$ and there exists a structure $B$ in $\Ce$ with $\kappa_B=\kappa$ and $\theta_B=\theta$. 
 Let $C$ be a cofinal subset of $\kappa$ of order-type $\cof{\kappa}$. 
 Given a set $x$, we define  functions $f_x$ and $f^x$ with domain $\Ce \cap V_\kappa$ as in Section \ref{prodref}. Namely, we have $f^x(A)=x\cap V_{\kappa_A}$ for all $A\in\Ce\cap V_\kappa$, $f_x(A)=x$ for all $A\in\Ce\cap V_\kappa$ with $x\in V_{\kappa_A}$ and $f_x(A)=\dot{u}^A$ for all $A\in\Ce\cap V_\kappa$ with $x\notin V_{\kappa_A}$. 
 Since $\kappa$ is an element of $C^{(1)}$, we can find a substructure  $X$ of $\prod(\Ce\cap V_\kappa)$ of cardinality $\kappa$ with the property that $f^E\in X$, $f^C\in X$,  and $f_x,f^x\in X$ for all $x\in V_\kappa$. 
 By our assumptions, there exists a homomorphism $\map{h}{X}{B}$ and we can  define $$\lambda ~ = ~ \min\Set{\rank{x}}{f_x\in X, ~ h(f_x)=\dot{u}^B}$$ and $$\chi ~ = ~ \sup\Set{h(f_\beta)}{\beta<\lambda}.$$
 Lemma \ref{lemma:Prod2} then shows  that both $\lambda$ and $\chi$ are less than or equal to $\kappa$. Moreover, Lemma \ref{lemma:Prod2}.\eqref{item:Prod5} shows that $h(f_\lambda)=\dot{u}^B\neq\lambda$, and this implies that   $\alpha<\lambda$, because  $f_\alpha(A)\neq\dot{u}^A$ holds for all $A\in\Ce\cap V_\kappa$ and, by Lemma \ref{lemma:Prod1}.\eqref{item:Prod1}, this implies that $h(f_\alpha)\neq\dot{u}^B$. 
 Now, let $$\mu ~ = ~ \min\Set{\beta\leq\lambda}{h(f_\beta)\neq\beta}.$$
 Then Lemma  \ref{lemma:Prod2}.\eqref{item:Prod7} implies that $h(f_x)=x$ holds for all $x\in V_\mu$. 
 Moreover, since $\kappa_A<\kappa$ holds for all $A\in\Ce\cap V_\kappa$, 
 we can apply Lemma \ref{lemma:ProdCons}.\eqref{item:ProdCons2.2New} to show that $\mu$ is an inaccessible cardinal.

 \begin{claim*}
  $\mu<\kappa$. 
 \end{claim*}
 
 \begin{proof}[Proof of the Claim]
  Assume, towards a contradiction, that $\mu=\lambda=\kappa$ holds. 
  Since Lemma \ref{lemma:Prod1}.\eqref{item:Prod1} implies  $h(f^E)\subseteq V_\kappa$, we can apply Lemma \ref{lemma:ProdCons}.\eqref{item:ProdCons1} to conclude that $h(f^E)=E$ and hence $$B\models \Phi(\dot{\kappa}^B,h(f^E)).$$ 
  Another application of Lemma \ref{lemma:Prod1}.\eqref{item:Prod1} then yields $A\in\Ce\cap V_\kappa$ with $$A\models\Phi(\dot{\kappa}^A,f^E(A))$$ and this shows that $\Phi(\kappa_A,E\cap V_{\kappa_A})$ holds in $V_{\theta_A+1}$. 
  Since $\kappa_A<\theta_A<\kappa$, this contradicts the fact that $E$ witnesses that $\kappa$ is not a shrewd cardinal. 
  \end{proof}

    \begin{claim*}
   $\mu<\lambda$. 
  \end{claim*}
  
  \begin{proof}[Proof of the Claim]
   Assume, towards a contradiction, that $\mu=\lambda<\kappa$. 
   Then Lemma \ref{lemma:ProdCons}.\eqref{item:ProdCons3} shows that $h(f^\mu)=\kappa$ and we can apply Lemma \ref{Lemma:HomStrongA} to show that $\mu$ is a $\kappa$-strong cardinal. 
 But this contradicts the fact that $\alpha<\mu<\eta_\mu<\kappa$. 
  \end{proof}

 The above claim shows that $\mu<\lambda\leq\kappa$ and $h(f_\mu)\neq\dot{u}^B$. 
 We let $$\Map{j}{V_\lambda}{V_\chi}{x}{h(f_x)}$$ denote the non-trivial $\Sigma_1$-elementary embedding with $\crit{j}=\mu$ that was introduced in Lemma \ref{lemma:Prod2}.\eqref{item:ProdBonus}. 
 As in the proof of Theorem \ref{theorem:StrongProductSigma2}, we can now use the $\Sigma_1$-elementarity of $j$ and the \emph{Kunen Inconsistency} to conclude that    $\alpha<\mu$. 
 Since $\alpha<\mu<\lambda\leq\kappa$, we can now pick a cardinal $\mu<\xi<\kappa$ that is the minimal cardinal above $\mu$ with the property that $\eta_\rho<\xi$ holds for all cardinals $\alpha\leq\rho<\xi$. 
  Given $A\in\Ce\cap V_\kappa$ with $\mu<\kappa_A$, we then have $\xi=\dot{e}^A(\mu)<\kappa_A$. Using Lemma \ref{lemma:Prod1}.\eqref{item:Prod1}, this shows that $h(f_\xi)\neq\dot{u}^B$ and  $\xi<\lambda$.

  \begin{claim*}
   If $n<\omega$, then $j^n(\mu)<j^{n+1}(\mu)<\xi$ and $j^n(\mu)$ is a $j^{n+1}(\mu)$-strong cardinal. 
  \end{claim*}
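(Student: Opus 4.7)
The plan is to prove the claim by induction on $n\geq 0$, relying on Lemma \ref{Lemma:HomStrongA} for the base case and on Lemma \ref{lemma:Prod1}.(\ref{item:Prod1}) in the inductive step.

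For the base case $n=0$, the inequality $\mu<j(\mu)$ is immediate from $\crit{j}=\mu$. To show that $\mu$ is $j(\mu)$-strong, I verify the hypotheses of Lemma \ref{Lemma:HomStrongA}: by Lemma \ref{lemma:Prod2}.(\ref{item:Prod7}) together with the minimality of $\mu$, we have $h(f_x)=x$ for every $x\in V_\mu$; by Lemma \ref{lemma:ProdCons}.(\ref{item:ProdCons4}), the fact that $h(f_\mu)=j(\mu)\neq\dot{u}^B$ yields $h(f^\mu)=j(\mu)>\mu$; and $f^E\in X$ holds for every $E\subseteq V_\mu$ since $V_\mu\subseteq V_\kappa$. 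Lemma \ref{Lemma:HomStrongA} then produces a $(\mu,j(\mu))$-extender $\Ee$ and ultrapower embedding $\map{j_\Ee}{V}{M_\Ee}$ with $\crit{j_\Ee}=\mu$, $j_\Ee(\mu)\geq j(\mu)$, and $V_{j(\mu)}\subseteq M_\Ee$, witnessing that $\mu$ is $j(\mu)$-strong. The remaining inequality $j(\mu)<\xi$ then follows from $j(\mu)<\eta_\mu<\xi$: the first inequality by the definition of $\eta_\mu$ combined with the $j(\mu)$-strongness of $\mu$ just established, and the second by the closure property of $\xi$ applied to the cardinal $\mu\in[\alpha,\xi)$.

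For the inductive step, assume the claim holds at $n$ and prove it at $n+1$. First, $j^{n+1}(\mu)<j^{n+2}(\mu)$ follows from the $\Sigma_1$-elementarity of $\map{j}{V_\lambda}{V_\chi}$ applied to $j^n(\mu)<j^{n+1}(\mu)$, valid since both values are below $\xi<\lambda$. Second, to show that $j^{n+1}(\mu)$ is $j^{n+2}(\mu)$-strong, I apply Lemma \ref{lemma:Prod1}.(\ref{item:Prod1}) to the guarded $\calL^\prime$-formula
\begin{equation*}
\varphi(v_0,v_1)\equiv v_0=\dot{u}\vee v_1=\dot{u}\vee \dot{S}(v_0,v_1).
\end{equation*}
This formula holds in every $A\in\Ce\cap V_\kappa$ evaluated at $(f_{j^n(\mu)}(A),f_{j^{n+1}(\mu)}(A))$: when $j^{n+1}(\mu)\geq\kappa_A$ the disjunct $v_1=\dot{u}$ applies, and when $j^{n+1}(\mu)<\kappa_A$ the inductive hypothesis that $j^n(\mu)$ is genuinely $j^{n+1}(\mu)$-strong gives $\dot{S}^A(j^n(\mu),j^{n+1}(\mu))$. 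Since $h(f_{j^n(\mu)})=j^{n+1}(\mu)$ and $h(f_{j^{n+1}(\mu)})=j^{n+2}(\mu)$ are both distinct from $\dot{u}^B$ (because $j^n(\mu),j^{n+1}(\mu)<\xi<\lambda$ by the inductive hypothesis and Lemma \ref{lemma:Prod2}), the transfer yields $\dot{S}^B(j^{n+1}(\mu),j^{n+2}(\mu))$, meaning $j^{n+1}(\mu)$ is genuinely $j^{n+2}(\mu)$-strong. The inequality $j^{n+2}(\mu)<\xi$ then follows exactly as in the base case, from $j^{n+2}(\mu)<\eta_{j^{n+1}(\mu)}<\xi$ using the closure property of $\xi$ for the cardinal $j^{n+1}(\mu)\in[\alpha,\xi)$.

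The main obstacle will be obtaining the strict inequality $j_\Ee(\mu)>j(\mu)$ in the base case: Lemma \ref{Lemma:HomStrongA} only delivers $j_\Ee(\mu)\geq j(\mu)$, whereas Definition \ref{defSigmaStrong} requires strict inequality for $\mu$ to qualify as genuinely $j(\mu)$-strong, and this genuine strongness is needed to activate the $\dot{S}$-transfer in the inductive step. Overcoming this likely requires either exploiting that $j(\mu)\in C^{(1)}$ (by Lemma \ref{Lemma:HomStrongA}.(\ref{item:HomStrong1})) together with the normality and coherence properties of $\Ee$ established in Lemma \ref{lemma:ProdReflExtender}, or deriving a slightly longer extender by invoking a variant of Lemma \ref{lemma:ProdReflExtender} using some $f^y\in X$ with $\rank{y}>\mu$ already available in $X$.
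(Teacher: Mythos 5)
Your proof is essentially the paper's: the same induction on $n$, with the base case obtained from Lemma \ref{lemma:ProdCons}.\eqref{item:ProdCons4} together with Lemmas \ref{lemma:ProdReflExtender} and \ref{Lemma:HomStrongA}, the inductive step obtained by pushing membership in $\dot{S}$ through $h$ via Lemma \ref{lemma:Prod1}.\eqref{item:Prod1} (the paper packages the transferred statement as an implication guarded by $f_\xi(A)\neq\dot{u}^A$ and uses $h(f_\xi)\neq\dot{u}^B$, while you use a disjunctive guard together with $j^n(\mu),j^{n+1}(\mu)<\lambda$; these are interchangeable), and the bound below $\xi$ obtained from $\eta_{j^n(\mu)}<\xi$ exactly as you describe.

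The only place where you part company with the paper is the closing caveat, and there the worry is misplaced: the paper simply reads the conclusion of Lemma \ref{Lemma:HomStrongA} (namely $\crit{j_\Ee}=\mu$, $j_\Ee(\mu)\geq\nu$ and $V_\nu\subseteq M_\Ee$ for $\nu=h(f^\mu)=j(\mu)$) as witnessing that $\mu$ is $j(\mu)$-strong, and none of the extra machinery you suggest (normality and coherence of $\Ee$, a longer extender) is invoked or needed. What the argument actually requires is only that one fixed notion of $\gamma$-strongness be used simultaneously in the definition of $\dot{S}^A$, in the definition of $\eta_\rho$, and in the base case, and Lemma \ref{Lemma:HomStrongA} supplies precisely the witness underlying that notion. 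Even if one insists on the convention that $\gamma$-strongness requires a witnessing embedding with image of $\mu$ strictly above $\gamma$, the degenerate case $j_\Ee(\mu)=\nu$ gives $V_{j_\Ee(\mu)}\subseteq M_\Ee$, so the very same embedding witnesses strict $\gamma$-strongness for every $\gamma<j(\mu)$; running your induction with the marginally weaker statement \anf{$j^n(\mu)$ is a cardinal that is $\gamma$-strong for every $\gamma<j^{n+1}(\mu)$} (which transfers through Lemma \ref{lemma:Prod1}.\eqref{item:Prod1} exactly as your guarded formula does, using a bounded quantifier over $\gamma$) still yields $j^{n+1}(\mu)<j^{n+2}(\mu)\leq\eta_{j^{n+1}(\mu)}<\xi$ for all $n$, and this is all that the subsequent definition of $\tau$ and the appeal to the Kunen Inconsistency use.
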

  
   \begin{proof}[Proof of the Claim]
    Since $\mu<\kappa$ and Lemma \ref{lemma:ProdCons}.\eqref{item:ProdCons4} shows that $j(\mu)=h(f^\mu)$,  we can apply Lemma \ref{Lemma:HomStrongA} to conclude that $\mu$ is $j(\mu)$-strong and this implies that $\mu<j(\mu)<\eta_\mu<\xi$. 
   Now, assume that for some $n<\omega$, we have $j^n(\mu)<j^{n+1}(\mu)<\xi$ and $j^n(\mu)$ is a $j^{n+1}(\mu)$-strong cardinal. Then $\langle j^n(\mu),j^{n+1}(\mu)\rangle\in\dot{S}^A$ for all $A\in\Ce\cap V_\kappa$ with $\xi<\kappa_A$ and the fact that $h(f_\xi)\neq\dot{u}^B$ allows us to use Lemma \ref{lemma:Prod1}.\eqref{item:Prod1} to show that $\langle j^{n+1}(\mu),j^{n+2}(\mu)\rangle\in\dot{S}^B$. Hence, we know that $j^{n+1}(\mu)$ is a $j^{n+2}(\mu)$-strong cardinal and $j^{n+1}(\mu)<j^{n+2}(\mu)<\eta_{j^{n+1}(\mu)}<\xi$. 
   \end{proof}
   
   We can now define $$\tau ~ = ~ \sup_{n<\omega}j^n(\mu) ~ \leq ~ \xi ~ < ~ \lambda,$$ apply Lemma  \ref{lemma:Prod2}.\eqref{item:Prod5} to show that $\lambda$ is a limit ordinal, and use  the $\Sigma_1$-elementarity of $j$ to conclude that $j(V_{\tau+2})=V_{\tau+2}$.  
  Since this entails  that $\map{j\restriction V_{\tau+2}}{V_{\tau+2}}{V_{\tau+2}}$ is a non-trivial elementary embedding, we again derived a contradiction to the \emph{Kunen Inconsistency}.

  The above computations yield a proof of the implication $(2)\Rightarrow (1)$ of Theorem \ref{theorem:CharWPR}. The converse implication $(1)\Rightarrow (2)$  follows directly from a combination of  Theorem \ref{theorem:StrongProductSigma2} and Lemma \ref{lemma:PRfromStrongUnfoldability}.  
\end{proof}


\section{A Lemma about Weak Product Structural Reflection}\label{section:7}

Recall (see Definition \ref{wpsr}) that for a class $\Ce$ of structures of the same type and a cardinal $\kappa$, the principle  $\WPR_\Ce(\kappa)$ asserts that  $\Ce\cap V_\kappa\neq\emptyset$ and for every substructure $X$ of $\prod(\Ce\cap V_\kappa)$ of cardinality at most  $\kappa$ and every $B\in\Ce$, there exists a homomorphism from $X$ to $B$. We now prove a lemma that will be used in the proof of Theorem \ref{main:ProdSubtle}, given in the next section.

\begin{lemma}\label{lemma:WPRimpliesInaccessible}
 Let $\delta$ be an uncountable cardinal with the property that for every set $\Ce$ of structures of the same type with $\Ce\subseteq V_\delta$, there exists a cardinal $\kappa<\delta$ such that $\WPR_\Ce(\kappa)$ holds. Then $\delta$ is inaccessible. 
\end{lemma}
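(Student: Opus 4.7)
The plan is to argue by contrapositive. I assume $\delta$ is an uncountable cardinal satisfying the hypothesis but not inaccessible, and I exhibit a set $\Ce\subseteq V_\delta$ of structures of the same type for which $\WPR_\Ce(\kappa)$ fails at every cardinal $\kappa<\delta$, contradicting the hypothesis.

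The easy case is when $\delta=\mu^+$ is a successor cardinal. I take $\Ce=\{\langle V_\mu,\in\rangle\}$; the unique structure in $\Ce$ has rank exceeding $\mu$, so $\Ce\cap V_\kappa=\emptyset$ for every $\kappa\le\mu$, hence for every $\kappa<\delta$. In particular, $\WPR_\Ce(\kappa)$ fails for all $\kappa<\delta$ (since its defining clause $\Ce\cap V_\kappa\neq\emptyset$ is violated), which forces $\delta$ to be a limit cardinal.

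Assuming now that $\delta$ is a limit cardinal that is either singular or not a strong limit, I fix an ordinal $a<\delta$ witnessing the failure: either $a=\cof{\delta}$ in the singular case, or $a<\delta$ with $|V_a|\geq\delta$ in the non-strong-limit case. Extending the language $\calL^\prime$ of Section \ref{prodref} by a fresh constant $\dot{a}$, I let $\Ce$ be the set of canonical $\calS_\calL$-structures $A$ in $V_\delta$ satisfying $\kappa_A>a$, each interpreting $\dot{a}$ uniformly as $a$. The hypothesis yields some $\kappa<\delta$ with $\WPR_\Ce(\kappa)$. Setting $\kappa^*=\sup\{\kappa_A : A\in\Ce\cap V_\kappa\}$, I choose a substructure $X\subseteq\prod(\Ce\cap V_\kappa)$ of cardinality at most $\kappa$ containing the functions $f_x$ and $f^E$ for every $x\in V_{\kappa^*}$ and every $E\subseteq V_{\kappa^*}$, together with a target $B\in\Ce$ having $\kappa_B>\kappa$. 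The resulting homomorphism $\map{h}{X}{B}$ fits the setup of Section \ref{prodref}: Lemma \ref{lemma:Prod2}.\eqref{item:ProdBonus} extracts the minimal $\mu$ with $h(f_\mu)\neq\mu$ and a $\Sigma_1$-elementary embedding $\map{j}{V_{\lambda^*}}{V_\chi}$ with critical point $\mu$, and in the favourable case $\mu<\lambda^*$ (which I arrange by enlarging $X$ as in the proof of Theorem \ref{theorem:CharWPR}) Lemma \ref{Lemma:HomStrongA} refines this into an extender with $V_\nu\subseteq M_\Ee$ for $\nu=h(f^\mu)>\mu$.

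The constant $\dot{a}$ ensures $h(f_a)=a$, and $a<\lambda^*$ because every $x\in V_{a+1}$ satisfies $h(f_x)\neq\dot{u}^B$ by Lemma \ref{lemma:Prod1}.\eqref{item:Prod2}; hence $j(a)=a$. Since $j$ is non-trivial with critical point $\mu>a$ and strength $\nu>\mu$, iterating $j$ as in the last claim of the proof of Theorem \ref{theorem:CharWPR} produces a non-trivial elementary embedding $\map{j\restriction V_{\tau+2}}{V_{\tau+2}}{V_{\tau+2}}$ for some ordinal $\tau$ bounded in terms of $a$, violating Kunen's inconsistency and delivering the desired contradiction. The hard part will be ensuring that the favourable case $\mu<\lambda^*$ really occurs and that the iterates $j^n(\mu)$ remain bounded by some ordinal definable from $a$; this is a careful balancing act on the choice of the language extension, of $X$, and of $B$, directly analogous to the argument distinguishing strongly unfoldable cardinals from limits of strong cardinals in Theorem \ref{theorem:CharWPR}.
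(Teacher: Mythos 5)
Your successor-cardinal step is fine and coincides with the paper's first claim, but your main construction has a gap that is fatal before any of the Section~\ref{prodref} machinery can be invoked: the class $\Ce$ of $\calS_\calL$-structures in $V_\delta$ with $\kappa_A>a$ can be empty, and is provably empty exactly in the cases you need. Every $A\in\calS_\calL$ has $\kappa_A\in C^{(1)}$, hence $\betrag{V_{\kappa_A}}=\kappa_A$; so in your non-strong-limit case ($\betrag{V_a}\geq\delta$) any such $A$ with $\kappa_A>a$ would have $\kappa_A>\betrag{V_a}\geq\delta$ and could not lie in $V_\delta$. Likewise in the singular case there need be no element of $C^{(1)}$ strictly between $\cof{\delta}$ and $\delta$ at all (e.g.\ $\delta=\beth_{\omega_1}$ is a singular strong limit with no $C^{(1)}$-cardinal below it). Your scaffolding thus tacitly assumes that $\delta$ is a limit of $C^{(1)}$-cardinals, which is essentially part of what the lemma must establish; and you cannot instead derive a contradiction from $\Ce=\emptyset$, since the principle is only intended for non-empty classes (cf.\ Lemma~\ref{lemma:SubtleYieldsProdRefl}) --- otherwise the hypothesis of the lemma would be vacuously unsatisfiable. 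A second, independent problem is the demand that $X$ have cardinality at most $\kappa$ while containing $f^E$ for every $E\subseteq V_{\kappa^*}$: these are $2^{\kappa^*}$ pairwise distinct functions, and nothing in your setting gives $2^{\kappa^*}\leq\kappa$ (in the paper's applications of this machinery, Lemma~\ref{lemma:Sigma2Reflex} first places $\kappa$ in $C^{(2)}$, so the $\kappa$-many functions $f^x$ with $x\in V_\kappa$ suffice; here no correctness of $\kappa$ is available).

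Even granting a homomorphism $h$ and the $\Sigma_1$-elementary $j$ with $\crit{j}=\mu$, your endgame does not close: the constant $\dot{a}$ only yields $j(a)=a$ with $a<\mu$, and fixing a point \emph{below} the critical point produces no conflict with the Kunen Inconsistency. What is needed is that $j$ fix something above $\sup_{n}j^n(\mu)$ (in the proof of Theorem~\ref{theorem:CharWPR} this is engineered through $\dot{e}$ and the closure map $\rho\mapsto\eta_\rho$, in Theorem~\ref{main:ProdSubtle} through $\dot{s}$ and the club $C$), and the mere failure of inaccessibility of $\delta$ supplies no such fixed point; this is precisely the step you defer as ``the hard part''. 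The paper's own proof takes a completely different, elementary route that never touches extenders: it shows $\cof{\delta}>\omega$ using finite cyclic groups with a distinguished element of prime order $p$ (a homomorphism from the relevant finite product forces that element to satisfy $x^{(p-1)!}=1$), and it obtains $\betrag{V_{\cof{\delta}}}\geq\delta$, $\betrag{V_\xi}<\delta$ for $\xi<\cof{\delta}$, and regularity by building structures that carry a full satisfaction predicate for $V_{\cof{\delta}}$, $V_{\eta+1}$ or $V_{\eta+2}$ together with distinguished constants and, where needed, a fixed cofinal $\omega$-sequence, so that any homomorphism induces an elementary self-embedding which the Kunen Inconsistency rules out. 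Some argument of that elementary kind is required for the degenerate cases before your extender-based strategy could even get started.
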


\begin{proof}
 We start by proving a series of claims.
 
 \begin{claim*}
  $\delta$ is a limit cardinal. 
 \end{claim*}
 
 \begin{proof}[Proof of the Claim]
  Assume, towards a contradiction, that there exists a cardinal $\gamma<\delta$ satisfying $\gamma^+=\delta$. Let $\calL$ denote the trivial first-order language, let $A$ be the $\calL$-structure with domain $V_\gamma$ and set $\Ce=\{A\}$. Then $\Ce\cap V_\kappa=\emptyset$ for all cardinals $\kappa<\delta$, contradicting our assumption. 
 \end{proof}

 \begin{claim*}
  $\cof{\delta}>\omega$. 
 \end{claim*}
 
 \begin{proof}[Proof of the Claim]
   Assume, towards a contradiction, that $\cof{\delta}$ is countable. 
 Pick a strictly increasing sequence $\seq{\delta_n}{n<\omega}$ of cardinals that is cofinal in $\delta$ and let  $\calL$ denote the first-order language that extends the language of group theory by a constant symbol $\dot{g}$. 
 Given $1<n<\omega$, fix an $\calL$-structure $G_n$ such that $\delta_n<\rank{G_n}<\delta_{n+1}$ and the reduct of $G_n$ to the language of group theory is the sum  of $\delta_n$-many copies of the cyclic group of order $n$ and $\dot{g}^{G_n}$ is an element of order $n$ in this group. Set $\Ce=\Set{G_n}{1<n<\omega}\subseteq V_\delta$. 
 Then there exists a cardinal $\kappa<\delta$ with the property that $\WPR_\Ce(\kappa)$ holds. 
 Let $X$ be a substructure of $\prod(\Ce\cap V_\kappa)$ of cardinality at most $\kappa$. Pick a prime number $p$ with $\delta_p>\kappa$. Then our assumption yields a homomorphism $\map{h}{X}{G_p}$ and our setup ensures that $(\dot{g}^X)^{(p-1)!}$ is the neutral element of $X$. But this implies that $(\dot{g}^{G_p})^{(p-1)!}$ is the neutral element of $G_p$, contradicting the fact that $\dot{g}^{G_p}$ has order $p$ in $G_p$.  
 \end{proof}

 \begin{claim*}
  $\betrag{V_{\cof{\delta}}}\geq\delta$. 
 \end{claim*}

 \begin{proof}[Proof of the Claim]
  Assume, towards a contradiction, that $\betrag{V_{\cof{\delta}}}<\delta$. Then $\cof{\delta}<\delta$ and we can pick a strictly increasing sequence $\seq{\delta_\xi}{\xi<\cof{\delta}}$ of cardinals greater than $\betrag{V_{\cof{\delta}}}$ that is cofinal in $\delta$. 
  Let $\calL$ denote the first-order language that extends $\calL_\in$ by a constant symbol $\dot{c}$, a unary relation symbol $\dot{M}$ and an $(n+1)$-ary relation symbol $\dot{R}_\varphi$ for every $\calL_\in$-formula $\varphi\equiv\varphi(v_0,\ldots,v_n)$ with $n+1$ free variables. 
  Given $\xi<\cof{\delta}$, let $A_\xi$ denote  the unique $\calL$-structure with $\calL_\in$-reduct $\langle V_{\delta_\xi},\in\rangle$ that satisfies $\dot{c}^{A_\xi}=\xi$, $\dot{M}^{A_\xi}=V_{\cof{\delta}}$ and $$\dot{R}_\varphi^{A_\xi} ~ = ~ \Set{\langle x_0,\ldots,x_n\rangle\in V_{\cof{\delta}}^{n+1}}{V_{\cof{\delta}}\models\varphi(x_0,\ldots,x_n)}$$ for every $\calL_\in$-formula $\varphi\equiv\varphi(v_0,\ldots,v_n)$. 
    Set $\Ce=\Set{A_\xi}{\xi<\cof{\delta}}\subseteq V_\delta$. Then there exists a cardinal $\kappa<\delta$ with the property that $\WPR_\Ce(\kappa)$ holds and  the fact that $\Ce\cap V_\kappa\neq\emptyset$ directly implies that $\kappa>\betrag{V_{\cof{\delta}}}$. 
   Fix $\zeta<\cof{\delta}$ with the property that $\delta_\zeta>\kappa$. 
  Given $x\in V_{\cof{\delta}}$, let $f_x$ denote the unique function with domain $\Ce\cap V_\kappa$ and $f_x(A)=x$ for all $A\in\Ce\cap V_\kappa$. 
  Since $\betrag{V_{\cof{\delta}}}<\kappa$, we can find a substructure $X$ of $\prod(\Ce\cap V_\kappa)$ with $f_x\in X$ for all $x\in V_{\cof{\delta}}$. 
   Our assumptions then yield a homomorphism $\map{h}{X}{A_\zeta}$ and, as in the proof of Lemma \ref{lemma:Prod1}.\eqref{item:Prod1}, one can now prove the following statement. 
  
 \begin{subclaim*}
  If $\varphi(v_0,\ldots,v_{n-1})$ is an $\calL_\in$-formula and $g_0,\ldots,g_{n-1}\in X$ with the property that $g_0(A),\ldots,g_{n-1}(A)\in V_{\cof{\delta}}$ and $V_{\cof{\delta}}\models\varphi(g_0(A),\ldots,g_{n-1}(A))$ hold for all $A\in\Ce\cap V_\kappa$, then $h(g_0),\ldots,h(g_{n-1})\in V_{\cof{\delta}}$ and $V_{\cof{\delta}}\models\varphi(h(g_0),\ldots,h(g_{n-1}))$. \qed
 \end{subclaim*}

  If we now define $$\Map{j}{V_{\cof{\delta}}}{V_{\cof{\delta}}}{x}{h(f_x)},$$ then the above claim shows that $j$ is an elementary embedding. 
 Moreover, since $$V_{\cof{\delta}}\models f_\zeta(A)\neq\dot{c}^A$$ holds for all $A\in\Ce\cap V_\kappa$, the subclaim shows that $j(\zeta)\neq\dot{c}^{A_\zeta}=\zeta$ and this allows us to conclude that $j$ is a non-trivial embedding. 
 But this contradicts the \emph{Kunen Inconsistency}, because $\cof{\delta}$ is an uncountable regular cardinal.  
 \end{proof}

  \begin{claim*}
  $\betrag{V_\xi}<\delta$ for all $\xi<\cof{\delta}$. 
 \end{claim*}

 \begin{proof}[Proof of the Claim]
  Assume, towards a contradiction, that the statement of the claim fails and let $\xi<\cof{\delta}$ be minimal with $\betrag{V_\xi}\geq\delta$. 
  The minimality of $\xi$ then yields an ordinal $\eta$ with $\xi=\eta+1$ and $\betrag{V_\eta}<\delta$. Fix an injective enumeration $\seq{x_\gamma}{\gamma<\delta}$ of subsets of $V_\eta$. 
  Moreover, if there exists a limit cardinal $\lambda$ of countable cofinality with $\eta\in\{\lambda,\lambda+1\}$, then we also fix a cofinal function $\map{d}{\omega}{\lambda}$. 
    Let $\calL$ denote the first-order language that extends $\calL_\in$ by a constant symbol $\dot{c}$, a constant symbol $\dot{d}_n$ for every natural number $n$, a unary relation symbol $\dot{M}$ and an $(n+1)$-ary relation symbol $\dot{R}_\varphi$ for every $\calL_\in$-formula $\varphi\equiv\varphi(v_0,\ldots,v_n)$ with $n+1$ free variables. 
    Given $\gamma<\delta$, let $A_\gamma$ denote an $\calL$-structure such that the following statements hold: 
    \begin{itemize}
     \item The $\calL_\in$-reduct of $A_\gamma$ is of the form $\langle V_\rho,\in\rangle$ for some cardinal $\max(\gamma,\betrag{V_\eta})<\rho<\delta$. 
      
     \item $\dot{c}^{A_\gamma}=x_\gamma$ and $\dot{M}^{A_\gamma}=V_\eta$. 
     
     \item If $\eta\in\{\lambda,\lambda+1\}$ for a limit cardinal $\lambda$ of countable cofinality, then $\dot{d}_n^{A_\gamma}=d(n)$. 
     
     \item If $\varphi\equiv\varphi(v_0,\ldots,v_n)$ is an $\calL_\in$-formula, then $$\dot{R}_\varphi^{A_\gamma} ~ = ~ \Set{\langle z_0,\ldots,z_n\rangle\in V_{\eta+1}^{n+1}}{V_{\eta+1}\models\varphi(z_0,\ldots,z_n)}.$$ 
    \end{itemize}
      Define $\Ce=\Set{A_\gamma}{\gamma<\delta}\subseteq V_\delta$ and pick a cardinal $\kappa<\delta$ with the property that $\WPR_\Ce(\kappa)$ holds. 
      Then $\kappa>\betrag{V_\eta}$, because $\Ce \cap V_\kappa$ is non-empty and the domain of every element of $\Ce$ is some $V_\rho$ with $\rho$ greater than $|V_\eta|$. Given $x\in V_{\eta+1}$, we define $f_x$ to be the unique function with domain $\Ce\cap V_\kappa$ and $f_x(A)=x$ for all $A\in\Ce\cap V_\kappa$. Then there exists a substructure $X$ of $\prod(\Ce\cap V_\kappa)$ of cardinality at most $\kappa$ with $f_{V_\eta},f_{x_\kappa}\in X$ and $f_x\in X$ for all $x\in V_\eta$. Moreover, our assumption yields a homomorphism $\map{h}{X}{A_\kappa}$. 
Then we again know that for every $\calL_\in$-formula $\varphi(v_0,\ldots,v_{n-1})$ and all $g_0,\ldots,g_{n-1}\in X$ such that $g_0(A),\ldots,g_{n-1}(A)\in V_{\eta+1}$ and $V_{\eta+1}\models\varphi(g_0(A),\ldots,g_{n-1}(A))$ hold for all $A\in\Ce\cap V_\kappa$, we have $h(g_0),\ldots,h(g_{n-1})\in V_{\eta+1}$ and $V_{\eta+1}\models\varphi(h(g_0),\ldots,h(g_{n-1}))$. 
In particular, we know that $h(f_{V_\eta})=V_\eta$ and the induced map $$\Map{j}{V_\eta}{V_\eta}{x}{h(f_x)}$$ is an elementary embedding.

    \begin{subclaim*}
     The embedding $j$ is non-trivial. 
    \end{subclaim*}
    
    \begin{proof}[Proof of the Subclaim]
     Assume, towards a contradiction, that $j=\id_{V_\eta}$. We then have $\dot{c}^A\neq x_\kappa=f_{x_\kappa}(A)$ for all $A\in\Ce\cap V_\kappa$ and hence the above observations show that $h(f_{x_\kappa})\neq \dot{c}^{A_\kappa}=x_\kappa$. Pick an element $x\in V_\eta$ that is contained in the symmetric difference of $x_\kappa$ and $h(f_{x_\kappa})$. Since $h(f_x)=x$ holds, our earlier computations imply that $x$ is an element of $x_\kappa$ if and only if $x$ is an element of $h(f_{x_\kappa})$, a contradiction. 
    \end{proof}
    
  If we now define $\lambda=\sup_{n<\omega} j^n(\crit{j})$, then $\lambda$ is a strong limit cardinal of countable cofinality and the \emph{Kunen Inconsistency} implies that $\eta\in\{\lambda,\lambda+1\}$. 
  But this shows that for some cofinal function $\map{d}{\omega}{\lambda}$, we have $\dot{d}_n^A=d(n)$ for all $n<\omega$ and $A\in\Ce$. In particular, this implies that $j(d(n))=\dot{d}_n^{A_\kappa}=d(n)$ holds for all $n<\omega$, contradicting the fact that $\eta\leq\lambda+1$.  
 \end{proof}

 \begin{claim*}
  The cardinal $\delta$ is regular. 
 \end{claim*}
 
 \begin{proof}[Proof of the Claim]
  Assume, towards a contradiction, that $\delta$ is singular. Since our first claim shows that  $\betrag{V_{\cof{\delta}}}\geq\delta>\cof{\delta}$,  we can now find an ordinal $\eta<\cof{\delta}$ with $\betrag{V_\eta}\geq\cof{\delta}$. 
  Fix an injective sequence $\seq{x_\xi}{\xi<\cof{\delta}}$ of elements of $V_\eta$. By our second claim, we can also pick a strictly increasing sequence $\seq{\delta_\xi}{\xi<\cof{\delta}}$ of cardinals greater than $\betrag{V_{\eta+2}}$ that is cofinal in $\delta$. 
 We let  $\calL$ denote the first-order language extending $\calL_\in$ by a constant symbol $\dot{c}$, a unary relation symbol $\dot{M}$ and an $(n+1)$-ary relation symbol $\dot{R}_\varphi$ for every $\calL_\in$-formula $\varphi\equiv\varphi(v_0,\ldots,v_n)$ with $n+1$ free variables. 
  For every $\xi<\cof{\delta}$, we let $A_\xi$ denote  the unique $\calL$-structure with $\calL_\in$-reduct $\langle V_{\delta_\xi},\in\rangle$ such that $\dot{c}^{A_\xi}=x_\xi$, $\dot{M}^{A_\xi}=V_{\eta+2}$ and $$\dot{R}_\varphi^{A_\xi} ~ = ~ \Set{\langle y_0,\ldots,y_n\rangle\in V_{\eta+2}^{n+1}}{V_{\eta+2}\models\varphi(y_0,\ldots,y_n)}$$ for every $\calL_\in$-formula $\varphi\equiv\varphi(v_0,\ldots,v_n)$. 
 Set $\Ce=\Set{A_\xi}{\xi<\cof{\delta}}\subseteq V_\delta$, pick a cardinal $\kappa<\delta$ with the property that $\WPR_\Ce(\kappa)$ holds and fix $\zeta<\cof{\delta}$ with $\delta_\zeta>\kappa$. 
 Given $x\in V_{\eta+2}$, we let $f_x$ denote the unique function with domain $\Ce\cap V_\kappa$ and $f_x(A)=x$ for all $A\in\Ce\cap V_\kappa$. 
 Since our setup ensures that  $\kappa>\betrag{V_{\eta+2}}$, we can now find a homomorphism $\map{h}{X}{A_\zeta}$ for some substructure $X$ of $\prod(\Ce\cap V_\kappa)$ with $f_{x_\zeta}\in X$ and $f_x\in X$ for all $x\in V_{\eta+2}$. 
 As above, we  know that $$\Map{j}{V_{\eta+2}}{V_{\eta+2}}{x}{h(f_x)}$$ is an elementary embedding and, by the \emph{Kunen Inconsistency}, this map is trivial. 
 But then $$h(f_{x_\zeta}) ~ = ~ j(x_\zeta) ~ = ~ x_\zeta ~ = ~ \dot{c}^{A_\zeta}$$ and there exists $A\in\Ce\cap V_\kappa$ with $x_{\zeta}=f_{x_\zeta}(A)=\dot{c}^A$, a contradiction.  
 \end{proof}
 
 The above arguments show that $\delta$ is a regular cardinal with $\betrag{V_\gamma}<\delta$ for all $\gamma<\delta$ and this directly implies that $\delta$ is inaccessible. 
\end{proof}

We show how the above lemma can be  combined with results in \cite{BagariaWilsonRefl} to prove that the following statements are equivalent for every uncountable cardinal $\delta$: 
\begin{enumerate}
  \item  $\delta$ is a Woodin cardinal. 
  
  \item For every set $\Ce$ of structures of the same type with $\Ce\subseteq V_\delta$, there exists a cardinal $\kappa<\delta$ with the property that the principle $\PR_\Ce(\kappa)$ holds. 
 \end{enumerate}

\begin{proof}[Proof of Theorem \ref{main:Woodin}]
 $(1) \Rightarrow (2)$: Let $\delta$ be a Woodin cardinal and let  $\Ce\subseteq V_\delta$ be a set of structures of the same type.  
  Using {\cite[Theorem 26.14]{MR1994835}}, we can  find a cardinal $\kappa<\delta$ that is \emph{$\gamma$-strong for $\Ce$} for all $\kappa<\gamma<\delta$, {i.e.} for all $\kappa<\gamma<\delta$, there exists a transitive class $M$ with $V_\gamma\subseteq M$ and an elementary embedding $\map{j}{V}{M}$ with $\crit{j}=\kappa$, $j(\kappa)>\gamma$ and $j(\Ce)\cap V_\gamma=\Ce\cap V_\gamma$. 
 Fix $B\in\Ce$ and pick an inaccessible cardinal $\kappa<\gamma<\delta$ with $B\in V_\gamma$ and an elementary embedding  $\map{j}{V}{M}$ with $\crit{j}=\kappa$, $j(\kappa)>\gamma$ and $j(\Ce)\cap V_\gamma=\Ce\cap V_\gamma$. 
 Then we have $B\in \Ce\cap V_\gamma=j(\Ce)\cap V_\gamma\subseteq j(\Ce)\cap V_{j(\kappa)}^M=j(\Ce\cap V_\kappa)$ and the map $$\Map{h}{\prod(\Ce\cap V_\kappa)}{B}{f}{j(f)(B)}$$ is a well-defined homomorphism. 
 
 $(2) \Rightarrow (1)$: Assume that $\delta$ is an uncountable cardinal with the property that for every set $\Ce\subseteq V_\delta$ of structures of the same type, there exists a cardinal $\kappa<\delta$ with the property that the principle $\PR_\Ce(\kappa)$ holds. Then Lemma \ref{lemma:WPRimpliesInaccessible} implies that $\delta$ is inaccessible. A direct adaptation of the proof of \cite[Theorem 5.13]{BagariaWilsonRefl} then shows that $\delta$ is a Woodin cardinal. 
\end{proof}


\section{Subtle product reflection}\label{section8}

We will next give a proof of Theorem \ref{main:ProdSubtle}. Namely, we will show that the following statements are equivalent for every uncountable cardinal $\delta$: 
 \begin{enumerate}
  \item $\delta$ is a subtle cardinal. 
  
  \item For every set $\Ce$ of structures of the same type with $\Ce\subseteq V_\delta$, there exists a cardinal $\kappa<\delta$ with the property that the principle $\WPR_\Ce(\kappa)$ holds. 
 \end{enumerate}

The implication $(1)\Rightarrow (2)$ is given by the following lemma:

\begin{lemma}\label{lemma:SubtleYieldsProdRefl}
If $\delta$ is a subtle cardinal and  $\Ce\subseteq V_\delta$ is a non-empty set of structures of the same type, then there exists an inaccessible cardinal $\kappa<\delta$ such that $\WPR_\Ce(\kappa)$ holds. 
\end{lemma}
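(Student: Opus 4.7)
I plan to argue by contradiction, following the combinatorial strategy of Lemma \ref{lemma:SubtleSR-SR--} adapted to the product setting. Suppose $\WPR_\Ce(\kappa)$ fails for every inaccessible cardinal $\kappa<\delta$ with $\Ce\cap V_\kappa\neq\emptyset$. Since subtle cardinals are strongly inaccessible and have stationarily many strongly unfoldable (hence inaccessible) cardinals below them, there is a stationary set $S\subseteq\delta$ of such $\kappa$. For each $\kappa\in S$ I fix witnesses: a substructure $X_\kappa$ of $\prod(\Ce\cap V_\kappa)$ with $|X_\kappa|\leq\kappa$, a bijection $\map{e_\kappa}{\kappa}{X_\kappa}$ (padding with repetitions if $|X_\kappa|<\kappa$), and a target $B_\kappa\in\Ce$ such that no homomorphism $X_\kappa\to B_\kappa$ exists. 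The evaluation $f\mapsto f(B_\kappa)$ would be a homomorphism whenever $B_\kappa\in\Ce\cap V_\kappa$, so $B_\kappa\notin V_\kappa$ and $\zeta_\kappa:=\rank{B_\kappa}$ satisfies $\kappa\leq\zeta_\kappa<\delta$.

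The crucial new ingredient relative to Lemma \ref{lemma:SubtleSR-SR--} is a rank-control step that will eventually force $B_\mu\in V_\nu$ for the matching pair. The map $\kappa\mapsto\zeta_\kappa$ on $S$ takes values in $\delta$, so its closure points form a club in $\delta$; intersecting $S$ with this club yields a stationary $S^*\subseteq S$ on which $\mu\in S$ and $\mu<\kappa\in S^*$ entail $\zeta_\mu<\kappa$. For each $\kappa\in S^*$, I encode the elementary diagram of $X_\kappa$ under $e_\kappa$ as a subset $E_\kappa\subseteq\kappa$ exactly as in Lemma \ref{lemma:SubtleSR-SR--}: $E_\kappa$ consists of the G\"odel codes $\goedel{\ell}{\alpha_0,\ldots,\alpha_{k-1}}$ such that $\ell\in\mathsf{Fml}_\calL$ codes an $\calL$-formula $\varphi$ in $k$ free variables (with $\calL$ the signature of $\Ce$), $\alpha_0,\ldots,\alpha_{k-1}<\kappa$, and $X_\kappa\models\varphi(e_\kappa(\alpha_0),\ldots,e_\kappa(\alpha_{k-1}))$; for $\kappa\notin S^*$, I instead set $E_\kappa$ to be a distinguishing sentinel value designed so that $E_\kappa\subseteq E_{\kappa'}$ forces $\kappa,\kappa'\in S^*$ (for instance, by inserting an ordinal marker at a fixed position that can appear only in codes of bona fide diagrams). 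Applying Corollary \ref{corollary:NewCharClSubtle} to the resulting $\delta$-list --- with $\rho$ above the initial rank threshold --- yields cardinals $\mu<\nu$ with $E_\mu\subseteq E_\nu$, and the sentinel design forces $\mu,\nu\in S^*$.

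Since the encoding includes both $\varphi$ and $\neg\varphi$, the inclusion $E_\mu\subseteq E_\nu$ implies that the map $\iota:=e_\nu\circ e_\mu^{{-}1}$ is a well-defined elementary embedding $X_\mu\to X_\nu$, and in particular a homomorphism of $\calL$-structures. Since $\mu,\nu\in S^*$ and $\mu<\nu$, we have $\zeta_\mu<\nu$, so $B_\mu\in\Ce\cap V_\nu$ is one of the coordinates of $\prod(\Ce\cap V_\nu)$; the evaluation $\map{\pi_{B_\mu}}{X_\nu}{B_\mu}$, $f\mapsto f(B_\mu)$, is consequently a homomorphism, and so $\pi_{B_\mu}\circ\iota:X_\mu\to B_\mu$ is a homomorphism, contradicting the choice of $(X_\mu,B_\mu)$ as a bad pair. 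The main obstacle I foresee is the rank-control step together with the sentinel encoding: one must arrange that the matching pair produced by Corollary \ref{corollary:NewCharClSubtle} not only lands in the stationary set $S^*$ of rank-closed inaccessibles but also satisfies $\zeta_\mu<\nu$, since this is precisely what allows $B_\mu$ to surface as a projection coordinate of $\prod(\Ce\cap V_\nu)$ and ultimately yields the desired homomorphism.
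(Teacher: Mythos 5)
Your overall architecture is the right one (diagram coding, a coherence principle from subtlety, and rank control so that $B_\mu$ reappears as a coordinate of $\prod(\Ce\cap V_\nu)$), but the step you yourself flag as the main obstacle is a genuine gap, and it is exactly where the paper has to use more than you allow yourself. You invoke Corollary \ref{corollary:NewCharClSubtle}, which only produces \emph{some} cardinals $\rho<\mu<\nu<\delta$ with $E_\mu\subseteq E_\nu$, with no club or stationarity control whatsoever; indeed that principle is equivalent to ``$\delta$ is subtle or a limit of subtles'', whereas the conclusion of the lemma (by Theorem \ref{main:ProdSubtle}) characterizes genuine subtlety. So the entire burden falls on your ``sentinel'' coding, which is supposed to force the matched pair into $S^*$, and this is not carried out and does not work as described. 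A marker ``that can appear only in codes of bona fide diagrams'' only propagates information forward along the inclusion $E_\mu\subseteq E_\nu$; it cannot exclude a coherent pair lying entirely outside $S^*$, and nothing you can place inside $E_\mu\subseteq\mu$ can encode the joint condition about the \emph{other} member of the pair (that $\nu$ is a closure point of $\kappa\mapsto\zeta_\kappa$, equivalently $\zeta_\mu<\nu$). Concretely, you give no sentinel at all for inaccessible cardinals outside $S^*$ -- e.g.\ inaccessibles that are not strongly unfoldable in $V_\delta$ (a restriction you impose needlessly, and which typically leaves a stationary set of excluded inaccessibles) or that are not closure points -- and it is unclear that any assignment of subsets to those cardinals can prevent coherence among them, since subtlety gives coherence on every \emph{club}, not avoidance of a prescribed stationary set.

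The paper resolves precisely this point by using the full club form of subtlety rather than the $\subseteq$-version: it fixes a club $C$ of strong limit cardinals chosen so that $\xi_\kappa<\min(C\setminus(\kappa+1))$ for every inaccessible $\kappa\in C$ (this is where the rank control lives, replacing your $S^*$), applies subtlety of $\delta$ to a $\delta$-list and this club to get $\kappa<\theta$ in $C$ with $D_\kappa=D_\theta\cap\kappa$, and rules out the only remaining bad case -- singular members of $C$ -- by a second branch of the coding that records a cofinal function $c_\alpha$ with $c_\alpha(0)=0$ at each singular strong limit $\alpha$; coherence then forces $c_\kappa=c_\theta$, which is impossible, so both $\kappa$ and $\theta$ are inaccessible, and membership of both in $C$ gives $B_\kappa\in\Ce\cap V_\theta$. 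If you want to salvage your write-up, the fix is to abandon Corollary \ref{corollary:NewCharClSubtle} and the sentinel idea, define your diagram code at \emph{all} inaccessibles (no strong unfoldability), fold the rank control into a club exactly as above, add the singular-coding branch, and apply the subtlety of $\delta$ with that club directly.
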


\begin{proof}
 Assume, towards a contradiction, that the above conclusion fails. Given an inaccessible cardinal $\kappa<\delta$, fix a substructure $X_\kappa$ of $\prod(\Ce\cap V_\kappa)$ of cardinality $\kappa$ and an ordinal $\xi_\kappa<\delta$ such that there exists a structure $B_\kappa\in\Ce\cap V_{\xi_\kappa}$ with the property that there is no homomorphism from $X_\kappa$ into $B_\kappa$. 
 Then there exists a closed unbounded subset $C$ of $\delta$ that consists of strong limit cardinals and has the property that whenever $\kappa$ is an inaccessible cardinal in $C$, then $\xi_\kappa<\min(C\setminus(\kappa+1))$. 
 In addition, fix a bijection $\map{b_\kappa}{\kappa}{X_\kappa}$ for every inaccessible cardinal $\kappa<\delta$. 
 
 Let $\calL$ denote the signature of $\Ce$ 
 and fix an enumeration  $\seq{\varphi_k}{k<\omega}$ of all   $\calL$-formulas. 
 Pick a $\delta$-list $\seq{D_\alpha}{\alpha<\delta}$ such that the following statements hold for all $\alpha<\delta$: 
 \begin{itemize}
  \item If $\alpha$ is inaccessible, then $D_\alpha$ is the set of all elements of $\alpha$ of the form\footnote{Here, we let $\map{\goedel{\cdot}{\ldots,\cdot}}{\On^{n+1}}{\On}$ denote the iterated \emph{G\"odel Pairing Function}.} $\goedel{0,k}{\alpha_0,\ldots,\alpha_{n-1}}$, where $\varphi_k$ is an $n$-ary $\calL$-formula, $\alpha_0,\ldots,\alpha_{n-1}<\alpha$ and $$X_\alpha\models\varphi_k( b_\alpha(\alpha_0),\ldots,b_\alpha(\alpha_{n-1})).$$ 
  
  \item If $\alpha$ is a singular limit cardinal of cofinality $\lambda$, then there exists a cofinal function $\map{c_\alpha}{\lambda}{\alpha}$ with $c_\alpha(0)=0$ and $D_\alpha ~ = ~ \Set{\goedel{1,\lambda}{\xi,c(\xi)}}{\xi<\lambda}$.
 \end{itemize}

 Using the subtleness of $\delta$, we can now find ordinals  $\kappa<\theta$ in $C$ with $D_\kappa=D_\theta\cap\kappa$.

 \begin{claim*}
  The ordinals $\kappa$ and $\theta$ are both inaccessible cardinals. 
 \end{claim*}
 
 \begin{proof}[Proof of the Claim]
  %
  %
  First, note that the definition of $C$ implies that both $\kappa$ and $\theta$ are strong limit cardinals. 
  Now, assume that $\kappa$ is a singular strong limit cardinal. Then we have $$\goedel{1}{\cof{\kappa},0,0} ~ \in ~ D_\kappa ~ \subseteq  ~ D_\theta$$ and this shows that $\theta$ is also singular with $\cof{\kappa}=\cof{\theta}$. 
  Moreover, it follows that $c_\kappa=c_\theta$ and hence $\ran{c_\theta}\subseteq\kappa<\theta$, a contradiction. 
  Thus we have shown  that $\kappa$ is inaccessible, and therefore there is some $k<\omega$ such that $\goedel{0}{k}\in D_\kappa\subseteq D_\theta$. 
  By the definition of the list $\seq{D_\alpha}{\alpha<\delta}$ this shows that $\theta$ is also an inaccessible cardinal.  
 \end{proof}

  If we now define  $$\map{b ~ = ~ b_\theta\circ b_\kappa^{{-}1}}{X_\kappa}{X_\theta},$$ then the above setup ensures that $b$ is an elementary embedding. 
  Note that the definition of $C$  implies that $B_\kappa\in\Ce\cap V_\theta$ and therefore we know that the map $$\Map{p}{X_\kappa}{B_\theta}{f}{b(f)(B_\kappa)}$$ is a homomorphism of $\calL$-structures. 
  But this allows us to conclude that $\map{p\circ b}{X_\kappa}{B_\kappa}$ is a homomorphism, contradicting our initial assumptions.  
\end{proof}

\begin{proof}[Proof of Theorem \ref{main:ProdSubtle}]
 Let $\delta$ be an uncountable cardinal with the property that for every set $\Ce$ of structures of the same type with $\Ce\subseteq V_\delta$, there exists a cardinal $\kappa<\delta$ with the property that the principle $\WPR_\Ce(\kappa)$ holds. 
 Lemma \ref{lemma:WPRimpliesInaccessible} then shows that $\delta$ is inaccessible. 
  Assume, towards a contradiction, that $\delta$ is not a subtle cardinal and fix a closed unbounded subset $C$ of $\delta$ and a $\delta$-list $\vec{E}=\seq{E_\gamma}{\gamma<\delta}$ with the property that $E_\gamma\cap\beta\neq E_\beta$ holds for all $\beta<\gamma$ in $C$. 
  Let $\alpha$ denote the least  cardinal of uncountable cofinality in $C$.  
  
     Let $\calL^\prime$ denote the first-order language that extends the language of set theory by a unary predicate symbol $\dot{C}$,  constant symbols $\dot{\kappa}$, $\dot{u}$, $\dot{c}$, and  unary function symbols $\dot{e}$ and $\dot{s}$. 
     Let $\calL$ denote the first-order language that extends $\calL^\prime$ by an $(n+1)$-ary predicate symbol $\dot{T}_\varphi$ for  every $\calL^\prime$-formula $\varphi(v_0,\ldots,v_n)$ with $(n+1)$-many free variables, and let $\calS_\calL$  be the class of $\calL$-structures as defined at the beginning of Section \ref{prodref}. Namely, the class of $\calL$-structures $A$  such that there exists a cardinal $\kappa_A$ in $C^{(1)}$ and a limit  ordinal $\theta_A>\kappa_A$ 
 such that: 
 \begin{enumerate}
     
     \item The domain of $A$ is $V_{\theta_A +1}$. 
     
     \item $\in^A={\in}\restriction{V_{\theta_A}+1}$, $\dot{\kappa}^A=\kappa_A$ and $\dot{u}^A=\theta_A$. 
     
     \item If $\varphi(v_0,\ldots,v_n)$ is an $\calL^\prime$-formula, then $$\dot{T}_\varphi^A ~ = ~ \Set{\langle x_0,\ldots,x_n\rangle\in V_{\theta_A +1}^{n+1}}{A\models\varphi(x_0,\ldots,x_n)}.$$
 \end{enumerate}
 Let $\Ce$ denote the set of all $A\in\calS_\calL\cap V_\delta$ such that the following  hold: 
  \begin{itemize}
   \item $\kappa_A$ is a limit point of $C$ above $\alpha$. 
   
   \item $\dot{c}^A=\alpha$ and $\dot{C}^A=C\cap\kappa_A$. 
  
  \item If $\gamma<\kappa_A$, then $\dot{e}^A(\gamma)=E_\gamma$ and $\dot{s}^A(\gamma)=\min(C\setminus(\gamma+1))$.  
 \end{itemize}

 The fact that $\delta$ is inaccessible implies that $\Ce$ is non-empty and hence there exists a cardinal $\zeta<\delta$ with the property that $\WPR_\Ce(\zeta)$ holds. 
 Define $$\kappa ~ = ~ \sup\Set{\kappa_A}{A\in\Ce\cap V_\zeta} ~ \in ~ (\alpha,\zeta]$$ 
 and notice that $\kappa\in C^{(1)}\cap\Lim(C)$.
 Let $D$ be some cofinal subset of $\kappa$ of order-type $\cof{\kappa}$. Given a set $x$, we again define functions $f_x$ and $f^x$ with domain $\Ce\cap V_\zeta$ as in Section \ref{prodref}. Namely, we have $f^x(A)=x\cap V_{\kappa_A}$ for all $A\in\Ce\cap V_\zeta$, $f_x(A)=x$ for all $A\in\Ce\cap V_\zeta$ with $x\in V_{\kappa_A}$ and $f_x(A)=\dot{u}^A$ for all $A\in\Ce\cap V_\zeta$ with $x\notin V_{\kappa_A}$.  
 Then there is a substructure $X$ of $\prod(\Ce\cap V_\zeta)$ of cardinality $\kappa$ with the property that $f^D,f^{E_\kappa}\in X$ and $f_x,f^x\in X$   for all $x\in V_\kappa$. 
 Moreover, since $\delta$ is inaccessible, we can find a structure  $B$ in $\Ce$ with $\kappa_B>\zeta$.  By $\WPR_\Ce(\zeta)$, there is a homomorphism $\map{h}{X}{B}$. Using the results of Section \ref{prodref}, we can define  $$\lambda ~ = ~ \min\Set{\rank{x}}{f_x\in X, \, h(f_x)=\dot{u}^B} ~ \leq ~ \kappa$$ as well as $$\chi ~ = ~ \sup\Set{h(f_\beta)}{\beta<\lambda} ~ \leq ~ \kappa_B.$$ 
 Since $h(f_\alpha)=\dot{c}^B=\alpha\neq\dot{u}^B$, we have that $\alpha<\lambda$. 
 
 \begin{claim*}
  $\lambda\in C$.
 \end{claim*}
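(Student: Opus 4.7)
The plan is to establish $\lambda\in C$ by showing that $\lambda$ is a limit point of the closed unbounded set $C\subseteq\delta$, from which the conclusion follows by closedness. Since $\lambda\leq\kappa\leq\zeta<\delta$ and Lemma~\ref{lemma:Prod2}.\eqref{item:Prod5} already ensures that $\lambda$ is a limit ordinal, the task reduces to verifying that $C\cap\lambda$ is unbounded in $\lambda$.

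To this end, I will fix an arbitrary $\gamma<\lambda$ and consider $s(\gamma):=\min(C\setminus(\gamma+1))$. Because $\kappa\in\Lim(C)$ and $\gamma<\kappa$, we have $s(\gamma)<\kappa$, so both $f_\gamma$ and $f_{s(\gamma)}$ belong to $X$. Moreover, for every $A\in\Ce\cap V_\zeta$ with $\kappa_A>\gamma$, the fact that $\kappa_A\in\Lim(C)$ yields $s(\gamma)<\kappa_A$, and the definition of $\Ce$ gives $\dot{s}^A(\gamma)=s(\gamma)$, $f_\gamma(A)=\gamma$, and $f_{s(\gamma)}(A)=s(\gamma)\neq\dot{u}^A$; while for every $A$ with $\kappa_A\leq\gamma$ we simply have $f_\gamma(A)=\dot{u}^A$.

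The main step is to transfer the property of being different from $\dot{u}$ from $f_\gamma$ to $f_{s(\gamma)}$ through the homomorphism $h$. For this I will apply Lemma~\ref{lemma:Prod1}.\eqref{item:Prod1} to the $\calL^\prime$-formula
\[
\varphi(v_0,v_1) \;\equiv\; v_0\neq\dot{u}\;\longrightarrow\;(v_1=\dot{s}(v_0)\;\wedge\;v_1\neq\dot{u}).
\]
The two case analyses above combine to show that $A\models\varphi(f_\gamma(A),f_{s(\gamma)}(A))$ holds for every $A\in\Ce\cap V_\zeta$ (in the first case the antecedent fails; in the second case the consequent is explicitly verified), so Lemma~\ref{lemma:Prod1}.\eqref{item:Prod1} yields $B\models\varphi(h(f_\gamma),h(f_{s(\gamma)}))$.

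To conclude, the minimality of $\lambda$ applied to the ordinal $\gamma<\lambda$ (whose rank equals $\gamma$) forces $h(f_\gamma)\neq\dot{u}^B$, whence the transported formula gives $h(f_{s(\gamma)})\neq\dot{u}^B$. A second appeal to the minimality of $\lambda$ then yields $s(\gamma)<\lambda$, so $s(\gamma)\in C\cap\lambda$ with $s(\gamma)>\gamma$. As $\gamma<\lambda$ was arbitrary, $C\cap\lambda$ is unbounded in $\lambda$, and $\lambda\in C$ by closedness of $C$. I do not expect a significant obstacle here: the argument is essentially a direct application of the transfer property provided by Lemma~\ref{lemma:Prod1}.\eqref{item:Prod1}, exploiting the fact that the ``successor in $C$'' function $\dot{s}$ has been explicitly built into the structures of $\Ce$.
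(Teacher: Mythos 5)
Your argument is correct and is essentially the paper's: both proofs transfer the property $h(f_{\cdot})\neq\dot{u}^B$ along the successor-in-$C$ function $\dot{s}$ via Lemma \ref{lemma:Prod1}.\eqref{item:Prod1}, the only difference being that you show directly that $C\cap\lambda$ is unbounded in $\lambda$ and invoke closedness of $C$, whereas the paper assumes $\lambda\notin C$ and derives a contradiction at $\beta=\sup(C\cap\lambda)$. One small point of justification: the inference $h(f_{s(\gamma)})\neq\dot{u}^B\Rightarrow s(\gamma)<\lambda$ is not literally the minimality of $\lambda$ (minimality gives the other direction, namely that ranks below $\lambda$ yield values $\neq\dot{u}^B$); it holds because $h(f_\xi)=\dot{u}^B$ for every ordinal $\xi\geq\lambda$ with $f_\xi\in X$ --- apply Lemma \ref{lemma:Prod1}.\eqref{item:Prod1} to the implication $v_0=\dot{u}\rightarrow v_1=\dot{u}$ at $(f_\lambda(A),f_\xi(A))$, using $h(f_\lambda)=\dot{u}^B$ from Lemma \ref{lemma:Prod2}.\eqref{item:Prod5} --- a step the paper's own proof leaves equally implicit when it concludes ``hence $\gamma<\lambda$''.
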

 
 \begin{proof}[Proof of the Claim]
  Assume, towards a contradiction, that $\lambda\notin C$. We then have $\alpha\in C\cap\lambda\neq\emptyset$ and, if we define $\beta=\sup(C\cap\lambda)$, then $\beta<\lambda$ and  $h(f_\beta)\neq\dot{u}^B$. 
  Set $\gamma=\min(C\setminus(\beta+1))$. If $A\in\Ce\cap V_\zeta$ is such that $f_\beta(A) \ne \dot{u}^A$, then $\beta<\kappa_A$,  $\gamma=\dot{s}^A(\beta)<\kappa_A$, and hence  $f_\gamma (A)=\gamma \ne \dot{u}^A$. Thus, for every $A\in \Ce \cap V_\zeta$, we have 
  $$A\models ``f_\beta \ne \dot{u}^A \longrightarrow f_\gamma \ne \dot{u}^A".$$
  As $h(f_\beta )\ne \dot{u}^B$,  Lemma \ref{lemma:Prod1}.\eqref{item:Prod1} shows that $h(f_\gamma)\neq\dot{u}^B$ and hence $\gamma<\lambda$, which yields a contradiction to the fact that $\gamma$ is the least element of $C$ greater than $\beta$, and therefore must be greater than $\lambda$.  
 \end{proof}

   Now  let  $$\Map{j}{V_\lambda}{V_\chi}{x}{h(f_x)}$$ be the $\Sigma_1$-elementary embedding given by  Lemma \ref{lemma:Prod2}.\eqref{item:ProdBonus}. 
   Lemma \ref{lemma:Prod2}.\eqref{item:Prod5} shows that $h(f_\lambda)=\dot{u}^B\neq\lambda$ and we can define $\mu\leq\lambda$ to be the minimal ordinal with $h(f_\mu)\neq\mu$. 
 %
%
Then  Lemma  \ref{lemma:Prod2}.\eqref{item:Prod7} implies that $h(f_x)=x$ holds for all $x\in V_\mu$.

  \begin{claim*}
   $\mu<\kappa$. 
  \end{claim*}
  
  \begin{proof}[Proof of the Claim]
   Assume, towards a contradiction, that $\mu=\lambda=\kappa$ holds. 
   Since $f^{E_\kappa}\in X$, Lemma \ref{lemma:ProdCons}.\eqref{item:ProdCons1} shows that $h(f^{E_\kappa})\cap\kappa=E_\kappa=\dot{e}^B(\kappa)$. Moreover, since $\kappa\in C\cap\kappa_B=\dot{C}^B$, the cardinal $\kappa$   witnesses that $$B\models\anf{\exists\beta<\dot{\kappa} ~ [\beta\in\dot{C}\wedge h(f^{E_\kappa})\cap\beta=\dot{e}(\beta)]}.$$
   Using Lemma \ref{lemma:Prod1}.\eqref{item:Prod1}, we can now find $A\in\Ce\cap V_\zeta$ and $\beta\in C\cap\kappa_A$ with $E_\kappa\cap\beta=f^{E_\kappa}(A)\cap\beta=E_\beta$.  But this yields a contradiction, because we have $\beta<\kappa_A\leq\kappa$, and hence $\beta$ and $\kappa$ are distinct  elements of $C$.  
  \end{proof}

  \begin{claim*}
   $\mu<\lambda$. 
  \end{claim*}
  
  \begin{proof}[Proof of the Claim]
   Assume, towards a contradiction, that $\mu=\lambda$ holds. The above claims then show that $\mu\in C\cap\kappa$.  
   Since $f^{E_\mu}\in X$, Lemma \ref{lemma:ProdCons}.\eqref{item:ProdCons1} implies that $h(f^{E_\mu})\cap \mu=E_\mu$ and Lemma \ref{lemma:Prod2}.\eqref{item:Prod5} shows that   $h(f_\lambda)=\dot{u}^B$, the ordinal $\mu$ witnesses that   $$B\models\anf{h(f_\mu)=\dot{u} ~ \wedge ~ \exists\beta<\dot{\kappa} ~ [\beta\in\dot{C}\wedge h(f^{E_\mu})\cap\beta=\dot{e}(\beta)]}.$$
   Using Lemma \ref{lemma:Prod1}.\eqref{item:Prod1}, this yields $A\in\Ce\cap V_\zeta$ with $\kappa_A\leq\mu$ and $\beta\in C\cap\kappa_A$ satisfying  $E_\mu\cap\beta  =  f^{E_\mu}(A)\cap\beta  =    E_\beta$. Since $\beta<\mu$ are both elements of $C$,  this yields a contradiction. 
  \end{proof}

 By the above claims, we now know that $\mu<\lambda\leq\kappa$.

 \begin{claim*}
  $\alpha<\mu$. 
 \end{claim*}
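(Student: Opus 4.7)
My plan is to argue by contradiction: assume $\mu\leq\alpha$ and derive a violation of the Kunen Inconsistency via the $\Sigma_1$-elementary embedding $\map{j}{V_\lambda}{V_\chi}$ given by Lemma \ref{lemma:Prod2}.\eqref{item:ProdBonus}. The strategy parallels the final step of the proof of Theorem \ref{theorem:CharWPR}; the new ingredient here is that the critical choice $\alpha=\min(\mathrm{Lim}(C)\cap\mathrm{Cof}({>}\omega))$ forces the Kunen iteration to stay below $\alpha$.

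The first step is to pin down $j(\alpha)$. Since $\alpha<\kappa_A$ and $\dot{c}^A=\alpha$ for every $A\in\Ce\cap V_\zeta$, one has $f_\alpha(A)=\alpha=\dot{c}^A$ throughout the product, so Lemma \ref{lemma:Prod1}.\eqref{item:Prod1} applied to the atomic $\calL^\prime$-formula $v_0=\dot{c}$ yields $j(\alpha)=h(f_\alpha)=\dot{c}^B=\alpha$. Because $j(\mu)\neq\mu$ by the minimality of $\mu$, this already rules out $\mu=\alpha$, so the assumption reduces to the case $\mu<\alpha$.

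The second step is to iterate $j$ inside $V_\alpha$. A straightforward induction using the $\Sigma_1$-elementarity of $j$ together with $j(\alpha)=\alpha$ shows that $j^n(\mu)<\alpha$ for every $n<\omega$. Set $\tau=\sup_{n<\omega}j^n(\mu)\leq\alpha$. Since $\tau$ has cofinality $\omega$ while $\alpha$ was chosen as the least cardinal of uncountable cofinality in $C$, we must in fact have $\tau<\alpha<\lambda$. Transporting the cofinal $\omega$-sequence $\seq{j^n(\mu)}{n<\omega}$ by $j$ (which is permissible given $\Sigma_1$-elementarity and $\tau<\lambda$) gives $j(\tau)=\sup_n j^{n+1}(\mu)=\tau$, and hence $j(V_{\tau+2})=V_{j(\tau)+2}=V_{\tau+2}$.

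The final step is to read off the contradiction. The restricted map $\map{j\restriction V_{\tau+2}}{V_{\tau+2}}{V_{\tau+2}}$ is a $\Sigma_1$-elementary self-embedding which is non-trivial, as $\mu$ is its critical point and $\mu<\tau$. This contradicts the Kunen Inconsistency by exactly the same endgame used at the close of the proof of Theorem \ref{theorem:CharWPR}. The only subtlety I expect is the usual one, namely that $j$ is only $\Sigma_1$-elementary rather than fully elementary; but this is precisely the setting already handled in the Kunen step of Theorem \ref{theorem:CharWPR}, so no genuinely new argument is required and I would cite that step directly.
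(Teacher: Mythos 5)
Your proof is correct and takes essentially the same route as the paper: show $j(\alpha)=h(f_\alpha)=\dot{c}^B=\alpha$, conclude $\mu<\alpha$ from the assumption $\mu\leq\alpha$, and then obtain a Kunen-style contradiction using the uncountable cofinality of $\alpha$. The paper merely compresses the iteration you carry out explicitly through $\tau=\sup_{n<\omega}j^n(\mu)$ by restricting $j$ to $V_\alpha$ and invoking the Kunen Inconsistency directly.
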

 
   \begin{proof}[Proof of the Claim]
   Assume, towards a contradiction, that $\mu\leq\alpha$ holds. Since we have $$j(\alpha) ~ = ~ h(f_\alpha) ~ = ~ \dot{c}^B ~ = ~ \alpha$$ we must have $\mu<\alpha$.
   But this implies that $\map{j\restriction V_\alpha}{V_\alpha}{V_\alpha}$ is a non-trivial elementary embedding and, since $\alpha$ has uncountable cofinality, this yields a contradiction via the \emph{Kunen Inconsistency}. 
  \end{proof}

 \begin{claim*}
  $\mu\in C$. 
 \end{claim*}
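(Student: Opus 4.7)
The plan is to adapt the proof of the preceding $\lambda\in C$ claim up to the point where $\gamma$ and $h(f_\gamma)=\gamma$ are available, and then close the argument with a Kunen Inconsistency step that mirrors the last paragraph of the proof of Theorem~\ref{theorem:CharWPR} in Section~\ref{section7}. I assume towards a contradiction that $\mu\notin C$. Since the preceding claim provides $\alpha<\mu$ and $\alpha\in C$, the set $C\cap\mu$ is non-empty, so $\beta=\sup(C\cap\mu)$ lies in $C$ with $\beta<\mu$ (using that $C$ is closed and $\mu\notin C$), and $\gamma=\min(C\setminus(\beta+1))$ satisfies $\gamma>\mu$.

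First I run the argument of the $\lambda\in C$ claim with this $\beta$: applying Lemma~\ref{lemma:Prod1}.\eqref{item:Prod1} to the formula $(w\ne\dot u\to z\ne\dot u)$ with $w=f_\beta$ and $z=f_\gamma$, using that every $\kappa_A$ is a limit point of $C$ so that $\beta<\kappa_A$ forces $\gamma<\kappa_A$, and combining this with $h(f_\beta)=\beta\ne\dot u^B$, I obtain $h(f_\gamma)\ne\dot u^B$ and hence $\gamma<\lambda$. The same strategy with the formula $(w\ne\dot u\to\dot s(w)=z)$ in place of $(w\ne\dot u\to z\ne\dot u)$ yields $\dot s^B(\beta)=h(f_\gamma)=\gamma$, and therefore $j(\gamma)=\gamma$. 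The crucial new step is to run this $\dot s$-formula once more, this time with $w=f_\mu$ and $z=f_\gamma$: whenever $\mu<\kappa_A$, the assumption $\mu\notin C$ forces $\dot s^A(\mu)=\min(C\setminus\mu)=\gamma<\kappa_A$, so the implication is true, while for the remaining $A$ the antecedent $f_\mu(A)\ne\dot u^A$ fails vacuously. Lemma~\ref{lemma:Prod1}.\eqref{item:Prod1} then delivers $\dot s^B(j(\mu))=h(f_\gamma)=\gamma$, which forces $j(\mu)<\gamma$; combined with $j(\mu)>\mu$ (from Lemma~\ref{lemma:Prod2}.\eqref{item:Prod7} applied to $x=\mu$ together with the minimality of $\mu$), this gives $\mu<j(\mu)<\gamma$.

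To finish, I iterate inside $V_\lambda$: since $j$ is strictly increasing, $\Sigma_1$-elementary, and fixes $\gamma$, induction on $n$ yields $j^n(\mu)<\gamma$ for every $n<\omega$. Set $\tau=\sup_{n<\omega}j^n(\mu)\le\gamma<\lambda$. The sequence $\seq{j^n(\mu)}{n<\omega}$ witnesses that $\tau$ has cofinality $\omega$, and because the existence of a cofinal $\omega$-sequence is expressed by a $\Sigma_1$-formula, the $\Sigma_1$-elementarity of $j$ forces $j$ to be continuous at $\tau$, so $j(\tau)=\sup_{n<\omega}j^{n+1}(\mu)=\tau$. Using Lemma~\ref{lemma:Prod2}.\eqref{item:Prod5} to know that $\lambda$ is a limit ordinal, we then have $\tau+2<\lambda$ and $j(V_{\tau+2})=V_{\tau+2}$, so $j\restriction V_{\tau+2}\colon V_{\tau+2}\to V_{\tau+2}$ is a non-trivial $\Sigma_1$-elementary embedding with critical point $\mu$, which contradicts the Kunen Inconsistency exactly as in the closing step of the proof of Theorem~\ref{theorem:CharWPR}. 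The main obstacle will be the careful bookkeeping needed to apply the $\dot s$-formula uniformly to $(f_\mu,f_\gamma)$: one has to exploit $\kappa_A\in\Lim(C)$ to secure $\gamma<\kappa_A$ whenever $\mu<\kappa_A$, and to dispatch the factors with $\kappa_A\le\mu$ via the vacuously satisfied antecedent $f_\mu(A)=\dot u^A$.
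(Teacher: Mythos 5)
Your proof is correct, and its core coincides with the paper's: you assume $\mu\notin C$, set $\beta=\sup(C\cap\mu)<\mu$ and $\gamma=\min(C\setminus(\beta+1))>\mu$, note that $\gamma<\kappa_A$ whenever the relevant antecedent holds (since each $\kappa_A$ is a limit point of $C$), and use Lemma \ref{lemma:Prod1}.\eqref{item:Prod1} with the function symbol $\dot{s}$ and $h(f_\beta)=\beta$ to get $h(f_\gamma)=\dot{s}^B(\beta)=\gamma$, hence $\gamma<\lambda$ and $j(\gamma)=\gamma$ --- which is exactly the paper's computation $j(\gamma)=h(f_\gamma)=\dot{s}^B(j(\beta))=\gamma$. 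Where you diverge is the endgame: the paper stops at this point, observing that $\gamma+2<\lambda$ and $j(\gamma)=\gamma$ already make $j\restriction V_{\gamma+2}$ a non-trivial elementary self-embedding of $V_{\gamma+2}$ (with critical point $\mu<\gamma$), contradicting the Kunen Inconsistency; you instead add a further application of the $\dot{s}$-formula at $\mu$ (using $\mu\notin C$ to compute $\dot{s}^A(\mu)=\gamma$) to obtain $\mu<j(\mu)<\gamma$, iterate to $\tau=\sup_{n<\omega}j^n(\mu)\leq\gamma$, argue $j(\tau)=\tau$ by continuity at cofinality $\omega$, and invoke Kunen at $V_{\tau+2}$, mirroring the closing step of the proof of Theorem \ref{theorem:CharWPR}. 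This detour is sound: the continuity claim holds because the sequence $\seq{j^n(\mu)}{n<\omega}$ is an element of $V_\lambda$ and $j$ sends it to an $\omega$-sequence cofinal in $j(\tau)$ with values $j^{n+1}(\mu)$, and the restricted map is in fact fully elementary (not merely $\Sigma_1$-elementary), since satisfaction in the set structure $V_{\tau+2}$ is absolute between $V_\lambda$ and $V_\chi$. But it is unnecessary work: the Kunen Inconsistency excludes non-trivial elementary self-embeddings of any $V_{\delta+2}$, so once $j(\gamma)=\gamma$ is available it can be applied at $\gamma$ directly, without first locating the supremum of the critical sequence below $\gamma$.
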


  \begin{proof}[Proof of the Claim]
   Assume, towards a contradiction, that $\mu$ is not an element of $C$. Since the previous claim shows that $\alpha\in C\cap\mu$, we know that $\beta=\sup(C\cap\mu)<\mu$ and $\gamma=\min(C\setminus(\beta+1))>\mu$. 
   We then have $\gamma=\dot{s}^A(\beta)<\kappa_A$ for all $A\in\Ce\cap V_\zeta$ with $\mu<\kappa_A$. Thus for every $A\in \Ce \cap V_\zeta$, we have 
   $$A\models ``f_\mu(A)\ne \dot{u}^A \longrightarrow  \dot{s}^A (f_\beta(A))<\dot{\kappa}^A".$$ 
   Thus, since an earlier claim shows $\mu<\lambda$ and therefore Lemma \ref{lemma:Prod1}.\eqref{item:Prod1} implies that $h(f_\mu)\neq \dot{u}^B$, we know that $\dot{s}^B(h(f_\beta))<\kappa_B$.
   Hence, we can conclude that 
   $$j(\gamma)=h(f_\gamma))= \dot{s}^B(h(f_\beta))=\dot{s}^B(j(\beta))=\dot{s}^B(\beta)=\gamma.$$
   Since Lemma \ref{lemma:Prod2}.\eqref{item:Prod5} ensures that $\gamma+2<\lambda$, the $\Sigma_1$-elementarity of $j$ implies that $j(V_{\gamma+2})=V_{\gamma+2}$ and we can conclude that $\map{j\restriction V_{\gamma+2}}{V_{\gamma+2}}{V_{\gamma+2}}$ is a non-trivial elementary embedding, which yields a contradiction via \emph{Kunen Inconsistency}. 
  \end{proof}

  Now notice that since $\mu <\lambda$, Lemma \ref{lemma:Prod2}.\eqref{item:Prod7} yields that $h(f_\mu)\ne \dot{u}^B$, and since  $\mu<j(\mu)=h(f_\mu)$, we have that $$B\models\anf{h(f_\mu)\neq\dot{u} ~ \wedge ~ \exists\beta< h(f_\mu) ~ [\beta\in\dot{C}\wedge h(f^{E_\mu})\cap\beta=\dot{e}(\beta)]}$$
  as witnessed by $\mu$.
  An application of  Lemma \ref{lemma:Prod1}.\eqref{item:Prod1} then yields an $A\in\Ce\cap V_\zeta$ with $\kappa_A>\mu$ and $\beta\in C\cap\mu$ with the property that $$E_\mu\cap\beta ~ = ~ f^{E_\mu}(A)\cap\beta ~ = ~ \dot{e}^A(\beta) ~ = ~ E_\beta.$$
  Since $\beta$ and $\mu$ are distinct elements of $C$, this yields a final contradiction.  
  
  The above computations yield the implication $(2)\Rightarrow (1)$ of the theorem. Since the implication $(1)\Rightarrow (2)$ follows from Lemma \ref{lemma:SubtleYieldsProdRefl}, this completes the proof of the theorem. 
\end{proof}


\section{Between strongly unfoldable and subtle cardinals}
\label{section9}

We shall give in this section a proof of Theorem \ref{C(n)StronglyUnfoldable}. But first we shall prove some properties of $C^{(n)}$-strongly unfoldable cardinals, and give other equivalent reformulations  of these cardinals in terms of elementary embeddings.
Recall (see Definition \ref{defCnsf}) that an inaccessible cardinal $\kappa$ is \emph{$C^{(n)}$-strongly unfoldable} if for every ordinal $\lambda\in C^{(n)}$ greater than $\kappa$ and every transitive $\mathrm{ZF}^-$-model $M$ of cardinality $\kappa$ with $\kappa\in M$ and ${}^{{<}\kappa}M\subseteq M$, there is a transitive set $N$ with $V_\lambda\subseteq N$ and an elementary embedding $\map{j}{M}{N}$ with $\crit{j}=\kappa$,  $j(\kappa)>\lambda$ and $V_\lambda\prec_{\Sigma_n}V_{j(\kappa)}^N$. 

\begin{proposition}\label{proposition:CnSUNF-Cn+1}
 $C^{(n)}$-strongly unfoldable cardinals are elements of $C^{(n+1)}$. 
\end{proposition}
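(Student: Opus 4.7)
The plan is to argue by induction on $n$. The base case $n=0$ is immediate: a $C^{(0)}$-strongly unfoldable cardinal is by definition inaccessible, hence $V_\kappa=H_\kappa$ and $\kappa\in C^{(1)}$.

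For the inductive step at $n\geq 1$, I would first establish a monotonicity observation: $C^{(n)}$-strong unfoldability implies $C^{(n-1)}$-strong unfoldability. Given $\lambda\in C^{(n-1)}$ with $\lambda>\kappa$ and a model $M$ as in Definition \ref{defCnsf}, pick any $\lambda'\in C^{(n)}$ with $\lambda'\geq\lambda$ and apply $C^{(n)}$-strong unfoldability to obtain $\map{j}{M}{N}$ with $V_{\lambda'}\subseteq N$, $j(\kappa)>\lambda'$ and $V_{\lambda'}\prec_{\Sigma_n}V_{j(\kappa)}^N$. Weakening to $\Sigma_{n-1}$-elementarity and chaining with $V_\lambda\prec_{\Sigma_{n-1}}V_{\lambda'}$ (which follows from $\lambda,\lambda'\in C^{(n-1)}$) yields $V_\lambda\prec_{\Sigma_{n-1}}V_{j(\kappa)}^N$, as required. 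The induction hypothesis then gives $\kappa\in C^{(n)}$.

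To upgrade this to $\kappa\in C^{(n+1)}$, I would fix a $\Sigma_{n+1}$-formula $\varphi(\vec v)\equiv\exists x\,\psi(x,\vec v)$ with $\psi\in\Pi_n$, and parameters $\vec a\in V_\kappa$. The downward direction $V_\kappa\models\varphi(\vec a)\Rightarrow V\models\varphi(\vec a)$ is easy: any $b\in V_\kappa$ witnessing $\psi(b,\vec a)$ in $V_\kappa$ also witnesses it in $V$, because $\kappa\in C^{(n)}$ gives $V_\kappa\prec_{\Pi_n}V$. For the upward direction, suppose $V\models\psi(b,\vec a)$ for some $b$. Choose $\lambda\in C^{(n)}$ with $b,\vec a\in V_\lambda$ and $\lambda>\kappa$; then $V_\lambda\models\psi(b,\vec a)$ by the defining property of $C^{(n)}$. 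Pick a transitive $\mathrm{ZF}^-$-model $M$ of cardinality $\kappa$ with $V_\kappa\cup\{\kappa\}\subseteq M$ and ${}^{{<}\kappa}M\subseteq M$ (for instance, the transitive collapse of a suitable elementary substructure of some large $H_\theta$ containing $V_\kappa$), and apply $C^{(n)}$-strong unfoldability to obtain $\map{j}{M}{N}$ with $\crit{j}=\kappa$, $j(\kappa)>\lambda$, $V_\lambda\subseteq N$ and $V_\lambda\prec_{\Sigma_n}V_{j(\kappa)}^N$. Since $\psi$ is $\Pi_n$, this pushes $\psi(b,\vec a)$ up to $V_{j(\kappa)}^N$, so $V_{j(\kappa)}^N\models\exists x\,\psi(x,\vec a)$. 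Using $V_\kappa=V_\kappa^M$ (ensured by $V_\kappa\subseteq M$ and transitivity), $j\restriction V_\kappa=\id_{V_\kappa}$, and $V_{j(\kappa)}^N=j(V_\kappa^M)$, the elementarity of $j$ pulls this statement back to $V_\kappa\models\varphi(\vec a)$.

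I do not expect a serious obstacle: this is a fairly routine reflection argument once the monotonicity observation is in place. The only point that requires a moment of care is the choice of $M$, where one must ensure both $V_\kappa\subseteq M$ (so that $V_\kappa^M=V_\kappa$) and enough closure so that $V_{j(\kappa)}^N$ is correctly $j(V_\kappa^M)$; these are handled by a standard Skolem-hull construction inside a sufficiently large $H_\theta$.
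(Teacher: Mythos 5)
Your proposal is correct and takes essentially the same route as the paper's proof: apply the unfoldability embedding to a ${<}\kappa$-closed transitive model containing $V_\kappa$, transfer the $\Sigma_{n+1}$ statement into $V_{j(\kappa)}^N$ using $V_\lambda\prec_{\Sigma_n}V_{j(\kappa)}^N$, and pull it back to $V_\kappa$ by the elementarity of $j$. The only differences are cosmetic: the paper picks $\lambda\in C^{(n+1)}$ so the $\Sigma_{n+1}$ statement holds in $V_\lambda$ outright and leaves the easy upward direction (from $V_\kappa$ to $V$) implicit, whereas you choose $\lambda\in C^{(n)}$ containing a witness and secure the upward direction through your (correct) monotonicity-plus-induction preamble yielding $\kappa\in C^{(n)}$.
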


\begin{proof}
 Let $\kappa$ be a $C^{(n)}$-strongly unfoldable cardinal. 
 Pick a $\Sigma_{n+1}$-formula $\varphi(v)$ and $z\in V_\kappa$ with the property that $\varphi(z)$ holds in $V$. %
 Fix $\lambda\in C^{(n+1)}$ greater than $\kappa$, so that $\varphi(z)$ holds in $V_\lambda$, and fix an elementary submodel $M$ of $H_{\kappa^+}$ of cardinality $\kappa$ with $(\kappa+1)\cup{}^{{<}\kappa}M\subseteq M$. By our assumption, there exists a transitive set $N$ with $V_\lambda\subseteq N$ and an elementary embedding $\map{j}{M}{N}$ with  $\crit{j}=\kappa$,  $j(\kappa)>\lambda$ and $V_\lambda\prec_{\Sigma_n}V_{j(\kappa)}^N$. 
Thus, since $\varphi(z)$ holds in $V_\lambda$, it also holds in $V_{j(\kappa)}^N$,  and hence elementarity implies that $\varphi(z)$ holds in $V_\kappa$. 
\end{proof}

Clearly, a cardinal is strongly unfoldable if and only if it is $C^{(0)}$-strongly unfoldable. But more is true:

\begin{proposition}\label{proposition:SunfC1Sunf}
 A cardinal is strongly unfoldable if and only if it is $C^{(1)}$-strongly unfoldable. 
\end{proposition}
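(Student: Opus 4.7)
The plan is to prove both directions, with the backward implication being essentially immediate from the definitions and the forward one reducing to a standard absoluteness observation.

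For the easy direction, assume $\kappa$ is $C^{(1)}$-strongly unfoldable. Given any ordinal $\lambda>\kappa$ and a transitive $\mathrm{ZF}^-$-model $M$ as in the definition of strong unfoldability, I select some $\lambda^\prime\geq\lambda$ with $\lambda^\prime\in C^{(1)}$ (possible since $C^{(1)}$ is a proper class) and apply $C^{(1)}$-strong unfoldability at $\lambda^\prime$ to $M$. The resulting embedding $\map{j}{M}{N}$ satisfies $V_\lambda\subseteq V_{\lambda^\prime}\subseteq N$, $\crit{j}=\kappa$, and $j(\kappa)>\lambda^\prime\geq\lambda$, which is what strong unfoldability requires.

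For the forward direction, suppose $\kappa$ is strongly unfoldable and fix $\lambda\in C^{(1)}$ with $\lambda>\kappa$ together with a transitive $\mathrm{ZF}^-$-model $M$ of cardinality $\kappa$ containing $\kappa$ and closed under ${<}\kappa$-sequences. I apply strong unfoldability at the ordinal $\lambda+1$ (in order to force $j(\kappa)>\lambda$ strictly), obtaining a transitive set $N$ with $V_{\lambda+1}\subseteq N$ and an elementary embedding $\map{j}{M}{N}$ with $\crit{j}=\kappa$ and $j(\kappa)\geq\lambda+1>\lambda$. Since $N$ is transitive with $V_\lambda\subseteq N$, rank computations are absolute on $V_\lambda$, whence $V_\lambda\subseteq V_{j(\kappa)}^N$.

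The only remaining task is to verify that $V_\lambda\prec_{\Sigma_1}V_{j(\kappa)}^N$. Fix a $\Sigma_1$-formula $\varphi(v)\equiv\exists x\,\psi(x,v)$ with $\psi$ being $\Sigma_0$, and $a\in V_\lambda$. The upward direction is immediate from $\Sigma_1$-upward absoluteness between the transitive sets $V_\lambda\subseteq V_{j(\kappa)}^N$. For the downward direction, any $\Sigma_1$-witness $x\in V_{j(\kappa)}^N$ with $V_{j(\kappa)}^N\models\psi(x,a)$ gives, by $\Sigma_0$-absoluteness for transitive sets, that $V\models\psi(x,a)$, hence $V\models\varphi(a)$; since $\lambda\in C^{(1)}$, the $\Sigma_1$-statement $\varphi(a)$ reflects back to $V_\lambda$. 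There is no real obstacle here — the whole argument is routine bookkeeping with absoluteness, and the ordinal shift $\lambda\mapsto\lambda+1$ is the only step requiring any care.
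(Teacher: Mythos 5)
Your proof is correct. The forward direction rests on the same basic observation as the paper's proof, namely that an embedding witnessing strong unfoldability at a $\Sigma_1$-correct $\lambda$ automatically satisfies the extra requirement $V_\lambda\prec_{\Sigma_1}V^N_{j(\kappa)}$, but you verify this differently: the paper argues internally to $N$, noting that ``$\lambda\in C^{(1)}$'' is a $\Pi_1$ property and hence holds in the transitive set $N$, and that $j(\kappa)$ is inaccessible in $N$ and therefore also in $C^{(1)}$ from $N$'s point of view, so that $\Sigma_1$-elementarity between $V_\lambda$ and $V^N_{j(\kappa)}$ follows with both computed inside $N$; you instead argue externally, going up from $V^N_{j(\kappa)}$ to $V$ by $\Delta_0$-absoluteness for transitive sets and then reflecting the $\Sigma_1$-statement back into $V_\lambda$ using $V_\lambda\prec_{\Sigma_1}V$. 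Your route has the minor advantage of not using the inaccessibility of $j(\kappa)$ in $N$ at all, and you are more explicit than the paper on two points it leaves implicit: the passage from $j(\kappa)\geq\lambda$ in the definition of strong unfoldability to the strict inequality $j(\kappa)>\lambda$ required for $C^{(1)}$-strong unfoldability, which you handle by applying strong unfoldability at $\lambda+1$, and the easy converse direction, which you settle by choosing $\lambda'\geq\lambda$ in $C^{(1)}$. Both arguments are short absoluteness checks of essentially the same fact.
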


\begin{proof}
 Let $\kappa$ be an inaccessible cardinal, let $\lambda\in C^{(1)}$ be greater than $\kappa$, let $M$ be a transitive $\mathrm{ZF}^-$-model  of cardinality $\kappa$ with  $\{\kappa\}\cup{}^{{<}\kappa}M\subseteq M$, let $N$ be a transitive set with $V_\lambda\subseteq N$, and let  $\map{j}{M}{N}$ be an elementary embedding with $\crit{j}=\kappa$ and   $j(\kappa)>\lambda$. 
 Then, in $N$,  $\lambda$ is also in $C^{(1)}$. So, since $j(\kappa)$ is an inaccessible cardinal in $N$,  hence also in $C^{(1)}$ in the sense of $N$, we have that $V_\lambda\prec_{\Sigma_1}V_{j(\kappa)}^N$. 
\end{proof}

\begin{proposition}\label{proposition:CnExtendsibleCn+1Sunf}
 Given a natural number $n$, every $C^{(n)}$-extendible cardinal is $C^{(n+1)}$-strongly unfoldable. 
\end{proposition}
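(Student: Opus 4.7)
The plan is to apply $C^{(n)}$-extendibility at the given $\lambda$ itself, restrict the resulting embedding to $M$, and then read off both the inclusion $V_\lambda\subseteq N$ and the $\Sigma_{n+1}$-elementarity from the interaction of $\lambda\in C^{(n+1)}$ with $i(\kappa)\in C^{(n)}$.

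Before applying extendibility I would first observe that the hypotheses on $M$ already force $V_\kappa\subseteq M$. This goes by induction on $\alpha<\kappa$: for the successor step, if $V_\alpha\subseteq M$, then any $S\subseteq V_\alpha$ has cardinality less than $\kappa$ by the inaccessibility of $\kappa$, any bijection $\map{f}{\betrag{S}}{S}$ is a ${<}\kappa$-sequence of elements of $M$ and hence lies in $M$ by hypothesis, and then $S=\ran{f}\in M$ since $M\models\mathrm{ZF}^-$; this gives $V_{\alpha+1}\subseteq M$, and the limit case is immediate. It follows that $V_\kappa\subseteq M$, and in particular $V_\kappa^M=V_\kappa$.

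Since $\lambda\in C^{(1)}$ forces $\lambda>\kappa^+$, we have $M\in V_\lambda$. Applying $C^{(n)}$-extendibility at $\lambda$ would then produce an elementary embedding $\map{i}{V_\lambda}{V_\mu}$ with $\crit{i}=\kappa$, $i(\kappa)>\lambda$, and $i(\kappa)\in C^{(n)}$. Setting $j=i\restriction M$ and $N=i(M)$, the elementarity of $i$ together with $M\in V_\lambda$ yields that $\map{j}{M}{N}$ is elementary as an $\in$-structure embedding with $\crit{j}=\kappa$ and $j(\kappa)=i(\kappa)>\lambda$, and that $N$ is transitive. Applying $i$ to the equalities $V_\kappa^M=V_\kappa$ and $i(V_\kappa)=V_{i(\kappa)}$ now yields $V_{j(\kappa)}^N=V_{i(\kappa)}^{i(M)}=i(V_\kappa)=V_{i(\kappa)}$, so in particular $V_\lambda\subseteq V_{i(\kappa)}=V_{j(\kappa)}^N\subseteq N$.

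The remaining task is the verification of $V_\lambda\prec_{\Sigma_{n+1}}V_{i(\kappa)}$. Given a $\Sigma_{n+1}$-formula $\varphi(\bar v)\equiv\exists y\,\psi(y,\bar v)$ with $\psi\in\Pi_n$ and parameters $\bar a\in V_\lambda$: if $V_\lambda\models\varphi(\bar a)$, I would pick a witness $y_0\in V_\lambda$ with $V_\lambda\models\psi(y_0,\bar a)$, use $\lambda\in C^{(n+1)}$ to lift $\psi(y_0,\bar a)$ up to $V$, and then use $i(\kappa)\in C^{(n)}$ to push the $\Pi_n$-statement $\psi(y_0,\bar a)$ back down to $V_{i(\kappa)}$; conversely, a witness in $V_{i(\kappa)}$ transfers to $V$ by $i(\kappa)\in C^{(n)}$ and reflects down to $V_\lambda$ by $\lambda\in C^{(n+1)}$. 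The one step of the argument that genuinely has to be got right is the preliminary claim $V_\kappa\subseteq M$: without it, the identification $V_{j(\kappa)}^N=V_{i(\kappa)}$, and with it the inclusion $V_\lambda\subseteq N$, would in general fail.
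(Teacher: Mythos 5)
Your proposal is correct and follows essentially the same route as the paper's proof: apply $C^{(n)}$-extendibility at the given $\lambda$, restrict the embedding to $M$, set $N=i(M)$, and deduce $V_\lambda\subseteq N$ together with $V_\lambda\prec_{\Sigma_{n+1}}V_{j(\kappa)}^N=V_{i(\kappa)}$ from $\lambda\in C^{(n+1)}$ and $i(\kappa)\in C^{(n)}$. The only differences are presentational: you spell out the (correct) auxiliary facts $V_\kappa\subseteq M$ and the absoluteness computation for $\Sigma_{n+1}$-formulas, and you treat $n=0$ directly instead of first reducing to $n>0$ as the paper does, which is fine since your absoluteness argument degenerates harmlessly to $\Delta_0$-absoluteness in that case.
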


\begin{proof}
 Since $C^{(0)}$-extendibility coincides with $C^{(1)}$-extendibility and $C^{(0)}$-strong unfoldability coincides with $C^{(1)}$-strong unfoldability, we may assume that $n$ is greater than $0$. 
 Let $\kappa$ be a $C^{(n)}$-extendible cardinal, let $\lambda>\kappa$ be an element of $C^{(n+1)}$ and let $M$ be a transitive $\mathrm{ZF}^-$-model  of cardinality $\kappa$ with $\kappa\in M$ and ${}^{{<}\kappa}M\subseteq M$. 
 Then there is an ordinal  $\mu$ and an elementary embedding $\map{j}{V_\lambda}{V_\mu}$ with $\crit{j}=\kappa$, $j(\kappa)>\lambda$ and $j(\kappa)\in C^{(n)}$. 
 Elementarity now implies that $N=j(M)$ is a transitive set with $V_{j(\kappa)}\subseteq N$, and our setup ensures that $V_\lambda\prec_{\Sigma_{n+1}}V_{j(\kappa)}=V^N_{j(\kappa)}$. 
 Moreover, the map $\map{i=j\restriction M}{M}{N}$ is an elementary embedding with $\crit{i}=\kappa$, $i(\kappa)>\lambda$ and $V_\lambda\prec_{\Sigma_{n+1}}V^N_{i(\kappa)}$.  
\end{proof}

\begin{theorem}\label{theorem:SigmaNsunf}
 Given a natural number $n>0$, the following statements are equivalent for every cardinal $\kappa$: 
 \begin{enumerate}
     \item\label{item:sunf}  $\kappa$ is $C^{(n)}$-strongly unfoldable. 
     
     \item\label{item:shrewd} For every $\calL_\in$-formula $\varphi(v_0,v_1)$, every ordinal $\gamma\in C^{(n)}$ greater than $\kappa$, and every subset $A$ of $V_\kappa$ with the property that $\varphi(\kappa,A)$ holds in $V_\gamma$, there exist ordinals $\alpha<\beta<\kappa$ with $\beta\in C^{(n)}$ and the property that $\varphi(\alpha,A\cap V_\alpha)$ holds in $V_\beta$. 
     
    \item\label{item:Magidor} For every ordinal $\gamma\in C^{(n)}$ greater than $\kappa$ and every $z\in V_\gamma$ there exist an ordinal $\bar{\gamma}\in C^{(n)}\cap\kappa$, a cardinal $\bar{\kappa}<\bar{\gamma}$, an elementary submodel $X$ of $V_{\bar{\gamma}}$ with $V_{\bar{\kappa}}\cup\{\bar{\kappa}\}\subseteq X$, and an elementary embedding $\map{j}{X}{V_\gamma}$ with $j\restriction\bar{\kappa}=\id_{\bar{\kappa}}$, $j(\bar{\kappa})=\kappa$ and $z\in\ran{j}$.

    \item\label{item:sunf-superalmosthuge}
    For every ordinal $\lambda\in C^{(n)}$ greater than $\kappa$, every transitive $\mathrm{ZF}^-$-model $M$ of cardinality $\kappa$ with $\kappa\in M$ and ${}^{{<}\kappa}M\subseteq M$, and every $\Pi_{n-1}$-formula $\psi(v_0,v_1)$ with the property that $\psi(M,\kappa)$ holds, there is a transitive set $N$ with $V_\lambda\subseteq N$ and an elementary embedding $\map{j}{M}{N}$ such that  $\crit{j}=\kappa$,  $j(\kappa)>\lambda$,  $V_\lambda\prec_{\Sigma_n}V_{j(\kappa)}^N$, and $\psi(N,j(\kappa))$ holds.  
 \end{enumerate}
\end{theorem}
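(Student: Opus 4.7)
Here is my plan. I will establish the cycle $(1) \Rightarrow (2) \Rightarrow (3) \Rightarrow (4) \Rightarrow (1)$, building on the arguments given in \cite{luecke2021strong} for the case $n = 0$; the implication $(4) \Rightarrow (1)$ is immediate, since taking $\psi$ to be a tautology recovers the definition of $C^{(n)}$-strong unfoldability verbatim. The new ingredient throughout is the interplay between the $\Sigma_n$-elementarity $V_\lambda \prec_{\Sigma_n} V_{j(\kappa)}^N$ and the internal $C^{(n)}$-correctness of ordinals inside the reflected world.

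For $(1) \Rightarrow (2)$, fix $\varphi(v_0, v_1)$, $\gamma \in C^{(n)}$ above $\kappa$, and $A \subseteq V_\kappa$ with $V_\gamma \models \varphi(\kappa, A)$. I would form a transitive $\mathrm{ZF}^-$-model $M$ of cardinality $\kappa$, closed under $<\kappa$-sequences and containing $\kappa$, by taking the transitive collapse of a suitable elementary submodel of some $H_\theta$ containing $A$, $V_\gamma$, $\gamma$, and a well-ordering. Under this collapse, $\gamma$ has an image $\bar\gamma \in M$ such that $M$ believes $V_{\bar\gamma}^M \models \varphi(\kappa, A)$ and $\bar\gamma \in C^{(n)}$. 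Applying (1) to $M$ with $\lambda \in C^{(n)}$ above $\gamma$ yields $j: M \to N$ with $\crit{j} = \kappa$, $j(\kappa) > \lambda$, $V_\lambda \subseteq N$, and $V_\lambda \prec_{\Sigma_n} V_{j(\kappa)}^N$. Elementarity of $j$ transfers the internal statement, producing $\mu := j(\bar\gamma) < j(\kappa)$ with $V_\mu^N \models \varphi(\kappa, A)$ and $\mu \in (C^{(n)})^{V_{j(\kappa)}^N}$; the $\Sigma_n$-elementarity, together with $\lambda \in C^{(n)}$, ensures that the internal and external notions of $C^{(n)}$ agree below $\lambda$. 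Pulling back through $j$, using $j \restriction V_\kappa = \id$ and the agreement of $M$ with $V$ on sets of rank below $\kappa$, then furnishes the required $\alpha < \beta < \kappa$ with $\beta \in C^{(n)}$ and $V_\beta \models \varphi(\alpha, A \cap V_\alpha)$.

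For $(2) \Rightarrow (3)$, given $\gamma \in C^{(n)}$ and $z \in V_\gamma$, I would form the Skolem hull $Y = H^{V_\gamma}(V_\kappa \cup \{\kappa, z\}) \prec V_\gamma$ of cardinality $\kappa$, and encode the complete diagram of $Y$ with parameters in $V_\kappa \cup \{\kappa, z\}$ as a subset $A \subseteq V_\kappa$. The assertion that $A$ is the diagram of an elementary submodel of $V_{\gamma'}$, for some ordinal $\gamma'$, containing $V_\kappa \cup \{\kappa, z\}$ is an $\calL_\in$-formula $\varphi(\kappa, A)$ holding in $V_\gamma$. An application of (2) produces $\alpha < \beta < \kappa$ with $\beta \in C^{(n)}$ and $V_\beta \models \varphi(\alpha, A \cap V_\alpha)$; decoding $A \cap V_\alpha$ inside $V_\beta$ yields an elementary submodel $X \prec V_\beta$ with $V_\alpha \cup \{\alpha\} \subseteq X$ and an elementary embedding $j: X \to V_\gamma$, extracted from the correspondence of diagrams, satisfying $j \restriction \alpha = \id_\alpha$, $j(\alpha) = \kappa$, and $z \in \ran{j}$. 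Setting $\bar\gamma := \beta$ and $\bar\kappa := \alpha$ completes the step.

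For $(3) \Rightarrow (4)$, given $\lambda \in C^{(n)}$, a transitive $M$ with the stipulated closure, and a $\Pi_{n-1}$-formula $\psi$ with $\psi(M, \kappa)$, I would fix $\gamma \in C^{(n)}$ sufficiently above $\lambda$ and apply (3) to $z := \langle M, \lambda \rangle \in V_\gamma$, obtaining $\bar\gamma, \bar\kappa, X, j$ with $j(\bar\kappa) = \kappa$ and $M, \lambda \in \ran{j}$; set $\bar M := j^{-1}(M)$ and $\bar\lambda := j^{-1}(\lambda)$. Elementarity of $j$ together with $\bar\gamma \in C^{(n)}$ yields $\bar\lambda \in C^{(n)}$ internally in $V_{\bar\gamma}$, that $\bar M$ is a transitive $\mathrm{ZF}^-$-model of size $\bar\kappa$ closed under $<\bar\kappa$-sequences, and that $\psi(\bar M, \bar\kappa)$ holds (by $\Pi_{n-1}$-absoluteness). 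From the external embedding $j \restriction \bar M: \bar M \to M$ I would derive a short $(\bar\kappa, \bar\lambda{+}1)$-extender $\Ee \in V_{\bar\gamma}$, form $\mathrm{Ult}(\bar M, \Ee)$ — well-founded by closure of $\bar M$ and transitively collapsing, inside $V_{\bar\gamma}$, to $\bar N$ with $V_{\bar\lambda} \subseteq \bar N$ via the standard factor-map analysis — and extract the induced embedding $\bar j^*: \bar M \to \bar N$ with $\crit{\bar j^*} = \bar\kappa$ and $\bar j^*(\bar\kappa) > \bar\lambda$. The $\Sigma_n$-correctness $V_{\bar\lambda} \prec_{\Sigma_n} V_{\bar j^*(\bar\kappa)}^{\bar N}$ follows by combining $V_{\bar\lambda} \prec_{\Sigma_n} V_{\bar\gamma}$ with a careful choice of extender length locating $\bar j^*(\bar\kappa)$ above the internal $C^{(n)}$, while $\psi(\bar N, \bar j^*(\bar\kappa))$ follows from $\Pi_{n-1}$-absoluteness using $\bar\lambda \in C^{(n-1)}$ and $V_{\bar\lambda} \subseteq \bar N$. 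Since this entire configuration lies in $V_{\bar\gamma}$ and the relevant parameters are in $X$, elementarity of $j: X \to V_\gamma$ pushes it up to produce the required $j^*: M \to N$ inside $V_\gamma$, and hence in $V$, with all the stated properties. The main obstacle is precisely this extender-theoretic construction: $\Ee$ must be long enough for $\mathrm{Ult}(\bar M, \Ee)$ to cover $V_{\bar\lambda}$ and inherit its $\Sigma_n$-correctness, yet short enough to lie in $V_{\bar\gamma}$ so that elementarity of $j$ can propagate the internal witness up to $V_\gamma$, with the $\Sigma_n$-correctness transfer resting delicately on the interaction between the factor map and the internal computation of $C^{(n)}$ inside $V_{\bar\gamma}$.
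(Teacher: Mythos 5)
Your plan fails at the step $(3)\Rightarrow(4)$, and the failure is structural rather than a matter of missing detail. The paper never builds an internal ultrapower: it observes that the actual pair $N:=M$, $k:=j\restriction\bar M$ already witnesses, in $V$, the $\Sigma_n$-statement ``there exist a transitive set $N$ and an elementary $\map{k}{\bar M}{N}$ with $\crit{k}=\bar\kappa$, $k(\bar\kappa)>\bar\lambda$, $V_{\bar\lambda}\subseteq N$, $V_{\bar\lambda}\prec_{\Sigma_n}V_{k(\bar\kappa)}^N$ and $\psi(N,k(\bar\kappa))$'', reflects this statement into $X$ using $\bar\gamma\in C^{(n)}$ and $X\prec V_{\bar\gamma}$, and only then pushes it up with $j$ and uses $\gamma\in C^{(n)}$; in particular the $\psi$-clause is free because the witness is $M$ itself. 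In your version the witness must lie in $V_{\bar\gamma}$ and satisfy $\psi(\bar N,\bar j^{*}(\bar\kappa))$ there for an \emph{arbitrary} $\Pi_{n-1}$-formula $\psi$, but $\bar N=\mathrm{Ult}(\bar M,\Ee)$ for a $(\bar\kappa,\bar\lambda{+}1)$-extender has cardinality at most $\bar\lambda<\bar j^{*}(\bar\kappa)$, so it fails exactly the kind of $\psi$ the theorem is designed to carry (for instance ``$v_0$ has cardinality $v_1$, is closed under ${<}v_1$-sequences and $v_1\in C^{(n-1)}$'', the clause used in the corollary following the theorem); knowing $\psi(\bar M,\bar\kappa)$ gives no transfer to a different structure. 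Moreover, deriving $V_{\bar\lambda}\prec_{\Sigma_n}V_{\bar j^{*}(\bar\kappa)}^{\bar N}$ from $V_{\bar\lambda}\prec_{\Sigma_n}V_{\bar\gamma}$ is a non sequitur: the factor map compares $V_{\bar j^{*}(\bar\kappa)}^{\bar N}$ with $V_\kappa$, not with $V_{\bar\gamma}$, so this clause reduces to $V_{\bar\lambda}\prec_{\Sigma_n}V_\kappa$, which the extender detour does nothing to provide.

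There is also a genuine gap in $(2)\Rightarrow(3)$. The assertion ``$A$ is the diagram of an elementary submodel of some $V_{\gamma'}$ containing $V_\kappa\cup\{\kappa,z\}$'' does not hold in $V_\gamma$: the only candidate ambient level is $\gamma$ itself, which is not available as a set inside $V_\gamma$, and ``$Y\prec V_\gamma$'' is not first-order over $V_\gamma$. The paper therefore evaluates its reflection formula in some $\lambda\in C^{(n)}$ strictly above $\gamma$ and, crucially, writes into the formula that the internal ambient level $\varepsilon$ is $\Sigma_n$-correct in the evaluating model. Without that clause, reflecting to $V_\beta$ only yields some $X\prec V_{\gamma'}$ for an internal $\gamma'<\beta$ with no reason that $\gamma'\in C^{(n)}$; your conclusion ``$X\prec V_\beta$, set $\bar\gamma:=\beta$'' does not follow from your formula, and $\bar\gamma\in C^{(n)}$ is precisely what clause (3) and its later use require. (A smaller, repairable slip occurs in $(1)\Rightarrow(2)$: elementarity of $j$ turns your internal statement into one about $\varphi(j(\kappa),j(A))$ at $j(\bar\gamma)$, not $\varphi(\kappa,A)$; the working argument, as in the paper, uses the actual $\gamma$, $\kappa$ and $A$ as witnesses of the reflection statement inside $V^N_{j(\kappa)}$ via $V_\gamma\prec_{\Sigma_n}V^N_{j(\kappa)}$, pulls that statement back through $j$, and then needs $\kappa\in C^{(n+1)}$ to see that the reflected $\beta$ is genuinely in $C^{(n)}$, a point your sketch does not address.)
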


\begin{proof}
 First, note that, by a well-known result of Levy (see \cite[Proposition 6.2]{MR1994835}), the assumption \eqref{item:shrewd} implies that $\kappa$ is an inaccessible cardinal. And also each of $\eqref{item:Magidor}$ and $\eqref{item:sunf-superalmosthuge}$ implies that $\kappa$ is inaccessible, since so are the critical points of elementary embeddings of sufficiently reach transitive models.

 $\eqref{item:sunf}\Rightarrow \eqref{item:shrewd}$: Assume that \eqref{item:sunf} holds and fix an $\calL_\in$-formula $\varphi(v_0,v_1)$, an ordinal $\gamma\in C^{(n)}$ greater than $\kappa$, and a subset $A$ of $V_\kappa$ such that $\varphi(\kappa,A)$ holds in $V_\gamma$. %
 Pick an elementary submodel $M$ of $H_{\kappa^+}$ of cardinality $\kappa$ with $\{\kappa,A\}\cup{}^{{<}\kappa}M\subseteq M$ and use  our assumption to find a transitive set $N$ with $V_\gamma\subseteq N$ and an elementary embedding $\map{j}{M}{N}$ with $\crit{j}=\kappa$, $j(\kappa)>\gamma$ and $V_\gamma\prec_{\Sigma_n}V_{j(\kappa)}^N$. 
 The ordinals $\kappa$ and $\gamma$ then witness that, in $N$, there are ordinals $\alpha<\beta<j(\kappa)$ such that $V_\beta\prec_{\Sigma_n}V_{j(\kappa)}$ and $\varphi(\alpha,j(A)\cap V_\alpha)$ holds in $V_\beta$. 
 Using the elementarity of $j$, we find ordinals $\alpha<\beta<\kappa$ with the property that  $V_\beta\prec_{\Sigma_n}V_\kappa$ and $\varphi(\alpha,A\cap V_\alpha)$ holds in $V_\beta$. 
 Since Proposition \ref{proposition:CnSUNF-Cn+1} shows that $\kappa\in C^{(n+1)}$, we also have that $\beta$ is in $C^{(n)}$.

 $\eqref{item:shrewd}\Rightarrow \eqref{item:Magidor}$: Assume that \eqref{item:shrewd} holds. 
 Pick an ordinal $\gamma\in C^{(n)}$ greater than $\kappa$, and an element $z$ of $V_\gamma$. 
 Let $\varphi(v_0,v_1)$ be an $\calL_\in$-formula  with the property that for every ordinal $\lambda$ and all $A,\delta\in V_\lambda$, the statement $\varphi(A,\delta)$ holds in $V_\lambda$ if and only if $\lambda$ is a limit ordinal and  there exists an ordinal  $\varepsilon>\delta$, a subset $X$ of $V_\varepsilon$ and a bijection $\map{b}{\delta}{X}$ such that the following statements hold: 
    \begin{itemize}  
     \item $V_\varepsilon$ is $\Sigma_n$-correct. 
     
     \item $V_\delta\cup\{\delta\}\subseteq X$.
     
     \item $b(0)=\delta$ and $b(\omega\cdot(1+\alpha))=\alpha$ for all $\alpha<\delta$. 
    
     \item If  $\alpha_0,\ldots,\alpha_{n-1}<\delta$ and $a\in\mathsf{Fml}$  represents a formula with $n$ free variables, then 
         \begin{equation*}
          \begin{split}
            \langle a,\alpha_0,\ldots,\alpha_{n-1}\rangle\in A ~ & \Longleftrightarrow ~ \mathsf{Sat}(X,\langle b(\alpha_0),\ldots,b(\alpha_{n-1})\rangle,a) \\
            & \Longleftrightarrow ~ \mathsf{Sat}(V_\varepsilon,\langle b(\alpha_0),\ldots,b(\alpha_{n-1})\rangle,a),  
           \end{split}
         \end{equation*}
      where $\mathsf{Fml}$ denotes the set of  formalized $\calL_\in$-formulas and $\mathsf{Sat}$ denotes the  formalized satisfaction relation for $\calL_\in$-formulas.\footnote{See {\cite[Section I.9]{devlin_2017}}. Note that the classes $\mathsf{Fml}$ and $\mathsf{Sat}$ are defined by $\Sigma_1$-formulas without parameters.}  
    \end{itemize}
  Pick an ordinal $\lambda\in C^{(n)}$ greater than $\gamma$, an  elementary submodel $Y$ of $V_\gamma$ of cardinality $\kappa$ with $V_\kappa\cup\{\kappa,z\}\subseteq Y$ and a bijection $\map{b}{\kappa}{Y}$ with  $b(0)=\kappa$, $b(1)=z$ and $b(\omega\cdot(1+\alpha))=\alpha$ for all $\alpha<\kappa$. 
 Let $A$ denote the set of all tuples $\langle a,\alpha_0,\ldots,\alpha_{n-1}\rangle$ with the property that $a\in\mathsf{Fml}$ represents an  $\calL_\in$-formula with $n$ free variables,  $\alpha_0,\ldots,\alpha_{n-1}<\kappa$ and $\mathsf{Sat}(Y,\langle b(\alpha_0),\ldots,b(\alpha_{n-1})\rangle,a)$.  
 Then $\gamma$, $Y$ and $b$ witness that the statement $\varphi(A,\kappa)$ holds in $V_\lambda$, and we can use our assumption to find ordinals $\bar{\kappa}<\beta<\kappa$ such that $\beta\in C^{(n)}$ and $\varphi(A\cap V_{\bar{\kappa}},\bar{\kappa})$ holds in $V_\beta$. 
 Then there are  ordinals  $\bar{\kappa}<\bar{\gamma}<\beta$, a subset $X$ of $V_{\bar{\gamma}}$ and a bijection $\map{\bar{b}}{\bar{\kappa}}{X}$ such that $V_{\bar{\gamma}}\prec_{\Sigma_n}V_\beta$,  $V_{\bar{\kappa}}\cup\{\bar{\kappa}\}\subseteq X$, $\bar{b}(0)=\bar{\kappa}$, $\bar{b}(\omega\cdot(1+\alpha))=\alpha$ for all $\alpha<\bar{\kappa}$ and 
  \begin{equation*}
          \begin{split}
            \langle a,\alpha_0,\ldots,\alpha_{n-1}\rangle\in A\cap V_{\bar{\kappa}} ~ & \Longleftrightarrow ~ \mathsf{Sat}(X,\langle \bar{b}(\alpha_0),\ldots,\bar{b}(\alpha_{n-1})\rangle,a) \\
            & \Longleftrightarrow ~ \mathsf{Sat}(V_{\bar{\gamma}},\langle\bar{b}(\alpha_0),\ldots,\bar{b}(\alpha_{n-1})\rangle,a)  
           \end{split}
         \end{equation*}
         for all $\alpha_0,\ldots,\alpha_{n-1}<\bar{\kappa}$ and every $a\in\mathsf{Fml}$ representing a formula with $n$ free variables. 
    Since $\beta \in C^{(n)}$ and $V_{\bar{\gamma}}\prec_{\Sigma_n}V_\beta$, we have that $\bar{\gamma}\in C^{(n)}$. Also, the displayed equivalences above show that  $X$ is an elementary submodel of $V_{\bar{\gamma}}$ and, for each $\calL_\in$-formula $\varphi(v_0,\ldots,v_{n-1})$ and  $\alpha_0,\ldots,\alpha_{n-1}<\bar{\kappa}$, we  have 
    \begin{equation*}
     \begin{split}
      X\models\varphi(\bar{b}(\alpha_0),\ldots,\bar{b}(\alpha_{n-1})) ~ & \Longleftrightarrow ~   \langle \ulcorner\varphi\urcorner,\alpha_0,\ldots,\alpha_{n-1}\rangle\in A\cap V_{\bar{\kappa}} \\ 
      ~ \Longleftrightarrow  ~ \langle \ulcorner\varphi\urcorner,\alpha_0,\ldots,\alpha_{n-1}\rangle\in A ~ & \Longleftrightarrow  ~  V_\gamma\models\varphi(b(\alpha_0),\ldots,b(\alpha_{n-1})),   
   \end{split}
  \end{equation*}
  where $\ulcorner\varphi\urcorner$ denotes the canonical element of $\mathsf{Fml}$ representing  $\varphi$. 
 In particular, we know that 
   the map $\map{j=b\circ\bar{b}^{{-}1}}{X}{V_\gamma}$ is an elementary embedding that satisfies  $j\restriction\bar{\kappa}=\id_{\bar{\kappa}}$, $j(\bar{\kappa})=\kappa$ and $z\in\ran{j}$. This shows that \eqref{item:Magidor} holds.

 $\eqref{item:Magidor}\Rightarrow \eqref{item:sunf-superalmosthuge}$: Assume that \eqref{item:Magidor} holds. 
 Fix  ordinals $\lambda<\gamma$ in $C^{(n)}$ and greater than $\kappa$, a transitive $\mathrm{ZF}^-$-model $M$ of cardinality $\kappa$ with $\{\kappa\}\cup{}^{{<}\kappa}M\subseteq M$, and a $\Pi_{n-1}$-formula $\psi(v_0,v_1)$ such that $\psi(M,\kappa)$ holds. 
 Then there exists an ordinal $\bar{\gamma}\in C^{(n)}\cap\kappa$, a cardinal $\bar{\kappa}<\bar{\gamma}$, an elementary submodel $X$ of $V_{\bar{\gamma}}$ with $V_{\bar{\kappa}}\cup\{\bar{\kappa}\}\subseteq X$, and an elementary embedding $\map{j}{X}{V_\gamma}$ with $j\restriction\bar{\kappa}=\id_{\bar{\kappa}}$, $j(\bar{\kappa})=\kappa$ and $M,\lambda\in\ran{j}$. 
 Pick $\bar{M},\bar{\lambda}\in X$ with $j(\bar{M})=M$ and $j(\bar{\lambda})=\lambda$. 
 Then $\bar{M}\subseteq X$ and the fact that the class $C^{(n)}$ is $\Pi_n$-definable implies that $\bar{\lambda}$ is an also element of $C^{(n)}$. In particular, we know that $V_{\bar{\lambda}}\prec_{\Sigma_n}V_\kappa$. 
 Thus, $\kappa$, $M$ and $j\restriction\bar{M}$ witness that there exists a transitive set $N$ and an elementary embedding $\map{k}{\bar{M}}{N}$ such that 
 $\bar{\lambda}<k(\bar{\kappa})$, 
 $V_{\bar{\lambda}}\subseteq N$, 
 $V_{\bar{\lambda}}\prec_{\Sigma_n}V_{k(\bar{\kappa})}^N$, 
 $\crit{k}=\bar{\kappa}$  and $\psi(N,k(\bar{\kappa}))$ holds.  
 Since this statement can be formulated by a $\Sigma_n$-formula with parameters $\bar{\kappa}$, $\bar{M}$ and $V_{\bar{\lambda}}$, the fact that all these parameters are contained in $X$ implies that the  statement holds in $X$. 
 The elementarity of $j$ and the $\Sigma_n$-correctness of $V_\gamma$ now yield a transitive set $N$ with $V_\lambda\subseteq N$ and an elementary embedding $\map{k}{M}{N}$ such that  $\crit{k}=\kappa$, $k(\kappa)>\lambda$, $V_\lambda\prec_{\Sigma_n}V_\rho^N$ and $\psi(N,k(\kappa))$ holds. This shows that \eqref{item:sunf-superalmosthuge} holds.

 Since the implication $\eqref{item:sunf-superalmosthuge}\Rightarrow \eqref{item:sunf}$ is trivial, this completes the proof that all four of the listed statements  are equivalent. 
\end{proof}

\begin{remark}\label{remarkCnsf}
 Note that, in the definition of $\kappa$ being $C^{(n)}$-strongly unfoldable (Definition \ref{defCnsf}), we may only require, and obtain an equivalent definition, that for a \emph{tail} of ordinals   $\lambda\in C^{(n)}$ greater than $\kappa$ and every transitive $\mathrm{ZF}^-$-model $M$ of cardinality $\kappa$ with $\kappa\in M$ and ${}^{{<}\kappa}M\subseteq M$, there is a transitive set $N$ with $V_\lambda\subseteq N$ and an elementary embedding $\map{j}{M}{N}$ with $\crit{j}=\kappa$,  $j(\kappa)>\lambda$ and $V_\lambda\prec_{\Sigma_n}V_{j(\kappa)}^N$. For given \emph{any} $\lambda'\in C^{(n)}$ greater than $\kappa$, and given $M$ as before, we can pick $\lambda \geq \lambda'$ in the tail so that the required $N$ exists. Then $N$ also works for $\lambda$, because $V_{\lambda'}\prec_{\Sigma_n}V_\lambda \prec_{\Sigma_n} V^N_{j(\kappa)}$, and therefore $V_{\lambda'}\prec_{\Sigma_n}  V^N_{j(\kappa)}$. 

 The same applies to the equivalent reformulations of $C^{(n)}$-strong unfoldability given in Theorem \ref{theorem:SigmaNsunf}, namely one gets equivalent statements by only requiring that they hold for a \emph{tail} of $\lambda$ in $C^{(n)}$. Let us see this for \eqref{item:shrewd}: so suppose \eqref{item:shrewd} holds for a tail of $\lambda\in C^{(n)}$. Given  $\varphi(v_0,v_1)$, $\gamma\in C^{(n)}$ greater than $\kappa$, and  $A\subseteq V_\kappa$ such that $\varphi(\kappa,A)$ holds in $V_\gamma$, pick $\lambda \in C^{(n)}$ in the tail such that the sentence 
$$\exists X,\delta ~ [X=V_\delta ~ \wedge ~ \delta \in C^{(n)} ~ \wedge ~  V_\delta \models \varphi(\kappa,A)]$$ holds in $V_\lambda$ (such a $\lambda$ exists by the Reflection Theorem, since the sentence  is true in $V$, and therefore true in  $V_\beta$ for a closed unbounded class  class of $\beta$). So, there exist ordinals $\alpha<\beta<\kappa$ with $\beta\in C^{(n)}$ and the property that the sentence $$\exists X,  \delta 
~ [X=V_\delta ~ \wedge ~  \delta \in C^{(n)} ~ \wedge ~ V_\delta \models \varphi(\alpha, A\cap \alpha)]$$ holds in $V_\beta$. If $\delta$ witnesses this, then $\delta \in C^{(n)}$, because $V_\beta$ is correct about this. 
\end{remark}

\begin{corollary}
 Given a nautural number $n>1$, the following statements are equivalent for every cardinal $\kappa$: 
 \begin{enumerate}
   \item  $\kappa$ is $C^{(n)}$-strongly unfoldable. 
     
   \item For every cardinal $\lambda\in C^{(n)}$ greater than $\kappa$, and every transitive $\mathrm{ZF}^-$-model $M$ of cardinality $\kappa$ with $\kappa\in M$ and ${}^{{<}\kappa}M\subseteq M$, there exists an inaccessible cardinal $\rho\in C^{(n-1)}$ greater than $\lambda$,\footnote{Note that these assumptions imply that $V_\lambda\prec_{\Sigma_n}V_\rho$.} a transitive set $N$ with $\rho\in N$ and ${}^{{<}\rho}N\subseteq N$,  and an elementary embedding $\map{j}{M}{N}$ with $\crit{j}=\kappa$ and   $j(\kappa)=\rho$. \qed 
\end{enumerate}
\end{corollary}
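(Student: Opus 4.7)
The plan is to derive both implications from the equivalence between $C^{(n)}$-strong unfoldability and its ``super almost huge'' reformulation in Theorem \ref{theorem:SigmaNsunf}\eqref{item:sunf-superalmosthuge}, together with an elementary closure computation showing that in condition (2) one automatically has $V_\lambda\subseteq N$ and $V_\rho^N=V_\rho$.

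For the implication $(2)\Rightarrow(1)$, I would fix $\lambda\in C^{(n)}$ greater than $\kappa$ and a transitive $\mathrm{ZF}^-$-model $M$ of cardinality $\kappa$ with $\kappa\in M$ and ${}^{{<}\kappa}M\subseteq M$, and then apply (2) to obtain $\rho$, $N$ and $\map{j}{M}{N}$. A straightforward induction on $\alpha<\rho$, using the inaccessibility of $\rho$ (so that $\betrag{V_\alpha}<\rho$) together with the ${<}\rho$-closure of $N$, shows that $V_\alpha\in N$ for every $\alpha<\rho$: at successor stages, $V_{\alpha+1}=\POT{V_\alpha}$ has cardinality $2^{\betrag{V_\alpha}}<\rho$ and its elements are subsets of $V_\alpha\subseteq N$ of size less than $\rho$, which are therefore absorbed by the ${<}\rho$-closure of $N$; the limit case is analogous. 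This yields both $V_\lambda\subseteq N$ and $V_\rho\subseteq N$, hence $V_\rho^N=V_\rho$. The combination $\lambda\in C^{(n)}$, $\rho\in C^{(n-1)}$, and $\lambda<\rho$ then gives $V_\lambda\prec_{\Sigma_n}V_\rho$ by a standard witness-chasing argument (downward witnesses are provided by $\Sigma_n$-correctness of $V_\lambda$ in $V$, upward ones by $\Pi_{n-1}$-correctness of $V_\rho$ in $V$). This is exactly what is required by the definition of $C^{(n)}$-strong unfoldability.

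For the converse implication $(1)\Rightarrow(2)$, I would invoke Theorem \ref{theorem:SigmaNsunf}\eqref{item:sunf-superalmosthuge} with the formula
\[
 \psi(v_0,v_1) ~ \equiv ~ \anf{\textit{$v_1$ is inaccessible}, ~ v_1\in C^{(n-1)}, ~ \textit{and} ~ {}^{{<}v_1}v_0\subseteq v_0}.
\]
The key observation is that $\psi$ is $\Pi_{n-1}$: inaccessibility and the ${<}v_1$-closure clause are $\Pi_1$, while membership in $C^{(n-1)}$ is $\Pi_{n-1}$ (this is where the hypothesis $n>1$ enters). Given $\lambda$ and $M$ as in (2), the statement $\psi(M,\kappa)$ holds, because $\kappa$ is inaccessible, ${}^{{<}\kappa}M\subseteq M$ is part of the hypothesis, and $\kappa\in C^{(n+1)}\subseteq C^{(n-1)}$ by Proposition \ref{proposition:CnSUNF-Cn+1}. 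The theorem then supplies a transitive set $N$ with $V_\lambda\subseteq N$ and an elementary embedding $\map{j}{M}{N}$ satisfying $\crit{j}=\kappa$, $j(\kappa)>\lambda$, and $\psi(N,j(\kappa))$; setting $\rho=j(\kappa)$ meets all the requirements of (2).

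The only real bookkeeping point to watch is the complexity check for $\psi$ in direction $(1)\Rightarrow(2)$, in particular phrasing the ${<}\rho$-closure clause as a genuine $\Pi_1$ statement about the pair $(N,\rho)$ and being careful that the parameters $n$ and $n-1$ are handled correctly. Once this is in place, both directions are essentially routine applications of material already developed in the paper.
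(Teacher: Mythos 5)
Your proposal is correct and takes essentially the same route as the paper: the implication $(1)\Rightarrow(2)$ is the paper's own application of Theorem \ref{theorem:SigmaNsunf}\eqref{item:sunf-superalmosthuge} with a $\Pi_{n-1}$-formula about the target structure (the paper uses \anf{$v_1\in C^{(n-1)}$ and $v_0$ is a transitive $\mathrm{ZF}^-$-model of cardinality $v_1$ with $v_1\in v_0$ and ${}^{{<}v_1}v_0\subseteq v_0$}, where you use inaccessibility of $v_1$ plus $v_1\in C^{(n-1)}$ plus closure, and you correctly observe that $\rho=j(\kappa)\in N$ comes for free), while $(2)\Rightarrow(1)$, which the paper simply calls immediate, is exactly your induction giving $V_\rho\subseteq N$ together with the footnote's observation that $\lambda\in C^{(n)}$, $\rho\in C^{(n-1)}$ and $\lambda<\rho$ yield $V_\lambda\prec_{\Sigma_n}V_\rho=V_{j(\kappa)}^N$. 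The only point to tighten is that absorbing a subset of $N$ of size less than $\rho$ (and hence each $V_\alpha$ for $\alpha<\rho$) into $N$ uses, besides ${}^{{<}\rho}N\subseteq N$, that ranges of functions belonging to $N$ are elements of $N$, which holds because $N$ is a transitive $\mathrm{ZF}^-$-model by elementarity with $M$.
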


\begin{proof}
 Assuming $(1)$, the statement $(2)$ follows immediately from \eqref{item:sunf-superalmosthuge} of Theorem \ref{theorem:SigmaNsunf}  by taking $\varphi(v_0,v_1)$ to be the formula that asserts that $v_1\in C^{(n-1)}$ and $v_0$ is a transitive $\mathrm{ZF}^-$-model  of cardinality $v_1$ with $v_1 \in v_0$ and ${}^{{<}v_1}v_0\subseteq v_0$, and by taking $\rho=j(\kappa)$. 
 That $(2)$ implies $(1)$ is immediate.
 \end{proof}

In particular, this shows that all $C^{(2)}$-strongly unfoldable cardinals are \emph{almost-hugely unfoldable}, as introduced in   {\cite[Definition 4]{HJ:SU}}.


The characterizations and properties of $C^{(n)}$-strongly unfoldable cardinals given in Section \ref{section3} can now be put to use to extend the equivalences between strong unfoldability and weak forms of Structural Reflection given by Theorem \ref{theorem:CharSr-Sr--} to any classes of structures of any degree of definitional complexity.

\begin{lemma}\label{lemmaCnSR}
 If $\kappa$ is $C^{(n)}$-strongly unfoldable cardinal, then $\HSR^-_\Ce(\kappa)$ holds for every class $\Ce$ of structures of the same type that is definable by a $\Sigma_{n+1}$-formula with parameters in $V_\kappa$. 
\end{lemma}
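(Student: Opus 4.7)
The plan is to adapt the argument establishing the implication $\eqref{item:EiterSunfOrSC}\Rightarrow\eqref{item:BothWeakHSR}$ of Theorem \ref{theorem:CharSr-Sr--}, replacing its appeal to \cite{luecke2021strong} with the Magidor-type characterization of $C^{(n)}$-strong unfoldability provided by clause \eqref{item:Magidor} of Theorem \ref{theorem:SigmaNsunf}. Assume that $\kappa$ is $C^{(n)}$-strongly unfoldable; by Proposition \ref{proposition:CnSUNF-Cn+1}, $\kappa$ is an element of $C^{(n+1)}$, in particular it is inaccessible, so $H_\kappa=V_\kappa$. Fix a $\Sigma_{n+1}$-formula $\varphi(v_0,v_1)$ and a parameter $z\in V_\kappa$ with $\Ce=\Set{A}{\varphi(A,z)}$, together with a structure $B\in\Ce$ of cardinality $\kappa$. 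We may assume $n\geq 1$, since for $n=0$ the conclusion is weaker than the case $n=1$, which is already contained in Theorem \ref{theorem:CharSr-Sr--}.

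The next step is to pick an ordinal $\gamma\in C^{(n+1)}$ above $\kappa$ with $B\in V_\gamma$; then $\varphi(B,z)$ holds in $V_\gamma$. Apply Theorem \ref{theorem:SigmaNsunf}\eqref{item:Magidor} with this $\gamma$ (which also belongs to $C^{(n)}$) and parameter $\langle B,z\rangle\in V_\gamma$ to obtain $\bar{\gamma}\in C^{(n)}\cap\kappa$, a cardinal $\bar{\kappa}<\bar{\gamma}$, an elementary submodel $X$ of $V_{\bar{\gamma}}$ with $V_{\bar{\kappa}}\cup\{\bar{\kappa}\}\subseteq X$, and an elementary embedding $\map{j}{X}{V_\gamma}$ such that $j\restriction\bar{\kappa}=\id_{\bar{\kappa}}$, $j(\bar{\kappa})=\kappa$, and $B,z\in\ran{j}$. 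A routine induction on rank (using $V_{\bar{\kappa}}\subseteq X$ together with $j\restriction\bar{\kappa}=\id_{\bar{\kappa}}$) then shows that $j\restriction V_{\bar{\kappa}}=\id_{V_{\bar{\kappa}}}$; since $\rank{z}<\kappa=j(\bar{\kappa})$, the elementarity of $j$ forces the unique preimage $z^*\in X$ of $z$ under $j$ to satisfy $\rank{z^*}<\bar{\kappa}$, so $z^*\in V_{\bar{\kappa}}$ and $z=j(z^*)=z^*$.

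Now pick $A\in X$ with $j(A)=B$. By the elementarity of $\map{j}{X}{V_\gamma}$, the formula $\varphi(A,z)$ holds in $X$, and since $X\prec V_{\bar{\gamma}}$, also in $V_{\bar{\gamma}}$. Writing $\varphi(v_0,v_1)$ as $\exists w\,\psi(v_0,v_1,w)$ with $\psi$ being $\Pi_n$, any witness $w\in V_{\bar{\gamma}}$ to this existential statement transfers upward to $V$ via the $\Pi_n$-absoluteness afforded by $\bar{\gamma}\in C^{(n)}$, so that $\varphi(A,z)$ holds in $V$ and $A\in\Ce$. Moreover, $A\in V_{\bar{\gamma}}\subseteq V_\kappa=H_\kappa$, so $A$ already lies in $\Ce\cap H_\kappa$.

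The remaining and most delicate step is to verify that $j$ restricts to an elementary embedding of the structure $A$ into $B$. The main obstacle is that having $A\in X$ does not by itself guarantee that $\dom{A}\subseteq X$. To overcome this, observe that the elementarity of $j$ yields $X\models\betrag{A}=\bar{\kappa}$, so there is a bijection $b\in X$ from $\bar{\kappa}$ onto $\dom{A}$; since $\bar{\kappa}\subseteq V_{\bar{\kappa}}\subseteq X$ and $b\in X$, elementarity inside $X$ forces each $b(\alpha)$ with $\alpha<\bar{\kappa}$ to lie in $X$, and hence $\dom{A}\subseteq X$. The elementarity of $\map{j}{X}{V_\gamma}$, combined with the fact that satisfaction in $A$ and $B$ is uniformly definable inside $X$ and $V_\gamma$ respectively, will then imply that $\map{j\restriction\dom{A}}{A}{B}$ is the desired elementary embedding, establishing $\HSR^-_\Ce(\kappa)$.
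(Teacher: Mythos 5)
Your proposal is correct and follows essentially the same route as the paper's proof: both apply the Magidor-style characterization of Theorem \ref{theorem:SigmaNsunf}.\eqref{item:Magidor} to pull $B$ and the parameter $z$ back along an embedding $\map{j}{X}{V_\gamma}$, use the $\Sigma_n$-correctness of $\bar{\gamma}$ to see that the preimage $A$ lies in $\Ce\cap H_\kappa$, and check that $j\restriction\dom{A}$ is the required elementary embedding. Your extra details (the reduction of the case $n=0$, the verification that $j\restriction V_{\bar{\kappa}}=\id_{V_{\bar{\kappa}}}$, and the bijection argument showing $\dom{A}\subseteq X$) are exactly the points the paper leaves implicit, and they are handled correctly.
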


\begin{proof}
 Pick a $\Pi_n$-formula $\varphi(v_0,v_1,v_2)$ and an element $z$ of $V_\kappa$ with the property that $$\Ce ~ = ~ \Set{A}{\exists x ~ \varphi(A,x,z)}.$$ 
 Fix a structure $B$ in $\Ce$ of cardinality $\kappa$ and an ordinal $\kappa<\gamma\in C^{(n)}$ with the property that $\varphi(B,y,z)$ holds for some $y\in V_\gamma$. 
 By Theorem \ref{theorem:SigmaNsunf}, we can now find an ordinal $\bar{\gamma}\in C^{(n)}\cap\kappa$, a cardinal $\bar{\kappa}<\bar{\gamma}$, an elementary submodel $X$ of $V_{\bar{\gamma}}$ with $V_{\bar{\kappa}}\cup\{\bar{\kappa}\}\subseteq X$ and an elementary embedding $\map{j}{X}{V_\gamma}$ with $j\restriction\bar{\kappa}=\id_{\bar{\kappa}}$, $j(\bar{\kappa})=\kappa$ and $B,z\in\ran{j}$. 
 Pick $A\in V_{\bar{\gamma}}$ with $j(A)=B$. Our setup then ensures that the domain of $A$ is a subset of $X$, $\map{j\restriction A}{A}{B}$ is an elementary embedding and $A$ is an element of $\Ce\cap H_\kappa$. 
\end{proof}

We shall next prove Theorem \ref{C(n)StronglyUnfoldable}, which is a generalization of Theorem \ref{theorem:CharSr-Sr--} to arbitrary classes of structures. Namely, we will show that, for every $n>1$, the following  are equivalent for every  cardinal $\kappa$: 
 \begin{enumerate} 
  \item $\kappa$ is either $C^{(n)}$-strongly unfoldable or a limit of $C^{(n-1)}$-extendible cardinals. 
   
  \item  The principle $\WSR_\Ce(\kappa)$ holds for every class $\Ce$ of structures of the same type that is definable by a $\Sigma_{n+1}$-formula with parameters in $V_\kappa$. 
   \item $\kappa \in C^{(n+1)}$ and the principle $\HSR_\Ce^- (\kappa)$ holds for every class $\Ce$ of structures of the same type that is definable by a $\Sigma_{n+1}$-formula with parameters in $V_\kappa$. 
 \end{enumerate}

The proof follows similar arguments as that  of Theorem \ref{theorem:CharSr-Sr--} (given in Section  \ref{proof1.7}), but now using Proposition \ref{proposition:CnSUNF-Cn+1}, Lemma \ref{lemmaCnSR},  and the characterization of $C^{(n)}$-strongly unfoldable cardinals given in Theorem \ref{theorem:SigmaNsunf} \ref{item:Magidor}.

\begin{proof}[Proof of Theorem \ref{C(n)StronglyUnfoldable}]
  Assume that $\kappa$ is not $C^{(n)}$-strongly unfoldable and the principle $\WSR_\Ce(\kappa)$ holds for every class $\Ce$ of structures of the same type that is definable by a $\Sigma_{n+1}$-formula with parameters in $V_\kappa$. 
  Lemma \ref{lemma:Sigma2Reflex} then shows that $\kappa\in C^{(n+1)}$. In particular, we know that $\kappa$ is a limit cardinal and $\betrag{V_\kappa}=\kappa$.

 \begin{claim*}
  If $\theta$ is a cardinal in $C^{(n)}$ greater than $\kappa$, $y\in V_\kappa$, and $z\in H_\theta$, then there are cardinals $\bar{\kappa}<\bar{\theta}<\kappa$ with $y\in V_{\bar{\kappa}}$ and $\bar{\theta}\in C^{(n)}$, an elementary submodel $X$ of $H_{\bar{\theta}}$ with $V_{\bar{\kappa}}\cup\{\bar{\kappa}\}\subseteq X$, and an elementary embedding $\map{j}{X}{H_\theta}$ such that  $j(\bar{\kappa})=\kappa$, $j(y)=y$ and $z\in\ran{j}$. 
 \end{claim*}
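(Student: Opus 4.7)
The plan is to lift the analogous claim in the proof of Theorem \ref{theorem:CharSr-Sr--} (Section \ref{section3}) from the $\Sigma_2$-level to the $\Sigma_{n+1}$-level, replacing references to $C^{(1)}$ by references to $C^{(n)}$. Specifically, let $\calL$ extend $\calL_\in$ by three constant symbols, and let $\Ce$ denote the class of all $\calL$-structures $\langle M,\in,\mu,a,y\rangle$ such that $\mu$ is a cardinal in $C^{(n)}$, $y\in V_\mu$, and there exist an ordinal $\nu>\mu$ with $\nu\in C^{(n)}$ and an elementary submodel $X$ of $H_\nu$ containing $V_\mu\cup\{\mu\}$ such that $M$ is the transitive collapse of $X$. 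Since $C^{(n)}$ is $\Pi_n$-definable, the class $\Ce$ is $\Sigma_{n+1}$-definable with parameter $y$, and our hypothesis yields that $\WSR_\Ce(\kappa)$ holds; by Lemma \ref{lemma:Sigma2Reflex} we also know $\kappa\in C^{(n+1)}$.

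To exhibit a witness in $\Ce$ of cardinality $\kappa$, pick an elementary submodel $Y$ of $H_\theta$ of cardinality $\kappa$ with $V_\kappa\cup\{\kappa,z\}\subseteq Y$, and let $\map{\tau}{Y}{N}$ be the corresponding transitive collapse. Since $\theta\in C^{(n)}$, the pair $(\theta,Y)$ witnesses that $B=\langle N,\in,\kappa,\tau(z),y\rangle$ lies in $\Ce$ with cardinality $\kappa$. Applying $\SR^-_\Ce(\kappa)$, we obtain a structure $B'=\langle M,\in,\bar{\kappa},a,y\rangle\in\Ce$ of cardinality less than $\kappa$ together with an elementary embedding $\map{i}{B'}{B}$, so $\bar{\kappa}\in C^{(n)}\cap\kappa$, $i(\bar{\kappa})=\kappa$, $i(a)=\tau(z)$, and $i(y)=y$.

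Since $M$ is transitive of cardinality less than $\kappa$ and $\kappa$ is a limit cardinal, $M$ lies in $V_\kappa$, and hence $B'\in V_\kappa$. Because $\kappa\in C^{(n+1)}$, the $\Sigma_{n+1}$-statement \anf{$B'\in\Ce$}, whose parameters $B'$ and $y$ both lie in $V_\kappa$, reflects into $V_\kappa$. This yields an ordinal $\bar{\theta}\in C^{(n)}\cap\kappa$ with $\bar{\theta}>\bar{\kappa}$ and an elementary submodel $X$ of $H_{\bar{\theta}}$ containing $V_{\bar{\kappa}}\cup\{\bar{\kappa}\}$ whose transitive collapse is $M$. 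Let $\map{\pi}{X}{M}$ denote this collapse and define $\map{j=\tau^{{-}1}\circ i\circ\pi}{X}{H_\theta}$. Then $j$ is elementary by composition. Since $\pi$ fixes $y\in V_{\bar{\kappa}}$ pointwise, $i$ preserves the constant $y$, and $\tau^{{-}1}$ fixes $y\in V_\kappa$, we get $j(y)=y$; an analogous computation yields $j(\bar{\kappa})=\kappa$; and the element $\pi^{{-}1}(a)\in X$ satisfies $j(\pi^{{-}1}(a))=\tau^{{-}1}(i(a))=\tau^{{-}1}(\tau(z))=z$, so $z\in\ran{j}$. The main subtlety is pinning down the complexity of $\Ce$ at exactly $\Sigma_{n+1}$ (requiring care with the $\Pi_n$-definition of $C^{(n)}$) and invoking $\kappa\in C^{(n+1)}$ to arrange that the witnessing $\bar{\theta}$ can be chosen below $\kappa$.
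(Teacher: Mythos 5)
Your proof is correct and follows essentially the same route as the paper's: the same $\Sigma_{n+1}$-definable class $\Ce$ of structures $\langle M,\in,\mu,a,y\rangle$ arising as transitive collapses of hulls of $H_\nu$ with $\nu\in C^{(n)}$, the same witness $B$ obtained by collapsing $Y\prec H_\theta$ with $V_\kappa\cup\{\kappa,z\}\subseteq Y$, an application of $\WSR_\Ce(\kappa)$, the use of $\kappa\in C^{(n+1)}$ to reflect the witnessing $\bar{\theta}$ and $X$ below $\kappa$, and the composite embedding $j=\tau^{-1}\circ i\circ\pi$. You merely spell out a few routine verifications (that $j(y)=y$, $j(\bar{\kappa})=\kappa$, and $z\in\ran{j}$) that the paper leaves implicit.
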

 
 \begin{proof}[Proof of the Claim]
  Let $\calL$ denote the first-order language that extends $\calL_\in$ by three constant symbols and let $\Ce$ denote the class of all $\calL$-structures of the form $\langle M,\in,\mu,a,y\rangle$ such that $\mu$ is a cardinal in $C^{(n)}$, $y\in V_\mu$, and there exists a cardinal $\nu$ in $C^{(n)}$ greater than $\mu$ and   an elementary submodel $X$ of $H_{\nu}$ with $V_\mu\cup\{\mu\}\subseteq X$ and the property that $M$ is the transitive collapse of $X$. 
  Since the class $C^{(n)}$ is $\Pi_{n}$-definable, the class $\Ce$ is  definable by a $\Sigma_{n+1}$-formula with parameter $y$. 
  Now, let $Y$ be an elementary submodel of $H_{\theta}$ of cardinality $\kappa$ with $V_\kappa\cup\{\kappa,z\}$ and let $\map{\tau}{Y}{N}$ denote the corresponding transitive collapse. Thus $\theta$ and $Y$ witness that $B=\langle N,\in,\kappa,\tau(z),y\rangle$ is an element of $\Ce$ of cardinality $\kappa$. 
  Our assumption $\WSR_\Ce(\kappa)$ now yields cardinals $\bar{\kappa}<\bar{\theta}$ with $\bar{\kappa}\in C^{(n)}\cap\kappa$, an elementary submodel $X$ of $H_{\bar{\theta}}$ of cardinality less than $\kappa$ with $V_{\bar{\kappa}}\cup\{\bar{\kappa}\}\subseteq X$ and an elementary embedding $\map{i}{M}{N}$, with $M$ being the transitive collapse of $X$, and with $i(\bar{\kappa})=\kappa$, $i(y)=y$ and $\tau(z)\in\ran{i}$. 
  Since $\kappa\in C^{(n+1)}$ and $M\in V_\kappa$, we may assume that $\bar{\theta}<\kappa$. Letting  $\map{\pi}{X}{M}$ denote the  transitive collapse, define $$\map{j ~ = ~ \tau^{{-}1}\circ i\circ\pi}{X}{H_{\theta}}.$$ Then $j$ is an elementary embedding with $j(\bar{\kappa})=\kappa$, $j(y)=y$ and $z\in\ran{j}$.  
 \end{proof}

 \begin{claim*}
 If $\theta$ is a cardinal in $C^{(n)}$ greater than $\kappa$, $y\in V_\kappa$, and $z\in H_\theta$, then there are cardinals $\bar{\kappa}<\bar{\theta}<\kappa$ with $y\in V_{\bar{\kappa}}$ and $\bar{\theta}\in C^{(n)}$,  an elementary submodel $X$ of $H_{\bar{\theta}}$ with $V_{\bar{\kappa}}\cup\{\bar{\kappa}\}\subseteq X$, and an elementary embedding $\map{j}{X}{H_\theta}$ such that  $j(\bar{\kappa})=\kappa$, $j(y)=y$, $z\in\ran{j}$, and $j\restriction\bar{\kappa}\neq\id_{\bar{\kappa}}$. 
 \end{claim*}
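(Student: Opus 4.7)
The plan is to repeat the argument given for the second claim in the proof of Theorem \ref{theorem:CharSr-Sr--}, replacing the ``Magidor-style'' characterization of strong unfoldability used there by the analogous characterization of $C^{(n)}$-strong unfoldability provided by Theorem \ref{theorem:SigmaNsunf}.\ref{item:Magidor} together with Remark \ref{remarkCnsf}. The extra definability assumption built into Theorem \ref{theorem:SigmaNsunf}.\ref{item:Magidor} (that $\bar{\gamma}$ lies in $C^{(n)}$) is precisely what is needed to handle $\Sigma_{n+1}$-definable classes in the present setting.

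First, since $\kappa$ fails to be $C^{(n)}$-strongly unfoldable, I will negate clause \ref{item:Magidor} of Theorem \ref{theorem:SigmaNsunf}: by Remark \ref{remarkCnsf} this produces a cardinal $\vartheta\in C^{(n)}$ greater than $\theta$ and some $z'\in V_\vartheta$ such that for all cardinals $\bar{\kappa}<\bar{\vartheta}$ and all elementary submodels $W\prec H_{\bar{\vartheta}}$ with $V_{\bar{\kappa}}\cup\{\bar{\kappa}\}\subseteq W$, no elementary embedding $\map{j}{W}{H_\vartheta}$ can simultaneously satisfy $j\restriction\bar{\kappa}=\id_{\bar{\kappa}}$, $j(\bar{\kappa})=\kappa$ and $z,z',\theta\in\ran{j}$. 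Then I will apply the previous claim of this proof with $\vartheta$ in place of $\theta$ and with the parameter chosen so that $\langle z,z',\theta\rangle$ lies in the range of the resulting embedding. This yields cardinals $\bar{\kappa}<\bar{\vartheta}<\kappa$ with $y\in V_{\bar{\kappa}}$ and $\bar{\vartheta}\in C^{(n)}$, an elementary submodel $Y\prec H_{\bar{\vartheta}}$ containing $V_{\bar{\kappa}}\cup\{\bar{\kappa}\}$, and an elementary embedding $\map{i}{Y}{H_\vartheta}$ with $i(\bar{\kappa})=\kappa$, $i(y)=y$ and $z,z',\theta\in\ran{i}$. The defining property of $\vartheta$ and $z'$ will then force $i\restriction\bar{\kappa}\neq\id_{\bar{\kappa}}$, as otherwise $(\bar{\kappa},Y,i)$ would contradict that choice.

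To finish, I will ``cut down'' $i$ to a map into $H_\theta$: choose $\bar{\theta}\in Y$ with $i(\bar{\theta})=\theta$; by elementarity $\bar{\theta}$ is a cardinal in $C^{(n)}\cap\bar{\vartheta}$, and since $H_{\bar{\vartheta}}$ is correct enough we have $H_{\bar{\theta}}\in Y$. Setting $X=Y\cap H_{\bar{\theta}}$ and $j=i\restriction X$ will then yield $X\prec H_{\bar{\theta}}$ with $V_{\bar{\kappa}}\cup\{\bar{\kappa}\}\subseteq X$ and an elementary embedding $\map{j}{X}{H_\theta}$ with $j(\bar{\kappa})=\kappa$, $j(y)=y$, $z\in\ran{j}$ and $j\restriction\bar{\kappa}\neq\id_{\bar{\kappa}}$. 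I expect the only genuinely delicate point to be arranging the witness $z'$ in the first step so that the negation of Theorem \ref{theorem:SigmaNsunf}.\ref{item:Magidor} remains effective after enlarging the parameter set to include $z$ and $\theta$; everything else is routine restriction and transfer via elementarity.
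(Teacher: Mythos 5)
Your proposal is correct and follows essentially the same route as the paper's own proof: negate clause \eqref{item:Magidor} of Theorem \ref{theorem:SigmaNsunf} (via Remark \ref{remarkCnsf}) to obtain $\vartheta\in C^{(n)}$ above $\theta$ and a witness $z'$, apply the previous claim with $\vartheta$ in place of $\theta$ so that $z,z',\theta\in\ran{i}$, conclude $i\restriction\bar{\kappa}\neq\id_{\bar{\kappa}}$, and then cut $i$ down to $X=Y\cap H_{\bar{\theta}}$. The point you flag as delicate is in fact immediate: demanding additionally that $z$ and $\theta$ lie in the range of the embedding only strengthens the non-existence statement furnished by the failure of Theorem \ref{theorem:SigmaNsunf}.\eqref{item:Magidor} at $\vartheta$, so the witness $z'$ needs no special arrangement.
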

 
 \begin{proof}[Proof of the Claim]
  %
  %

  %
  Since we assumed $\kappa$ is not $C^{(n)}$-strongly unfoldable,  Theorem \ref{theorem:SigmaNsunf}.\ref{item:Magidor}  and  Remark \ref{remarkCnsf} show that there exists a cardinal $\vartheta$ in $C^{(n)}$ greater than $\theta$, and  $z'\in V_\vartheta$ such that for all cardinals $\bar{\kappa}<\bar{\vartheta}$, with $\bar{\vartheta}\in C^{(n)}\cap \kappa$, and all elementary submodels $X$ of $H_{\bar{\vartheta}}$ with $V_{\bar{\kappa}}\cup \{ \bar{\kappa}\}\subseteq X$, there is no elementary embedding $\map{j}{X}{H_\vartheta}$ such that  $j\restriction\bar{\kappa}=\id_{\bar{\kappa}}$,  $j(\bar{\kappa})=\kappa$, and $z, z', \theta \in \ran{j}$.
 %
  An application of our previous claim now yields cardinals $\bar{\kappa}<\bar{\vartheta}<\kappa$ with $\bar{\vartheta}\in C^{(n)}$ and $y\in V_{\bar{\kappa}}$, an elementary submodel $Y$ of $H_{\bar{\vartheta}}$ with $V_{\bar{\kappa}}\cup\{\bar{\kappa}\}\subseteq Y$, and an elementary embedding $\map{i}{Y}{H_\vartheta}$ such that  $i(\bar{\kappa})=\kappa$, $i(y)=y$ and $z,z',\theta\in\ran{i}$. 
 Therefore, we must have $i\restriction\bar{\kappa}\neq\id_{\bar{\kappa}}$. Pick $\bar{\theta}\in Y$ with $i(\bar{\theta})=\theta$. Then elementarity implies that $\bar{\theta}$ is a cardinal. Set $X=Y\cap H_{\bar{\theta}}$ and $j=i\restriction X$. 
 We can then conclude that $\bar{\kappa}<\bar{\theta}<\kappa$, $X$  is an elementary submodel of $H_{\bar{\theta}}$ with $V_{\bar{\kappa}}\cup\{\bar{\kappa}\}\subseteq X$ and $\map{j}{X}{H_\theta}$ is an elementary embedding  with $j(\bar{\kappa})=\kappa$, $j(y)=y$, $z\in\ran{j}$, and $j\restriction\bar{\kappa}\neq\id_{\bar{\kappa}}$. 
  \end{proof}

 \begin{claim*}
  There are unboundedly many cardinals below $\kappa$ that are $C^{(n-1)}$-extendible. 
 \end{claim*}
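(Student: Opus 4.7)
The plan is to mimic the analogous argument from the proof of Theorem \ref{theorem:CharSr-Sr--} that extracts $\alpha$-supercompact cardinals, but upgrading from Magidor's supercompactness lemma to the corresponding extendibility witness. First I fix an arbitrary uncountable regular cardinal $\rho<\kappa$ and a cardinal $\theta\in C^{(n)}$ above $\kappa$. Applying the previous Claim with $y=\rho$ and $z=\theta$ yields cardinals $\rho<\bar{\kappa}<\bar{\theta}<\kappa$ with $\bar{\theta}\in C^{(n)}$, an elementary submodel $X$ of $H_{\bar{\theta}}$ with $V_{\bar{\kappa}}\cup\{\bar{\kappa}\}\subseteq X$, and an elementary embedding $\map{j}{X}{H_\theta}$ with $j(\bar{\kappa})=\kappa$, $j(\rho)=\rho$, $\theta\in\ran{j}$, and $j\restriction\bar{\kappa}\neq\id_{\bar{\kappa}}$.

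Next I set $\map{i=j\restriction V_{\bar{\kappa}}}{V_{\bar{\kappa}}}{V_\kappa}$; this is a non-trivial elementary embedding, and the \emph{Kunen Inconsistency} applied to $V_\rho$ yields $\mu:=\crit{i}>\rho$. The target is to show that $i(\mu)=j(\mu)$ is a $C^{(n-1)}$-extendible cardinal lying in the interval $(\rho,\kappa)$. The central step is to establish that, inside $X$, the cardinal $\mu$ is $\lambda$-$C^{(n-1)}$-extendible for every $\lambda<\bar{\kappa}$. For $\lambda<\bar{\kappa}$ with $\lambda<i(\mu)$, the restriction $\map{i\restriction V_\lambda}{V_\lambda}{V_{i(\lambda)}}$ is itself the required elementary embedding with $\crit=\mu$ and critical image $i(\mu)>\lambda$. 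To handle the ordinals $\lambda$ between $i(\mu)$ and $\bar{\kappa}$, I argue that the iterated critical sequence $\mu_0=\mu$, $\mu_{k+1}=i(\mu_k)$ cannot remain bounded in $\bar{\kappa}$: if $\sigma:=\sup_k\mu_k<\bar{\kappa}$, then $i(\sigma)=\sigma$, and since $\bar{\kappa}$ is a cardinal we would have $\sigma+2\leq\bar{\kappa}$, so that $\map{i\restriction V_{\sigma+2}}{V_{\sigma+2}}{V_{\sigma+2}}$ would be a non-trivial self-embedding, contradicting the Kunen Inconsistency. Hence some iterate $\mu_k$ exceeds $\lambda$, and composing finitely many restrictions of $i$ produces the required witness. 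The accompanying requirement that the critical image lie in $C^{(n-1)}$ is verified by chasing $\Pi_{n-1}$-correctness through the chain of models $V$, $H_\theta$, $X$, $H_{\bar{\theta}}$, $V$; this is legitimate because $\theta,\bar{\theta}\in C^{(n)}$ and hence $H_\theta$ and $H_{\bar{\theta}}$ are both $\Sigma_n$-correct.

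Once the statement ``for every $\lambda<\bar{\kappa}$, $\mu$ is $\lambda$-$C^{(n-1)}$-extendible'' is established in $X$, I transfer it: it is a $\Pi_n$ statement with parameters $\mu,\bar{\kappa}$, so it holds in $H_{\bar{\theta}}$ by elementarity of $X$, and then in $V$ via $\bar{\theta}\in C^{(n)}$. Applying the elementary embedding $\map{j}{X}{H_\theta}$ and exploiting $\theta\in C^{(n)}$ then gives that $j(\mu)$ is $\lambda$-$C^{(n-1)}$-extendible in $V$ for every $\lambda<\kappa$. Finally, since Lemma \ref{lemma:Sigma2Reflex} guarantees $\kappa\in C^{(n+1)}$, the statement ``$j(\mu)$ is $\lambda$-$C^{(n-1)}$-extendible for every $\lambda$'' has complexity $\Pi_{n+1}$ and is absolute between $V_\kappa$ and $V$; the local version below $\kappa$ therefore lifts to full $C^{(n-1)}$-extendibility of $j(\mu)$. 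Since $\rho$ was an arbitrary uncountable regular cardinal below $\kappa$ and $\rho<j(\mu)<\kappa$, this proves the claim.

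The hard part will be the middle step, namely ruling out the pathological case in which the single embedding $i$ fails to directly witness enough $\lambda$-extendibility and extracting from it the correct sequence of witnesses while maintaining the $C^{(n-1)}$-membership of the critical images. The iterated-critical-sequence argument combined with the Kunen Inconsistency gives the former, and the $\Pi_{n-1}$-correctness chain handles the latter; once these are in place the final reflection to full $C^{(n-1)}$-extendibility via $\kappa\in C^{(n+1)}$ is a routine definability computation.
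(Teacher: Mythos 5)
Your proposal follows the same overall skeleton as the paper's proof (apply the previous claim, restrict the embedding to $V_{\bar{\kappa}}$, use the Kunen Inconsistency to push the critical point above $\rho$, reflect local extendibility statements and transfer them up through $j$ and $\kappa\in C^{(n+1)}$), but two essential steps do not go through. First, the device you propose for securing the clause $e(\mu)>\lambda$ when $i(\mu)\leq\lambda<\bar{\kappa}$ --- ``composing finitely many restrictions of $i$'' along the critical sequence --- is not available: $i=j\restriction V_{\bar{\kappa}}$ has domain $V_{\bar{\kappa}}$ and maps into $V_\kappa$, and for the relevant $\lambda$ the image $i(\lambda)$ will in general lie at or above $\bar{\kappa}$, so $i$ cannot be applied a second time; there is no composite elementary embedding of $V_\lambda$ into some $V_{i^k(\lambda)}$ with critical point $\mu$ sending $\mu$ to $\mu_k$. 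The paper sidesteps this issue entirely: for each $\alpha\in(\crit{i},\bar{\kappa})$ it reflects only the statement ``there exist $\beta$ and an elementary $e:V_\alpha\to V_\beta$ with $\crit{e}=\crit{i}$ and $e(\crit{i})\in C^{(n-1)}$'', witnessed simply by $i\restriction V_\alpha$ with no demand that the critical image exceed $\alpha$, and recovers full $C^{(n-1)}$-extendibility only at the end, after transferring through $j$ and using $\kappa\in C^{(n+1)}$.

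Second, and more seriously, the requirement that the critical image lie in $C^{(n-1)}$ is the actual heart of the proof, and ``chasing $\Pi_{n-1}$-correctness through $V$, $H_\theta$, $X$, $H_{\bar{\theta}}$, $V$'' does not establish it: the $\Sigma_n$-correctness of $H_\theta$ and $H_{\bar{\theta}}$ says nothing about the ordinal $\mu=\crit{i}$, which is not in the range of the maps you are chasing along. To know that $i(\mu)\in C^{(n-1)}$ one must first show that $\mu$ itself is sufficiently correct, and this is where the paper spends most of its effort: it proves $\crit{i}\in C^{(n)}$ by (i) showing $\bar{\kappa}\in C^{(n)}$ (moving ``$\kappa\in C^{(n)}$'' through $H_\theta$, $X$ and $H_{\bar{\theta}}$), (ii) using the Kunen Inconsistency to see that the critical sequence of $i$ is unbounded in $\bar{\kappa}$, and (iii) pulling $\Sigma_n$-witnesses from $V_{i^m(\mu)}$ down to $V_\mu$ step by step via $V_{\bar{\kappa}}\prec_{\Sigma_n}V_\kappa$; only then do the elementarity of $i$ and $\kappa\in C^{(n)}$ yield $i(\mu)\in C^{(n)}\subseteq C^{(n-1)}$. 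Ironically, the iterated critical sequence you introduce is exactly the right tool, but you deploy it for the dispensable clause $e(\mu)>\lambda$ while omitting it where it is indispensable, namely the correctness of $\mu$. As written, even your ``easy case'' $\lambda<i(\mu)$ is incomplete, since $i(\mu)\in C^{(n-1)}$ is asserted but never proved.
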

 
 \begin{proof}[Proof of the Claim]
  Fix an uncountable regular cardinal $\rho<\kappa$ and a cardinal $\theta$ in $C^{(n)}$ above $\kappa$. By our previous claim, we can find cardinals $\rho<\bar{\kappa}<\bar{\theta}<\kappa$, with $\bar{\theta}\in C^{(n)}$, an elementary submodel $X$ of $H_{\bar{\theta}}$ with $V_{\bar{\kappa}}\cup\{\bar{\kappa}\}\subseteq X$, and an elementary embedding $\map{i}{X}{H_\theta}$ such that $i(\bar{\kappa})=\kappa$, $i(\rho)=\rho$ and $i\restriction\bar{\kappa}\neq\id_{\bar{\kappa}}$. 
  Set $\map{j=i\restriction V_{\bar{\kappa}}}{V_{\bar{\kappa}}}{V_\kappa}$. Since $j$ is  non-trivial, the \emph{Kunen's Inconsistency} implies that $j\restriction V_\rho=\id_\rho$, and so $\crit{j}>\rho$. 
    Define $\lambda=\crit{j}$. 
    
   \begin{subclaim*}
       $\lambda \in C^{(n)}$. 
   \end{subclaim*} 

   \begin{proof}[Proof of the Subclaim]
     Fix a $\Pi_{n-1}$-formula $\varphi(v_0,v_1)$ that is absolute between $V$ and $V_\lambda$, and a parameter $a\in V_\lambda$. In one direction, if $\exists x ~ \varphi(a,x)$ holds in $V_\lambda$, then our assumptions on $\varphi$ directly ensure that this statement also holds in $V$. 
  In the other direction, assume, towards a contradiction, that the statement $\exists x ~ \varphi(a,x)$ holds in $V$ and fails in $V_\lambda$. 
  First,  notice that $\bar{\kappa}\in C^{(n)}$, because as $\theta \in C^{(n)}$, $H_\theta$ satisfies ``$\kappa \in C^{(n)}$", and, by elementarity, the model $X$, and therefore also $H_{\bar{\theta}}$, satisfies  ``$\bar{\kappa}\in C^{(n)}$",  and since $\bar{\theta }\in C^{(n)}$, this is true. 
  In particular,  the statement $\exists x ~ \varphi(a,x)$ holds in $V_{\bar{\kappa}}$ and we can find $b\in V_{\bar{\kappa}}$ such that the statement $\varphi(a,b)$ holds in $V_{\bar{\kappa}}$, $V_\kappa$ and $V$. 
  Now, note that there exists an ordinal $0<\eta\leq\omega$ with the property that $j^m(\lambda)\leq\bar{\kappa}$  for all $m<\eta$ and  $\bar{\kappa}\leq\sup_{m<\eta}j^m(\lambda)$, because otherwise we would have $j^m(\lambda)<\bar{\kappa}$ for all $m<\omega$ and   $\sup_{m<\omega}j^m(\lambda)$ would be a fixed point of $j$ below $\bar{\kappa}$, which is impossible by the \emph{Kunen  Inconsistency}. 
   Hence, we can  pick $m<\omega$ minimal with the property that $b\in V_{j^m(\lambda)}$. Our assumptions then imply that $m>0$. Moreover, the minimality of $m$ ensures that $j^m(\lambda)<\kappa$.  
  Thus, we know that $$V_\kappa\models \exists x\in V_{j^m(\lambda)} ~ \varphi(x,a),$$ which, by elementarity, directly implies that 
  $$V_{\bar{\kappa}}\models \exists x\in V_{j^{m-1}(\lambda)} ~ \varphi(x,a).$$ 
  Since $\kappa,\bar{\kappa}\in C^{(n)}$, and therefore $V_{\bar{\kappa}}\prec_{\Sigma_n} V_\kappa$, this shows that $$V_\kappa\models \exists x\in V_{j^{m-1}(\lambda)} ~ \varphi(x,a).$$ 
 By iterating this argument $m$-many times, we can conclude that 
  $$V_{\bar{\kappa}}\models \exists x\in V_\lambda ~ \varphi(x,a)$$  and our assumptions on $\varphi$ then show that the statement $\exists x ~ \varphi(a,x)$ holds in $V_\lambda$, contradicting our initial assumption. 
  \end{proof}

  For each ordinal $\alpha$ in the interval $( \lambda ,\bar{\kappa})$, the restricted map  $\map{j\restriction V_\alpha}{V_\alpha}{V_{j(\alpha)}}$ is an elementary embedding with critical point $\lambda$. 
  Note that  $j(\lambda)\in C^{(n)}$, because  $\lambda \in C^{(n)}$ implies that $\lambda \in (C^{(n)})^{V_{\bar{\kappa}}}$, elementarity ensures that $j(\lambda)\in (C^{(n)})^{V_\kappa}$ and the fact that $\kappa\in C^{(n)}$ allows us to conclude that $j(\lambda)\in C^{(n)}$. 
  Now, for each  $\alpha\in(\lambda,\bar{\kappa})$, the statement  \anf{\emph{There exists an ordinal $\beta$ and an elementary embedding $\map{i}{V_\alpha}{V_\beta}$  with $\crit{i}=\lambda$ and $i(\lambda) \in C^{(n-1)}$}} can be formulated by a $\Sigma_n$-formula with  parameters $\alpha$ and $\lambda$, and it holds in $V$. Hence, it also holds in $V_{\bar{\kappa}}$. Thus, in $V_{\bar{\kappa}}$, for every ordinal $\alpha$ greater than $\lambda$, there is an ordinal $\beta$ and   an elementary embedding $\map{i}{V_\alpha}{V_\beta}$ with $\crit{i}=\lambda$ and $i(\lambda)\in C^{(n-1)}$. 
  Elementarity then implies that, in $V_\kappa$,  for every ordinal $\alpha$ greater than $j(\lambda)$, there is an ordinal $\beta$ and an elementary embedding $\map{i}{V_\alpha}{V_\beta}$ with  $\crit{i}=j(\lambda)$ and  $i(j(\lambda))\in C^{(n-1)}$. Since $\kappa$ is an element of $C^{(n+1)}$, this statement also holds  in $V$ and we can conclude that  $j(\lambda)$ is a cardinal in the interval $(\rho,\kappa)$ that is $C^{(n-1)}$-extendible.  
 \end{proof}
 
 The above computations directly yield the implication $(2) \Rightarrow (1)$ of the theorem. Moreover, the implication $(1) \Rightarrow (3)$ follows directly from a combination of Theorem \ref{theorem:JoanBoldSupercompactSigma22}, Proposition \ref{proposition:CnSUNF-Cn+1} and Lemma \ref{lemmaCnSR}. This completes the proof of the theorem, because the implication $(3) \Rightarrow (2)$ is immediate. 
\end{proof}

We end this section by determining the class-principle corresponding to weak SR, i.e., we show that the following schemes are equivalent over $\ZFC$: 
\begin{enumerate}
  \item $\On$ is essentially subtle. 

  \item For every natural number $n$, there exists a $C^{(n)}$-strongly unfoldable cardinal. 
  
  \item For every natural number $n$, there exists a proper class of $C^{(n)}$-strongly unfoldable cardinals. 
    
  \item  For every natural number $n$ and every class $\Ce$ of structures of the same type, there exists a cardinal $\kappa\in C^{(n)}$ with the property that  that $\HSR^-_\Ce(\kappa)$ holds. 
 \end{enumerate}

\begin{proof}[Proof of Theorem \ref{theorem:OrdSubtle1}]
 $(1)\Rightarrow(2)$: Assume that for some natural number $m>1$, there are no $C^{(m)}$-strongly unfoldable cardinals. 
 Using a canonical coding of both G\"odel numbers of $\calL_\in$-formulas and subsets of $V_\kappa$ into subsets of $\kappa$ for $\kappa\in C^{(1)}$, our assumption allows us to apply  Theorem \ref{theorem:SigmaNsunf} to find an $\calL_\in$-formula $\varphi(v_0,v_1)$ with the property that for every cardinal $\kappa$ in $C^{(1)}$, there exists a subset $E$ of $\kappa$ and an ordinal $\kappa<\gamma\in C^{(m)}$ with the property that $\varphi(\kappa,E)$ holds in $V_\gamma$ and $\neg\varphi(\alpha,E\cap\alpha)$ holds in $V_\beta$ for all $\alpha<\beta<\kappa$ with $\alpha\in C^{(1)}$ and $\beta\in C^{(m)}$. 
 Now, let $\calE$ be the unique  class function with domain $\On$ such that the following statements hold:
 \begin{itemize}
     \item If $\gamma\in\On\setminus C^{(1)}$, then $\calE(\gamma)=\POT{\gamma}$. 
     
     \item If $\kappa\in C^{(1)}$, then $\calE(\kappa)$ consists of all subsets $E$ of $\kappa$ with the property that there exists an ordinal $\kappa<\gamma\in C^{(m)}$ with the property that $\varphi(\kappa,E)$ holds in $V_\gamma$ and $\neg\varphi(\alpha,E\cap\alpha)$ holds in $V_\beta$ for all $\alpha<\beta<\kappa$ with $\alpha\in C^{(1)}$ and $\beta\in C^{(m)}$. 
 \end{itemize}  
 
 Assume, towards a contradiction, that there are  $\rho<\kappa$ in $C^{(m+1)}$ and $E\in\calE(\kappa)$ with $E\cap\rho\in\calE(\rho)$. Then there exists an ordinal $\rho<\gamma\in C^{(m)}$ with the property that $\varphi(\rho,E\cap\rho)$ holds in $V_\gamma$. Since $\kappa$ is an element of $C^{(m+1)}$, we can find such a $\gamma$ that is smaller than $\kappa$. But this contradicts the fact that $E$ is an element of $\calE(\kappa)$. These arguments show that $\On$ is not essentially subtle in this case.

 $(2)\Rightarrow(3)$: Assume that for some natural number $m$, there are only boundedly many $C^{(m)}$-strongly unfoldable cardinals, and let $\lambda$ denote the least upper bound of the set of all $C^{(m)}$-strongly unfoldable cardinals. Since the set $\{\lambda\}$ is definable by an $\calL_\in$-formula without parameter, Proposition \ref{proposition:CnSUNF-Cn+1} implies that for all sufficiently large natural numbers $n$, there is no $C^{(n)}$-strongly unfoldable cardinal. 
 
 $(3)\Rightarrow(4)$: This implication follows directly from Proposition \ref{proposition:CnSUNF-Cn+1} and Lemma \ref{lemmaCnSR}. 
 
 $(4)\Rightarrow(1)$: Assume that for every natural number $n$ and every class $\Ce$ of structures of the same type, there exists a cardinal $\kappa\in C^{(n)}$ with the property  that $\HSR^-_\Ce(\kappa)$ holds. 
 Let $C$ be a closed unbounded class of ordinals and let $\calE$ be a class function on the ordinals with the property that $\emptyset\neq\calE(\gamma)\subseteq\POT{\gamma}$ holds for all $\gamma\in\On$. Then there is a natural number $n>0$ such that every element $\kappa$ of $C^{(n)}$ with the property that $V_\kappa$ contains the parameters used in the definition of $C$ is a limit point of $C$. 
 Let $\calL$ denote the first-order language that extends $\calL_\in$ by a constant symbol $\dot{d}_n$ for every natural number $n$, unary function symbols $\dot{e}$ and $\dot{s}$, and  unary relation symbols $\dot{C}$ and $\dot{E}$. Define $\Ce$ to be the class of $\calL$-structures of the form $\langle V_\rho,\in,\seq{d_n}{n<\omega},e,s,C\cap\rho,E\rangle$ such that the following statements hold: 
 \begin{itemize}
     \item $\rho\in C^{(n)}$ and $V_\rho$ contains the parameters used in the definition of $C$.  
     
     \item $E\in\calE(\rho)$. 
     
     \item If $\cof{\rho}=\omega$, then $\seq{d_n}{n<\omega}$ is a strictly increasing cofinal sequence in $\rho$. 
     
     \item $e(\gamma)\in\calE(\gamma)$ for all $\gamma<\rho$. 
     
     \item $s(\gamma)=\min(C\setminus(\gamma+1))$ for all $\gamma<\rho$. 
 \end{itemize}
 
 By our assumption, there exists $\kappa\in C^{(n)}$ with the property that  $\HSR^-_\Ce(\kappa)$ holds. Then there exists $B$ in $\Ce$ with domain $V_\kappa$ and, since $\betrag{V_\kappa}=\kappa$, we find $A\in \Ce\cap H_\kappa$ such that there exists an elementary embedding $j$ of $A$ into $B$. Pick $\rho\in C^{(n)}$ with the property that $V_\rho$ is the domain of $A$. If $j$ is the trivial embedding, then $\rho<\kappa$ are elements of $C$ and $\dot{E}^B$ is an element of $\calE(\kappa)$ with $\dot{E}^B\cap\rho=\dot{E}^A\in\calE(\rho)$. In the following, assume that the embedding $j$ is non-trivial and set $\lambda=\crit{j}<\rho$. 
 
 \begin{claim*}
  $\lambda\in C$. 
 \end{claim*}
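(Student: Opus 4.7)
The plan is to derive a contradiction from the assumption $\lambda\notin C$ by producing a fixed point of $j$ strictly above $\crit{j}=\lambda$ and well below $\rho$, and then invoking the \emph{Kunen Inconsistency} applied to a suitable rank-initial segment inside $V_\rho$.

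First I would split into two cases according to whether $C\cap\lambda$ is empty. If $C\cap\lambda$ is nonempty, I would set $\beta=\sup(C\cap\lambda)$; closedness of $C$ together with $\lambda\notin C$ forces $\beta\in C$ and $\beta<\lambda=\crit{j}$, so $j(\beta)=\beta$. Then I would apply the successor-function symbol $\dot{s}$: the element $\gamma=\dot{s}^A(\beta)$ is the least member of $\dot{C}^A=C\cap\rho$ past $\beta+1$, which, since $\lambda\notin C$ and $\beta=\sup(C\cap\lambda)$, satisfies $\lambda<\gamma<\rho$. By elementarity,
\[ j(\gamma) ~ = ~ \dot{s}^B(j(\beta)) ~ = ~ \dot{s}^B(\beta) ~ = ~ \min(C\setminus(\beta+1)) ~ = ~ \gamma, \]
where the last equality uses that $\gamma<\kappa$. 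In the second case, when $C\cap\lambda=\emptyset$, I would use instead that (by the choice of $n$) $\rho$ is a limit point of $C$, so $\gamma:=\min C<\rho$; this $\gamma$ is strictly above $\lambda$ (otherwise $\lambda\in C$), and since $\gamma$ is first-order definable in $A$ as $\min(\dot{C}^A)$ and equally in $B$ as $\min(\dot{C}^B)$, elementarity yields $j(\gamma)=\gamma$. Either way, I obtain an ordinal $\gamma$ with $\lambda<\gamma<\rho$ and $j(\gamma)=\gamma$.

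Next I would set $\mu=\sup_{m<\omega}j^m(\lambda)$. Iterating $j$ starting from $\lambda<\gamma$ stays below $\gamma$ because $j^m(\gamma)=\gamma$, so $\mu\leq\gamma<\rho$, and clearly $j(\mu)=\mu$. Since $\rho\in C^{(n)}$ is inaccessible, $V_{\mu+2}\in V_\rho$, and $j(V_{\mu+2})=V_{j(\mu)+2}=V_{\mu+2}\in V_\kappa$. Using the fact that the satisfaction relation for the transitive set $V_{\mu+2}$ is correctly computed both in $V_\rho$ and in $V_\kappa$, the elementarity of $\map{j}{V_\rho}{V_\kappa}$ transfers to the restriction, giving a non-trivial elementary embedding $\map{j\restriction V_{\mu+2}}{V_{\mu+2}}{V_{\mu+2}}$ (non-trivial because $\lambda=\crit{j}\in V_{\mu+2}$ is moved). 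This contradicts the Kunen Inconsistency.

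The only genuinely delicate step is the argument that the restriction $j\restriction V_{\mu+2}$ is elementary as a self-embedding of $V_{\mu+2}$; this is the kind of absoluteness verification that must be stated carefully, although it is standard once one notes that $V_{\mu+2}$ is an element of both $V_\rho$ and $V_\kappa$ and that truth in a transitive set is absolute between any two models containing that set as an element. Everything else amounts to a straightforward use of the function $\dot{s}$ and the predicate $\dot{C}$ that were included precisely to make $C\cap\rho$ and its local structure first-order accessible in $A$ and $B$, together with the standard observation that $\crit{j}$ together with a fixed point above it leads, via $\mu=\sup_{m<\omega}j^m(\lambda)$, to a Kunen-style contradiction.
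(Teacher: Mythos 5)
Your argument is correct and follows essentially the same route as the paper's: use $\dot{s}$ (or $\min\dot{C}$ in the case $C\cap\lambda=\emptyset$) to produce an ordinal $\gamma$ with $\crit{j}=\lambda<\gamma<\rho$ and $j(\gamma)=\gamma$, and then contradict the \emph{Kunen Inconsistency}; the paper folds your second case into the observation that $j(\min(C))=\min(C)$ together with Kunen forces $\lambda>\min(C)$. Two small points. First, the detour through $\mu=\sup_{m<\omega}j^m(\lambda)$ is unnecessary and slightly under-justified: $j(\mu)=\mu$ is not immediate from order-preservation alone, but requires continuity of $j$ at $\omega$-cofinal ordinals, which holds here because the sequence $\seq{j^m(\lambda)}{m<\omega}$ is an element of $V_\rho$ and $j$ fixes all natural numbers; the paper avoids this issue entirely by applying the Kunen Inconsistency directly to $j\restriction V_{\gamma+2}$, since $j(\gamma)=\gamma$ already yields $j(V_{\gamma+2})=V_{\gamma+2}$. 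Second, your justification that $V_{\mu+2}\in V_\rho$ because $\rho\in C^{(n)}$ is inaccessible is incorrect as stated: $C^{(n)}$ is a closed unbounded class of ordinals, so most of its members are singular and not inaccessible. The needed fact is nonetheless immediate, since $\mu\leq\gamma<\rho$ and $\rho$ is a limit ordinal (indeed a limit point of $C$), so $\mu+2<\rho$. With these repairs (or, more simply, by running your Kunen argument at $\gamma$ itself), the proof is complete and matches the paper's.
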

 
 \begin{proof}[Proof of the Claim]
  Assume, towards a contradiction, that  $\lambda\notin C$. Since elementarity implies that $j(\min(C))=\min(C)$, the \emph{Kunen Inconsistency} ensures that $\lambda>\min(C)$. Set $\beta=\sup(C\cap\lambda)<\lambda$ and $\gamma=\min(C\setminus(\beta+1))\in(\lambda,\rho)$. Then $\dot{s}^A(\beta)=\gamma$ and hence we have $j(\gamma)=\gamma$ and $j(V_{\gamma+2})=V_{\gamma+2}$, contradicting the  \emph{Kunen Inconsistency}. 
 \end{proof}
 
 By elementarity, we now know that $\lambda<j(\lambda)$ are elements of $C$ and $j(\dot{e}^A(\lambda))=\dot{e}^B(\lambda)$ is an element of $\calE(j(\lambda))$ with $j(\dot{e}^A(\lambda))\cap\lambda=\dot{e}^A(\lambda)\in\calE(\lambda)$. These computations show that (1) holds in this case. 
\end{proof}


\section{On $\WPR$ for arbitrary classes of structures}\label{section10}

We shall deal  next with the extension of Theorem \ref{theorem:CharWPR} to arbitrary classes of structures. This is given by Theorem \ref{WPSRC(n)}. Namely, we will show that for every natural number $n>1$, the following statements are equivalent for every cardinal $\kappa$:
\begin{enumerate}
 \item $\kappa$ is either $C^{(n)}$-strongly unfoldable or a limit of $\Sigma_{n+1}$-strong cardinals.

 \item The principle $\WPR_\Ce(\kappa)$ holds for every class $\Ce$ of structures of the same type that is definable by a $\Sigma_{n+1}$-formula with parameters in $V_\kappa$.
\end{enumerate}

The proof will use similar arguments to those in the proof of Theorem \ref{theorem:CharWPR}, given in section \ref{section7}.  The implication $(1) \Rightarrow (2)$ follows from the following  $C^{(n)}$-version of Lemma \ref{lemma:PRfromStrongUnfoldability}, which can be proved in the same way, using Proposition \ref{proposition:CnSUNF-Cn+1}.

\begin{lemma}\label{lemma:PRfromStrongUnfoldability2}
 If $n>0$ is a natural number and $\kappa$ is a $C^{(n)}$-strongly unfoldable cardinal, then  $\WPR_\Ce(\kappa)$ holds for every  non-empty class $\Ce$ of structures of the same type that is definable by a $\Sigma_{n+1}$-formula with parameters in $V_\kappa$. 
\end{lemma}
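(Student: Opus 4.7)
My plan is to imitate directly the proof of Lemma \ref{lemma:PRfromStrongUnfoldability} (which is precisely the case $n=1$), adjusting the complexity parameters and exploiting the extra $\Sigma_n$-correctness clause built into the definition of $C^{(n)}$-strong unfoldability. The main observation is that the condition $V_\lambda \prec_{\Sigma_n} V^N_{j(\kappa)}$, which distinguishes $C^{(n)}$-strong unfoldability from plain strong unfoldability, is exactly what is needed to transfer a $\Sigma_{n+1}$-statement of the form ``$B \in \Ce$'' from $V$ into the target model $N$.

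The setup is as follows. By Proposition \ref{proposition:CnSUNF-Cn+1}, $\kappa \in C^{(n+1)}$, so Lemma \ref{lemma:Sigma2Reflex} yields $\Ce \cap V_\kappa \neq \emptyset$. I fix a substructure $X$ of $\prod(\Ce \cap V_\kappa)$ of cardinality at most $\kappa$ and a structure $B \in \Ce$. I then pick a cardinal $\delta \in C^{(n+1)}$ above $\kappa$ with $B \in V_\delta$, and an elementary submodel $M \prec H_{\kappa^+}$ of cardinality $\kappa$ containing $V_\kappa \cup \{X\}$ and closed under ${<}\kappa$-sequences. Since $\kappa \in C^{(n+1)}$ makes the set $\Ce \cap V_\kappa$ coincide with its relativization to $V_\kappa$ under the defining $\Sigma_{n+1}$-formula, it is definable in $H_{\kappa^+}$ from $V_\kappa$ and the parameter of that formula, hence $\Ce \cap V_\kappa \in M$. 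Applying $C^{(n)}$-strong unfoldability to $M$ with target $\lambda = \delta$, I obtain a transitive set $N \supseteq V_\delta$ and an elementary embedding $\map{j}{M}{N}$ with $\crit{j} = \kappa$, $j(\kappa) > \delta$, and the crucial clause $V_\delta \prec_{\Sigma_n} V^N_{j(\kappa)}$.

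The key step is to show $B \in j(\Ce \cap V_\kappa)$. Writing the defining formula for $\Ce$ as $\varphi(x, p) \equiv \exists y\, \psi(y, x, p)$ with $\psi$ a $\Pi_n$-formula, the truth of $\varphi(B, p)$ in $V$ together with $\delta \in C^{(n+1)}$ yields a witness $y \in V_\delta$ with $V_\delta \models \psi(y, B, p)$. The $\Sigma_n$-elementarity between $V_\delta$ and $V^N_{j(\kappa)}$ (which also covers $\Pi_n$-statements) transfers this to $V^N_{j(\kappa)} \models \psi(y, B, p)$, hence $V^N_{j(\kappa)} \models \varphi(B, p)$. By elementarity of $j$ and the identification $j(V_\kappa) = V^N_{j(\kappa)}$, the set $j(\Ce \cap V_\kappa)$ is precisely $\{x \in V^N_{j(\kappa)} : V^N_{j(\kappa)} \models \varphi(x, p)\}$, so $B$ lies in it. The homomorphism $\map{h}{X}{B}$ is then defined by $h(f) = j(f)(B)$ and verified to be a homomorphism by the standard pointwise argument, exactly as in the proof of Lemma \ref{lemma:PRfromStrongUnfoldability}.

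The main conceptual obstacle is this two-stage transfer of the $\Sigma_{n+1}$-statement ``$B \in \Ce$'' from $V$ all the way into $V^N_{j(\kappa)}$: downward from $V$ to $V_\delta$ (using $\delta \in C^{(n+1)}$), and then across from $V_\delta$ to $V^N_{j(\kappa)}$ (using the $V_\delta \prec_{\Sigma_n} V^N_{j(\kappa)}$ clause). Each stage is as tight as possible, and this is why the correct hypothesis for $\Sigma_{n+1}$-definable classes is $C^{(n)}$-strong unfoldability rather than plain strong unfoldability: the latter would only afford $\Sigma_1$-absoluteness between $V_\delta$ and $V^N_{j(\kappa)}$, which is insufficient to carry the $\Pi_n$-witness $y$ across.
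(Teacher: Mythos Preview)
Your proposal is correct and follows essentially the same approach as the paper's proof: both use Proposition \ref{proposition:CnSUNF-Cn+1} to get $\kappa\in C^{(n+1)}$, choose $\delta\in C^{(n+1)}$ with $B\in V_\delta$, apply $C^{(n)}$-strong unfoldability to a suitable $M\prec H_{\kappa^+}$, and then use the clause $V_\delta\prec_{\Sigma_n}V^N_{j(\kappa)}$ to conclude $B\in j(\Ce\cap V_\kappa)$ and define $h(f)=j(f)(B)$. Your write-up simply spells out the two-stage absoluteness argument for $B\in j(\Ce\cap V_\kappa)$ that the paper compresses into the single line ``$B\in\Ce\cap V_\delta\subseteq j(\Ce\cap V_\kappa)$''.
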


\begin{proof}
 Since Proposition \ref{proposition:CnSUNF-Cn+1} ensures that $\kappa\in C^{(n+1)}$, we have $\emptyset\neq\Ce\cap V_\kappa\in H_{\kappa^+}$. 
 Pick a substructure $X$ of $\prod(\Ce\cap V_\kappa)$ of cardinality at most $\kappa$, a structure $B$ in $\Ce$, a cardinal $\kappa<\delta\in C^{(n+1)}$  with $B\in V_\delta$, and an elementary submodel $M$ of $H_{\kappa^+}$ of cardinality $\kappa$ with  $V_\kappa\cup\{\Ce\cap V_\kappa,X\}\cup{}^{{<}\kappa}M\subseteq M$. 
 Fix  a transitive set $N$ with $V_\delta\subseteq N$ and an elementary embedding $\map{j}{M}{N}$ with $\crit{j}=\kappa$, $j(\kappa)>\delta$ and $V_\delta\prec_{\Sigma_n}V_{j(\kappa)}^N$. 
 This setup ensures that $B\in\Ce\cap V_\delta\subseteq j(\Ce\cap V_\kappa)$ and hence the function $$\Map{h}{X}{B}{f}{j(f)(B)}$$ is a well-defined homomorphism.  
  \end{proof}

\begin{proof}[Proof of Theorem \ref{WPSRC(n)}]
 Let $n>0$ be a natural number and let $\kappa$ be a cardinal with the property that $\WPR_\Ce(\kappa)$ for every class $\Ce$ of structures of the same type that is definable by a $\Sigma_{n+1}$-formula with parameters in $V_\kappa$. 
 Then Lemma \ref{lemma:Sigma2Reflex} shows that $\kappa\in C^{(n+1)}$. 
 Assume, towards a contradiction, that $\kappa$ is neither $C^{(n)}$-strongly unfoldable nor a limit of $\Sigma_{n+1}$-strong cardinals. 
 Pick an ordinal $\alpha<\kappa$ such that the interval $[\alpha,\kappa)$ contains no $\Sigma_{n+1}$-strong cardinals. 
 Given a cardinal $\rho$ that is not $\Sigma_{n+1}$-strong, we let $\eta_\rho$ denote the least cardinal $\delta>\rho$ such that $\rho$ is not $\delta$-$\Sigma_{n+1}$-strong. Since the property of not being a $\Sigma_{n+1}$-strong cardinal can be defined by a $\Sigma_{n+1}$-formula, and since $\kappa \in C^{(n+1)}$,  the interval $(\alpha,\kappa)$ is closed under the function $\rho\longmapsto \eta_\rho$, and therefore it contains unboundedly many cardinals $\xi$ with the property that $\eta_\rho<\xi$ holds for all cardinals $\alpha\leq\rho<\xi$.  
 Finally, using Theorem \ref{theorem:SigmaNsunf}.\eqref{item:shrewd} and some standard fact about definability in models of the form $V_\eta$, we can find an $\calL_\in$-formula $\Phi(v_0,v_1)$, an ordinal $\theta>\kappa$ in $C^{(n)}$, and $E\subseteq V_\kappa$ with the property that $\Phi(\kappa,E)$ holds in $V_{\theta+1}$ and for all $\beta<\gamma<\kappa$ with $\gamma \in C^{(n)}$ the statement $\Phi(\beta,E\cap V_\beta)$ does not hold in $V_{\gamma+1}$. 
 
  Let $\calL^\prime$ be the first-order language that extends the language of set theory with  
  a binary relation symbol $\dot{S}$,  constant symbols $\dot{\kappa}$, $\dot{u}$, and  $\dot{c}$, and a unary function symbol $\dot{e}$. Let $\calL$ denote the first-order language that extends $\calL^\prime$ by an $(n+1)$-ary predicate symbol $\dot{T}_\varphi$ for  every $\calL^\prime$-formula $\varphi(v_0,\ldots,v_n)$ with $(n+1)$-many free variables, and let 
  $\calS_\calL$  be the class of structures  $A$ such that there exists a cardinal $\kappa_A$ in $C^{(n)}$ and a cardinal $\theta_A>\kappa_A$ also in $C^{(n)}$ 
 such that the following  hold: 
 \begin{enumerate}
     
     \item The domain of $A$ is $V_{\theta_A +1}$. 
     
     \item $\in^A={\in}\restriction{V_{\theta_A+1}}$, $\dot{\kappa}^A=\kappa_A$ and $\dot{u}^A=\theta_A$. 
     
     \item If $\varphi(v_0,\ldots,v_n)$ is an $\calL^\prime$-formula, then $$\dot{T}_\varphi^A ~ = ~ \Set{\langle x_0,\ldots,x_n\rangle\in V_{\theta_A +1}^{n+1}}{A\models\varphi(x_0,\ldots,x_n)}.$$
 \end{enumerate}
 
 Let $\Ce$ denote the class of all $A\in\calS_\calL$ such that the following statements hold: 
 \begin{itemize}
  \item $\dot{c}^A=\alpha<\kappa_A$. 
  
  \item The interval $[\alpha,\kappa_A)$ contains no $\Sigma_{n+1}$-strong cardinals. 
  
  
  \item $\dot{S}^A=\Set{\langle\rho,\gamma\rangle\in\kappa_A\times\kappa_A}{\textit{$\rho$ is a $\gamma$-$\Sigma_{n+1}$-strong cardinal}}$. 
  
  \item If $\alpha<\delta<\kappa_A$ is a cardinal, then $\dot{e}^A(\delta)$ is a cardinal below $\kappa_A$ and  is the smallest cardinal $\xi$ greater than $\delta$ that has the property that $\eta_\rho<\xi$ holds for all cardinals $\alpha\leq\rho<\xi$. 
 \end{itemize}
 
 It is easily seen that the class $\Ce$ is  $\Sigma_{n+1}$-definable  with parameter $\alpha$. 
 In addition, the fact that  $\kappa$ is an element of $C^{(n+1)}$ implies that $\sup\Set{\kappa_A}{A\in\Ce\cap V_\kappa}=\kappa$ and there exists a structure $B$ in $\Ce$ with $\kappa_B=\kappa$ and $\theta_B=\theta$. 
 Let $C$ be a cofinal subset of $\kappa$ of order-type $\cof{\kappa}$.  Let $X$ be  a substructure   of $\prod(\Ce\cap V_\kappa)$ of cardinality $\kappa$ with the property that $f^C\in X$, $f^E\in X$,  and $f_x,f^x\in X$ for all $x\in V_\kappa$. 
 By our assumption, there is a homomorphism $\map{h}{X}{B}$ and we   define $$\lambda ~ = ~ \min\Set{\rank{x}}{f_x \{ \kappa \}, ~ h(f_x)=\dot{u}^B}$$ and $$\chi ~ = ~ \sup\Set{h(f_\beta)}{\beta<\lambda}.$$
 Note that the results of Section \ref{prodref} imply that $\lambda, \chi\leq \kappa$. Moreover, Lemma \ref{lemma:Prod2}.\eqref{item:Prod7} shows that $h(f_\lambda)=\dot{u}^B\neq\lambda$ and this implies that   $\alpha<\lambda$ (see the proof of Theorem \ref{theorem:CharWPR}). 
 Let $$\mu ~ = ~ \min\Set{\beta\leq\lambda}{h(f_\beta)\neq\beta}.$$
 and apply  Lemma \ref{lemma:ProdCons}.\eqref{item:ProdCons2.2New} to show that $\mu$ is an inaccessible cardinal. Moreover, Lemma  \ref{lemma:Prod2}.\eqref{item:Prod7} implies that $h(f_x)=x$ holds for all $x\in V_\mu$.

 \begin{claim*}
  $\mu<\kappa$. 
 \end{claim*}
 
 \begin{proof}[Proof of the Claim]
  Assume, towards a contradiction, that $\mu=\lambda=\kappa$ holds. 
  Since Lemma \ref{lemma:Prod1}.\eqref{item:Prod1} implies  $h(f^E)\subseteq\kappa$, we can apply Lemma \ref{lemma:ProdCons}.\eqref{item:ProdCons1} to conclude that $h(f^E)=E$ and hence $$B\models \Phi(\dot{\kappa},h(f^E)).$$ 
  Another application of Lemma \ref{lemma:Prod1}.\eqref{item:Prod1} then yields $A\in\Ce\cap V_\kappa$ with $$A\models\Phi(\dot{\kappa},f^E(A))$$ and this shows that $\Phi(\kappa_A,E\cap V_{\kappa_A})$ holds in $V_{\theta_A+1}$. 
  Since $\kappa_A<\theta_A<\kappa$ and $\theta_A \in C^{(n)}$, this contradicts the choice of $E$. 
  \end{proof}

  \begin{claim*}
   $\mu<\lambda$. 
  \end{claim*}
  
  \begin{proof}[Proof of the Claim]
   Assume, towards a contradiction, that $\mu=\lambda<\kappa$. 
   Then Lemma \ref{lemma:ProdCons}.\eqref{item:ProdCons3} shows that $h(f^\mu)=\kappa$ and we can apply Lemmas \ref{lemma:ProdReflExtender} and  \ref{Lemma:HomStrongA} to show that $\mu$ is a $\kappa$-strong cardinal. 
 The claim will be proved once we show that $\mu$ is a $\kappa$-$\Sigma_{n+1}$-strong cardinal, for this contradicts the fact that $\alpha<\mu<\eta_\mu<\kappa$. 
 
 Now, \cite[Proposition 5.5]{BagariaWilsonRefl} shows that  $\mu$ is a $\kappa$-$\Sigma_{n+1}$-strong cardinal if and only if there exists a  transitive $M$ and an elementary embedding $\map{j}{V}{M}$ with  $\crit{j}=\mu$, $j(\mu)>\kappa$, $V_\kappa \subseteq M$, and $M\models ``\kappa \in C^{(n)}"$. 
  Thus, letting $\mathcal{E}$ be the $(\mu ,\kappa)$-extender given by Lemma \ref{lemma:ProdReflExtender}, and letting $\map{j_{\mathcal{E}}}{V}{M_{\mathcal{E}}}$ be as in Lemma \ref{Lemma:HomStrongA}, it only remains  to show that $\bar{M}_{\mathcal{E}}\models ``\kappa \in C^{(n)}"$, where $\bar{M}_\Ee$ is the transitive collapse of $M_\Ee$. 
  Since $\kappa$ is a limit point of $C^{(n)}$, it suffices to show that   if $\gamma <\kappa$ belongs to  $C^{(n)}$, then $\bar{M}_\Ee\models ``\gamma \in C^{(n)}"$. So, fix such an ordinal  $\gamma$.
  %
  
  Let $\map{f}{[\mu]^1}{\mu}$ be such that $f(\{x\})=x$. It is well known that $k_{\{ \gamma \}}( [f]_{E_{\{\gamma\}}})=\gamma$, where $\map{k_{\{ \gamma\}}}{M_{\{\gamma\}}}{\bar{M}_\Ee}$ is the standard map given by $k_{\{ \gamma\}}([g]_{E_{\{\gamma\}}})=\pi([\{ \gamma\}, [g]_{E_{\{\gamma\}}}])$, for any $\map{g}{[\mu]^1}{V}$, and where $\map{\pi}{M_\Ee}{\bar{M}_\Ee}$ is the transitive collapse (see \cite[Lemma 26.2(a)]{MR1994835}). 

 Since being a singleton whose unique element belongs to  $C^{(n)}$ is a predicate in the language of every structure $A\in \Ce \cap V_\kappa$, letting $Z=\Set{ \{ x\}\in [\mu]^1}{x\in C^{(n)}}$, by Lemma \ref{lemma:Prod1}.\eqref{item:Prod1} and, since $h(f^\mu)=\kappa$ and $\kappa \in C^{(n)}$, we have that  $h$ maps $f^Z$ to the set $\Set{\{ x\}\in [\kappa]^1}{x\in C^{(n)}\cap V_{\kappa}}$. Thus, $\{ \gamma \} \in h(Z)$, and therefore $Z\in E_{\{ \gamma\}}$. 
 Hence, $M_{\{ \gamma\}}\models ``[f]_{E_{\{ \gamma \}}}\in C^{(n)}"$, and therefore we can conclude that $M_\Ee \models ``[\{ \gamma\}, [f]_\Ee]\in C^{(n)}"$, which yields $\bar{M}_\Ee \models ``\gamma \in C^{(n)}"$, as desired. 
  \end{proof}

 The claim above shows that $\mu<\lambda\leq\kappa$ and $h(f_\mu)\neq\dot{u}^B$. 
 So let $$\Map{j}{V_\lambda}{V_\chi}{x}{h(f_x)}$$ be the  non-trivial $\Sigma_1$-elementary embedding with $\crit{j}=\mu$  given by Lemma \ref{lemma:Prod2}.\eqref{item:ProdBonus}. 
 Like  in the proof of Theorem \ref{theorem:StrongProductSigma2} we may now use the $\Sigma_1$-elementarity of $j$ and the \emph{Kunen Inconsistency} to conclude that    $\alpha<\mu$. 
 Since $\alpha<\mu<\lambda\leq\kappa$, we can pick a cardinal $\mu<\xi<\kappa$ that is  minimal   above $\mu$ with the property that $\eta_\rho<\xi$ holds for all cardinals $\alpha\leq\rho<\xi$. 
  Given $A\in\Ce\cap V_\kappa$ with $\mu<\kappa_A$, we then have $\xi=\dot{e}^A(\mu)<\kappa_A$. Using Lemma \ref{lemma:Prod1}.\eqref{item:Prod1}, this shows that $h(f_\xi)\neq\dot{u}^B$ and  $\xi<\lambda$.

  \begin{claim*}
   If $i<\omega$, then $j^i(\mu)<j^{i+1}(\mu)<\xi$ and $j^i(\mu)$ is a $j^{i+1}(\mu)$-$\Sigma_{n+1}$-strong cardinal. 
  \end{claim*}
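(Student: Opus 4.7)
The plan is induction on $i<\omega$, with the base case carried out via the extender machinery of Section \ref{prodref} (essentially a repetition of the previous claim's argument, but with $j(\mu)$ in place of $\kappa$), and the inductive step effected by a single transfer through the predicate $\dot{S}$ via Lemma \ref{lemma:Prod1}.\eqref{item:Prod1}.

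For the base case $i=0$, the inequality $\mu<j(\mu)$ follows from the minimality in the definition of $\mu$ together with Lemma \ref{lemma:Prod2}.\eqref{item:Prod7} applied to $x=\mu\in V_\lambda$ (any $\alpha<\mu$ with $\alpha<h(f_\alpha)$ would contradict $h(f_\alpha)=\alpha$). Since the preceding claim shows $\mu<\lambda$, Lemma \ref{lemma:ProdCons}.\eqref{item:ProdCons4} yields $h(f^\mu)=h(f_\mu)=j(\mu)<\kappa_B$, so the hypotheses of Lemmas \ref{lemma:ProdReflExtender} and \ref{Lemma:HomStrongA} are satisfied with $\nu=j(\mu)$. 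The resulting $(\mu,j(\mu))$-extender $\Ee$ produces a transitive $\bar{M}_\Ee$ with $V_{j(\mu)}\subseteq\bar{M}_\Ee$ and an elementary embedding $\map{j_\Ee}{V}{\bar{M}_\Ee}$ with $\crit{j_\Ee}=\mu$ and $j_\Ee(\mu)\geq j(\mu)$. To upgrade strongness to $\Sigma_{n+1}$-strongness I repeat the previous claim's argument: because every $A\in\Ce$ satisfies $\kappa_A,\theta_A\in C^{(n)}$, the $\Pi_n$-definable class $C^{(n)}$ is captured by one of the $\dot{T}_\varphi$-predicates in every $A$, so applying Lemma \ref{lemma:Prod1}.\eqref{item:Prod1} to the set $Z=\{\{x\}\in[\mu]^1:x\in C^{(n)}\}$ transfers $C^{(n)}$-membership below $j(\mu)$ into $\bar{M}_\Ee$. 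Combined with the $\Sigma_n$-analogue of Lemma \ref{Lemma:HomStrongA}.\eqref{item:HomStrong1} (which uses $\kappa_A\in C^{(n)}$ to yield $V_{j(\mu)}\prec_{\Sigma_n}V_{\kappa_B}$ and hence $j(\mu)\in C^{(n)}$), this forces $\bar{M}_\Ee\models ``j(\mu)\in C^{(n)}"$. By \cite[Proposition 5.5]{BagariaWilsonRefl}, $\mu$ is $j(\mu)$-$\Sigma_{n+1}$-strong, and therefore $j(\mu)<\eta_\mu<\xi$ by the definition of $\xi$ and the fact that $\alpha<\mu$.

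For the inductive step, assume $j^{i-1}(\mu)<j^i(\mu)<\xi$ and $j^{i-1}(\mu)$ is $j^i(\mu)$-$\Sigma_{n+1}$-strong. Since $\sup\{\kappa_A:A\in\Ce\cap V_\kappa\}=\kappa>\xi$, for every $A\in\Ce\cap V_\kappa$ with $\xi<\kappa_A$ the definition of $\dot{S}^A$ gives $\langle j^{i-1}(\mu),j^i(\mu)\rangle\in\dot{S}^A$, so
\[
A\models ``f_\xi\neq\dot{u}\longrightarrow\dot{S}(f_{j^{i-1}(\mu)},f_{j^i(\mu)})".
\]
Because $\xi<\lambda$ ensures $h(f_\xi)\neq\dot{u}^B$, Lemma \ref{lemma:Prod1}.\eqref{item:Prod1} transfers this to $B\models\dot{S}(j^i(\mu),j^{i+1}(\mu))$. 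Since $\dot{S}^B$ is externally defined as the set of pairs $\langle\rho,\gamma\rangle\in\kappa_B\times\kappa_B$ with $\rho$ being $\gamma$-$\Sigma_{n+1}$-strong in $V$, this means $j^i(\mu)$ is $j^{i+1}(\mu)$-$\Sigma_{n+1}$-strong. The definition of $\xi$, combined with $\alpha<\mu\leq j^i(\mu)<\xi$, then forces $j^{i+1}(\mu)<\eta_{j^i(\mu)}<\xi$, and the strict inequality $j^i(\mu)<j^{i+1}(\mu)$ follows by applying $j$ to $j^{i-1}(\mu)<j^i(\mu)$ and invoking the strict monotonicity of $j$ on ordinals (whose domain $V_\lambda$ contains $j^i(\mu)$ since $j^i(\mu)<\xi<\lambda$).

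The main obstacle is the $\Sigma_{n+1}$-upgrade in the base case: pure $j(\mu)$-strongness of $\mu$ is immediate from the extender, but forcing $\bar{M}_\Ee$ to see $j(\mu)$ as a $C^{(n)}$-ordinal depends crucially on the enrichment of $\calL$ by the $\dot{T}_\varphi$-predicates and on the requirement $\kappa_A,\theta_A\in C^{(n)}$ built into the definition of $\Ce$; together these allow the $\Pi_n$-class $C^{(n)}$ to travel through the homomorphism $h$ and be correctly recognized in the extender ultrapower. Once this base case is handled, the induction runs cleanly through $\dot{S}$.
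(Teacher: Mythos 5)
Your proposal follows the same route as the paper: the inductive step through the predicate $\dot{S}$, using Lemma \ref{lemma:Prod1}.\eqref{item:Prod1} together with $h(f_\xi)\neq\dot{u}^B$ and the bound $\eta_{j^{i}(\mu)}<\xi$, is exactly the paper's argument and is correct (as are your justifications of $\mu<j(\mu)$ via Lemma \ref{lemma:Prod2}.\eqref{item:Prod7} and of $j^{i}(\mu)<j^{i+1}(\mu)$ via monotonicity of $j$ on ordinals below $\lambda$). For the base case the paper likewise applies Lemmas \ref{lemma:ProdReflExtender} and \ref{Lemma:HomStrongA} with $\nu=h(f^\mu)=j(\mu)$ and then upgrades to $\Sigma_{n+1}$-strongness by the argument of the preceding claim, i.e.\ the set $Z=\Set{\{x\}\in[\mu]^1}{x\in C^{(n)}}$ and {\cite[Proposition 5.5]{BagariaWilsonRefl}}.

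The gap is in how you finish that upgrade. First, your parenthetical \anf{$\Sigma_n$-analogue of Lemma \ref{Lemma:HomStrongA}.\eqref{item:HomStrong1}} does not go through: that clause is proved by transferring the statement $V_{f^\mu(A)}\prec_{\Sigma_1}V_{\kappa_A}$, which holds because $f^\mu(A)\in\{\mu,\kappa_A\}\subseteq C^{(1)}$. Its $\Sigma_n$-version would require $V_{f^\mu(A)}\prec_{\Sigma_n}V_{\kappa_A}$ for \emph{every} $A\in\Ce\cap V_\kappa$, and since $f^\mu(A)=\mu$ for the cofinally many $A$ with $\kappa_A>\mu$, this amounts to assuming $\mu\in C^{(n)}$, which is not known at this point; the hypothesis $\kappa_A\in C^{(n)}$ is of no help for those $A$. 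Second, even granting $j(\mu)\in C^{(n)}$ in $V$, this does not by itself yield $\bar{M}_\Ee\models\anf{j(\mu)\in C^{(n)}}$: membership in $C^{(n)}$ is not absolute between $V$ and $\bar{M}_\Ee$ merely because $V_{j(\mu)}\subseteq\bar{M}_\Ee$. In the preceding claim the paper gets $\bar{M}_\Ee\models\anf{\kappa\in C^{(n)}}$ by showing, via $Z\in\mathsf{E}_{\{\gamma\}}$, that every real $C^{(n)}$-ordinal $\gamma$ below the target lies in $(C^{(n)})^{\bar{M}_\Ee}$, and then using that the target $\kappa$ is a \emph{limit point} of $C^{(n)}$ (a consequence of $\kappa\in C^{(n+1)}$) together with the closedness of $(C^{(n)})^{\bar{M}_\Ee}$. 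When the target is $j(\mu)$, the analogue of that limit-point step is exactly what your write-up fails to supply: you need either that $j(\mu)$ is a limit of (true) $C^{(n)}$-ordinals, or some other way of seeing $j(\mu)\in(C^{(n)})^{\bar{M}_\Ee}$ (e.g.\ via how $j(\mu)$ is represented in the extender ultrapower), and your appeal to $\kappa_A\in C^{(n)}$ does not provide it.
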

  
   \begin{proof}[Proof of the Claim]
    Since $\mu<\kappa$ and Lemma \ref{lemma:ProdCons}.\eqref{item:ProdCons4} shows that $j(\mu)=h(f^\mu)$,  we can apply Lemma \ref{Lemma:HomStrongA} and argue as in the last claim above to conclude that $\mu$ is $j(\mu)$-$\Sigma_{n+1}$-strong,  and this implies that $\mu<j(\mu)<\eta_\mu<\xi$. 
   Now, assume that for some $i<\omega$, we have $j^i(\mu)<j^{i+1}(\mu)<\xi$ and $j^i(\mu)$ is a $j^{i+1}(\mu)$-$\Sigma_{n+1}$-strong cardinal. Then $\langle j^i(\mu),j^{i+1}(\mu)\rangle\in\dot{S}^A$ for all $A\in\Ce\cap V_\kappa$ with $\xi<\kappa_A$ and the fact that $h(f_\xi)\neq\dot{u}^B$ allows us to use Lemma \ref{lemma:Prod1}.\eqref{item:Prod1} to show that $\langle j^{i+1}(\mu),j^{i+2}(\mu)\rangle\in\dot{S}^B$. Hence, we know that $j^{i+1}(\mu)$ is a $j^{i+2}(\mu)$-$\Sigma_{n+1}$-strong cardinal and $j^{i+1}(\mu)<j^{i+2}(\mu)<\eta_{j^{i+1}(\mu)}<\xi$. 
   \end{proof}
   
   Now, define $$\tau ~ = ~ \sup_{i<\omega}j^i(\mu) ~ \leq ~ \xi ~ < ~ \lambda,$$ and apply Lemma  \ref{lemma:Prod2}.\eqref{item:Prod7} to show that $\lambda$ is a limit ordinal and use  the $\Sigma_1$-elementarity of $j$ to conclude that $j(V_{\tau+2})=V_{\tau+2}$.  
  Since this entails  that $\map{j\restriction V_{\tau+2}}{V_{\tau+2}}{V_{\tau+2}}$ is a non-trivial elementary embedding, we obtain a contradiction to the \emph{Kunen Inconsistency}.
  
  The above computations prove the implication $(2)\Rightarrow(1)$ of the theorem. The implication $(1) \Rightarrow (2)$ follows directly from Lemma \ref{lemma:PRfromStrongUnfoldability2}. 
  \end{proof}

  We conclude this section with the proof of Theorem \ref{theorem:Ordsubtle2}, i.e., we prove that the following schemas are equivalent over $\ZFC$: 
  \begin{enumerate}
   \item $\On$ is essentially subtle. 
  
   \item  For every non-empty class $\Ce$ of structures of the same type, there exists a cardinal $\kappa$ such that $\WPR_\Ce(\kappa)$ holds. 
 \end{enumerate}
  
  \begin{proof}[Proof of Theorem \ref{theorem:Ordsubtle2}]
  The implication $(1)\Rightarrow (2)$ follows from a combination of Theorem \ref{theorem:OrdSubtle1} and Lemma \ref{lemma:PRfromStrongUnfoldability2}.
  
   To show that $(2)\Rightarrow (1)$, assume, aiming for a contradiction, that there is a closed unbounded class $C$ of ordinals and a class function $\calE$ with domain $\On$ and the property that $\emptyset\neq\calE(\gamma)\subseteq\POT{\gamma}$ holds for all $\gamma\in\On$ and $E\cap\beta\notin\calE(\beta)$ holds for all $\beta<\gamma$ in $C$ and $E\in\calE(\gamma)$. 
   As in the proof of Theorem \ref{main:ProdSubtle}, let $\alpha$ denote the least cardinal of uncountable cofinality in $C$ and let $\calL^\prime$ denote the first-order language that extends $\calL_\in$ by a unary predicate symbol $\dot{C}$,  constant symbols $\dot{\kappa}$, $\dot{c}$ and $\dot{u}$, and  unary function symbols $\dot{e}$ and $\dot{s}$. Then let $\calL$ denote the first-order language extending  $\calL^\prime$ by an $(n+1)$-ary predicate symbol $\dot{T}_\varphi$ for  every $\calL^\prime$-formula $\varphi(v_0,\ldots,v_n)$ with $(n+1)$-many free variables, and let $\calS_\calL$  be the class of $\calL$-structures as defined at the beginning of Section \ref{prodref}. 
   Now, let $\Ce$ denote the class of all $A\in\calS_\calL$ such that the following statements hold: 
  \begin{itemize}
   \item $\kappa_A$ is a limit point of $C$ above $\alpha$. 
   
   \item $\dot{c}^A=\alpha$ and  $\dot{C}^A=C\cap\kappa_A$. 
  
  \item If $\gamma<\kappa_A$, then $\dot{e}^A(\gamma)\in\calE(\gamma)$ and $\dot{s}^A(\gamma)=\min(C\setminus(\gamma+1))$.  
 \end{itemize}
 
 Then $\Ce\neq\emptyset$. Assume, towards a contradiction, that there is a cardinal $\zeta$ with the property that $\WPR_\Ce(\zeta)$ holds. 
 Set $$\kappa ~ = ~ \sup\Set{\kappa_A}{A\in\Ce\cap V_\zeta} ~ \in ~ \Lim(C) \cap C^{(1)}\cap (\alpha,\zeta].$$ Let $D$ be a cofinal subset of $\kappa$ of order-type $\cof{\kappa}$, and let $E$ be an element of $\calE(\kappa)$.

 For each set $x$, define functions $f_x$ and $f^x$ as in Section \ref{prodref}. 
 Pick $B\in \Ce$ with $\kappa_B>\zeta$ and $\dot{e}^B(\kappa)=E$. Fix a substructure $X$ of $\prod(\Ce\cap V_\zeta)$ of cardinality $\kappa$ such that $f^D,f^E\in X$ and $f_x,f^x\in X$   for all $x\in V_\kappa$. 
 Then there is a homomorphism $\map{h}{X}{B}$ and we can define  $$\lambda ~ = ~ \min\Set{\rank{x}}{f_x\in X, \, h(f_x)=\dot{u}^B} ~ \leq ~ \kappa$$ as well as $$\chi ~ = ~ \sup\Set{h(f_\alpha)}{\alpha<\lambda} ~ \leq ~ \kappa_B.$$ 
 Our setup then ensures that $\alpha<\lambda$ and an argument presented  in the proof of Theorem \ref{main:ProdSubtle} shows that $\lambda\in C$. Using Lemma \ref{lemma:Prod2}.\eqref{item:ProdBonus}, we know that  $$\Map{j}{V_\lambda}{V_\chi}{x}{h(f_x)}$$ is a $\Sigma_1$-elementary embedding. Next, Lemma \ref{lemma:Prod2}.\eqref{item:Prod5} allows us to define $\mu\leq\lambda$ to be the minimal ordinal with $h(f_\mu)\neq\mu$. By Lemma  \ref{lemma:Prod2}.\eqref{item:Prod7}, we then have $h(f_x)=x$  for all $x\in V_\mu$.

  \begin{claim*}
   $\mu<\kappa$. 
  \end{claim*}
  
  \begin{proof}[Proof of the Claim]
   Assume, towards a contradiction, that $\mu=\lambda=\kappa$ holds. 
   Lemma \ref{lemma:ProdCons}.\eqref{item:ProdCons1} now shows that $h(f^E)\cap\kappa=E=\dot{e}^B(\kappa)$ and hence the cardinal  $\kappa\in C\cap\kappa_B=\dot{C}^B$   witnesses that $$B\models\anf{\exists\beta<\dot{\kappa} ~ [\beta\in\dot{C}\wedge h(f^E)\cap\beta=\dot{e}(\beta)]}.$$
   Then Lemma \ref{lemma:Prod1}.\eqref{item:Prod1} yields $A\in\Ce\cap V_\zeta$ and $\beta\in C\cap\kappa_A$ with $E\cap\beta=f^E(A)\cap\beta=\dot{e}^A(\beta)\in\calE(\beta)$. Since  $\beta<\kappa_A\leq\kappa$, this yields a contradiction.   
  \end{proof}
  
  Define $E_0=\dot{e}^B(\mu)\in\calE(\mu)$. 
  
  \begin{claim*}
   $\mu<\lambda$. 
  \end{claim*}
  
  \begin{proof}[Proof of the Claim]
   Assume, towards a contradiction, that $\mu=\lambda$ holds. Then $\mu\in C\cap\kappa$ and $f^{E_0}\in X$. Lemma \ref{lemma:ProdCons}.\eqref{item:ProdCons1} now  implies that $h(f^{E_0})\cap \mu=E_0$ and, since Lemma \ref{lemma:Prod2}.\eqref{item:Prod5} shows that   $h(f_\lambda)=\dot{u}^B$, we have $$B\models\anf{h(f_\mu)=\dot{u} ~ \wedge ~ \exists\beta<\dot{\kappa} ~ [\beta\in\dot{C}\wedge h(f^{E_0})\cap\beta=\dot{e}(\beta)]}.$$
   By Lemma \ref{lemma:Prod1}.\eqref{item:Prod1}, there is $A\in\Ce\cap V_\zeta$ with $\kappa_A\leq\mu$ and $\beta\in C\cap\kappa_A$ with   $E_0\cap\beta  =  f^{E_0}(A)\cap\beta  = \dot{e}^A(\beta)\in\calE(\beta)$, a contradiction. 
  \end{proof}

 This shows that $\mu<\lambda\leq\kappa$. In addition, arguments already  contained in the proof of Theorem \ref{main:ProdSubtle} prove that   $\alpha<\mu\in C$.     Then  $\mu<j(\mu)=h(f_\mu)\neq\dot{u}^B$ and $\mu$ witnesses that $$B\models\anf{h(f_\mu)\neq\dot{u} ~ \wedge ~ \exists\beta< h(f_\mu) ~ [\beta\in\dot{C}\wedge h(f^{E_0})\cap\beta=\dot{e}(\beta)]}.$$
  By Lemma \ref{lemma:Prod1}.\eqref{item:Prod1}, there is $A\in\Ce\cap V_\zeta$ with $\kappa_A>\mu$ and $\beta\in C\cap\mu$ with $E_0\cap\beta = f^{E_0}(A)\cap\beta = \dot{e}^A(\beta) \in \calE(\beta)$, a contradiction.  
  \end{proof}


\bibliographystyle{amsplain} 
\bibliography{masterbiblio}

\providecommand{\bysame}{\leavevmode\hbox to3em{\hrulefill}\thinspace}
\providecommand{\MR}{\relax\ifhmode\unskip\space\fi MR }
\providecommand{\MRhref}[2]{%
  \href{http://www.ams.org/mathscinet-getitem?mr=#1}{#2}
}
\providecommand{\href}[2]{#2}
\begin{thebibliography}{10}

\bibitem{zbMATH06029795}
Joan Bagaria, \emph{{$C^{(n)}$}-cardinals}, Archive for Mathematical Logic
  \textbf{51} (2012), no.~3-4, 213--240.

\bibitem{Bag-SR}
\bysame, \emph{Large cardinals as principles of structural reflection}, Bull.
  Symb. Log. \textbf{29} (2023), no.~1, 19--70. \MR{4560533}

\bibitem{BCMR}
Joan Bagaria, Carles Casacuberta, A.~R.~D. Mathias, and Ji\v{r}\'{\i}
  Rosick\'y, \emph{Definable orthogonality classes in accessible categories are
  small}, Journal of the European Mathematical Society \textbf{17} (2015),
  no.~3, 549--589.

\bibitem{BGS}
Joan Bagaria, Victoria Gitman, and Ralf~D. Schindler, \emph{Generic
  {V}op\v{e}nka's {P}rinciple, remarkable cardinals, and the weak {P}roper
  {F}orcing {A}xiom}, Archive for Mathematical Logic \textbf{56} (2017),
  no.~1-2, 1--20.

\bibitem{BALU1}
Joan Bagaria and Philipp L\"{u}cke, \emph{Huge reflection}, Ann. Pure Appl.
  Logic \textbf{174} (2023), no.~1, Paper No. 103171, 32.

\bibitem{Ba-Ma:GR}
Joan Bagaria and Menachem Magidor, \emph{Group radicals and strongly compact
  cardinals}, Transactions of the {A}merican {M}athematical {S}ociety
  \textbf{366} (2014), 1857--1877.

\bibitem{BM2}
\bysame, \emph{On $\omega_1$-strongly compact cardinals}, J. Symb. Log.
  \textbf{79} (2014), no.~1, 266--278.

\bibitem{MR3519447}
Joan Bagaria and Jouko V\"{a}\"{a}n\"{a}nen, \emph{On the symbiosis between
  model-theoretic and set-theoretic properties of large cardinals}, J. Symb.
  Log. \textbf{81} (2016), no.~2, 584--604.

\bibitem{BagariaWilsonRefl}
Joan Bagaria and Trevor~M. Wilson, \emph{The weak {V}op\v{e}nka principle for
  definable classes of structures}, The Journal of Symbolic Logic (2022),
  1–27.

\bibitem{boney2022model}
Will Boney, Stamatis Dimopoulos, Victoria Gitman, and Menachem Magidor,
  \emph{Model theoretic characterizations of large cardinals revisited},
  Preprint, 2022.

\bibitem{MR4474130}
Sean Cox, \emph{Salce's problem on cotorsion pairs is undecidable}, Bull. Lond.
  Math. Soc. \textbf{54} (2022), no.~4, 1363--1374.

\bibitem{devlin_2017}
Keith~J. Devlin, \emph{Constructibility}, Perspectives in Logic, Cambridge
  University Press, 2017.

\bibitem{MR2279655}
Mirna D\v{z}amonja and Joel~David Hamkins, \emph{Diamond (on the regulars) can
  fail at any strongly unfoldable cardinal}, Ann. Pure Appl. Logic \textbf{144}
  (2006), no.~1-3, 83--95.

\bibitem{Fl:NMSC}
William~G. Fleissner, \emph{The normal {M}oore space conjecture and large
  cardinals}, Handbook of set-theoretic topology, North-Holland, Amsterdam,
  1984, pp.~733--760.

\bibitem{For:denseideal}
Matthew Foreman, \emph{An $\aleph_1$-dense ideal on $\aleph_2$}, Israel Journal
  of Mathematics \textbf{108} (1998), no.~1, 253.

\bibitem{FoMa:MutStat}
Matthew Foreman and Menachem Magidor, \emph{Mutually stationary sequences of
  sets and the non-saturation of the non-stationary ideal on
  $\mathcal{P}_\kappa (\lambda)$}, Acta Mathematica \textbf{186} (2001), no.~2,
  271--300.

\bibitem{GitPrikry}
Moti Gitik, \emph{Prikry-type forcings}, Handbook of set theory, Springer,
  2010, pp.~1351--1447.

\bibitem{hamkins2016vopvenka}
Joel~David Hamkins, \emph{The {V}op\v{e}nka principle is inequivalent to but
  conservative over the {V}op\v{e}nka scheme}, Preprint, 2016.

\bibitem{HJ:SU}
Joel~David Hamkins and Thomas~A. Johnstone, \emph{Strongly uplifting cardinals
  and the boldface resurrection axioms}, Archive for Mathematical Logic
  \textbf{56} (2017), no.~7, 1115--1133.

\bibitem{MR1221741}
Wilfrid Hodges, \emph{Model theory}, Encyclopedia of Mathematics and its
  Applications, vol.~42, Cambridge University Press, Cambridge, 1993.

\bibitem{jensennotes}
Ronald~B. Jensen and Kenneth Kunen, \emph{Some combinatorial properties of {L}
  and {V}}, Handwritten notes, available at
  {\url{https://www.mathematik.hu-berlin.de/~raesch/org/jensen/pdf/CPLV_2.pdf}},
  1969.

\bibitem{MR1994835}
Akihiro Kanamori, \emph{The {H}igher {I}nfinite}, second ed., Springer
  Monographs in Mathematics, Springer-Verlag, Berlin, 2003.

\bibitem{luecke2021strong}
Philipp L\"{u}cke, \emph{Strong unfoldability, shrewdness and combinatorial
  consequences}, Proc. Amer. Math. Soc. \textbf{150} (2022), no.~9, 4005--4020.

\bibitem{SRminus}
\bysame, \emph{Structural reflection, shrewd cardinals and the size of the
  continuum}, J. Math. Log. \textbf{22} (2022), no.~2, Paper No. 2250007, 43.

\bibitem{MR295904}
Menachem Magidor, \emph{On the role of supercompact and extendible cardinals in
  logic}, Israel J. Math. \textbf{10} (1971), 147--157.

\bibitem{MS:PPD}
Donald~A. Martin and John~R. Steel, \emph{A proof of projective determinacy},
  J. Amer. Math. Soc. \textbf{2} (1989), no.~1, 71--125.

\bibitem{MR0930262}
Pierre Matet, \emph{Some aspects of {$n$}-subtlety}, Z. Math. Logik Grundlag.
  Math. \textbf{34} (1988), no.~2, 189--192. \MR{930262}

\bibitem{MR3324126}
Norman~Lewis Perlmutter, \emph{The large cardinals between supercompact and
  almost-huge}, Arch. Math. Logic \textbf{54} (2015), no.~3-4, 257--289.

\bibitem{MR1649079}
Andr\'{e}s Villaveces, \emph{Chains of end elementary extensions of models of
  set theory}, J. Symbolic Logic \textbf{63} (1998), no.~3, 1116--1136.

\bibitem{Woodin:PNAS}
W.~Hugh Woodin, \emph{Supercompact cardinals, sets of reals, and weakly
  homogeneous trees}, Proceedings of the National Academy of Sciences
  \textbf{85} (1988), no.~18, 6587--6591.

\end{thebibliography}

\end{document}